\DeclareMathOperator{\im}{im}
\DeclareMathOperator{\Spec}{\underline{S}p\underline{ec}}
\DeclareMathOperator{\spec}{Spec}
\DeclareMathOperator{\jac}{Jac}
\DeclareMathOperator{\Pic}{Pic}
\DeclareMathOperator{\rk}{rk}
\DeclareMathOperator{\pr}{pr}
\DeclareMathOperator{\sym}{Sym}
\DeclareMathOperator{\ann}{Ann}
\newcommand{\dd}{\mathrm{d}}
\DeclareMathOperator{\disc}{disc}
\DeclareMathOperator{\adj}{adj}
\newcolumntype{M}{>{$}c<{$}} % math-mode version of "c" column type
\newcolumntype{L}{>{$}l<{$}}
\DeclareMathOperator{\Hom}{\mathscr{H}\kern -2pt om}
\DeclareMathOperator{\Ext}{\mathscr{E}\kern -1.5pt xt}
\newtheorem{theorem}{Theorem}[section]
\numberwithin{theorem}{section} % Theorem number is of the form Section.Subsection.Thm
\newcommand{\nameofthm}{}
\newtheorem{genericThm}[theorem]{\nameofthm}
\newtheorem*{theorem*}{Theorem}
\newtheorem{lemma}[theorem]{Lemma}
\newtheorem{corollary}[theorem]{Corollary}
\newtheorem{proposition}[theorem]{Proposition}
\theoremstyle{definition}
\newtheorem*{definition*}{Definition}
\newtheorem{definition}[theorem]{Definition}
\newtheorem{notation}[theorem]{Notation}
\newtheorem{remark}[theorem]{Remark}
\newtheorem*{notation*}{Notation}
\numberwithin{equation}{section}
\newcommand{\nameofenv}{}
\newtheorem*{generic}{\nameofenv}
\newcommand{\nameOfNumEnv}{}
\newtheorem{genericNumbered}[theorem]{\nameOfNumEnv}
\def\imod#1{\allowbreak\mkern10mu({\operator@font mod}\,\,#1)}
\newcommand{\p}{\Pp^1}
\newcommand{\pp}{\Pp^2}
\newcommand{\ppp}{\Pp^3}
\newcommand{\vppp}{\widehat{\mathbb{P}}^3}
\newcommand{\vpp}{\widehat{\mathbb{P}}^2}
\newcommand{\vQ}{\widehat{Q}}
\newcommand{\vq}{\widehat{q}}
\newcommand{\vLam}{\widehat{\Lambda}}
\newcommand{\vlam}{\widehat{\lambda}}
\newcommand{\vGam}{\widehat{\Gamma}}
\newcommand{\vomega}{\widehat{\omega}}
\newcommand{\vL}{\widehat{L}}
\newcommand{\inv}{^{-1}}
\newcommand*{\ratto}{\mathbin{\tikz [baseline=-0.25ex,-latex, dashed,->] \draw (0pt,0.5ex) -- (1.3em,0.5ex);}}
\newcommand{\too}{\longrightarrow}
\newcommand{\tos}{\twoheadrightarrow}
\newcommand{\toi}{\hookrightarrow}
\newcommand{\isoto}{\overset{\sim}{\to}}
\newcommand{\Cc}{\mathbb{C}}
\newcommand{\Pp}{\mathbb{P}}
\renewcommand{\H}{\mathrm{H}}
\newcommand{\xa}{\mathfrak{a}}
\newcommand{\xb}{\mathfrak{b}}
\newcommand{\xc}{\mathfrak{c}}
\newcommand{\xq}{\mathfrak{q}}
\newcommand{\ca}{\mathcal{A}}
\newcommand{\cd}{\mathcal{D}}
\newcommand{\cf}{\mathcal{F}}
\newcommand{\ck}{\mathcal{K}}
\newcommand{\cl}{\mathcal{L}}
\newcommand{\cm}{\mathcal{M}}
\newcommand{\cn}{\mathcal{N}}
\newcommand{\co}{\mathcal{O}}
\newcommand{\cp}{\mathcal{P}}
\newcommand{\cR}{\mathcal{R}}
\newcommand{\cs}{\mathcal{S}}
\newcommand{\ct}{\mathcal{T}}
\newcommand{\cv}{\mathcal{V}}
\newcommand{\sep}{^{\mathrm{sep}}}
\newcommand{\MUV}{V'}
\newcommand{\xdashrightarrow}[2][]{\ext@arrow 0359\rightarrowfill@@{#1}{#2}}
\def\rightarrowfill@@{\arrowfill@@\relax\relbar\rightarrow}
\def\arrowfill@@#1#2#3#4{%
	$\m@th\thickmuskip0mu\medmuskip\thickmuskip\thinmuskip\thickmuskip
	\relax#4#1
	\xleaders\hbox{$#4#2$}\hfill
	#3$%
}
\newcommand{\PP}{\mathbb{P}}
\newcommand{\tC}{\tilde{C}}
\newcommand{\kbar}{\overline{k}}
\newcommand{\ksep}{k^\mathrm{sep}}
\newcommand{\bs}{\cv}
\DeclareMathOperator{\Aut}{Aut}
\DeclareMathOperator{\Jac}{Jac}
\DeclareMathOperator{\Prym}{Prym}
\DeclareMathOperator{\Norm}{Norm}
\DeclareMathOperator{\Trace}{Trace}
\DeclareMathOperator{\Gal}{Gal}
\DeclareMathOperator{\kum}{Kum}
\title{Prym varieties of genus four curves}
\author{Nils Bruin}
\address{Nils Bruin\\ Simon Fraser University \\
8888 University Drive \\
Burnaby, British Columbia, V5A 1S6, Canada}
\email{nbruin@sfu.ca}
\author{Emre Can Sert\"oz}
\address{Emre Can Sert\"oz\\
Max Planck Institute for Mathematics in the Sciences, Inselstr. 22\\ 04103 Leipzig, Germany}
\email{emresertoz@gmail.com}
\thanks{Research of the first author partially supported by NSERC}
\subjclass[2010]{14H45, 14H40, 14H50}
\begin{document}

\begin{abstract}
Double covers of a generic genus four curve C are in bijection with Cayley cubics containing the canonical model of C. The Prym variety associated to a double cover is a quadratic twist of the Jacobian of a genus three curve X. The curve X can be obtained by intersecting the dual of the corresponding Cayley cubic with the dual of the quadric containing C. We take this construction to its limit, studying all smooth degenerations and proving that the construction, with appropriate modifications, extends to the complement of a specific divisor in moduli. We work over an arbitrary field of characteristic different from two in order to facilitate arithmetic applications.
\end{abstract}

\maketitle

\section{Introduction}

Let $C \subset \ppp_{\Cc}$ be a canonically embedded complex curve of genus four. If $C$ is generic, there is a bijection between the $255$ two-torsion points $\varepsilon \in \jac_C[2]$ of the Jacobian of $C$ and Cayley cubics $\Gamma_\varepsilon\subset \ppp$ containing $C$ (see~\cite{catanese83}). A Cayley cubic is a surface of degree three with four simple nodes and its strict projective dual $\vGam_\varepsilon \subset \vppp$ is a Steiner quartic. Let $Q_C$ be the quadric containing $C$ and $\vQ_C$ its dual, then $\vGam_\varepsilon \cap \vQ_C$ is a singular model of a genus $3$ curve $X_\varepsilon$. A beautiful classical story culminates with the observation that the Jacobian of $X_\varepsilon$ is the Prym variety $\Prym(\tilde C_\varepsilon/C)$ (see~\cites{abelian-varieties, mumford--prym}) associated to the unramified double cover $\tilde C_\varepsilon \to C$ induced by $\varepsilon$ (see~\cites{Milne1923,coble--theta-book,Recillas1993}).

In the present paper we generalize this classical work in two directions. For one, we remove the genericity assumptions on $(C,\varepsilon)$ and generalize the construction above to its natural limit. In addition, with an eye towards arithmetic applications to Prym varieties~\cites{Bruin2008,BruinFlynn2005}, we perform these constructions over an arbitrary base field of characteristic not two.

Milne~\cite{Milne1923} was the first to observe, from a projective geometric setting, that there is a natural bijection between certain pairs of tritangents of $C$ and $28$ bitangents of a plane quartic. Milne's paper served as an inspiration for our work and we dedicate the last section to retelling this beautiful story in modern language. 

\subsection{Construction}

Let $k$ be a field of characteristic not two, let $C/k$ be a proper smooth curve of genus four and $\varepsilon \in \jac_C[2](k)$ a two-torsion point. 

We need to distinguish between the special pairs $(C,\varepsilon)$ for which the construction of $X_\varepsilon$ behaves differently. If $Q_C$ is singular, then $C$ admits a vanishing theta-null $\theta$, which induces a natural bijection between two torsion elements $\varepsilon$ in $\jac_C[2]$ and theta characteristics $\theta\otimes \varepsilon$ on $C$. In this case, we call $\varepsilon$ \emph{even} or \emph{odd} according to the parity of the theta characteristic $\theta\otimes \varepsilon$. Whether or not $Q_C$ is singular, if $C$ admits a degree two map to a genus one curve $E$ and $\varepsilon$ is the pull-back of a line bundle on $E$ we say $\varepsilon$ is \emph{bielliptic}.

A uniform treatment for these cases can be given using the notion of a \emph{symmetrization}. A cubic symmetroid $\Gamma \subset \ppp$ is, by definition, cut out by the determinant of a $3 \times 3$ symmetric matrix of linear forms defined over $\kbar$. Equivalently, $\ppp$ parametrizes conics on $\pp$ in such a way that $\Gamma$ becomes the locus of singular conics. Evaluating the conics parametrized by $\ppp$ we get the \emph{symmetrization} of $\Gamma$, a map $\xq\colon \pp \to \vppp$ into the dual space of $\ppp$.

We remove all genericity assumptions from~\cite{catanese83}*{Theorem 1.5} and obtain the following result in Section~\ref{sec:prym_canonical_map}, which is key for the rest of our results.

\begin{theorem}\label{thm:prym_from_intro}
  Every line bundle $\varepsilon$ of order two on $C$ gives rise to a natural construction of a cubic symmetroid $\Gamma_\varepsilon \subset \ppp$ containing $C$ as well as a symmetrization $\xq_\varepsilon\colon \pp \to \vppp$ of $\Gamma_\varepsilon$. This construction can be reversed to recover the class of $\varepsilon$ from the pair $(\Gamma_\varepsilon,\xq_\varepsilon)$.
\end{theorem}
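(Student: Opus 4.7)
Let $W := H^0(C, K_C)$ and $U := H^0(C, K_C \otimes \varepsilon)$; since $\varepsilon$ is non-trivial $2$-torsion, Riemann--Roch gives $\dim W = 4$ and $\dim U = 3$. The canonical embedding realizes $C \hookrightarrow \mathbb{P}(W^\vee) = \mathbb{P}^3$, while $|K_C \otimes \varepsilon|$ defines the $\varepsilon$-Prym-canonical map $\phi_\varepsilon\colon C \to \mathbb{P}(U^\vee) = \mathbb{P}^2$. The plan is to construct a symmetric tensor $M_\varepsilon \in W \otimes \sym^2 U$, that is, a $3 \times 3$ symmetric matrix of linear forms on $\mathbb{P}(W^\vee)$, from which both the symmetroid $\Gamma_\varepsilon := V(\det M_\varepsilon)$ and the symmetrization $\xq_\varepsilon\colon [p] \mapsto [M_\varepsilon(\,\cdot\,)(p,p)]$ are read off.

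The key geometric input is the étale double cover $\pi\colon \tilde C \to C$ classified by $\varepsilon$: it has genus $7$, a sheet-swap involution $\sigma$, and canonical embedding $\tilde C \hookrightarrow \mathbb{P}((W \oplus U)^\vee) = \mathbb{P}^6$ with $\sigma$ acting as $+1$ on $W$ and $-1$ on $U$. Decomposing the canonical multiplication $\sym^2 H^0(K_{\tilde C}) \to H^0(K_{\tilde C}^{\otimes 2})$ into $\sigma$-eigenspaces, its $(-1)$-eigenspace reads
$$\mu^-\colon W \otimes U \longrightarrow H^0(C, K_C^{\otimes 2} \otimes \varepsilon),$$
a map from a space of dimension $12$ to one of dimension $9$. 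The first substantive step is to show that $\mu^-$ is surjective---equivalently that $N_\varepsilon := \ker \mu^-$ is exactly $3$-dimensional---via Petri's theorem applied to $\tilde C$; geometrically $N_\varepsilon$ parametrizes the $\sigma$-anti-invariant quadrics containing $\tilde C$ in $\mathbb{P}^6$.

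Next I convert the inclusion $N_\varepsilon \hookrightarrow W \otimes U$ into the desired symmetric tensor. Each $Q \in N_\varepsilon$ is a bilinear quadric on $\mathbb{P}((W\oplus U)^\vee)$, and a direct calculation shows that $Q$ vanishes on every $\sigma$-orbit line $\ell_y$ through $\phi_+(y) \in C$ and $\phi_\varepsilon(y) \in \mathbb{P}(U^\vee)$, for $y \in \tilde C$. This incidence yields, for each $p = \phi_\varepsilon(y) \in \phi_\varepsilon(C)$, a canonical evaluation map $N_\varepsilon \to W$ whose image consists of hyperplanes through $\phi_+(y)$; dualising over $\phi_\varepsilon(C)$ produces a canonical identification $N_\varepsilon \simeq U^\vee$. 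The resulting inclusion $U^\vee \hookrightarrow W \otimes U$, viewed as an element of $W \otimes U \otimes U$, lies in the symmetric summand $W \otimes \sym^2 U$ because the original quadrics are symmetric tensors in $\sym^2(W \oplus U)$. Defining $M_\varepsilon$ as this element, the vanishing $M_\varepsilon(\phi_+(y))(\phi_\varepsilon(y), \phi_\varepsilon(y)) = 0$ for every $y \in C$ is immediate, forcing $C \subset \Gamma_\varepsilon$; and $\xq_\varepsilon$ is the associated tautological symmetrization.

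For reversibility, $(\Gamma_\varepsilon, \xq_\varepsilon)$ determines $M_\varepsilon$ up to $\GL(U)$-action; from $M_\varepsilon$ one reconstructs the decomposition $H^0(K_{\tilde C}) = W \oplus U$, hence the double cover $\pi$, and therefore $\varepsilon = \det \pi_* \mathcal{O}_{\tilde C}$. The main obstacle is the identification $N_\varepsilon \simeq U^\vee$ together with the verification of symmetry, and extending the construction uniformly across the special situations flagged in the theorem's context (vanishing theta-null on $C$, bielliptic $\varepsilon$, etc.): in these cases either $\dim N_\varepsilon$ may jump, the Prym-canonical map $\phi_\varepsilon$ may degenerate, or the quadric $Q_C$ containing $C$ may become singular, and handling them uniformly is the technical substance of Section~\ref{sec:prym_canonical_map}.
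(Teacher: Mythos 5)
Your route is genuinely different from the paper's: the paper follows Catanese, using the Prym-canonical sextic $C_\varepsilon\subset\pp_\varepsilon$, resolving its nodes to a surface $S_\varepsilon$, mapping to $\ppp$ by $|\omega_{S_\varepsilon}(C)|$, and then invoking Catanese's theorem on symmetric determinantal representations (via rank-one sheaves with symmetric resolutions) to see that the image cubic is a symmetroid; you instead work with the $3$-dimensional space $N_\varepsilon$ of $\sigma$-anti-invariant quadrics through the canonically embedded genus-$7$ cover $\tilde C\subset\Pp^6$. The dimension count for $N_\varepsilon$ is fine (Max Noether surjectivity for the non-hyperelliptic $\tilde C$ suffices; Petri is not needed), and the observation that every $Q\in N_\varepsilon$ vanishes on each $\sigma$-orbit line is correct and does show that $N_\varepsilon$ contains the image of $\ca_U\colon U^\vee\to W\otimes U$ \emph{once a symmetrization is known to exist}. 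But your argument runs in the wrong direction at the crucial point: to build $M_\varepsilon$ you must show that the abstract $3$-plane $N_\varepsilon\subset W\otimes U=\operatorname{Hom}(U^\vee,W)$ is of the special "symmetric" form, i.e.\ that there is an isomorphism $\lambda\colon U^\vee\to N_\varepsilon$ with $\lambda(u)\cdot u'=\lambda(u')\cdot u$ in $W$ for all $u,u'\in U^\vee$. A generic $3$-plane in this $12$-dimensional space has no such structure, so this is a genuine closed condition to verify, and your justification --- "because the original quadrics are symmetric tensors in $\sym^2(W\oplus U)$" --- does not address it: every element of $W\otimes U$ is a cross-term of a symmetric tensor on $W\oplus U$, whereas the symmetry you need is between the $U$-factor of $W\otimes U$ and the factor $N_\varepsilon^\vee\simeq U$, two a priori unrelated copies of $U$. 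The "canonical identification $N_\varepsilon\simeq U^\vee$ by dualising over $\phi_\varepsilon(C)$" is likewise never constructed. This is exactly the content that the paper outsources to Catanese's symmetric-resolution theorem, and without it your $\det M_\varepsilon$ does not exist.

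Two further points. First, even granting the identification, your displayed vanishing $M_\varepsilon(\phi_+(y))(\phi_\varepsilon(y),\phi_\varepsilon(y))=0$ only says the conic $M_\varepsilon\cdot\phi_+(y)$ \emph{passes through} $\phi_\varepsilon(y)$, which does not force it to be singular; what you need (and what your line argument actually yields, if the symmetry holds) is that the full contraction $(M_\varepsilon\cdot\phi_+(y))\cdot\phi_\varepsilon(y)$ vanishes in $U$, i.e.\ that the conic is singular \emph{at} $\phi_\varepsilon(y)$ --- only then does $\det M_\varepsilon$ vanish at $\phi_+(y)$. Second, the theorem's actual novelty over the classical statement is precisely the uniform treatment of the odd, even (vanishing theta-null) and bielliptic strata, together with the reversibility via the natural double cover of $(\Gamma_\varepsilon,\xq_\varepsilon)$ obtained by labelling the two lines of each singular conic (Section~\ref{sec:natural_double_cover}); your plan defers all of these, and your reverse step ("reconstruct $W\oplus U$, hence the cover") presupposes data that must instead be extracted from the pair $(\Gamma_\varepsilon,\xq_\varepsilon)$ alone. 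As it stands the proposal is an interesting alternative skeleton, but the load-bearing steps are missing.
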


\begin{remark}
  If $\varepsilon$ is bielliptic then $\Gamma_\varepsilon$ is \emph{degenerate}, meaning it is a cone over a cubic curve. When $\varepsilon$ is odd then $\Gamma_\varepsilon$ is \emph{reducible}. Otherwise, $\Gamma_\varepsilon$ is irreducible and normal. 
\end{remark}

\begin{theorem}\label{thm:intro_X}
  If $\varepsilon$ is not odd, then the image of $X_\varepsilon$ under its canonical map is the pullback $\xq_\varepsilon\inv(\vQ_C)$ of the dual quadric $\vQ_C$ by the symmetrization $\xq_\varepsilon$. 
\end{theorem}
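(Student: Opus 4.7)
The plan is to show that $Y_\varepsilon := \xq_\varepsilon\inv(\vQ_C) \subset \pp$ coincides, as a subscheme of $\pp$, with the image of the canonical map of $X_\varepsilon$. The approach is to recognize $Y_\varepsilon$ as the natural desingularization, pulled back through $\xq_\varepsilon$, of the singular model $\vGam_\varepsilon \cap \vQ_C$ of $X_\varepsilon$.

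First, observe that $\xq_\varepsilon \colon \pp \to \vppp$ is defined by a four-dimensional linear system of conics on $\pp$: the system parametrized by the $\ppp$ of conics that defines the symmetroid $\Gamma_\varepsilon$. Pulling back the quadratic form that cuts out $\vQ_C \subset \vppp$ therefore yields a plane quartic $Y_\varepsilon \subset \pp$. Since $\xq_\varepsilon$ factors through its image $\vGam_\varepsilon$, we have $Y_\varepsilon = \xq_\varepsilon\inv(\vGam_\varepsilon \cap \vQ_C)$.

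Second, handle the generic even case, where $\Gamma_\varepsilon$ is irreducible and normal (that is, $\varepsilon$ is not bielliptic) and $Q_C$ is smooth. In this setting $\xq_\varepsilon\colon \pp \to \vGam_\varepsilon$ is the normalization of the Steiner quartic $\vGam_\varepsilon$: the map is birational onto its image, and $\pp$ is the unique normal projective surface mapping birationally to $\vGam_\varepsilon$. Since $\vGam_\varepsilon \cap \vQ_C$ is by construction a singular model of $X_\varepsilon$, and the singularities that arise from the non-normal locus of $\vGam_\varepsilon$ are resolved by pulling back to $\pp$, we obtain $Y_\varepsilon \isoto X_\varepsilon$. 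As a smooth plane quartic, $Y_\varepsilon$ has $\omega_{Y_\varepsilon} \cong \mathcal{O}_{Y_\varepsilon}(1)$ by adjunction, so $Y_\varepsilon \inc \pp$ is the canonical embedding.

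The main obstacle is the remaining cases: (i) $\varepsilon$ bielliptic, so $\Gamma_\varepsilon$ is a cone, and (ii) $Q_C$ singular, so $C$ admits a vanishing theta-null. Here $\xq_\varepsilon$ need not be the normalization of its image, and $Y_\varepsilon$ may be reducible or singular; in particular, $X_\varepsilon$ may become hyperelliptic, with canonical image a double conic rather than a smooth plane quartic. I would address these either by a degeneration argument --- placing $\varepsilon$ in a family $\{\varepsilon_t\}$ whose general member lies in the generic locus and identifying the flat limit of $Y_{\varepsilon_t}$ with $Y_\varepsilon$ using the properness of the moduli-theoretic constructions --- or by explicit case analysis, using the inverse construction of Theorem \ref{thm:prym_from_intro} to reconstruct $X_\varepsilon$ from $(\Gamma_\varepsilon, \xq_\varepsilon)$ and checking in each case that this reconstruction matches $Y_\varepsilon$ together with its induced map to $X_\varepsilon$. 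The subtlest piece is the bielliptic case, where the cone structure of $\Gamma_\varepsilon$ must be compatibly reflected in a bielliptic structure on $X_\varepsilon$ so that the pullback $Y_\varepsilon$ still records the canonical image.
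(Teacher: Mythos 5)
There is a genuine gap, and it is the central one: your argument never connects $\xq_\varepsilon\inv(\vQ_C)$ to the Prym variety. In the paper, $X_\varepsilon$ is not \emph{defined} as the normalization of $\vGam_\varepsilon\cap\vQ_C$; it is the genus three curve with $\Jac_{X_\varepsilon}=\Prym(\tC_\varepsilon/C)$ produced by the Recillas--Donagi trigonal construction, and the theorem's content is precisely that the plane curve $X:=\xq_\varepsilon\inv(\vQ_C)$ is that curve. When you write ``since $\vGam_\varepsilon\cap\vQ_C$ is by construction a singular model of $X_\varepsilon$,'' you are importing the conclusion (the informal description from the introduction) as a hypothesis, so even your generic case only shows that $\xq_\varepsilon\inv(\vQ_C)$ normalizes $\vGam_\varepsilon\cap\vQ_C$ --- it never shows that this normalization has the right Jacobian. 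The paper's actual proof (Section~\ref{sec:prym_construction}, strategy in Section~\ref{sec:strategy}) proves three things: the pullback $X$ is reduced; each point $x\in C\subset\Gamma_\varepsilon$ determines a singular conic $\xq_\varepsilon\inv(H_x)$ in $\pp$ whose two components split the fiber $f_X\inv(f_C(x))$ of the tetragonal pencil into two pairs, exhibiting $C\to\p$ as the cubic resolvent of $X\to\p$; and the labeling of these pairs is the double cover $\tC_\varepsilon\to C$, via Theorem~\ref{thm:induced_double_cover} and the natural double cover of $\Gamma_\varepsilon$ from Section~\ref{sec:natural_double_cover}. None of this appears in your proposal.

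Separately, the ``remaining cases'' you defer are not a technicality, and your fallback strategies would not work as stated. When $\varepsilon$ is even, $\vQ_C$ is a \emph{plane}, $\overline{Y}=\vGam_\varepsilon\cap\vQ_C$ is a rational plane quartic, and $X_\varepsilon$ is a hyperelliptic curve obtained as a \emph{double cover} of the conic $\xq_\varepsilon\inv(\vQ_C)$ branched over $\xq_\varepsilon\inv(\vq_C)$ --- it is not the normalization of $\overline{Y}$, so the scaffolding of your argument (pull back the singular model and desingularize) does not even set up the right object. A flat-degeneration argument is also delicate here: the canonical image jumps from a smooth quartic to a conic on the hyperelliptic locus, and the paper works over an arbitrary field of characteristic $\ne 2$, where twist issues (Section~\ref{sec:subtle_arithmetic}) make ``identify the flat limit'' insufficient. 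Also a small terminological point: the case $\Gamma_\varepsilon$ irreducible and $Q_C$ smooth is the \emph{general} stratum $\cR_4^{\mathrm{o}}$, not the even case; ``even'' in this paper means $Q_C$ is a cone.
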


In Section~\ref{sec:prym_construction} we show how this construction induces on $X_\varepsilon$ an unordered pair $\{\cl_1,\cl_2\}$ of residual tetragonal pencils, i.e., $\cl_1\otimes \cl_2 \simeq \omega_{X_\varepsilon}^{\otimes 2}$. Consequently, we obtain a $k$-valued point $\kappa_\varepsilon=\{\cl_1\otimes \omega_X^\vee, \cl_2 \otimes \omega_X^\vee\}$ of the Kummer variety $\kum_X=\jac_X/\langle -1 \rangle$ of $X$. We explain how to reverse this construction and obtain $(C,\varepsilon)$ from $(X_\varepsilon,\kappa_\varepsilon)$ in the next section.

These constructions can be carried out over arbitrary $k$ primarily because of the following result, proved in Section~\ref{sec:cubic_classification_over_k}. An implication of this result is Proposition~\ref{prop:descend_epsilon} which states that if the isomorphism class of $\varepsilon$ is defined over $k$ then there is a representative line bundle over $k$.
 
\begin{proposition}\label{prop:intro_k_symmetrization}
  A non-degenerate cubic symmetroid admits a symmetrization over its field of definition.
\end{proposition}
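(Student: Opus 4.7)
The plan is a Galois descent argument closing with a short Picard-theoretic calculation. A symmetrization of $\Gamma$ is equivalent to giving a $3$-dimensional vector space $V$ together with a linear inclusion $W \hookrightarrow \sym^2 V^*$ --- where $W = H^0(\ppp,\co(1))$ --- whose discriminant locus in $\Pp(W)$ equals $\Gamma$. Since any two $\kbar$-symmetrizations of a fixed $\Gamma$ differ by the natural $\pgl_3$-action on $V$, the problem reduces to trivializing the Brauer--Severi surface $P := \Pp(V)$ as a $k$-variety.

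The first step is to exhibit $P$ intrinsically from $\Gamma$ as a $k$-variety. Over $\kbar$, $P$ is the target of the rational map $\Gamma \dashrightarrow P$ sending a smooth rank-$2$ point of $\Gamma$ to the kernel of the symmetric matrix there. When $\Gamma$ is irreducible this identifies $P$ with the normalization of the projective dual surface $\vGam \subset \vppp$; since duality and normalization are functorial, $P$ descends to $k$ and inherits a $k$-morphism $P \to \vppp$ that recovers the symmetrization $\xq$ after base change. Reducible non-degenerate symmetroids (the odd $\varepsilon$ case) require a variant, for instance realizing $P$ as a canonical double cover of the image of $\xq$ branched along an intrinsic conic, or from the kernel line bundle on the normalization of $\Gamma$. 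In every case non-degeneracy guarantees that $P$ is a $2$-dimensional Brauer--Severi surface over $k$ equipped with a $k$-morphism to $\vppp$.

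Once $P$ is in hand, the argument closes immediately. Because $P$ is a Brauer--Severi surface, $\pic(P)$ embeds in $\pic(P_{\kbar}) = \Zz$ with image $d\Zz$, where $d$ is the period of the class $[P] \in \Br(k)$, and necessarily $d \mid 3$. Pulling back $\co_{\vppp}(1)$ along the $k$-morphism $P \to \vppp$ produces a $k$-rational line bundle on $P$ whose base change to $\kbar$ is $\co_{\pp}(2)$, since the symmetrization $\xq$ is defined by conics on $\Pp(V)$. Hence $d \mid 2$, and combined with $d \mid 3$ this forces $d = 1$, so $P \cong \pp_k$. Any such $k$-isomorphism transports the inclusion $W \hookrightarrow \sym^2 V^*$ to a symmetrization of $\Gamma$ defined over $k$, as required. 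The bulk of the work lies in the first step, namely producing a uniform intrinsic $k$-construction of $P$ across all non-degenerate symmetroids --- most delicately for reducible $\Gamma$, where the image of $\xq$ is not birational to $\Pp(V)$ and the naive functorial description via the dual variety fails. The Picard trivialization is then a short consequence of $\gcd(2,3) = 1$ together with the standard period bound for Brauer--Severi surfaces of dimension $2$.
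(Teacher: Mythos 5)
Your overall strategy --- descend the pair $(P,\xq)$ consisting of the source plane of the symmetrization and the map to $\vppp$, then kill the Brauer obstruction because $\xq$ is given by conics, so $[P]$ has period dividing both $2$ and $3$ --- is sound in principle, and the closing Picard computation is correct. But the load-bearing step, the intrinsic construction of $P$ over $k$, is only actually justified for types (1)--(3) of Theorem~\ref{T:symmetroid_classification}, where $\xq_\ca$ is a base-point-free morphism, finite and birational onto the dual surface, so that $P$ really is the normalization of $\vGam$. For the non-normal irreducible types (4) and (5) the linear system of conics defining $\xq_\ca$ has a base point; the map only resolves on a blow-up of $\Pp(U)$, the exceptional curve is not contracted, and the normalization of $\vGam$ is a Hirzebruch surface rather than a Brauer--Severi surface --- so the stated identification of $P$ is simply false there. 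For the reducible types the situation is worse: for type (6) the image of $\xq_\ca$ is a smooth quadric (already normal), and for type (7) the map is $2$-to-$1$ onto a quadric cone (see Remark~\ref{rem:parametrization}); you acknowledge these need ``variants'' but do not supply them, and this is exactly where a proof has to do work. Note also that your reduction quietly uses uniqueness of the symmetrization up to $\pgl(U)$, which the paper only establishes for types (1)--(3) (Remark~\ref{rem:unique_symmetrization}); for types such as (8) the symmetrization is far from unique and the descent set-up needs more care.

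The irony is that all the cases where your construction breaks down admit much more elementary arguments, which is what the paper does: for types (2)--(5) there is a $k$-rational singular point, and projection from it gives a birational map to $\Pp^2$ over $k$ inverting the adjugation map, so $P\cong\Pp^2_k$ with no Brauer theory needed; for types (6)--(8) the surface splits over $k$ into a plane and a quadric, each a symmetric determinant, and one takes a block-diagonal matrix. The only case requiring a genuinely non-trivial idea is the Cayley cubic, whose four nodes form a quartic \'etale algebra with no distinguished rational point; there the paper constructs an explicit symmetroid from the minimal polynomial of a generator of that algebra (a hyperplane section of the discriminant cubic in $\Pp^4$) and invokes a projective-equivalence lemma, whereas your route --- normalization of the Steiner quartic plus the $\gcd(2,3)=1$ argument --- is a legitimate and rather more conceptual alternative for that case. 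To turn your proposal into a proof, restrict the dual-surface construction to types (1)--(3) and handle the remaining types directly as above.
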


\subsection{Reverse construction}\label{sec:reverse_from_intro}

Suppose $X$ is a curve of genus three and $\kappa=\{\cn,\cn^\vee\}$ is a non-zero, $k$-valued point of the Kummer variety $\kum_{X}=\Jac_{X}/\langle -1\rangle$. Over $\kbar$ we define the pair of tetragonal pencils $\cl_1=\omega_X\otimes \cn$ and $\cl_2=\omega_X\otimes \cn^\vee$.

Only the unordered pair $\{\cl_1,\cl_2\}$ is guaranteed to be defined over $k$, but for ease of exposition we assume here that the $\cl_i$ are defined over $k$ individually, leaving the general case for Proposition~\ref{P:QGammadescent}. Let $W_i=\H^0(X,\cl_i)$ and, with $j_1,\, j_2$ denoting multiplication maps, define a four-dimensional vector space $\MUV$ as follows:
\begin{equation}\label{eq:reverse--M}
  \begin{tikzcd}
    \MUV \arrow[dr, phantom, "\ulcorner", very near start] \arrow[r]\arrow[d] & W_1\otimes W_2 \arrow[d,"j_2"]\\
    \sym^2 \H^0(\omega_X) \arrow[r,"j_1"'] & \H^0(\omega_X^{\otimes 2}).
  \end{tikzcd}
\end{equation}

The space $|\sym^2 \H^0(\omega_X)|$ parametrizes conics on $\pp = \Pp\H^0(\omega_X)$ and therefore contains the cubic discriminant locus $\cd$ parametrizing singular conics. Let $\Gamma_{X,\kappa}$ be the cubic symmetroid in $|\MUV|$ obtained by pulling back $\cd$, which comes with a symmetrization $\xq_{X,\kappa}$.  Denote the pullback of the smooth quadric $|W_1| \times |W_2| \toi |W_1 \otimes W_2|$ into $|\MUV|$ by $Q_{X,\kappa}$. 

\begin{theorem}\label{thm:reconstruction}
  If $C=Q_{X,\kappa}\cap \Gamma_{X,\kappa}\subset |\MUV|$ is a curve of geometric genus four, then the canonical isomorphism $|\MUV|\simeq \Pp \H^0(\omega_C)$ identifies the pair $(\Gamma_{X,\kappa},\xq_{X,\kappa})$ with a pair $(\Gamma_\varepsilon,\xq_\varepsilon)$ as in Theorem~\ref{thm:prym_from_intro}, for which $\Prym(\tC_\varepsilon/C)=\Jac_{X}$.
\end{theorem}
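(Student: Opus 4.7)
The plan is to show that the reverse construction inverts the forward construction of Theorem~\ref{thm:prym_from_intro}. Concretely, I would first produce a two-torsion class $\varepsilon\in\jac_C[2](k)$ from the pair $(\Gamma_{X,\kappa},\xq_{X,\kappa})$, and then verify that the forward construction applied to $(C,\varepsilon)$ returns the same symmetroid, the same quadric, and (via Theorem~\ref{thm:intro_X} together with the results of Section~\ref{sec:prym_construction}) the same unordered pair of tetragonal pencils on $X_\varepsilon\simeq X$; this identifies $\Prym(\tC_\varepsilon/C)$ with $\Jac_X$.

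The first step is to identify $|\MUV|$ with $\Pp\H^0(\omega_C)$ canonically. A dimension count in the pullback diagram~\eqref{eq:reverse--M}, using $h^0(\omega_X)=3$, $h^0(\omega_X^{\otimes 2})=6$, and (generically) $h^0(\cl_i)=2$, gives $\dim\MUV=4$, so $|\MUV|\simeq\Pp^3$. The complete intersection $C=Q_{X,\kappa}\cap\Gamma_{X,\kappa}$ has arithmetic genus four; the hypothesis that its geometric genus is four forces $C$ to be Gorenstein with $\omega_C\simeq\co_{|\MUV|}(1)|_C$ by adjunction, so $|\MUV|\simeq\Pp\H^0(\omega_C)$ canonically. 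Petri's theorem then identifies $Q_{X,\kappa}$ with the unique quadric $Q_C$ through the canonical model, so in particular $\vQ_C=\vQ_{X,\kappa}$.

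By construction, $\Gamma_{X,\kappa}$ is a cubic symmetroid, being the pullback of the universal discriminant under $\MUV\to\sym^2\H^0(\omega_X)$, and $\xq_{X,\kappa}$ is a symmetrization of it. The reversibility statement in Theorem~\ref{thm:prym_from_intro} then produces a unique $\varepsilon\in\jac_C[2](k)$ with $(\Gamma_\varepsilon,\xq_\varepsilon)=(\Gamma_{X,\kappa},\xq_{X,\kappa})$. By Theorem~\ref{thm:intro_X}, the canonical image of $X_\varepsilon$ is $\xq_\varepsilon\inv(\vQ_C)=\xq_{X,\kappa}\inv(\vQ_{X,\kappa})$. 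The key technical verification is that this pullback recovers the canonical image of $X$ itself: a point $p\in\pp=\Pp\H^0(\omega_X)$ lies in the pullback iff the hyperplane of conics in $|\MUV|$ vanishing at $p$ is tangent to $Q_{X,\kappa}$, and via the fiber-product definition of $\MUV$ this tangency condition translates into the existence of sections $\phi_i\in W_i$ with a common zero at $p$, whose locus is precisely the canonical image of $X$. The two rulings of $Q_{X,\kappa}$ pulled back from $|W_1|\times|W_2|$ cut out the residual tetragonal pencils on $X_\varepsilon$ constructed in Section~\ref{sec:prym_construction}, and they coincide with $\{\cl_1,\cl_2\}$, so $\kappa_\varepsilon=\kappa$.

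The identification $\Prym(\tC_\varepsilon/C)\simeq\Jac_{X_\varepsilon}$ established in the forward direction then gives $\Prym(\tC_\varepsilon/C)\simeq\Jac_X$. The main obstacle is the geometric verification that $\xq_{X,\kappa}\inv(\vQ_{X,\kappa})$ is the canonical model of $X$ and that the two rulings yield precisely the prescribed pencils; these identifications must be made uniformly across the degenerate cases (bielliptic, even, odd $\varepsilon$) and over an arbitrary base field, which requires Proposition~\ref{prop:intro_k_symmetrization} together with the descent results culminating in Proposition~\ref{P:QGammadescent} to handle the possible Galois action swapping the two pencils $\cl_1,\cl_2$.
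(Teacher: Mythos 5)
Your overall strategy is the same as the paper's: show that the forward and reverse constructions are mutually inverse by identifying $|\MUV|$ with $\Pp\H^0(\omega_C)$, $Q_{X,\kappa}$ with $Q_C$, and $(\Gamma_{X,\kappa},\xq_{X,\kappa})$ with $(\Gamma_\varepsilon,\xq_\varepsilon)$, and then quoting the forward construction for $\Prym(\tC_\varepsilon/C)=\Jac_{X_\varepsilon}$. The paper organizes this in the opposite direction (start from $(C,\varepsilon)$, build $(X,\kappa)$, and verify the reverse construction returns $Q_C$ and $\Gamma_\varepsilon$), but that difference is cosmetic. The skeleton of your plan is sound, including the observation that descent issues reduce to Proposition~\ref{P:QGammadescent}.

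The gap is that your ``key technical verification'' --- which is where essentially all of the paper's Sections~\ref{sec:reverse-general}--\ref{sec:reverse-hyp-bi} live --- is both underspecified and, as stated, not quite correct. The criterion ``there exist $\phi_i\in W_i$ with a common zero at $p$'' is not a well-defined condition on points $p\in\pp$: the $\phi_i$ are sections of line bundles on $X$, so a common zero only makes sense for $p$ already on $X$, and there the condition is vacuously satisfied; it does not cut out the canonical image inside $\pp$. The correct statement is that for $z$ on the canonical image of $X$ the evaluation functional $q\mapsto q(z)$ on $\MUV$ corresponds, via the fiber product, to the rank-one functional $\ev_z\otimes\ev_z$ on $W_1\otimes W_2$, hence $H_z$ is a tangent hyperplane of $Q_{X,\kappa}$; this gives one inclusion, and equality needs a separate argument (e.g.\ comparing degrees, or the paper's identification $j_1\circ\ca_\varepsilon(\H^0(\omega_C)^\vee)=j_2(W_1\otimes W_2)$). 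Moreover this argument only works as stated in the case where $X$ is non-hyperelliptic and $\cl_1\not\simeq\cl_2$: in the other three cases the diagram~\eqref{eq:reverse--M} degenerates differently each time ($j_1$ or $j_2$ acquires a kernel), $\xq_\varepsilon\inv(\vQ_C)$ is only the conic $\overline{X}$ when $X$ is hyperelliptic, and one must additionally recover $X$ as a specific quadratic twist of a double cover of that conic (and, in the self-residual case, locate the branch lines via the $\iota$-eigenvectors of $W$). Your plan acknowledges these cases exist but does not engage with them, and they cannot be handled ``uniformly'' --- each requires its own identification, which is why the paper splits the proof into four subsections.
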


\subsection{Stratification}

The results in the preceding theorems can be tied together as follows. Let $\cR_4$ be the moduli space of pairs $(C,\varepsilon)$, where $C$ is a curve of genus $4$ and $\varepsilon\in\Pic(C)[2] \setminus \{0\}$. We define the following stratification of the open locus in $\cR_4$ where $C$ is not hyperelliptic:

\begin{center}
\begin{tabular}{ll}
  $\cR_4^\mathrm{o}$          & non-hyperelliptic curves excluding the special cases below, \\
  $\cR_4^\mathrm{odd}$        & curves with vanishing theta-null and $\varepsilon$ odd,  \\
  $\cR_4^\mathrm{even}$       & curves with vanishing theta-null and $\varepsilon$ even,  \\
  $\cR_4^\mathrm{biell}$      & bielliptic $(C,\varepsilon)$ without vanishing theta-null, \\
  $\cR_4^\mathrm{biell,even}$ & bielliptic $(C,\varepsilon)$ with vanishing theta-null.
\end{tabular}
\end{center}
Corollary~\ref{cor:odd_bielliptic} shows that $\varepsilon$ cannot be bielliptic \emph{and} odd.

The curve $X_\varepsilon$ may be hyperelliptic and the pencils $\{\cl_1,\cl_2\}$ can be canonical ($\cl_i\simeq\omega_{X_\varepsilon}$), self-residual $(\cl_1\simeq\cl_2)$ or neither (general). The strata above reflect these properties.
\begin{table}
\[
\begin{array}{c||c|c||c|c}
  \text{stratum} & Q_C & \Gamma_\varepsilon & X_\varepsilon & \{\cl_1,\cl_2\}\\
\hline
\cR_4^\mathrm{o}          & \text{nonsingular} & \text{irreducible} & \text{non-hyperelliptic} & \text{general}\\
\cR_4^\mathrm{even}       & \text{cone}        & \text{irreducible} & \text{hyperelliptic}      & \text{general}\\
\cR_4^\mathrm{biell}      & \text{nonsingular} & \text{degenerate}  & \text{non-hyperelliptic} & \text{self-residual}\\
\cR_4^\mathrm{biell,even} & \text{cone}        & \text{degenerate}  & \text{hyperelliptic}      & \text{self-residual} \\
\hline
\cR_4^\mathrm{odd}        & \text{cone}        & \text{reducible}   & \text{non-hyperelliptic} & \text{canonical}
\end{array}
\]
\caption{Strata of $\cR_4$, correlated with properties of $C$ and $X$}
\label{tbl:strata} 
\end{table}
\begin{remark}
  If $(C,\varepsilon)$ is odd, we cannot expect to construct the curve $X_\varepsilon$ from $(\Gamma_\varepsilon,\xq_\varepsilon)$ since $C$ itself is no longer determined by $\Gamma_\varepsilon$; indeed $\Gamma_\varepsilon \cap Q_C = Q_C$, see Section~\ref{sec:prym_canonical_map}. The odd case allows for another construction, see Section~\ref{sec:odd_del_pezzo}.
\end{remark}

\begin{remark}
  When $k$ is not algebraically closed, the double cover $\tC_\varepsilon\to C$ may admit quadratic twists. In this case, the symmetrization of $\Gamma_\varepsilon$ distinguishes a twist and, therefore, fully determines $X_\varepsilon$. In general, even without a symmetrization, one can find the correct twist of $\tC_\varepsilon \to C$ as it will be the one yielding a Prym variety that is also the Jacobian of a curve. But when $X_\varepsilon$ is hyperelliptic, the symmetrization becomes crucial since twists of the Jacobian of $X_\varepsilon$ are also Jacobians, hence every twist of $\tC_\varepsilon \to C$ yields a Prym variety that is also a Jacobian.
    
  For the reverse construction there is another arithmetic subtlety. Starting with a self-residual pencil on $X$, one obtains a bielliptic $C \to E$ over $k$, but no particular quadratic twist of this cover is distinguished by data on $X$. This reflects that quadratic twists of a given $C\to E$ yield the same $(X_\varepsilon,\cl,\cl)$.
\end{remark}

\subsection{Tritangents and bitangents}

In a paper from 1923, Milne~\cite{Milne1923} constructs 255 plane quartic curves associated to a general canonical curve $C$ of genus four. Each of the $28\cdot 255$ bitangents of these quartics are shown to be in bijection with the unordered pairs of tritangents of $C$ via an explicit construction. This paper of Milne was our source of inspiration and we give a modern treatment of his work in Section~\ref{sec:classical}, showing that his genus three curves indeed give rise to the corresponding Prym varieties. 

Milne specifies the line bundle $\varepsilon$ by giving a set of six points on the canonical curve $C \subset \ppp$ supporting a generic divisor $D \in |\omega_C \otimes \varepsilon|$. He then recovers $\Gamma_\varepsilon$ via the following.

\begin{proposition}
 Suppose $(C,\varepsilon)$ is neither odd nor bielliptic. If $D \in |\omega_C \otimes \varepsilon|$ is generic, then there exists a twisted cubic $T$ passing through the six points in the support of $D \subset C \subset \ppp$. There is a unique cubic hypersurface containing $C$ and $T$---it is the Cayley cubic $\Gamma_\varepsilon$.
\end{proposition}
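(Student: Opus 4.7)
The plan is to verify the statement in three steps: construct a twisted cubic $T$ through the six points of $\supp D$, show that a cubic hypersurface through $C\cup T$ is unique, and identify that unique cubic with $\Gamma_\varepsilon$. For the first step, Riemann--Roch together with $h^0(\varepsilon)=0$ gives $h^0(\omega_C\otimes\varepsilon)=3$, so $|\omega_C\otimes\varepsilon|$ is a $\Pp^2$. Under our hypotheses---that $(C,\varepsilon)$ is neither odd nor bielliptic---for generic $D$ the six points of $\supp D$ are distinct and in general linear position in $\ppp$ (no four coplanar, etc.), so they lie on a unique twisted cubic $T$.

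For the uniqueness of the cubic through $C\cup T$, observe that $h^0(\ci_C(3))=5$ since $C$ is the complete intersection of $Q_C$ with a cubic. Two distinct cubics through $C\cup T$ would intersect in a complete intersection curve of degree $9$ and arithmetic genus $10$ containing $C\cup T$; but $p_a(C\cup T)=g_C+g_T+\#(C\cap T)-1=9$ by the nodal union formula (for generic $D$, $C$ and $T$ meet transversally in $D$), a contradiction. Conversely, the restriction sequence
\[ 0\to\ci_{C\cup T}(3)\to\ci_C(3)\to \co_T(3)(-D)\to 0 \]
together with $h^0(\co_T(3)(-D))=h^0(\co_{\Pp^1}(3))=4$ forces $h^0(\ci_{C\cup T}(3))\ge 1$, so the cubic $F\supset C\cup T$ is unique.

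To identify $F$ with $\Gamma_\varepsilon$, we produce $T$ inside $\Gamma_\varepsilon$ via a natural two-parameter family of twisted cubics on $\Gamma_\varepsilon$. Writing $\Gamma_\varepsilon=\{\det M=0\}$ for a symmetric $3\times 3$ matrix $M$ of linear forms, the assignment $p\mapsto[q]$ with $M(q)\cdot p=0$ defines a rational map $\pi\colon\pp\dashrightarrow\Gamma_\varepsilon$, birational onto $\Gamma_\varepsilon$. For a line $\ell\subset\pp$ parametrized by $p(t)=p_0+tp_1$, the coordinates of $\pi(p(t))$ are the $3\times 3$ minors of a $3\times 4$ matrix linear in $t$, hence cubic in $t$; so $T_\ell:=\pi(\ell)$ is a twisted cubic contained in $\Gamma_\varepsilon$. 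Since $\Gamma_\varepsilon\cap Q_C=C$, the intersection $T_\ell\cap C=T_\ell\cap Q_C$ has degree $6$, and as $\ell$ varies over the $\Pp^2$ of lines in $\pp$ these divisors trace out a $2$-dimensional linear system $|L|$ on $C$. Using the construction of $\Gamma_\varepsilon$ from $\varepsilon$ in Theorem~\ref{thm:prym_from_intro}, one identifies $L\simeq\omega_C\otimes\varepsilon$. Hence every $D\in|\omega_C\otimes\varepsilon|$ equals $T_{\ell_D}\cap C$ for some line $\ell_D\subset\pp$; for our generic $D$ the uniqueness of $T$ from step one forces $T=T_{\ell_D}\subset\Gamma_\varepsilon$, and then $F=\Gamma_\varepsilon$.

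The main obstacle is the identification $L\simeq\omega_C\otimes\varepsilon$ in step three. This requires either a direct Picard-group computation on the minimal resolution of $\Gamma_\varepsilon$ using the specific symmetric presentation determined by $\varepsilon$, or a comparison via the Prym-theoretic isomorphism $H^0(\omega_C\otimes\varepsilon)\simeq H^0(\omega_X)$ that matches lines in $\pp$ with elements of $|\omega_C\otimes\varepsilon|$ through the reverse construction of Theorem~\ref{thm:reconstruction}.
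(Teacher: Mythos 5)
Your overall architecture runs parallel to the paper's: the paper also produces $T$ as the image $\xc_\varepsilon(L)$ of a line under the adjugation map, and also uses the $\Pp^4$ of cubics through $C$ to pin down the cubic surface. But your argument has a genuine gap exactly where you flag one: the identification of the $g^2_6$ cut out on $C$ by the twisted cubics $T_\ell=\xc(\ell)$ with $|\omega_C\otimes\varepsilon|$. Without this you have only shown that the $T_\ell$ sweep out \emph{some} two-dimensional linear system of degree-six divisors on $C$; since $C$ carries many such systems (one for each of the $255$ nonzero two-torsion classes, besides $|\omega_C|$), nothing yet ties your abstractly constructed twisted cubic through $\supp D$ to the particular surface $\Gamma_\varepsilon$, and the final identification $F=\Gamma_\varepsilon$ collapses. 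A divisor-class computation on the resolution $S$ of $\Gamma_\varepsilon$ easily gives that $\omega_C\otimes(\ell|_C)^\vee$ is two-torsion (because $\sum_i N_i\equiv 2(2\ell-\sum E_{ij})$ restricts trivially to $C$), but identifying \emph{which} two-torsion class requires the double-cover argument: this is Lemma~\ref{lem:blow_down_is_prym}, which combines Theorem~\ref{thm:induced_double_cover} with the simple connectedness of $S$ to conclude $(2\ell-\sum E_{ij})|_C\equiv\varepsilon$, hence $\ell|_C\equiv\omega_C\otimes\varepsilon$. Alternatively the identification is built into the construction of $\Gamma_\varepsilon$ in Section~\ref{sec:prym_map_general}, where $\xc_\varepsilon$ is \emph{defined} on $\pp_\varepsilon=\Pp \H^0(\omega_C\otimes\varepsilon)$ and restricts on $C$ to the inverse of the Prym-canonical map (Lemma~\ref{lem:prym_map_reverse}), so that lines cut out $|\omega_C\otimes\varepsilon|$ by construction. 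Either way the step must actually be supplied; naming two possible strategies does not discharge it.

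Two smaller points. Your existence step rests on the unproved assertion that the six points of a generic $D$ are in general linear position, and your genus count rests on $C\cap T=\supp D$ transversally; both are avoidable if you simply \emph{define} $T:=\xc(\ell_D)$ at the outset (as the paper does in Corollary~\ref{cor:DLinTL}), which makes $T\subset\Gamma_\varepsilon$ automatic. Also, your B\'ezout/arithmetic-genus uniqueness argument tacitly needs $T\not\subset Q_C$ (otherwise $Q_C\cup H$ contains $C\cup T$ for every plane $H$, and the two cubics you intersect could share the component $Q_C$); this follows from your transversality assumption but should be said. Granting these, your uniqueness argument --- the $p_a$ comparison $9\neq 10$ together with the ideal-sheaf sequence giving $h^0(\ci_{C\cup T}(3))\ge 1$ --- is a legitimate and somewhat cleaner alternative to the paper's classical count in Corollary~\ref{cor:milne_cor2}, which imposes four general points of $T$ on the $\Pp^4$ of cubics through $C$.
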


Suppose $H_1,\,H_2 \subset \ppp$ are two tritangents of $C$ with contact divisors $D_1,\, D_2$ so that $H_i \cdot C = 2D_i$ and $D_1 + D_2 \in |\omega_C \otimes \varepsilon|$. If $C$ is generic then $D_1+D_2$ satisfies the genericity assumption of the proposition above. Milne's observation rests on the following fact.

\begin{theorem}
  Let $T \subset \Gamma_\varepsilon$ be the twisted cubic passing through $D_1+D_2$. As a subset of $\Gamma_\varepsilon$, $T$ parametrizes singular conics in $\pp$. The singularities of these conics trace a bitangent of $X_\varepsilon$. 
\end{theorem}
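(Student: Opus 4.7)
The first assertion is immediate: since $\ppp$ parametrizes conics on $\pp$ with $\Gamma_\varepsilon$ the locus of singular conics, every point of $T\subset\Gamma_\varepsilon$ corresponds to a singular conic on $\pp$.

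For the main assertion I would study the \emph{singularity map} $s:\Gamma_\varepsilon\dashrightarrow\pp$ sending a smooth point $x\in\Gamma_\varepsilon$ (where the defining symmetric matrix $M(x)$ has rank $2$) to $\ker M(x)$, the singular point of the conic parametrized by $x$. Differentiating $\det M$ shows that the Gauss map $\Gamma_\varepsilon\dashrightarrow\vppp$ factors as $\xq_\varepsilon\circ s$. The crucial geometric input is that $\Gamma_\varepsilon\cap H_i$ is a triangle of three lines whose nodes are $D_i$: this plane cubic contains $H_i\cdot C=2D_i$, a length-six scheme supported on three points, which forces both the triangle structure and the tangency of $H_i$ to $\Gamma_\varepsilon$ at each point of $D_i$. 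Consequently the Gauss map collapses $D_i$ to a single point $\widehat H_i\in\vppp$, and because $T\cap H_i=D_i$ by Bezout, the six points of $D_1+D_2\subset T$ are sent by $\xq_\varepsilon\circ s$ to just $\{\widehat H_1,\widehat H_2\}$.

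Next I would show $s(T)\subset\pp$ is a line. An Euler-characteristic computation in the symmetric exact sequence
\[
0\to\ck\to\co_{\Gamma_\varepsilon}^3\xrightarrow{M}\co_{\Gamma_\varepsilon}(1)^3\to\ck^\vee(1)\to0
\]
restricted to $T\cong\p^1$ yields $\ck|_T\cong\co_{\p^1}(-3)$, so $s|_T$ is defined by three cubic polynomials $f_0,f_1,f_2\in\H^0(\p^1,\co(3))$ and the image is \emph{a priori} a rational curve of degree at most three in $\pp$. In the generic situation $\widehat H_i$ is a smooth point of $\vGam_\varepsilon$ and $\xq_\varepsilon$ is birational there, so $\xq_\varepsilon^{-1}(\widehat H_i)$ is a single point $v_i$, forcing $s(D_i)=v_i$. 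Writing $v_i=[a_i:b_i:c_i]$, the degree-three polynomial $b_if_0-a_if_1$ vanishes on $D_i$ and is therefore a scalar multiple of $\phi_i:=\prod_{p\in D_i}(t-t_p)$; combining for $i=1,2$ forces $f_0,f_1,f_2$ into the two-dimensional span $\langle\phi_1,\phi_2\rangle\subset\H^0(\p^1,\co(3))$, exhibiting a linear relation among them. Therefore $s(T)$ lies in a line $\ell\subset\pp$, and $s|_T:T\to\ell$ is a degree-three cover with fibers $D_1,D_2$ over $v_1,v_2$.

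Finally, $\xq_\varepsilon|_\ell$ maps $\ell$ birationally onto a conic $\Sigma\subset\vppp$, so $\ell\cdot X_\varepsilon=\xq_\varepsilon|_\ell^*(\Sigma\cap\vQ_C)$ is a degree-four divisor on $\ell$, and the bitangency of $\ell$ to $X_\varepsilon=\xq_\varepsilon^{-1}(\vQ_C)$ reduces to $\Sigma$ being doubly tangent to $\vQ_C$. I would derive this double tangency from the fact that $T$ meets $C$ in the divisor $D_1+D_2\in|\omega_C\otimes\varepsilon|$, so the Gauss image $\Sigma$ of $T$ inherits two double contacts with $\vQ_C$ from the corresponding double contacts of $T$ with $C=Q_C\cap\Gamma_\varepsilon$. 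The main obstacles are the collapse argument in paragraph three---specifically, verifying $\xq_\varepsilon^{-1}(\widehat H_i)$ is a single point, which uses the triangle structure---and the final transfer of the double-contact data from $T\cap C$ to $\Sigma\cap\vQ_C$.
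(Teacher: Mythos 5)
There is a genuine gap, and it occurs at the step you yourself flag as crucial. The claim that $\Gamma_\varepsilon\cap H_i$ is a triangle of lines with nodes at $D_i$ is false in general, and the justification offered is a non sequitur: a length-six curvilinear scheme supported at three points lies on a \emph{smooth} plane cubic without difficulty. What actually happens at a point $p$ of $D_i$ is that $H_i\supset T_pC=T_pQ_C\cap T_p\Gamma_\varepsilon$, so the conic $Q_C\cap H_i$ and the cubic $\Gamma_\varepsilon\cap H_i$ are both smooth at $p$ and simply tangent there (common tangent line $T_pC$); the cubic is singular at $p$ only in the degenerate situation $H_i=T_p\Gamma_\varepsilon$. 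One can see the triangle claim cannot hold generically by counting: a Cayley cubic contains only nine lines, hence only finitely many (at most a dozen) of its plane sections are triangles, while there are $56$ tritangent planes in the $\varepsilon$-system. Since $H_i$ is not the tangent plane of $\Gamma_\varepsilon$ at the points of $D_i$, the Gauss map does not collapse $D_i$ to a single point $\widehat H_i$, and your entire mechanism for showing that $s(T)$ is a line (the forced linear relation among $f_0,f_1,f_2$) evaporates. The conclusion $s(T)=\ell$ is nevertheless true, but for a different reason: $T=\xc_\varepsilon(L)$ where $L\subset\pp$ is the line with $\rho_\varepsilon^*(L)=D_1+D_2$, which the paper obtains by realizing $\xc_\varepsilon$ as blow-up-then-blow-down and identifying the blow-down restricted to $C$ with the Prym-canonical map (Lemmas~\ref{lem:blow_down_is_prym} and Corollary~\ref{cor:DLinTL}).

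The final step is also unsupported. The twisted cubic $T$ meets $C$ transversally in six points, so there are no ``double contacts of $T$ with $C$'' to transfer to $\Sigma\cap\vQ_C$; and arguing that $\Sigma\cap\vQ_C=\xq_\varepsilon(\ell\cap X_\varepsilon)$ is everywhere non-reduced is exactly the assertion to be proved, not an input. The missing idea is the enveloping quadric cone $\Lambda_L$ of the conic $\xq_\varepsilon(L)$: it is the unique quadric through the four nodes of $\Gamma_\varepsilon$ tangent to $\Gamma_\varepsilon$ along $T$ (a divisor-class computation on the resolution of $\Gamma_\varepsilon$), hence satisfies $\Lambda_L\cdot C=2(D_1+D_2)$. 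Therefore $\langle\Lambda_L,Q_C\rangle$ is the pencil of quadrics cutting $2(D_1+D_2)$ on $C$, and it contains the reducible member $H_1\cup H_2$. A separate lemma on pencils of quadrics (the paper's Lemma~\ref{lem:cone_lemma}, proved by projecting $Q_C$ from the vertex of $\Lambda_L$) shows that the pencil $\langle\Lambda_L,Q_C\rangle$ contains a reducible reduced member if and only if the conic of enveloping planes $\xq_\varepsilon(L)$ is bitangent to $\vQ_C$, which by $X_\varepsilon=\xq_\varepsilon^{-1}(\vQ_C)$ gives the bitangency of $L$ to $X_\varepsilon$. Without $\Lambda_L$ and this pencil argument, the existence of the tritangent pair never enters your proof in a way that forces the double tangency.
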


\subsection{Review of literature}\label{sec:review}

Our work is a geometric realization of Donagi's~\cite{donagi} extension of Recillas' trigonal construction~\cite{Recillas1974}. Consider the moduli space $\cR_4^\mathrm{trig}$ of triplets $(C,\varepsilon,\varphi\colon C \overset{3:1}{\longrightarrow} \p)$ where $(C,\varepsilon) \in \cR_4$ and $\varphi$ is a trigonal pencil. Let $\cm_3^\mathrm{tetr}$ be the moduli of pairs $(X,\psi\colon X \overset{4:1}{\longrightarrow}\p)$ where $X$ is a genus three curve and $\psi$ is a tetragonal pencil which does not contain fibers of the form $2p+2q$.
The trigonal construction \cite{donagi}*{Theorem~2.9} establishes an isomorphism $\cR_4^\mathrm{trig} \isoto \cm_3^\mathrm{tetr}$. See Section~\ref{sec:trigonal} for more details.

Recillas~\cite{Recillas1993} originally observed, by working with generic curves over the complex numbers, that the isomorphism above descends to a birational map $\cR_4 \xdashrightarrow{\;\,\sim\;\,} \ck_3$ where $\ck_3$ is the moduli space of Kummer varieties of dimension three. Hidalgo and Recillas~\cite{Hidalgo-Recillas1999} extend this to a map $\overline{\cR}_4 \to \ca_3$ and study the general fiber. This construction requires the identity section of $\ck_3$ to be blown-up, which corresponds to $\cR_4^\mathrm{odd}$ on the left hand side, giving another explanation for the disparate behavior of the odd locus.

Del Centina and Recillas~\cite{DelCentina-Recillas1983} study projective models of the double cover $\tilde C \to C$ of a generic curve $C$ of genus four. The generic member in $\cR_4^\mathrm{odd}$ and the case where $\Gamma_\varepsilon$ is a type~(2) cubic symmetroid is touched upon (see Definition~\ref{def:types}). Vakil~\cite{vakil--twelve} reviews the classical correspondences involving $\cR_4^\mathrm{odd}$.

Relations between curves of genus three and four have been observed more than a century ago, see for instance the articles by Roth~\cite{Roth1911} and Milne~\cite{Milne1923}. A remarkable treatment of this connection, with genus three curves and their Jacobians on the one hand and symmetric cubics containing genus four curves on the other is given in Coble's book~\cite{coble--theta-book}*{Sections 14 and 50}.

Prym varieties are generically Jacobians only when the base of the double cover is genus two, three or four. For the arithmetic aspects of the genus three case we refer to~\cite{Bruin2008} and to~\cite{ACGH:volI}*{Exercise~VI.F} for an overview. See~\cite{mumford--prym} for a comprehensive treatment of the general theory of Prym varieties, including a discussion of the hyperelliptic case.

\subsection*{Acknowledgments}
We would like to thank Bernd Sturmfels for constant feedback and for introducing us to one another during his ``tritangent summit'' where this project was conceived. We have also benefited from conversations with Gavril Farkas and Alessandro Verra which we gratefully acknowledge. Special thanks to Corey Harris for useful observations and computations at the start of this project. We would like to thank Igor Dolgachev for guidance into the literature and mathematical comments. We are also indebted to the anonymous referee for careful reading and detailed feedback, which resulted in significant improvements.

\section{Background and notation}

Let $k$ be a field of characteristic not two. When $V$ is a finite dimensional $k$-vector space and $X$ is a smooth projective $k$-variety with a line bundle $\cl$ we use the following notation:
\begin{itemize}
  \item $\omega_X$ : the canonical bundle of $X$,
  \item $|V|$ : the projective space of one dimensional subspaces of $V$,
  \item $\Pp V$ : the projective space of one dimensional quotients of $V$,
  \item $\H^0(X,\cf)$, $\H^0(\cf)$ : the global sections of the sheaf $\cf$ on $X$,
  \item $h^0(\cf)$ : the dimension of the $k$-vector space $\H^0(X,\cf)$,
  \item $Z(f)$ : for $f \in \sym^d V$ denotes the zero locus of $f$ in $\Pp V$,
  \item $\ksep,\, \kbar$ : a separable and algebraic closure of $k$ respectively,
  \item $X\sep$ : the pullback of $X/k$ to $\ksep$, that is, $X \times_{\spec k} \spec \ksep$,
  \item $\bs_{[\cl]}$ : the Brauer-Severi variety that provides a model of $|H^0(X\sep,\cl)|$ over $k$, when $[\cl]\in\Pic(X\sep)$ is a $\Gal(k\sep/k)$-invariant class (see~\cite{BruinFlynn2004} for a very explicit description).
  \item A conic is a curve of degree two (in space or in plane) and a quadric is a degree two surface in space.
\end{itemize}
 
\subsection{Genus 4 curves}\label{sec:prym_basic_defs}

The canonical model of a genus four non-hyperelliptic curve $C$ is the complete intersection of a quadric $Q_C$ and a cubic $\Gamma$ in $\PP^3$. There is a four-dimensional linear system of cubics containing $C$.

 A \emph{theta characteristic} on $C$ is a line bundle $\eta$ such that $\eta^{\otimes 2}\simeq \omega_C$. A theta characteristic is called \emph{odd} or \emph{even} according to the parity of $h^0(C,\eta)$. An even theta characteristic with $h^0(C,\eta)>0$ is called a \emph{vanishing theta-null}. 

  An odd theta characteristic $\eta$ corresponds to an effective divisor $D_\eta$ of degree three on $C$. Since $\eta^{\otimes 2}\simeq \omega_X$, the canonical model of $C$ has a \emph{tritangent} plane $H_\eta$ satisfying $H_\eta \cdot C = 2D_\eta$. We will often abuse notation and identify $\eta$ with $D_\eta$.

The rulings of the quadric $Q_C$ correspond to trigonal pencils on $C$. If $Q_C$ is nonsingular then the two rulings induce two distinct trigonal pencils on $C$. If $Q_C$ is singular then $C$ is uniquely trigonal. The latter happens precisely when $C$ admits a vanishing theta-null. A non-hyperelliptic genus four curve $C$ admits at most one vanishing theta-null. If $C$ has a vanishing theta-null $\theta$, then a two-torsion class $\varepsilon$ is called \emph{even} or \emph{odd} according to the parity of the corresponding theta characteristic $\theta\otimes \varepsilon$.

\begin{definition}\label{def:bielliptic} A class $\varepsilon \in \Jac_C[2]$ is called  \emph{bielliptic} if $C$ admits a double cover of a genus one curve $\xb\colon C\to E$ curve and a class $\varepsilon'\in\Jac_E[2]$ such that $\varepsilon=\xb^*(\varepsilon')$.
\end{definition}

\subsection{The trigonal construction} \label{sec:trigonal}
The trigonal construction~\cites{Recillas1974,donagi} establishes a relation between Prym varieties of trigonal curves and Jacobians of tetragonal curves, over algebraically closed base field. We recall the construction in order to formulate its implications when applied to curves over non-algebraically closed base fields.

The construction is based on a Galois-theoretic observation. Let $X\to L$ be a degree $4$ (ramified) cover of curves, where $L$ is of genus $0$. Then the Galois closure $X!$ of $X$ over $L$ generically is of degree $24$, with $\Aut(X!/L)=S_4$. The subgroups $C_4\subset D_4\subset S_4$ provide subcovers $\tC\to C\to L$. Recillas~\cite{Recillas1974} proves that $\Prym(\tC/C)=\Jac_{X}$ in the generic case and Donagi~\cite{donagi} generalizes this to stable limits (for special ramification types of $X\to L$).

From a Galois-theoretic perspective, $f_C\colon C\to L$ is characterized by being the \emph{cubic resolvent} of $f_X\colon X\to L$: for a point $p\in L$, the fiber $f_C^{-1}(p)$ parametrizes the ways in which $f_X^{-1}(p)$ can be split into two sets of two, counting multiplicity.

Conversely if we start with a cover $\tC$ of a trigonal curve $C\to L$, we find that the Galois closure of $\tC\to L$ generically is of degree $48$, with $\Aut(\tC!/L)=(C_2)^3\rtimes S_3\simeq C_2\times S_4$.
If $\tC\to C$ is unramified, then $\tC!$ is not geometrically connected, with the center of $\Aut(\tC!/L)$ interchanging the components. If we take one component $X!$ we obtain a Galois cover $X!\to L$ with $\Aut(X!/L)=S_4$, leading to a tetragonal cover $X\to L$ with $\Prym(\tC/C)=\Jac_X$.

This result generalizes to non-algebraically closed base field with a minor modification.

\begin{definition}
Let $\pi\colon\tC\to C$ be a separable degree $2$ map of proper nonsingular curves over a base field $k$. We say $\pi'\colon\tC'\to C$ is a \emph{quadratic twist} of $\pi$ if there is a quadratic extension $k(\sqrt{d})/k$ and an isomorphism $\psi\colon\tC\to \tC'$ over $k(\sqrt{d})$ such that $\pi=\pi'\circ\psi$.
\end{definition}

\begin{remark}
On the level of function fields we see that if $k(\tC)=k(C)[\sqrt{f}]$ then $k(\tC')=k(C)[\sqrt{df}]$. In fact, Kummer theory yields that the quadratic twists of $\pi$ are classified by $k^\times/k^{\times2}$.
\end{remark}

If $\tC\to C\to L$ is defined over a non-algebraically closed base field then two components of $\tC!$ may be only defined over a quadratic extension $k(\sqrt{d})$ of $k$. By taking the appropriate twist of $\tC$ we can assure the components of $\tC!$ are defined over $k$, so we obtain the following.

\begin{proposition}\label{P:trigonal_construction} Let $C,\, L$ be proper nonsingular curves with $L$ of genus $0$ and $\pi\colon C\to L$ finite of degree $3$. Let $\varepsilon$ be a line bundle of order two on $C$. Then there is a double cover $\pi\colon \tC\to C$ such that $\pi^*(\varepsilon)=0$ and a degree $4$ cover $X\to L$ such that $\Prym(\tC/C)=\Jac_X$.
\end{proposition}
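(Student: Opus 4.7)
The plan is to reduce to the geometric statement over $\ksep$ recalled in the discussion preceding the proposition, and then argue that the failure of descent is measured by a single class in $k^\times/k^{\times 2}$ which can be absorbed into the freedom of choosing $\tC$ among quadratic twists.

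First I would construct a candidate double cover. Since $\varepsilon$ is of order two on $C$, Kummer theory produces an étale double cover $\tC_0\to C$, unique up to quadratic twist over $k$, whose pullback kills $\varepsilon$. Compose with $\pi$ to get $\tC_0\to L$ of degree $6$, and let $\tC_0!\to L$ denote the Galois closure. Over $\ksep$, the analysis already given in the excerpt shows $\Aut(\tC_0!/L)=(C_2)^3\rtimes S_3\cong C_2\times S_4$, and because $\tC_0\to C$ is étale the central $C_2$ acts without fixed points on the geometric generic fiber, so $\tC_0!\sep$ splits as a disjoint union of two isomorphic components $X_0!\sep \sqcup X_0'!\sep$ interchanged by this central involution. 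Taking the $S_4$-quotient of either component by the stabilizer $S_3$ of a point yields a degree $4$ cover $X_0\sep\to L\sep$ with $\Prym(\tC_0\sep/C\sep)=\Jac_{X_0\sep}$ by Recillas--Donagi.

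Next I would descend. The pair of geometric components $\{X_0!\sep,X_0'!\sep\}$ is $\Gal(\ksep/k)$-stable (they are the only two components), so it defines a degree-two étale $k$-algebra, hence an element $d\in k^\times/k^{\times 2}$: either $d$ is trivial, and each component already descends to $k$, giving a $k$-model $X\to L$ with the desired Prym identification $\Prym(\tC_0/C)=\Jac_X$ (so we may take $\tC=\tC_0$); or $d$ is nontrivial, and the components are only defined over $k(\sqrt d)$. In the latter case I would replace $\tC_0$ by its quadratic twist $\tC=\tC_0^{(d)}$. The twist changes the étale cover $\tC_0\to C$ by the character corresponding to $d$, and inside $\tC!$ this has the effect of precomposing the action of $\Gal(\ksep/k)$ on $\pi_0(\tC!)$ by the quadratic character cutting out $k(\sqrt d)$. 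Concretely, $\tC!\simeq (\tC_0!\otimes_k k(\sqrt d))^{\Gal(k(\sqrt d)/k)}$ with the Galois action twisted by $d$, so after twisting the two components become individually $\Gal(\ksep/k)$-stable and hence descend to a cover $X\to L$ of degree $4$ over $k$.

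Finally I would check that the trigonal-construction identity survives the twist: $\Prym(\tC/C)$ is the quadratic twist of $\Prym(\tC_0/C)$ by $d$, and $\Jac_X$ is the quadratic twist of $\Jac_{X_0\sep}$ by the same character (since $X$ is the $d$-twist of $X_0\sep$ viewed as $k(\sqrt d)$-variety), so the Recillas--Donagi isomorphism over $\ksep$ descends to an isomorphism of $k$-abelian varieties $\Prym(\tC/C)\simeq\Jac_X$. This yields $\pi^*(\varepsilon)=0$ (unchanged by the quadratic twist) and the required equality over $k$.

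The main obstacle I anticipate is bookkeeping in the last step: one must verify that the character by which the components get permuted is exactly the character by which $\tC_0$ is twisted to produce $\tC$, so that a single twist rigidifies both the étale double cover and the choice of component. This is where one needs the explicit identification of $\pi_0(\tC_0!)$ with a torsor under the center of $\Aut(\tC_0!/L)$, and to trace how this torsor transforms under a quadratic twist of $\tC_0\to C$; once this compatibility is confirmed, the rest is formal Galois descent.
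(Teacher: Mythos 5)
Your proposal is correct and follows essentially the same route as the paper: the paper's argument is exactly the preceding discussion of the Galois closure $\tC!$ with group $(C_2)^3\rtimes S_3\simeq C_2\times S_4$, the observation that the two geometric components are defined over a quadratic extension $k(\sqrt{d})$, and the replacement of $\tC$ by its quadratic twist by $d$ to make a component descend. Your final compatibility check (that twisting the double cover by $c$ twists the constant field of the components by the same class $c$, visible on function fields via $N_{k(C)/k(L)}(cf)=c^3N_{k(C)/k(L)}(f)\equiv c\,N_{k(C)/k(L)}(f)$ modulo squares) is the one point the paper leaves implicit, and it does go through as you anticipate.
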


\subsection{A remark on the odd case} \label{sec:odd_del_pezzo}

In~\cite{vakil--twelve} the correspondence between $\cR_4^\mathrm{odd}$ and smooth plane quartics $X$ with a canonical pencil is worked out in detail. An even closer link with del~Pezzo surfaces can be made explicit, which may be useful for the construction of examples. Let $C$ be a genus four curve with a vanishing theta-null $\theta$. Then $C$ can be realized as a curve in the branch locus of the Bertini involution on a del~Pezzo surface $S$ of degree one. The exceptional lines on $S$ correspond two-to-one with the odd theta characteristics on $C$. Let $E_\eta$ be such a line. By blowing down this line we get a degree two del~Pezzo surface $S'$, together with a point $p_\eta$. It has an involution (the Geiser involution), branched along a genus three curve $X$. The quotient by this involution gives us $X\subset\PP^2$, together with a point $\bar p_\eta$. Generically, projection from $\bar p_\eta$ gives a canonical tetragonal pencil $\cl$ on $X$.
This is the curve $X$ that Proposition~\ref{P:trigonal_construction} associates $C$ with $\varepsilon=\eta\otimes\theta^\vee$ and $\cl$ is the corresponding tetragonal pencil. Starting with the pair $(X, \cl)$, the construction can be reversed to obtain $(C,\varepsilon)$.

\section{Cubic symmetroids}\label{sec:cubic_symmetroids}

A \emph{symmetroid surface} in $\PP^3$ of degree $d$ is a surface where the defining equation is given as the determinant of a $d\times d$ symmetric matrix with entries that are linear forms in the coordinates on $\PP^3$. We are interested in characterizing the cubic surfaces that admit a symmetroid model. There is a large classical literature on the subject, see Dolgachev's book~\cite{dolgachev}*{\S 9.3.3} and the survey by Beauville~\cite{beauville}. In this section we review these results and extend the characterization to apply to non-algebraically closed base field as well.

\subsection{Basic properties}

For the rest of this section let $U$ be a three dimensional vector space and $V$ be a four dimensional vector space. We give coordinate-free constructions, but for expositional clarity we fix bases $U=\langle z_0,z_1,z_2 \rangle$ and $V = \langle x_0,x_1,x_2,x_3 \rangle $. In subsequent sections where $(C,\varepsilon)$ is in play, we have $V=\H^0(C,\omega_C)$ and $U=\H^0(C,\omega_C\otimes \varepsilon)$. 

Let $\ca\colon k \toi V\otimes \sym^2 U$ be given, which we equate to an element in $V \otimes \sym^2 U$ up to scaling. We consider $\sym^2 U$ as a subspace of $U \otimes U$ and obtain from $\ca$ two tensor contraction maps: $\ca_U\colon U^\vee \to V \otimes U ,\, \ca_V\colon V^\vee \to \sym^2 U$.  Notice that the order in which we contract $U$ does not matter due to the symmetry of $\ca$. We call $\ca$ \emph{non-degenerate} if $\ca_V$ is injective.

In coordinates, we can write the 3-tensor $\ca$ as a sum $\sum_{0 \le i,j \le 3 } l_{ij}\otimes z_iz_j$, where $l_{ij} \in V$ and $l_{ij} = l_{ji}$ for all $i,j$. Identifying quadratic forms with symmetric matrices we get:
\[
  \ca_V= \begin{pmatrix} l_{11} & l_{12} & l_{13} \\ l_{21} & l_{22} & l_{23} \\ l_{31} & l_{32} & l_{33} \end{pmatrix}.
\]
Alternatively, with $x_0^*,\ldots,x_3^*$ the basis for $V^\vee$ dual to $x_0,\ldots,x_3$ and $q_i=\ca_V(x_i^*)$, we have
$\ca_V=x_0q_0(z_0,z_1,z_2)+\cdots+x_3q_3(z_0,z_1,z_2)$, where $q_0,q_1,q_2,q_3$ are quadratic forms on $U^\vee$.  

There is a locus  $\cd \subset |\sym^2 U| \simeq \Pp^5$ parametrizing singular symmetric matrices which is called the \emph{discriminant hypersurface}. Naturally, $\cd$ is cut out by the determinant. Given $\ca \in V \otimes \sym^2 U$ we can use the linear map $\ca_V\colon V^\vee \to \sym^2 U$ to pull back the discriminant hypersurface to a degree three surface $\Gamma_\ca$ in $\Pp(V)$ which is cut out by the cubic equation $\det \ca_V=0$.

\begin{definition}
  Let $\Gamma \subset \Pp(V)$ be a cubic surface. A \emph{symmetrization} of $\Gamma$ is a non-degenerate tensor $\ca \in V \otimes \sym^2 U$ such that $\Gamma = \Gamma_\ca$.
\end{definition}

  One can view cubic symmetroids as cross-sections of the cubic fourfold $\cd \subset |\sym^2 U| \simeq \Pp^5$. Note that $\cd$ is singular along non-reduced conics, that is, along the image of the second Veronese of $\vpp = |U|$. The cross-sections will acquire singularities as they pass through these points. The generic cubic symmetroid, corresponding to the generic cross-section, has four simple nodes. If $\ca_V\colon V^\vee \to \sym^2 U$ is degenerate and has a one-dimensional kernel then $\Gamma_\ca$ is a cone over a plane cubic curve. This case is considered in Section~\ref{sec:degenerate_cubics}. 

\subsection{Classification of cubic symmetroids over algebraically closed fields}

\begin{theorem}[{\cite{catanese83},~\cite{dolgachev}*{\S 9.3.3}}]\label{T:symmetroid_classification}
  Let $\Gamma$ be a non-degenerate cubic symmetroid over an algebraically closed field of characteristic not equal to two. Then, up to projective transformation, it is of the following form:
\begin{enumerate}
    \item Cayley cubic; four nodes in general position: $ \det \begin{psmallmatrix} x_0 & x_3 & x_3 \\ x_3 & x_1 & x_3 \\ x_3 & x_3 & x_2  \end{psmallmatrix} $.
    \item Two nodes and one regular double point of type $A_3$:  $ \det \begin{psmallmatrix} x_0 & x_3 & -x_3 \\ x_3 & x_1 & 0 \\ -x_3 & 0 & x_2 \end{psmallmatrix} $.
    \item One node and one regular double point of type $A_5$: $ \det \begin{psmallmatrix} x_0 & x_2 & 0 \\ x_2 & x_1 & x_3 \\0 & x_3 & -x_2 \end{psmallmatrix} $.
    \item Non-normal, with singularity along $\{x_3=x_2=0\}$: $ \det \begin{psmallmatrix} x_0 & -x_3 & x_2 \\ -x_3 & x_1 & x_3 \\  x_2 & x_3 & 0 \end{psmallmatrix} $.
    \item Non-normal, with singularity along $\{x_0=x_2=0\}$:  $ \det \begin{psmallmatrix} 0 & x_2 & x_0 \\ x_2 & -x_0 & x_3 \\ x_0 & x_3 & x_1 \end{psmallmatrix} $.
    \item Reducible; union of a smooth quadric with a \emph{tangent} plane:  $ \det \begin{psmallmatrix} 0 & x_1 & x_3 \\ x_1 & 0 & x_2 \\ x_3 & x_2 & x_0 \end{psmallmatrix} $.
    \item Reducible; union of an irreducible quadric cone with a \emph{non-tangent} plane:  $ \det \begin{psmallmatrix} x_0 & 0 & 0 \\ 0 & x_1 & x_3 \\ 0 & x_3 & x_2 \end{psmallmatrix} $.
    \item Reducible and non-reduced; union of a double plane with another plane:  $ \det \begin{psmallmatrix} 0 & 0 & x_2 \\ 0 & x_0 & x_3 \\ x_2 & x_3 & x_1 \end{psmallmatrix} $.
  \end{enumerate}
\end{theorem}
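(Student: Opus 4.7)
The plan is to reduce the classification to the projective classification of pencils of conics in $\PP^2$, a classical result, and then compute the $3\times 3$ symmetric matrix associated to each pencil type case by case.

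First, a non-degenerate symmetrization $\ca$ yields a $4$-dimensional subspace $W := \ca_V(V^\vee) \subset \sym^2 U$ via the linear injection $\ca_V$. Under the isomorphism $|V| \isoto |W| \subset |\sym^2 U|$ induced by $\ca_V$, the symmetroid $\Gamma_\ca$ is identified with the intersection $|W| \cap \cd$, where $\cd$ is the discriminant hypersurface of singular conics in $|U|$. Two symmetrizations yield cubic surfaces that are projectively equivalent in $|V|$ if and only if the corresponding $4$-dimensional subspaces are related by the congruence $\pgl(U)$-action on $\sym^2 U$ (which preserves $\cd$). Via the perfect pairing $\sym^2 U \otimes \sym^2 U^\vee \to k$, this is in turn equivalent to classifying the annihilator $W^\perp \subset \sym^2 U^\vee$, that is, a pencil of conics in $|U|$, up to $\pgl(U)$-equivalence.

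The projective classification of pencils of conics in $\PP^2$ over algebraically closed fields of characteristic different from two is classical: the equivalence classes are described by the Segre symbol (the Jordan type of $A^{-1}B$ for generators $A, B$ of the pencil with $A$ nondegenerate), together with additional invariants needed when the pencil consists entirely of singular conics. For each pencil type, choose coordinates on $U$ putting $W^\perp$ in normal form, take the annihilator $W$, pick a basis $q_0, \dots, q_3$, and form $\ca_V = x_0 q_0 + \cdots + x_3 q_3$. Then $\det \ca_V$ is the equation of $\Gamma_\ca$, and a linear change of coordinates on $V$ brings it into one of the eight listed normal forms. The cases split naturally: (1)--(3) are normal symmetroids with isolated singularities, coming from pencils containing a smooth conic whose base locus ranges over the various confluence patterns; (4)--(5) are non-normal symmetroids with a singular line, coming from pencils with a common fixed component in the base locus; (6)--(8) are reducible symmetroids, coming from pencils every member of which is singular (so $W^\perp \subset \cd$).

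The main obstacle is the completeness of the case analysis: one must verify that the list of pencil types is exhaustive and that each type produces, after linear change of coordinates on $V$, exactly one of the eight listed matrices. Cases (4)--(8) require particular care, since they correspond to pencils in special position---with a fixed line component, or entirely contained in the discriminant hypersurface---whose normal forms and symmetrizations must be constructed by direct computation rather than deduced from the generic picture governing cases (1)--(3).
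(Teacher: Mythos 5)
The paper offers no proof of this statement; it is imported from Catanese and Dolgachev. Your reduction via the apolarity pairing $\sym^2 U\otimes \sym^2 U^\vee\to k$ (perfect since $\operatorname{char}k\neq 2$), which trades the web $W=\ca_V(V^\vee)$ for the pencil of conics $W^\perp$ and invokes the classical Segre classification of pencils, is a legitimate route and close in spirit to Dolgachev's treatment. Note only the ``if'' direction of your equivalence claim is needed to show every symmetroid lands in the list; the ``only if'' direction amounts to uniqueness of the symmetrization, which is a separate (and for types (1)--(3) nontrivial) statement.

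However, the case division you propose is wrong, and executing it as written would not reproduce the eight types. Over $\kbar$ there are \emph{five} $\pgl(U)$-classes of pencils whose generic member is smooth (Segre symbols $[111]$, $[21]$, $[3]$, $[(11)1]$, $[(21)]$, i.e.\ base-point confluence patterns $1{+}1{+}1{+}1$, $2{+}1{+}1$, $3{+}1$, $2{+}2$, $4$), but only the first three yield the normal types (1)--(3). The classes $[(11)1]$ and $[(21)]$ contain a double line as a \emph{member} of the pencil; dually, the web is then tangent to the discriminant hypersurface along a positive-dimensional locus, and these two classes are exactly what produce the non-normal types (4)--(5). For instance the $[(11)1]$ pencil $\langle x^2+y^2,\,z^2\rangle$ has apolar web $\langle X^2-Y^2, XY, XZ, YZ\rangle$, whose symmetroid $x_1x_2x_3+x_0(x_2^2-x_3^2)=0$ is irreducible and singular along the line $x_2=x_3=0$. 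By contrast, a pencil with a fixed line component---which is what you assign to (4)--(5)---consists entirely of singular conics; its apolar web contains the whole net of conics singular at the point dual to the fixed line, i.e.\ a plane inside the discriminant, so the resulting symmetroid is \emph{reducible} and belongs to (6)--(8) (e.g.\ $\langle z_0^*z_1^*,\,z_0^*z_2^*\rangle$ gives $x_0(4x_1x_2-x_3^2)=0$, type (7)). So your trichotomy assigns five pencil classes to the three types (1)--(3) and misattributes (4)--(5); the correct division is: (1)--(3) from $[111],[21],[3]$; (4)--(5) from $[(11)1],[(21)]$; (6)--(8) from the three classes of pencils of everywhere-singular conics (common vertex with no common component; common line with the residual base point off that line; common line with the residual base point on it). You would need to repair this dictionary---in particular, relate singularities of $|W|\cap\cd$ to rank-one \emph{members} of $W^\perp$, not only to its base points---before the case analysis can be completed.
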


\begin{definition}\label{def:types}
  Let $\Gamma$ be a cubic symmetroid over a field $k$ and $n$ an integer from one through eight. We will say $\Gamma$ \emph{is of type $(n)$} if the base change of $\Gamma$ to an algebraic closure of $k$ is of type $(n)$ according to the classification in Theorem~\ref{T:symmetroid_classification}.
\end{definition}

\subsection{Adjugation map}\label{sec:adjugation_map}

A symmetrization $\ca$ gives rise to a linear system of quadratic forms $\langle q_0,q_1,q_2,q_3 \rangle$ on $\pp = \Pp(U)$. If that system is sufficiently general, one expects that for a point $p\in\PP^2$ there is a unique conic in $\langle q_0,q_1,q_2,q_3 \rangle$ for which the singular locus is $p$. Let $u\in U^\vee$ be a representative of $p$. The singular locus of a conic is given by the kernel of its symmetric matrix, so $v\in V^\vee$ represents a conic with a singularity at $p$ precisely if $\ca_V(v)\cdot u=0$. 

In coordinates, we may compute the $1\times 4$ adjugate matrix $\adj \ca_U$ of the $3\times 4$ matrix $\ca_U$. The entries of $\adj \ca_U$ are cubic forms, inducing the desired map $\xc_{\ca}\colon \pp \ratto \ppp$. As the adjugate of a non-square matrix annihilates that matrix, we see that $\xc_\ca(p)$ represents a singular conic with a singularity at $p$. In particular, the image of $\xc_\ca$ lies in $\Gamma_\ca$. If $\xc_\ca$ is birational onto $\Gamma_\ca$, then for a generic point $p \in \pp$ there is a unique point $[v] \in \Gamma_\ca$ such that $\ca \cdot v$ is singular at $p$.

We obtain the following from a case by case analysis.

\begin{proposition}\label{prop:parametrization}
  The adjugation map $\xc_\ca\colon \Pp(U) \ratto \Pp(W)$ is birational onto $\Gamma_\ca$ if and only if the symmetrization $\ca$ is of type~(1),(2),(3),(4) or (5). \qed
\end{proposition}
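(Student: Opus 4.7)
The plan is to reduce to the algebraic closure---birationality of a rational map is preserved by base change---so that Theorem~\ref{T:symmetroid_classification} reduces the claim to verification on the eight normal forms of $\ca$. The key geometric observation, drawn from Section~\ref{sec:adjugation_map}, is that the fiber of $\xc_\ca$ over a point $[v]\in\Gamma_\ca$ equals the singular locus of the conic $\ca_V(v)$, viewed as a subscheme of $\Pp(U)$. Consequently, $\xc_\ca$ is birational onto $\Gamma_\ca$ if and only if (i) the image of $\xc_\ca$ meets every two-dimensional component of $\Gamma_\ca$, and (ii) the generic singular conic in the linear system $\ca_V(V^\vee)$ has rank exactly two (a pair of distinct lines), so that its singular locus is a reduced point rather than a line.

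For types~(1)--(5) the cubic $\Gamma_\ca$ is irreducible. The locus of rank-one conics inside the three-dimensional linear subspace $\ca_V(V^\vee)\subset\sym^2 U$ is its intersection with the cone over the Veronese surface of double lines in $|\sym^2 U|$; by dimension this intersection is generically zero-dimensional, and inspection of each normal form confirms it consists only of the finitely many rank-one conics corresponding to the nodes of $\Gamma_\ca$. Hence condition~(ii) holds. A short computation of $\adj\ca_U$ in each of the five cases shows that its entries have no nontrivial common factor and that the image is two-dimensional; by degree reasons the closure of the image must be the full cubic surface $\Gamma_\ca$, yielding birationality together with (ii).

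For types~(6), (7), and (8), we exhibit an explicit obstruction case by case. In types~(6) and (7) the four entries of $\adj\ca_U$ share a common linear factor, and after dividing it out the image is visibly contained in the quadric (respectively quadric cone) component of $\Gamma_\ca$ alone---in type~(7) it even collapses to a conic---so the remaining planar component is not dominated and (i) fails. In type~(8) the non-reduced component of $\Gamma_\ca$ parametrizes a pencil of rank-one conics (double lines), so (ii) fails there; equivalently, $\xc_\ca$ contracts a line in $\Pp(U)$ to a single point on the reduced component and reaches the non-reduced component only along an exceptional locus of positive dimension, so its image cannot contain that component. The main technical obstacle is this last group of cases: one must carefully resolve the indeterminacies of $\xc_\ca$ after extracting common factors and track the image closure component by component. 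The irreducible cases~(1)--(5) then follow quickly from the Veronese dimension count, once birationality is phrased in terms of (i) and (ii).
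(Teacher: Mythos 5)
Your overall route is the same as the paper's: the paper offers no written argument beyond ``a case by case analysis'' of the eight normal forms of Theorem~\ref{T:symmetroid_classification}, with Remark~\ref{rem:parametrization} recording the outcomes for types~(6)--(8), and that is exactly what you do. Your organizing criterion is a genuinely useful addition: since the fiber of $\xc_\ca$ over $[v]\in\Gamma_\ca$ is contained in $\Pp(\ker\ca_V(v))$, which is a reduced point whenever $\ca_V(v)$ has rank two, dominance of an irreducible $\Gamma_\ca$ together with generic rank two does give birationality, and your Veronese dimension count correctly shows the rank-one locus is finite in types~(1)--(5).

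There is, however, a concrete error in your treatment of type~(8). For the normal form $\ca_V=\begin{psmallmatrix} 0 & 0 & x_2 \\ 0 & x_0 & x_3 \\ x_2 & x_3 & x_1 \end{psmallmatrix}$ the non-reduced component is the plane $x_2=0$, and the conics it parametrizes are $v_0z_1^2+2v_3z_1z_2+v_1z_2^2$; these have rank one only along the conic $v_0v_1=v_3^2$, not on a pencil, and the generic singular conic of the whole system still has rank exactly two, so your condition~(ii) does \emph{not} fail. What actually happens is that every conic parametrized by $\{x_2=0\}$ is a cone with vertex $[1:0:0]$, and a direct computation gives $\adj\ca_U=(z_2^3,-z_1^2z_2,0,z_1z_2^2)$, whose image after removing the common factor $z_2$ is a conic \emph{inside} the non-reduced component (as Remark~\ref{rem:parametrization} states), not a point on the reduced one. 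None of this endangers the proposition, because for all of types~(6)--(8) there is a one-line obstruction you should use instead: $\Pp(U)$ is irreducible, so the closure of the image of $\xc_\ca$ is irreducible and can never equal the reducible surface $\Gamma_\ca$. I would replace the type-(8) paragraph with that observation and keep the detailed computations only as confirmation of where the image actually lands.
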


\begin{remark}\label{rem:parametrization}
  If $\ca$ is of type~(6), then $\xc_\ca$ is a birational map onto the degree two component of $\Gamma_\ca$. If $\ca$ is of type~(7), then the image of $\xc_\ca$ is the one dimensional singular locus of $\Gamma_\ca$, which is a conic. If $\ca$ is of type~(8) then $\xc_\ca$ maps $\pp$ on to a conic contained in the non-reduced component.
\end{remark}

A coordinate-free description of the map $\xc_\ca$ can be obtained by considering the adjugation map
  \[
    V \otimes U = \hom(V^\vee, U) \overset{\adj}{\too} \hom(\bigwedge^3 V^\vee , \bigwedge^3 U) = V^\vee \otimes \det(V) \otimes \det(U).
  \]
  Composing $\ca_U\colon U^\vee \to V\otimes U$ with this adjugation map yields $\adj(\ca_U)\colon U^\vee \to V^\vee \otimes \det(V) \otimes \det(U)$.  After projectivization, we get $\xc_\ca\colon \Pp(U) \ratto \Pp(V)$.

\subsection{Gauss map and symmetrization}\label{sec:gauss_map}
The Gauss map of a reduced surface $S=Z(f) \subset \ppp$ is the map taking each smooth point $s$ of $S$ to the unique tangent plane $T_s S$ of $S$. In coordinates, the map $\gamma_S\colon \ppp \ratto \vppp:\; p \mapsto \nabla f(p)$ restricts on $S$ to the Gauss map of $S$. The closure of the image $\gamma_S(S)$ is the dual variety $\widehat{S}\subset\widehat{\PP}^3$.

The Gauss map of a cubic symmetroid $\Gamma_\ca$ can be expressed in terms of the symmetrization $\ca$. Consider $\ca_V\colon V^\vee \to \sym^2 U$ as defining a map  $\pp \ratto \vppp$, such that a linear form $v\in V^\vee$ on the codomain is pulled back to the conic $\ca_V(v)$. In bases, if $x_0^*,\dots,x_3^*$ is a basis for $V^\vee$ and $q_i := \ca_V(x_i^*)$ then the Gauss map of $\Gamma_\ca$ is given by $\xq_\ca\colon  \pp \ratto \vppp;\; [z] \mapsto [q_0(z),\dots,q_3(z)]$. 
\begin{remark}\label{rem:q_and_A} 
  The map $\xq_\ca$ gives rise to a $3$-tensor that is a multiple of the tensor $\ca$.
\end{remark}

\begin{remark}\label{rem:regular-xq}
  If $\ca$ is of type~(1), (2), (3) or (7) then the rational map $\xq_\ca$ is in fact a \emph{regular} map. This can be seen by direct computation. It is fortunate that only these four types of non-degenerate cubic symmetroids will arise in the Prym construction in Section~\ref{sec:prym_construction}.
\end{remark}

\begin{lemma}\label{lem:obvious}
For any point $p=[v] \in \ppp$, the conic cut out by $\ca \cdot v$ in $\pp$ equals the pullback of the dual plane $H_p \subset \vppp$ of $p$ via $\xq_\ca\colon \pp \to \vppp$.
\end{lemma}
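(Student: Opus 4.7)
The plan is to unwind both sides of the claimed equality in compatible coordinates; the lemma is essentially tautological, as its nickname suggests. I would begin by fixing a basis $x_0,\ldots,x_3$ of $V$ with dual basis $x_0^\ast,\ldots,x_3^\ast$ of $V^\vee$, and setting $q_i := \ca_V(x_i^\ast) \in \sym^2 U$ as in Section~\ref{sec:gauss_map}, so that $\ca = \sum_i x_i\otimes q_i$. Using the $x_i^\ast$ as homogeneous coordinates on $\vppp$, the Gauss map is then by construction $\xq_\ca([z]) = [q_0(z):q_1(z):q_2(z):q_3(z)]$.

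Next I would write $p = [v] \in \ppp$ as $v = \sum v_i x_i^\ast \in V^\vee$ (well-defined up to scale), so that the dual plane $H_p \subset \vppp$ is cut out by the linear form $\sum_i v_i x_i^\ast$ on $\vppp$. Pulling this form back along $\xq_\ca$ gives the quadratic form $\sum_i v_i\, q_i(z) \in \sym^2 U$ on $\pp$, which by $k$-linearity of $\ca_V$ equals $\ca_V(v)$. Since the conic $\ca\cdot v$ is by definition $Z(\ca_V(v)) \subset \pp$, the two subschemes of $\pp$ agree.

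There is no real obstacle here; the only thing to check carefully is that the identifications $\ppp = \Pp(V) \cong |V^\vee|$ and $\vppp \cong \Pp(V^\vee)$ are applied consistently with the pairing $V\otimes V^\vee\to k$ used to define $H_p$ and to write the coordinate description of $\xq_\ca$. A coordinate-free restatement is that $H_p \subset \vppp$ is carved out by $v \in V^\vee$ regarded as a linear form on $\vppp$, and the pullback of this linear form by the degree-two map $\xq_\ca$ is, by the very definition of $\xq_\ca$, the element $\ca_V(v) \in \sym^2 U$ (cf.\ Remark~\ref{rem:q_and_A}).
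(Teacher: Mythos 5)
Your proof is correct and is essentially the paper's own argument: the paper simply observes that $H_p$ is the zero set of $v$ in $\vppp$ and that pulling back the form $v$ along $\xq_\ca$ yields $\ca_V(v)=\ca\cdot v$ by definition, which is exactly your coordinate-free restatement. The coordinate computation you add is a harmless elaboration of the same point.
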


\begin{proof}
  The representative $v$ of $p$ is in $V^\vee$ and the dual plane $H_p$ is the zero set of $v$ in $\vppp$. Pulling back $H_p$ is equivalent to computing the zero locus of the pullback form $v$, which is $\ca_V(v) = \ca \cdot v$ by definition of $\xq_\ca$.
\end{proof}

The following well known result links the symmetrization $\ca$ of $\Gamma_\ca$ to the Gauss map $\gamma_\ca\colon\Gamma_\ca \ratto \vppp$, see also~\cite{dolgachev}*{Remark 4.1.2}. 

\begin{lemma}\label{lem:gauss}
  If $\ca$ is of type~(1)--(6), then the rational maps $\xq_\ca,\gamma_\ca,\xc_\ca$ satisfy $\xq_\ca = \gamma_\ca \circ \xc_\ca$.
\end{lemma}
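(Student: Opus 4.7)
The plan is to prove the identity by a direct computation via Jacobi's formula for the gradient of a determinant, combined with the characterization of the adjugation map $\xc_\ca$ in terms of the kernel of $\ca_V(v)$.

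First I would recall that $\Gamma_\ca$ is cut out by $f := \det \ca_V$, so by definition the Gauss map $\gamma_\ca$ is represented at a smooth point $[v]\in \Gamma_\ca$ by the gradient $\nabla f(v)\in \vppp$. Since $\ca_V = \sum_i x_i q_i$ with $q_i = \ca_V(x_i^*)$, we have $\partial \ca_V/\partial x_i = q_i$ (as a constant symmetric matrix). Jacobi's formula then gives
\[
    \frac{\partial f}{\partial x_i}(v) \;=\; \tr\bigl(\adj(\ca_V(v))\cdot q_i\bigr).
\]

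Next I would invoke the definition of $\xc_\ca$ from Section~\ref{sec:adjugation_map}. For $\ca$ of type (1)--(5) the map $\xc_\ca\colon \pp \ratto \Gamma_\ca$ is birational by Proposition~\ref{prop:parametrization}, and for type (6) it is birational onto the quadric component. Pick a generic $u\in U^\vee$ and set $v := \xc_\ca(u)$; by construction of the adjugate in Section~\ref{sec:adjugation_map}, $u$ spans the kernel of the rank-2 symmetric matrix $\ca_V(v)$. Therefore $\adj(\ca_V(v))$ is a rank-1 symmetric matrix whose image is the line spanned by $u$, so $\adj(\ca_V(v)) = \lambda\, u\, u^T$ for some nonzero scalar $\lambda$. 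Substituting into Jacobi's formula yields
\[
    \frac{\partial f}{\partial x_i}(v) \;=\; \lambda\, \tr(u u^T q_i) \;=\; \lambda\, u^T q_i u \;=\; \lambda\, q_i(u).
\]
Hence $\gamma_\ca(v) = [\nabla f(v)] = [q_0(u):\cdots:q_3(u)] = \xq_\ca(u)$, which is exactly the claimed identity $\xq_\ca = \gamma_\ca\circ \xc_\ca$ at $u$. Since both sides are rational maps and equality holds on a dense open subset of $\pp$, the identity holds as a map of rational maps.

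The only point that requires a bit of care is that $\gamma_\ca$ must be defined at a generic point of the image of $\xc_\ca$. For types (1)--(3) the image is an irreducible normal surface (with isolated singularities only), so the generic point is smooth and $\gamma_\ca$ is defined there. For types (4) and (5), $\Gamma_\ca$ is non-normal but is still generically smooth along the image of $\xc_\ca$ by Theorem~\ref{T:symmetroid_classification}, so the same argument applies. For type (6), $\xc_\ca$ lands generically in the smooth locus of the quadric component by Remark~\ref{rem:parametrization}, and the Gauss map of the whole reducible cubic restricts to the Gauss map of that component at its smooth points; the computation above applies verbatim. The main obstacle, therefore, is merely the bookkeeping of the degenerate types, and this is handled by the classification in Theorem~\ref{T:symmetroid_classification}.
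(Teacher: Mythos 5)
Your proof is correct, and it takes a genuinely different route from the paper's. The paper argues via projective duality: for a smooth point $p=[v]\in\Gamma_\ca$, Lemma~\ref{lem:obvious} identifies $\xq_\ca^*(H_p)$ with the singular conic $Z(\ca\cdot v)$, whose unique singular point $r$ satisfies $\xc_\ca(r)=p$ by Proposition~\ref{prop:parametrization} (resp.\ Remark~\ref{rem:parametrization} for type~(6)); since $\xq_\ca^*(H_p)$ is singular at $r$, the plane $H_p$ is tangent to $\vGam_\ca$ at $\xq_\ca(r)$, and the reflexivity theorem then yields $\xq_\ca(r)=\gamma_\ca(p)$. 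Your argument instead carries out the direct computation that the paper alludes to but sets aside, made uniform by Jacobi's formula: $\nabla(\det\ca_V)(v)=\bigl(\tr(\adj(\ca_V(v))\,q_i)\bigr)_i$, combined with the fact that the adjugate of a corank-one symmetric $3\times 3$ matrix equals $\lambda\,uu^T$ on its kernel line. This is a legitimate proof and in some respects more robust: it treats all six types in one stroke, the only input being that a generic point of the image of $\xc_\ca$ has corank exactly one (true for types (1)--(6), since that image is not contained in the pullback of the rank-one locus $\cd_1$); it shows as a byproduct that $\gamma_\ca$ is defined at such points, because $\nabla f(v)=\lambda\,(q_i(u))_i\neq 0$ for generic $u$; and it avoids the reflexivity theorem, whose use over fields of positive characteristic requires a separability hypothesis. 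What the paper's route buys in exchange is the conceptual identification of $\xq_\ca(\pp)$ with the dual surface $\vGam_\ca$, which is precisely the picture exploited later when $X_\varepsilon$ is realized as $\xq_\varepsilon^{-1}(\vQ_C)$ inside $\vGam_\varepsilon\cap\vQ_C$.
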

\begin{proof}
One can check this via a case-by-case analysis. We  give a more insightful proof.

Let $p=[v] \in \Gamma_\ca$ be a smooth point. By Lemma~\ref{lem:obvious} the pullback of the dual plane $H_p$ to $\pp$ is the conic $Z(\ca\cdot v)$ cut out by $\ca \cdot v$. As the point $p$ belongs to $\Gamma_\ca$, the conic $Z(\ca\cdot v)$ is singular. Since $p$ is a smooth point of $\Gamma_\ca$, the conic $Z(\ca \cdot v)$ is the union of two distinct lines coming together at a point $r \in \pp$. If $\ca$ is of type~(1)--(5) then apply Proposition~\ref{prop:parametrization} together with the preceding explanation of $\xc_\ca$ to conclude $\xc_\ca(r)=p$. If $\ca$ is of type~(6) then recall that the linear component of $\Gamma_\ca$ is tangential to the quadric component of $\Gamma_\ca$ and apply Remark~\ref{rem:parametrization}.

Since $\xq_\ca^*(H_p)$ is singular at $r$, the plane $H_p$ must be tangential to the dual surface $\vGam_\ca$ at $\xq_\ca(r)$. Using the reflexivity theorem~\cite{tevelev}*{Chapter 1} between dual hypersurfaces, we conclude that $\xq_\ca(r)=\gamma_\ca(p)=\gamma_\ca \circ \xc_\ca(r)$. 
\end{proof}

\subsection{Classification of cubic symmetroids over non-algebraically closed fields}\label{sec:cubic_classification_over_k}

From Theorem~\ref{T:symmetroid_classification} it follows that if a cubic surface $\Gamma$ admits a symmetrization then the type of the symmetrization is determined by the singular locus of $\Gamma$. Let $\Gamma$ be a cubic surface over a field $k$. In this section we prove that if $\Gamma$ admits a symmetrization over an algebraic closure of $k$, then it admits one over $k$. First we consider the general case, that is $\Gamma$ of type~(1).

\begin{definition}
  Let $k$ be a field of characteristic different from $2$. A cubic surface $\Gamma$ over $k$ is a \emph{Cayley cubic} if its singular locus is of dimension $0$, of degree $4$, is reduced, separable, and is not contained in a plane.
\end{definition}

It follows from the definition of a Cayley cubic that the singular locus is $\spec(R)$ for some quartic \'etale algebra $R$ over $k$. Then $\Gamma$ has a singular point $p_R$ defined over $R$. Furthermore, the complement of $p_R$ in the singular locus is contained in a plane, given by a linear form $h$ over $R$.
Without loss of generality, we can assume that $R=k[\alpha]$ and write
\[h=h_0(\alpha)x_0+h_1(\alpha)x_1+h_2(\alpha)x_2+h_3(\alpha)x_3.\]

We consider the Norm and Trace maps for the finite extension $k[x_0,x_1,x_2,x_3][\alpha]/k[x_0,x_1,x_2,x_3]$, which restrict to the ordinary Norm and Trace maps of $R/k$.

\begin{lemma}
	Let $\Gamma$ be a Cayley cubic over the field $k$ with singular locus given by coordinate ring $R$. Define $p_R$ and $h$ as above. Then for some $\delta\in R$ we have
	\[
	\Gamma\colon\Trace\left(\delta
	\frac{\Norm(h)}{h}
	\right)=0.\]
	In fact, by scaling $h$ we can assume $\delta=1$.
\end{lemma}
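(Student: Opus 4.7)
The plan is to work first over $\kbar$, where a classical dimension count parametrizes all cubics singular at four non-coplanar points, then descend via Galois equivariance, and finally achieve $\delta=1$ by a torus argument.

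Over $\kbar$ the singular locus splits as four points $p_1,\dots,p_4$, not coplanar by the definition of a Cayley cubic, and $h$ decomposes as $(h_1,\dots,h_4)$ where $h_i$ cuts out the plane through $\{p_j:j\neq i\}$. A direct computation shows that $\prod_{j\neq i}h_j$ vanishes to order $3$ at $p_i$ and to order $2$ at each other $p_m$, so it is singular at all four nodes. These four cubics are also linearly independent: evaluating $\sum_i\delta_i\prod_{j\neq i}h_j$ at a generic point of $\{h_\ell=0\}$ isolates $\delta_\ell$. Moving the nodes to the standard basis of $\PP^3$, only the monomials $x_ix_jx_k$ with $i<j<k$ survive the singularity conditions at all four, so the space of cubics singular at four non-coplanar points is exactly four-dimensional. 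Hence $\Gamma=\sum_i\delta_i\prod_{j\neq i}h_j$ for some $(\delta_i)\in\kbar^4$.

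To descend, I rewrite this sum as $\Trace(\delta\cdot\Norm(h)/h)$, where $\delta\in R$ maps to $(\delta_i)$ under $R\otimes\kbar\simeq\kbar^4$. The Galois group permutes the $p_i$ (and hence the $h_i$) via its action on the embeddings $R\to\kbar$, and one checks directly that the linear map $\delta\mapsto\Trace(\delta\Norm(h)/h)$ is Galois-equivariant. Both the source $R$ and the subspace of cubics singular along $\spec R$ are defined over $k$, so Galois-invariance yields a $k$-linear isomorphism from $R$ onto the $k$-space of cubics singular along $\spec R$. Since $\Gamma$ lies in this space, there is a unique $\delta\in R$, up to a $k^\times$-scalar absorbed into the defining equation of $\Gamma$, with $\Gamma\colon\Trace(\delta\Norm(h)/h)=0$.

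Finally, replacing $h$ by $uh$ with $u\in R^\times$ multiplies $\Norm(h)/h$ by $\Norm(u)/u$, so $\delta$ is replaced by a $k^\times$-multiple of $\delta u/\Norm(u)$. Achieving $\delta=1$ is thus equivalent to the surjectivity on $k$-points of $\psi\colon R^\times/k^\times\to R^\times/k^\times$, $u\mapsto\Norm(u)/u$ (well-defined because $\psi|_{k^\times}(a)=a^{n-1}=a^3$). Over $\kbar$ the torus $R^\times/k^\times$ becomes $\Gg_m^3$, and in quotient coordinates $\psi$ identifies with the inversion map $[-1]$; hence $\psi$ is the inversion automorphism of the $k$-torus $R^\times/k^\times$, an isomorphism of $k$-group schemes, so bijective on $k$-points, and the required $u$ exists. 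This final torus argument is the main technical input: it is what forces the normalization to be attainable over $k$ itself rather than merely after extending scalars, whereas the preceding steps combine a classical dimension count with a routine Galois descent.
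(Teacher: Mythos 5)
Your argument is correct, and for the existence of $\delta$ it is essentially the paper's argument with the descent step made explicit: the paper also reduces to the standard normal form $x_0x_1x_2+x_0x_1x_3+x_0x_2x_3+x_1x_2x_3=0$ over $\kbar$ (where $\Norm(h)=x_0x_1x_2x_3$ and $\delta=1$) and then invokes covariance under projective equivalence plus the $k$-rationality of the expression $\Trace(\delta\Norm(h)/h)$; your dimension count for cubics singular at four non-coplanar points and the observation that $\delta\mapsto\Trace(\delta\Norm(h)/h)$ is a $k$-linear isomorphism onto that space is a clean way of filling in what the paper leaves terse. Where you genuinely diverge is the normalization $\delta=1$: the paper simply sets $h'=\delta^{-1}h$ and computes $\Trace(\delta\Norm(h)/h)=\Norm(\delta)\Trace(\Norm(h')/h')$ with $\Norm(\delta)\in k^\times$, which is the whole proof. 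Your torus argument is correct (modulo Hilbert~90 to identify $(R^\times/\Gg_m)(k)$ with $R^\times/k^\times$), but it is heavy machinery for a triviality: the preimage of $\delta$ under the inversion map you identify is just $u=\delta^{-1}$, so unwinding your argument recovers exactly the paper's substitution. I would also make explicit the point both treatments use silently: $\delta$ lies in $R^\times$, since if some component $\delta_i$ vanished over $\kbar$ the cubic would acquire $h_i$ as a factor and be reducible, contradicting that a Cayley cubic has zero-dimensional singular locus.
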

\begin{proof}
	Over an algebraic closure we know $\Gamma$ is projectively equivalent to
	\[x_0x_1x_2+x_0x_1x_3+x_0x_2x_3+x_1x_2x_3=0.\]
	In this case, take $p_R=(1:0:0:0)$ and $h=x_0$. With a change in coordinates we can explicitly represent the singular locus as an affine variety to compute $\Norm(h)=x_0x_1x_2x_3$, and the statement of the theorem holds. The general statement follows by covariance under projective equivalence, together with the basic observation that the given equation is defined over the base field. For the second assertion, set $h'=\delta\inv h$ and note
	\[
    \Trace\left(\delta\frac{\Norm(h)}{h}\right)=\Norm(\delta)\Trace\left(\frac{\Norm(h')}{h'}\right).\qedhere
	\]
\end{proof}

\begin{lemma}\label{L:cayley-k-equivalence}
	Two Cayley cubics $\Gamma_1,\, \Gamma_2$ over $k$ are projectively equivalent if and only if their singular loci are isomorphic over $k$.
\end{lemma}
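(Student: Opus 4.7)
The forward implication is immediate: any element of $\pgl_4(k)$ carrying $\Gamma_1$ to $\Gamma_2$ restricts to a $k$-isomorphism $S_1 \isoto S_2$ between the singular loci. For the converse, the plan is Galois descent. Fix a $k$-isomorphism $\varphi\colon S_1 \isoto S_2$. Theorem~\ref{T:symmetroid_classification}~(1) produces some $g \in \pgl_4(\kbar)$ with $g(\Gamma_1) = \Gamma_2$; the task is to correct $g$ so that it becomes Galois-invariant, hence descends to $k$.

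The key structural input I will establish is that the projective stabilizer $G := \stab_{\pgl_4}(\Gamma_2)$ acts faithfully on the four nodes of $S_2$. Working with the standard normal form $\sum_{i=0}^{3} \prod_{j \neq i} x_j$, the stabilizer of the four coordinate nodes inside $\pgl_4$ is the diagonal torus modulo scalars. A substitution $x_j \mapsto \lambda_j x_j$ multiplies the coefficient of $\prod_{j \neq i} x_j$ by $\prod_{j \neq i} \lambda_j$; equating these factors across $i$ forces all $\lambda_i$ equal, so the only element of this torus preserving the cubic is the identity. Since permutations of $(x_0,\ldots,x_3)$ visibly preserve the equation, the inclusion $S_4 \hookrightarrow \pgl_4$ realizes $S_4$ inside $G$, and the restriction map $G(\kbar) \to \Aut(S_2)(\kbar) \cong S_4$ is an isomorphism.

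With this isomorphism in hand, I replace $g$ by $\tau \circ g$ for a suitable $\tau \in G(\kbar)$ so that $g|_{S_1}$ coincides with $\varphi$ after base change to $\kbar$. For each $\sigma \in \Gal(\kbar/k)$, the element $h_\sigma := \sigma(g) \circ g^{-1}$ lies in $G(\kbar)$ (because both $\Gamma_i$ are defined over $k$), and its restriction to $S_2$ equals $\sigma(\varphi) \circ \varphi^{-1} = \id_{S_2}$ since $\varphi$ descends to $k$. Faithfulness of the $G$-action on the four nodes then forces $h_\sigma = \id$, so $\sigma(g) = g$ for every $\sigma$ and $g \in \pgl_4(k)$. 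The main technical point is the torus-stabilizer calculation above: it is what rules out spurious $k$-forms of Cayley cubics beyond those classified by the isomorphism class of the singular locus.
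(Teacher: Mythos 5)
Your proof is correct, and it takes a genuinely different route from the paper's. The paper first reduces to the case where the two singular loci \emph{coincide} as subschemes of $\PP^3$ (two $k$-isomorphic degree-$4$ \'etale subschemes spanning $\PP^3$ are projectively equivalent over $k$), writes both cubics in the normal form $\Trace(\delta_i\,\Norm(h)/h)=0$ from the preceding lemma, and then uses the group of linear transformations scaling $h$ by units of $R$ --- a $k$-form of the diagonal torus, isomorphic to $R^*$ --- to carry $\delta_1$ to $\delta_2$; in effect it shows this torus acts \emph{transitively} on Cayley cubics with a fixed singular locus. You prove the orbit--stabilizer counterpart: the torus stabilizer of a single Cayley cubic is trivial, so $\stab_{\pgl_4}(\Gamma)\cong S_4$ acts faithfully on the nodes, and the transporter from $\Gamma_1$ to $\Gamma_2$ inducing the prescribed $\varphi$ on singular loci consists of a single, hence Galois-invariant, element of $\pgl_4$. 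Your version is less computational (it needs only the existence of a $\kbar$-equivalence from Theorem~\ref{T:symmetroid_classification}, not the Trace/Norm normal form) and it isolates the independently useful fact that $\Aut(\Gamma)\cong S_4$; the paper's version stays over $k$ throughout and produces the equivalence explicitly. One point of hygiene: over an imperfect field the fixed-point argument must be run with $\Gal(\ksep/k)$ rather than $\Aut(\kbar/k)$, so you should note that $g$ can be chosen in $\pgl_4(\ksep)$. This is harmless here --- the nodes are $\ksep$-points by the separability built into the definition of a Cayley cubic, and any cubic singular at the four coordinate points has the form $\sum_i c_i\prod_{j\neq i}x_j$, which a diagonal substitution over the field generated by the $c_i$ puts into standard form --- but it deserves a sentence.
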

\begin{proof}
  One direction is obvious. Suppose now that the singular loci of $\Gamma_1,\,\Gamma_2$ are isomorphic. Since the singular loci are of the same degree, these loci must be projectively equivalent. After a linear transformation we assume that the singular loci of $\Gamma_1$ and $\Gamma_2$ coincide. Then, for a fixed form $h$ we may write the defining equations as
	\[
	\Gamma_i\colon \Trace\left(\delta_i
	\frac{\Norm(h)}{h}
	\right)=0,\]
If $A$ is a linear transformation over $k$ that leaves $h$ invariant, we must have that $ h\circ A=\delta_A h$ for some $\delta_A \in R^*$. In fact, we have an isomorphism between $R^*$ and the group of such transformations. By finding $A$ for which $\delta_A=\delta_2/\delta_1$ we see that $\Gamma_2\circ A$ and $\Gamma_1$ are both cut out by the same equation.
\end{proof}

\begin{proposition}\label{P:cayley_is_symmetroid}
	A Cayley cubic $\Gamma$ defined over $k$ admits a symmetrization over $k$. 
\end{proposition}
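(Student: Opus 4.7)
My plan is to invoke Lemma~\ref{L:cayley-k-equivalence}: once we construct, for the quartic étale $k$-algebra $R$ whose spectrum is the singular locus of $\Gamma$, some other Cayley cubic $\Gamma_R$ over $k$ with singular locus isomorphic to $\spec R$ admitting a $k$-symmetrization, the lemma yields a projective equivalence $\Gamma\simeq\Gamma_R$ over $k$ that transports the symmetrization back to $\Gamma$. So the task reduces to building the pair $(\Gamma_R,\ca_R)$ intrinsically from $R$.

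For this, I would take $U=(\ker\Trace\colon R\to k)^\vee$, a three-dimensional $k$-vector space well defined because $\Trace(1)=\dim_k R=4\neq 0$ in characteristic not two. The four $\kbar$-algebra projections $\pi_i\colon R\otimes\kbar\to\kbar$ restrict to linear functionals on $\ker\Trace$, i.e., to elements $\phi_i\in U\otimes\kbar$ satisfying the single relation $\sum\phi_i=0$; in particular, any three of them form a $\kbar$-basis. Set $M_i=\phi_i\otimes\phi_i\in\sym^2 U\otimes\kbar$. The Galois group permutes the $\phi_i$, and hence the $M_i$, in the same way it acts on the components of $R\otimes\kbar$, so the $\kbar$-linear span of the $M_i$ is Galois-stable and, by Galois descent, is the base change of a $k$-subspace $V^\vee\subset\sym^2 U$. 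The tautological inclusion $V^\vee\hookrightarrow\sym^2 U$ is the candidate symmetrization, and the associated cubic is $\Gamma_R\subset|V^\vee|$.

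Both the dimension of $V^\vee$ and the location of its rank-one locus come from a single linear-algebra computation. After eliminating $\phi_4=-\phi_1-\phi_2-\phi_3$, the generic element $\sum c_iM_i$ of $V^\vee$ is represented, in the basis $\phi_1,\phi_2,\phi_3$, by the symmetric matrix
\[
c_4 J+\diag(c_1,c_2,c_3),
\]
where $J$ is the $3\times 3$ all-ones matrix. This matrix vanishes only when $c_1=c_2=c_3=c_4=0$, so the $M_i$ are linearly independent and $\dim V^\vee=4$. The hardest step is the rank-one analysis, which I would handle by computing the $2\times 2$ minors of this matrix explicitly: the off-diagonal minors are $c_ic_4$ and the principal minors on rows and columns $\{i,j\}$ are $c_ic_j+c_4(c_i+c_j)$. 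A short case analysis splits into $c_4=0$, which forces at most one of $c_1,c_2,c_3$ to be nonzero, and $c_4\neq 0$, which forces $c_1=c_2=c_3=0$; in either case the rank-one combination is a scalar multiple of some $M_i$. Consequently $\Gamma_R$ has exactly four nodes at the $[M_i]$; since the $M_i$ span $V^\vee$ these nodes span $|V^\vee|\simeq\PP^3$, so the singular locus is not contained in a plane, and Galois-equivariance identifies it as a $k$-scheme with $\spec R$. Lemma~\ref{L:cayley-k-equivalence} then completes the argument.
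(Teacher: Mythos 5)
Your reduction is the same as the paper's: both proofs construct a model Cayley cubic over $k$ whose singular locus is $\spec R$ and which carries a visible $k$-symmetrization, and then invoke Lemma~\ref{L:cayley-k-equivalence} to transport that symmetrization back to $\Gamma$. Where you differ is in how the model is built. The paper chooses a primitive element $\theta$ of $R$ and realizes $\spec R$ as the intersection of the rational normal quartic curve (the rank-one locus of the Hankel symmetroid in $\PP^4$) with the hyperplane cut out by the minimal polynomial of $\theta$. You instead build the net of conics intrinsically from the trace form: the four geometric points of $\spec R$ become the rank-one tensors $\phi_i\otimes\phi_i$ in $\sym^2 U\otimes\kbar$, and Galois descent of their span produces the symmetrization with no choice of primitive element (a choice which, incidentally, is not always available for an \'etale algebra such as $\Ff_3^4$ over $\Ff_3$). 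Your linear algebra checks out: the matrix $c_4J+\diag(c_1,c_2,c_3)$ vanishes only for $c=0$, and your minor computation correctly identifies the rank-one locus of the family with $\{[M_1],\dots,[M_4]\}$.

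There is one inferential gap, at the word ``Consequently.'' Knowing that $|V^\vee|$ meets the rank-$\le 1$ locus of $\cd$ in exactly the four points $[M_i]$ does not by itself show that the singular locus of $\Gamma_R$ is exactly those four points: a linear section of $\cd$ is singular not only where it meets the rank-one (Veronese) locus $\cd_1$ but also wherever it is tangent to $\cd$ at a rank-two point --- this is precisely how the extra $A_3$ and $A_5$ points of types (2)--(5) in Theorem~\ref{T:symmetroid_classification} arise, and the paper's proof spends a sentence ruling this possibility out via the discriminant of the minimal polynomial. You need the analogous check, since Lemma~\ref{L:cayley-k-equivalence} only applies once $\Gamma_R$ is known to be a Cayley cubic in the sense of the definition (four reduced nodes spanning $\PP^3$, nothing more). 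Fortunately your setup makes the fix a one-liner: $\det\bigl(c_4J+\diag(c_1,c_2,c_3)\bigr)=c_1c_2c_3+c_4(c_1c_2+c_1c_3+c_2c_3)$, which is the standard four-nodal Cayley cubic of type (1), whose singular locus is classically exactly the four coordinate points, each an $A_1$ point. With that line added, the argument is complete.
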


\begin{proof} Let $R$ be the quartic \'etale algebra of the singular locus of $\Gamma$.  Pick a primitive element $\theta\in R$ and express the minimal polynomial of $\theta$ as $a_0+a_1t+a_2t^2+a_3t^3+t^4$ for $a_0,\dots,a_3 \in k$.
	
Consider the cubic symmetric threefold in $\PP^4$ defined by
\[\cd'\colon \det\begin{pmatrix}u_0&u_1&u_2\\u_1&u_2&u_3\\u_2&u_3&u_4\end{pmatrix}=0.\]
Its singular locus $\cd_1'$ consists of the locus where the symmetric matrix above has rank $1$. Therefore, $\cd_1'$ is the rational normal curve parametrized by $u_i = t^{i-1}$.
Intersect $\cd'$ with the hyperplane $A= \{a_0u_0+a_1u_1+a_2u_2+a_3u_3+u_4=0\}$ and obtain the cubic symmetroid $\Gamma'=\cd'\cap A$. Observe that the locus $\Gamma'\cap \cd'_1=\spec(R')$ is singular, where $R'=k[t]/(a_0+a_1t+a_2t^2+a_3t^3+t^4)$ is identified with $R$. A priori, the surface $\Gamma'$ could acquire other singularities if $A$ is tangent to $\cd'$ in some nonsingular point. However, direct computation of the dual variety $\widehat{\cd}'$ of $\cd'$ shows that $A$ is tangent to $\Gamma'$ away from the singular locus only if $\disc_t(a_0+a_1t+a_2t^2+a_3t^3+t^4)=0$, which is not the case, because this is the minimal polynomial of a generator of an \'etale algebra.

Consequently, the singular loci of $\Gamma'$ and $\Gamma$ are identified. By Lemma~\ref{L:cayley-k-equivalence}, $\Gamma$ and $\Gamma'$ are projectively equivalent over $k$. Therefore, $\Gamma$ admits a symmetrization over $k$.
\end{proof}

\begin{proof}[Proof of Proposition~\ref{prop:intro_k_symmetrization}]
We split the proof by type according to the classification in Theorem~\ref{T:symmetroid_classification}.
Proposition~\ref{P:cayley_is_symmetroid} establishes the theorem if $\Gamma$ is of type (1). The other possibilities for irreducible $\Gamma$ are (2)--(5). For each of those, there is a rational singular point and one can check that projection from it gives a birational map to $\PP^2$. In each case on can check that, up to some Cremona transformations determined by the singularities, this map is a birational inverse to the adjugation map $\xc_\ca$ from Proposition~\ref{prop:parametrization}. This shows that $\xc_\ca$ is defined over $k$. From Lemma~\ref{lem:gauss} it follows that the map $\xq_\ca$ is defined over $k$ and by Remark~\ref{rem:q_and_A} we can recover $\ca$.

Types (6)--(8) are each the union of a plane and a (possibly degenerate) quadric, which are necessarily each defined over $k$. The quadric is the determinant locus of a $2\times 2$ symmetric matrix and the plane is the determinant locus of a $1\times 1$ symmetric matrix. The union is the determinant locus of the $3\times 3$ determinant locus of the block diagonal symmetric matrix obtained from the two.
\end{proof}

\begin{remark} Many of the types can be obtained from the same construction in the proof of Proposition~\ref{P:cayley_is_symmetroid} by taking non-\'etale algebras $R=k[t]/f(t)$, where $f(t)$ is some quartic polynomial. One can check directly that the different factorization types of $f(t)$ lead to irreducible cubics:
	\begin{center}
		\begin{tabular}{l|l}
			Type&ramification type of $f(t)$\\
			\hline
			(1)& $\displaystyle t^4+a_1t^3+a_2t^2+a_3t+a_4$\\
			(2)& $\displaystyle(t-\alpha_1)^2(t^2+a_1t+a_2)$\\
			(3)& $\displaystyle(t-\alpha_1)^3(t-\alpha_2)$\\
			(4)& $\displaystyle(t^2+a_1t+a_2)^2$\\
			(5)& $\displaystyle(t-\alpha_1)^4$
		\end{tabular}
	\end{center}
\end{remark}

\subsection{A natural double cover} \label{sec:natural_double_cover}

Let $\cd \subset |\sym^2 U|$ be the discriminant locus of quadratic forms on $\PP(U)$. 
An element $[A]\in \cd$ defines a singular quadratic form $q_A$ on $\PP(U)=|U^\vee|$. 
It is a standard result on determinantal varieties that the singular locus $\cd_1$ of $\cd$ corresponds to quadrics $A$ with $\rk(A)=1$.

A singular quadratic form on $\PP^2$ describes a pair of lines (coinciding if $\rk(A)=1$).
By labeling these lines, we obtain an unramified double cover $\tilde{\cd}\to \cd-\cd_1$.

If we represent our quadratic form $A$ with a symmetric matrix $(a_{ij})_{i,j =1,\dots,3}$, $a_{ij}=a_{ji}$, then we get a relation $(a_{11}a_{22}-a_{12}^2)(a_{22}a_{33}-a_{23}^2)-(a_{12}a_{23} - a_{13}a_{22})^2=a_{22}\det(A)$.
Consequently, up to birationality, the double cover $\tilde{\cd}$ can be obtained by adjoining any of the roots $\sqrt{a_{12}^2-a_{11}a_{22}}$, $\sqrt{a_{13}^2-a_{11}a_{33}}$, $\sqrt{a_{23}^2-a_{22}a_{33}}$ to the function field of $\Gamma$. The vanishing of these three minors on $\cd$ coincides with $\cd_1$.

In order to extend the double cover we consider $\cs=\{([A],[u])\in \cd\times\PP(U) \mid Au=0\}$. This space parametrizes singular conics on $\PP(U)$ together with a point $u$ in the singular locus. For $[A]\in \cd-\cd_1$ we have a unique singularity, so on $\cd-\cd_1$ the map $\cs\to\cd$ is an isomorphism.

  Let $\ct \subset \Pp(U) \times |U|$ be the tautological $\p$-bundle over $\Pp(U)$: for each point $x \in \Pp(U)$ the fiber $\ct_x \simeq \p$ parametrizes the lines through $x$. We define $\tilde \cs \subset \cd \times \ct$ as a double cover of $\cs \subset \cd \times \Pp(U)$ as follows:
$\tilde \cs := \{(A,x,y) \in \cd \times \ct \mid A x = 0,\,  \bar q_A(y)=0\}$, where $\bar q_A$ is the conic induced on $\ct_x$ by $A$. Roughly speaking, $\tilde \cs$ parametrizes singular conics $q$ in $\Pp(U)$ together with a point of singularity $x$ as well as one of the two branches of $q$ at $x$. 

\begin{remark}
The fiber of $\cs \to \cd$ over $A\in\cd$ of rank 1 is a line, parametrizing all points of the double line $q_A$. The double cover $\tilde \cs \to \cs$ then is branched all along this line. More generally, the branch locus of $\tilde \cs \to \cs$ is precisely the pullback of the singular locus in $\cd$.
\end{remark}

A symmetrization $\ca$ of a cubic surface $\Gamma$ gives an embedding $\Gamma\subset \cd$.
We write $S_\ca$ for the pullback of $\Gamma_\ca$ along $\cs\to\cd$. The surface $S_\ca$ forms a partial resolution of the singularities of $\Gamma_\ca$. Restriction of $\tilde{\cs}\to\cs$ yields a double cover $\tilde{S}_\ca\to S_\ca$.

By restricting the cover $\tilde{\cs}\to\cd$ to $\Gamma_\ca$, we obtain a cover $\tilde{S}_\ca\to\Gamma_\ca$, which is finite unramified of degree $2$ outside $\cd_1\cap\Gamma_\ca$ (which is the singular locus of $\Gamma_\ca$).

\begin{remark}\label{rem:cone_double_cover}
	If $\Gamma$ is a cone over a smooth plane cubic curve, then this construction gives an unramified double cover of the cubic curve. This double cover gives rise to a double cover of $\Gamma$ branched only at its vertex.
\end{remark}

\begin{theorem}[Catanese~\cite{catanese81}]\label{thm:unique_double_cover}
  Over an algebraically closed base field, any cubic symmetroid of type (1), (2), or (3) has a unique double cover branched only over the singularities. 
\end{theorem}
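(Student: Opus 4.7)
The plan is to leverage the minimal resolution $\pi\colon\tilde{\Gamma}\to\Gamma_\ca$, reducing the question to a local analysis at each singularity combined with a Picard-group computation on the smooth surface $\tilde{\Gamma}$. Existence of at least one double cover branched only at the singularities is already supplied by the construction $\tilde{S}_\ca\to\Gamma_\ca$ of Section~\ref{sec:natural_double_cover}, so the entire task is to rule out any other.

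First, I would set things up as follows. Given any double cover $Y\to\Gamma_\ca$ branched only at singular points, the normalization of $Y\times_{\Gamma_\ca}\tilde{\Gamma}$ produces a double cover $\tilde{Y}\to\tilde{\Gamma}$ whose branch divisor $B$ is supported on the exceptional locus $E$. Because $\tilde{\Gamma}$ is a smooth rational surface, $\Pic(\tilde{\Gamma})$ is torsion-free, so for each fixed effective $B$ there is at most one such cover, classified by a line bundle $L$ with $L^{\otimes 2}\simeq\co_{\tilde{\Gamma}}(B)$. Uniqueness therefore boils down to showing that the configuration of $B$ is forced.

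Next, I would analyze each singularity locally. The link of an $A_n$ singularity has cyclic fundamental group of order $n+1$, and for the types $A_1,\,A_3,\,A_5$ occurring in classes (1)--(3), each such group admits a unique subgroup of index two. Working with the explicit local models $\{xy=z^{2m}\}$ covered by $\{uv=z^m\}$ via $(u,v,z)\mapsto(u^2,v^2,z)$, I would track the branch divisor through the minimal resolution and verify that on each exceptional chain $E_1-\cdots-E_{2m-1}$ the contribution to $B$ must be the alternating sum $E_1+E_3+\cdots+E_{2m-1}$; no other configuration on the chain descends to a cover of $\Gamma_\ca$ branched at a single point rather than along a curve. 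Summing the local data determines $B$ globally, and the existence of a square root of $\co_{\tilde{\Gamma}}(B)$ is then guaranteed by the cover $\tilde{S}_\ca\to\Gamma_\ca$ already in hand.

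The hard part will be justifying this alternating pattern. It requires comparing the minimal resolution of an $A_{2m-1}$ singularity with that of its $A_{m-1}$ double cover and showing that any other $B$ supported on the exceptional chain would force the downstairs branch locus to contain a curve---a verification best carried out explicitly in local coordinates at each singularity type, where one can also check compatibility with the descent back down through $\pi$.
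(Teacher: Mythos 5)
The paper itself offers no proof of this statement---it is quoted from Catanese---so your proposal is judged on its own terms. The skeleton is the right one: pass to the minimal resolution $\tilde\Gamma$ (a smooth rational surface for types (1)--(3)), use torsion-freeness of $\Pic(\tilde\Gamma)$ to reduce uniqueness of the cover to uniqueness of the branch divisor $B$ supported on the exceptional locus, and then pin down $B$. But the way you pin down $B$ has two problems. The smaller one is that your stated reason for excluding configurations other than $E_1+E_3+\cdots+E_{2m-1}$ on an $A_{2m-1}$-chain (``any other $B$ would force the downstairs branch locus to contain a curve'') is not valid: \emph{every} divisor supported on the chain maps to the singular point, so every configuration descends to a cover branched only at that point. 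The correct local constraint is that $\co_{\tilde\Gamma}(B)$ must restrict to a line bundle of even degree on each exceptional curve $E_i\simeq\Pp^1$ (otherwise it has no square root there); on a chain this parity condition forces $B$ to be either the alternating sum or \emph{zero}.

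That last alternative is the real gap. Taking $B=0$ on a given chain corresponds to a cover that is split (two unramified points) over that singular point, which is still a double cover ``branched only over the singularities.'' Your local analysis therefore leaves $2^{\#\mathrm{Sing}(\Gamma)}$ candidate branch divisors, and ``summing the local data'' does not single one out. Uniqueness requires the genuinely global statement that the only nonzero candidate divisible by two in $\Pic(\tilde\Gamma)$ is the full one. For the Cayley cubic, for instance, with $N_i\equiv \ell-\sum_{j\neq i}E_{ij}$ one checks that $N_1+N_2=2\ell-2E_{12}-E_{13}-E_{14}-E_{23}-E_{24}$ is not $2$-divisible (nor is any other proper nonempty subsum), whereas $N_1+N_2+N_3+N_4=2(2\ell-\sum_{i<j}E_{ij})$ is; analogous computations are needed for types (2) and (3) with the chains included. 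Once you add these Picard-lattice verifications---and check that the existing cover $\tilde S_\ca\to\Gamma_\ca$ is in fact connected over each singular point, so that it realizes the full branch divisor---the argument closes.
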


\begin{corollary}\label{cor:unique_moduli}
  Any double cover of an irreducible symmetroid of type (1), (2), or (3) branched only over the singular points is a twist of the cover constructed here.
\end{corollary}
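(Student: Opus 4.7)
The plan is to deduce this by Galois descent from the absolute uniqueness statement of Theorem~\ref{thm:unique_double_cover}. Let $\pi\colon\tilde S_\ca\to\Gamma$ denote the natural double cover of Section~\ref{sec:natural_double_cover}, and let $\pi'\colon Y\to\Gamma$ be another double cover defined over $k$ that is branched only along the singular locus of $\Gamma$.

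First I base change to $\kbar$. Theorem~\ref{thm:unique_double_cover} provides an isomorphism $\psi\colon Y_{\kbar}\isoto(\tilde S_\ca)_{\kbar}$ of covers of $\Gamma_{\kbar}$. For each $\sigma\in\Gal(\kbar/k)$ the element $c_\sigma=\sigma(\psi)\circ\psi^{-1}$ belongs to $\Aut((\tilde S_\ca)_{\kbar}/\Gamma_{\kbar})$, and $\sigma\mapsto c_\sigma$ is a continuous $1$-cocycle; the Galois descent datum defining $Y/k$ is the one obtained from the descent datum of $\tilde S_\ca/k$ by twisting with $(c_\sigma)$.

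Next I identify the automorphism group. For $\Gamma$ of type~(1), (2) or~(3) the cover $(\tilde S_\ca)_{\kbar}\to\Gamma_{\kbar}$ is nontrivially ramified along the singular locus, hence geometrically connected, so
\[\Aut\bigl((\tilde S_\ca)_{\kbar}/\Gamma_{\kbar}\bigr)=\Zz/2,\]
generated by the covering involution. By Kummer theory
\[H^1(\Gal(\kbar/k),\Zz/2)\simeq k^\times/k^{\times 2},\]
so the class of $(c_\sigma)$ corresponds to a unique $d\in k^\times/k^{\times 2}$. Unwinding the twisted descent identifies $Y$ with the quadratic twist of $\tilde S_\ca$ along $k(\sqrt d)/k$ in the sense introduced in Section~\ref{sec:trigonal}, which is the required conclusion.

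The only point requiring real care is the identification of the geometric deck group with $\Zz/2$, which rests on geometric connectedness of $\tilde S_\ca$. In each of the three relevant types this is either transparent from the explicit construction in Section~\ref{sec:natural_double_cover} or follows from the observation that a disconnected double cover is unramified, while by hypothesis every singular point of $\Gamma$ lies in the branch locus. Everything else is standard descent and cohomology.
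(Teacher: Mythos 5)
Your argument is correct and is essentially the one the paper intends: the corollary is stated as an immediate consequence of Theorem~\ref{thm:unique_double_cover}, the point being that geometric uniqueness plus the identification of the deck group with $\Zz/2$ (and Kummer theory, as in the remark following the definition of quadratic twist) shows any $k$-form is a quadratic twist. Your write-up just makes the standard descent cocycle explicit; the only cosmetic point is that one should work with $\Gal(\ksep/k)$, which is harmless here since the automorphism group scheme is \'etale in characteristic different from two.
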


\begin{remark}\label{rem:unique_symmetrization}
  In light of this observation, Lemma~\ref{lem:gauss} implies that the symmetrization of $\Gamma_\ca$ is unique over $\kbar$, up to equivalence, if $\ca$ is of type (1)--(3). The inverse of the map $\xc_\ca$ is obtained by blowing up each of the singular points of $\Gamma_\ca$ exactly once and then blowing down the $(-1)$-curves. Using the Gauss map $\gamma_\ca$ we obtain $\xq_\ca=\gamma_\ca \circ \xc_\ca$.
\end{remark}

\subsection{Degenerate cubic symmetroids}\label{sec:degenerate_cubics}

Suppose now that $\ca_V\colon V^\vee \to \sym^2 U$ has a one-dimensional kernel $K \subset V^\vee$. Let $\ann(K) \subset V$ be the annihilator of the subspace $K$. The injection $V^\vee/K \toi \sym^2 U$ induced by $\ca$ allows us to pull back the discriminant locus $\cd \subset |\sym^2 U|$ onto a cubic plane curve $E \subset |V^\vee/K| = \Pp \ann(K)$. The corresponding cubic symmetroid $\Gamma_\ca$ is the cone over the cubic curve $E$ via the projection $\Pp V \ratto \Pp \ann(K)$. 

\begin{definition}
  For the rest of this paper, a symmetrization $\ca$ is \emph{degenerate} if $\ca_V$ has a one-dimensional kernel, and the cubic curve $E$ constructed above is \emph{smooth}. The cubic symmetroid $\Gamma$ is called a \emph{degenerate} cubic symmetroid if $\ca$ is degenerate.
\end{definition}

The study of a degenerate cubic symmetroid $\Gamma$ reduces to the study of the cubic curve $E \subset \Pp \ann(K)$ obtained by projecting $\Gamma$ from its node $[\ann(K)] \in \Pp V$. We refer to~\cite{dolgachev}*{Chapters 3--4} for the study of cubic plane curves and their symmetrizations. Let us recall the following two results from \emph{loc.\ cit.}

\begin{proposition}\label{prop:sym_of_E}
  Symmetrizations of a smooth plane cubic $E$ are canonically in bijection with the three elements of order two in $\jac_E[2]$.
\end{proposition}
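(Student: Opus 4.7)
The plan is to invoke the classical correspondence between symmetric determinantal representations of a plane curve and its theta characteristics, specialized to the cubic case. By choosing a basis for $U$ and for $V^\vee/K$, a symmetrization amounts to a symmetric $3 \times 3$ matrix $M$ of linear forms on $\pp = \Pp \ann(K)$ with $\det M$ cutting out $E$. Such a matrix yields the short exact sequence
\[ 0 \to \mathcal{O}_\pp(-2)^{3} \xrightarrow{M} \mathcal{O}_\pp(-1)^{3} \to L \to 0, \]
and smoothness of $E$ forces $M$ to drop rank by exactly one along $E$, so $L := \coker(M)$ is a line bundle on $E$. An Euler characteristic computation using the resolution gives $\chi(L) = 0$, hence $L$ has degree zero on the genus-one curve $E$.

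Next, I would use the symmetry $M = M^t$ to show that $L$ defines a $2$-torsion class. Applying $\Ext^1_{\mathcal{O}_\pp}(-, \omega_\pp)$ to the resolution and invoking Grothendieck-Serre duality along $E \hookrightarrow \pp$, together with $\omega_E = \mathcal{O}_E$ and $\omega_\pp|_E = \mathcal{O}_E(-3)$, identifies the cokernel of $M^t$ with $L^\vee$. Symmetry of $M$ then forces $L \simeq L^\vee$, i.e.\ $L^{\otimes 2} \simeq \mathcal{O}_E$. To rule out $L = \mathcal{O}_E$, take global sections of the resolution: $h^0(L) = 0$ because $h^0(\mathcal{O}_\pp(-1)) = 0$ and $h^1(\mathcal{O}_\pp(-2)) = 0$, whereas $h^0(\mathcal{O}_E) = 1$. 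Thus $L \in \jac_E[2] \setminus \{0\}$, and the association $\ca \mapsto L$ is well-defined on the equivalence classes of symmetrizations (where two symmetric matrices are equivalent if $M' = A^t M A$ for some $A \in \GL_3$, an operation that manifestly preserves $L$).

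For the reverse direction, given any non-trivial $L \in \jac_E[2]$, the Hilbert-Burch theorem (or Beilinson's resolution on $\pp$) furnishes a minimal free resolution of the form $0 \to \mathcal{O}_\pp(-2)^{3} \xrightarrow{N} \mathcal{O}_\pp(-1)^{3} \to L \to 0$, unique up to isomorphism of complexes. The self-duality $L \simeq L^\vee$ arising from $L^{\otimes 2} \simeq \mathcal{O}_E = \omega_E$ lifts, by uniqueness of the minimal resolution, to a non-degenerate bilinear pairing on $\mathcal{O}_\pp(-1)^{3}$ that conjugates $N$ into $N^t$. Since every $3 \times 3$ skew-symmetric matrix has vanishing determinant in characteristic different from two, while $\det N$ must cut out $E$, this pairing has to be symmetric; a change of basis then converts $N$ into a symmetric matrix, producing the sought symmetrization.

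The main obstacle is this last reverse-direction step: showing that the self-duality of $L$ gives a \emph{symmetric} (rather than skew-symmetric) intertwiner of $N$ with $N^t$. The parity argument on odd-dimensional skew-symmetric matrices settles it, but the cleanest verification is to write the pairing on local sections and use the perfect pairing $L \otimes L \to \mathcal{O}_E$; one should also verify that the pairing is non-degenerate, which can be done generically on $\pp$ using that $N$ has rank $2$ exactly along $E$ and rank $3$ on its complement.
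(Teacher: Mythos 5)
The paper offers no proof of this proposition: it is recalled verbatim from Dolgachev's book (Chapters 3--4, with the same material in Beauville's cited survey on determinantal hypersurfaces), and your argument is precisely the standard proof given in those references --- the cokernel line bundle of the determinantal representation, the degree and $h^0$ computations from the resolution, self-duality via $\Ext^1(-,\omega_{\Pp^2})$ and Grothendieck--Serre duality, and the odd-size skew-determinant parity argument to force symmetry of the intertwiner in the converse direction. It is correct as written, including the identification of the three nontrivial $2$-torsion classes as exactly the ineffective theta characteristics of $E$ (after the twist by $\co_E(-1)$).
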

\begin{remark}
  Since $E$ is smooth, the singular conics parametrized by $E$ are all reduced.
\end{remark}

An element of order two $\varepsilon \in \jac_E[2]$ induces an unramified double cover $\tilde E_\varepsilon \to E$, where $\tilde E_\varepsilon = \Spec_E (\co_E \oplus \varepsilon)$ with $\co_E \oplus \varepsilon$ having the natural algebra structure. In addition, the corresponding symmetrization induces a natural unramified double cover of $E$ by marking the two components of each of the singular conics parametrized by $E$. From~\cite{dolgachev}*{Exercise~3.2} we get:

\begin{proposition}
  The two double covers of $E$ described above coincide.\qed
\end{proposition}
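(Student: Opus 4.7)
The plan is to match the two double covers by unpacking both constructions and invoking the natural description of the bijection of Proposition~\ref{prop:sym_of_E}.

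First I would use the setup of Section~\ref{sec:natural_double_cover} applied to the degenerate case. A symmetrization $\ca$ with one-dimensional kernel $K\subset V^\vee$ exhibits $E$ as the pullback of the discriminant $\cd\subset|\sym^2 U|$ along the injection $V^\vee/K\hookrightarrow\sym^2 U$. The double cover $\tilde\cs\to\cs$ of Section~\ref{sec:natural_double_cover}, which marks a component of each (possibly non-reduced) singular conic, pulls back to a double cover of $E$. Because $E$ is smooth, no point of $E$ parametrizes a rank-one (i.e., non-reduced) conic, so the pullback cover has empty branch locus: it is the component-marking double cover of $E$ described in the statement. Thus the second cover of the proposition is exactly $\tilde S_\ca\to E$ restricted from Remark~\ref{rem:cone_double_cover}.

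Next I would identify this cover with $\tilde E_\varepsilon$ for the $\varepsilon$ paired with $\ca$ in Proposition~\ref{prop:sym_of_E}. Since $E$ is a smooth plane cubic, an étale double cover is either trivial or determined by a unique nonzero $\varepsilon\in\Jac_E[2]$ via $\tilde E_\varepsilon=\Spec_E(\co_E\oplus\varepsilon)$. So it suffices to check two points: (i) the component-marking cover is connected (hence nontrivial), and (ii) the resulting element of $\Jac_E[2]$ is exactly the one attached to $\ca$ by the cited bijection from~\cite{dolgachev}*{Chapters~3--4}. For (i), one can work in any of the normal forms of Theorem~\ref{T:symmetroid_classification} restricted to the cone situation, or use the explicit birational description of the cover by adjoining $\sqrt{a_{12}^2-a_{11}a_{22}}$ to the function field of $\Gamma$ and verify this class is nontrivial in $k(E)^\times/k(E)^{\times 2}$ modulo principal divisors.

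For (ii), which is the main content, I would appeal to the fact that the bijection of Proposition~\ref{prop:sym_of_E} is \emph{defined} by the component-marking construction: Dolgachev's identification of symmetrizations of a smooth plane cubic with nonzero two-torsion proceeds by sending each symmetrization $\ca$ to the étale double cover obtained by marking the two branches of each reduced singular conic in the linear system, and then to the unique $\varepsilon\in\Jac_E[2]\setminus\{0\}$ it represents. With this understanding, the proposition is the tautological statement that the two constructions produce the same cover. The main obstacle in a self-contained writeup is making this identification of bijections explicit — equivalently, checking that Dolgachev's construction and the one coming from the ambient determinantal variety $\cd\subset|\sym^2 U|$ restrict to the same double cover of $E$. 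This can be done by tracing through the $3\times 3$ symmetric cone description, where both descriptions reduce to adjoining a square root of the same $2\times 2$ minor, as recalled in Section~\ref{sec:natural_double_cover}.
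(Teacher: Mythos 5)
The paper offers no argument for this statement: it is quoted directly from \cite{dolgachev}*{Exercise~3.2}, so the only question is whether your plan constitutes a valid proof. Your first two steps are fine and match the surrounding text: the component-marking cover of $E$ is the restriction of $\tilde\cs\to\cs$ along $V^\vee/K\hookrightarrow\sym^2 U$, and it is \'etale because a smooth $E$ meets none of the rank-one locus $\cd_1$, so every conic it parametrizes is a reduced line pair. Establishing nontriviality by computing the class of $a_{12}^2-a_{11}a_{22}$ in $k(E)^\times/k(E)^{\times 2}$ is also a workable (if unglamorous) route to point (i).

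The gap is in point (ii), which you yourself identify as the main content. You dispose of it by asserting that the bijection of Proposition~\ref{prop:sym_of_E} is \emph{defined} by the component-marking construction. That is not how Dolgachev (nor the paper, which cites his Chapters 3--4 for the bijection and then a separate exercise for the present proposition) sets things up: the canonical bijection between symmetrizations of a smooth plane curve and its ineffective theta characteristics --- for a cubic, the three nonzero points of $\Jac_E[2]$ --- is constructed via the cokernel sheaf of the matrix of linear forms, i.e.\ $\theta(1)=\coker\bigl(\ca\colon\co(-1)^{3}\to\co^{3}\bigr)$. The proposition is exactly the nontrivial compatibility between that cokernel-sheaf class and the component-marking cover; declaring them equal by definition begs the question. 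Your fallback computation does not repair this: as described, it compares the two \emph{component-marking} descriptions (Dolgachev's versus the one inherited from the ambient $\cd\subset|\sym^2U|$), both of which amount to adjoining the same square root of a $2\times 2$ minor, rather than comparing the component-marking cover with $\Spec_E(\co_E\oplus\varepsilon)$ for the $\varepsilon$ produced by the cokernel construction. To close the argument you would need to exhibit the marked line as a section of (a twist of) the cokernel line bundle, or otherwise compute the divisor class cut out by one branch of the line pairs and show it differs from a line section by the theta characteristic attached to $\ca$.
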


\section{The Prym-canonical map}\label{sec:prym_canonical_map}

Throughout the section, $C$ is a non-hyperelliptic smooth proper curve of genus four and $\varepsilon$ a line bundle on $C$ of order two. We assume $C$ is embedded into $\ppp$ via its canonical map and denote by $Q_C$ the unique quadric containing $C$. We remove all generality assumptions from Catanese~\cite{catanese83}*{Theorem 1.5} and prove the following theorem. The proof of this theorem is divided into subsections corresponding to different types of $\varepsilon$.

\begin{theorem}\label{thm:prym}
  The line bundle $\varepsilon$ corresponds naturally to a pair $(\Gamma_\varepsilon,\xq_\varepsilon)$ where $\Gamma_\varepsilon \subset \ppp$ is a cubic symmetroid containing $C$ and $\xq_\varepsilon\colon \pp \to \vppp$ is a symmetrization of $\Gamma_\varepsilon$. 
\end{theorem}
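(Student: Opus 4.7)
I set $V := \H^0(C, \omega_C)$ (dimension $4$) and $U := \H^0(C, \omega_C \otimes \varepsilon)$ (dimension $3$ by Riemann-Roch, since $h^0(\varepsilon)=0$), so that $C \subset \ppp = \Pp V$ is the canonical model and $\pp = \Pp U$ is the target of the Prym-canonical map $\phi_\varepsilon$. To prove the theorem it suffices to produce a natural tensor $\ca_\varepsilon \in V \otimes \sym^2 U$ whose associated linear map $\ca_{\varepsilon,V}\colon V^\vee \to \sym^2 U$ pulls the discriminant back to a cubic $\Gamma_\varepsilon$ containing $C$; the symmetrization $\xq_\varepsilon\colon \pp \to \vppp$ is then determined by $\ca_\varepsilon$ via Section~\ref{sec:gauss_map}. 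The proof proceeds by case analysis over the strata of Table~\ref{tbl:strata}.

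For the generic stratum $\cR_4^\mathrm{o}$, my plan is to extract $\ca_\varepsilon$ from the Koszul-type structure on $(C,\omega_C,\omega_C\otimes\varepsilon)$. The key input is the identity $\varepsilon^{\otimes 2} \cong \co_C$, which yields a multiplication $m\colon \sym^2 U \to \H^0(\omega_C^{\otimes 2})$; combined with the surjection $\sym^2 V \twoheadrightarrow \H^0(\omega_C^{\otimes 2})$ (kernel $kQ_C$) and the companion multiplication $\mu\colon V\otimes U \to \H^0(\omega_C^{\otimes 2}\otimes\varepsilon)$, Riemann-Roch shows that $m$ is injective and $\mu$ is surjective with three-dimensional kernel $K$. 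I construct $\ca_\varepsilon$ from $K$ using the symmetry of $m$ (i.e., $u_1u_2=u_2u_1$), which forces $K$ to assemble symmetrically in $V\otimes\sym^2 U$. Equivalently, one may work with the minimal free resolution of the pushforward sheaf $i_*(\theta\otimes\varepsilon)$ on $\ppp$ for a theta characteristic $\theta$, where self-duality under Grothendieck–Serre duality (using $(\theta\otimes\varepsilon)^\vee\otimes\omega_C \cong \theta\otimes\varepsilon$) yields a symmetric matrix-factorization presentation of $\Gamma_\varepsilon$. The containment $C \subset \Gamma_\varepsilon$ is then automatic, since the rank-drop locus of the symmetric matrix is precisely the support of the resolved sheaf.

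The remaining four strata are treated by tracking how this resolution degenerates. For bielliptic $\varepsilon = \xb^*\varepsilon'$, the Prym-canonical map $\phi_\varepsilon$ factors through the elliptic quotient $\xb\colon C \to E$, so $\phi_\varepsilon(C)$ is a plane cubic and $\mu$ has an enlarged kernel; consequently $\ca_{\varepsilon,V}$ acquires a one-dimensional kernel and $\Gamma_\varepsilon$ becomes a cone, handled via the degenerate framework of Section~\ref{sec:degenerate_cubics}. For odd $\varepsilon$, the tritangent plane $H_{\theta\otimes\varepsilon}$ cuts $C$ in $2D_{\theta\otimes\varepsilon}$ and the resolution splits off a rank-two block, making $\Gamma_\varepsilon$ a reducible symmetroid of type~(7) (a quadric cone and a plane, for which $\Gamma_\varepsilon \cap Q_C = Q_C$ as announced after the theorem statement). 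The $\cR_4^\mathrm{even}$ and $\cR_4^\mathrm{biell,even}$ strata combine these features with a singular (cone) $Q_C$ and are verified analogously.

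The chief technical obstacle is establishing the symmetry of the extracted tensor. In the Koszul approach this amounts to showing that the three-dimensional kernel $K$ descends symmetrically from $V\otimes U\otimes U$ to $V\otimes\sym^2 U$; in the resolution approach it is the self-duality isomorphism of complexes, which requires careful cohomological bookkeeping of twists and, when no theta characteristic is defined over $k$, a descent argument exploiting the fact that $\Gamma_\varepsilon$ and $\ca_\varepsilon$ are intrinsic to $\varepsilon$ alone. Once symmetry is in hand, the case-by-case stratification behavior and the containment $C\subset \Gamma_\varepsilon$ reduce to direct computation with the explicit multiplication maps $\sym^2 U \to \H^0(\omega_C^{\otimes 2})$ and $V\otimes U \to \H^0(\omega_C^{\otimes 2}\otimes\varepsilon)$.
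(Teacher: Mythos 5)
Your route is genuinely different from the paper's. The paper never tries to write down the tensor $\ca_\varepsilon$ directly from multiplication maps on $C$: it first constructs the \emph{surface} $\Gamma_\varepsilon$ geometrically, as the image of the adjoint system $|\omega_{S_\varepsilon}(C)|$ on the resolution $S_\varepsilon$ of the Prym-canonical plane sextic (with Lemma~\ref{lem:mult_seq} on the multiplicity sequence doing the work of separating the strata), and only then obtains the symmetrization by invoking Catanese's theorem on symmetric resolutions of the resulting cubic surface, recovering $\xq_\varepsilon=\gamma_\varepsilon\circ\xc_\varepsilon$ via the Gauss map (Lemma~\ref{lem:gauss}). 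Your plan --- extract $\ca_\varepsilon$ from the space $K$ of linear syzygies between $V=\H^0(\omega_C)$ and $U=\H^0(\omega_C\otimes\varepsilon)$ --- would, if completed, give the containment $C\subset\Gamma_\varepsilon$ essentially for free (the syzygy condition forces $\ca_V(\ev_p)$ to annihilate the nonzero evaluation vector of $U$ at each $p\in C$), and it is an attractive, more algebraic alternative.

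The gap is exactly at the step you flag and then wave through. Having a three-dimensional $K\subset V\otimes U$ (and already the surjectivity of $\mu$, hence $\dim K=3$, is a projective-normality statement that Riemann--Roch alone does not give and that must be re-examined on each stratum), you must produce an isomorphism $U^\vee\isoto K$ under which $U^\vee\to K\toi V\otimes U$ is symmetric in the two copies of $U$. The commutativity $u_1u_2=u_2u_1$ is a statement about $\sym^2U\to\H^0(\omega_C^{\otimes2})$ and imposes no structure on $K$, which carries no a priori $U^\vee$-module structure; so "forces $K$ to assemble symmetrically" is an assertion, not an argument. The resolution version has the same problem in sharper form: a sheaf supported on a curve has codimension two in $\ppp$, so the minimal resolution of $i_*(\theta\otimes\varepsilon)$ (the right sheaf here is in any case $i_*(\omega_C\otimes\varepsilon)$, whose resolution has shape $0\to\co(-3)^{3}\to\co(-1)^{3}\oplus\co(-2)^{3}\to\co^{3}\to\cf\to0$) is a length-two complex, not a square matrix presenting a hypersurface. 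Grothendieck--Serre duality gives an isomorphism of this complex with its twisted dual, but upgrading that to an actual \emph{symmetric} choice of the $3\times3$ linear block is precisely the nontrivial content of Catanese's theory of symmetric resolutions --- i.e., the theorem the paper cites, applied there to a rank-one sheaf on the surface $\Gamma_\varepsilon$ rather than to a sheaf on $C$. Until that symmetry is established, you have at best a cubic through $C$, not a symmetroid with a distinguished symmetrization; you would also still need to rule out $\det\ca_{\varepsilon,V}\equiv0$ and to verify, rather than assert, the claimed degeneration patterns (reducible of type~(7) in the odd case, one-dimensional kernel of $\ca_{\varepsilon,V}$ in the bielliptic case, where in fact $K$ remains three-dimensional but lands in $W\otimes U$ for a three-dimensional $W\subset V$).
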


This correspondence admits different interpretations with suitable restrictions on $\varepsilon$. We list the most prominent ones here.

\begin{theorem}\label{thm:induced_double_cover}
  If $(C,\varepsilon)$ is not odd, the natural double cover of $(\Gamma_\varepsilon,\xq_\varepsilon)$ restricts on $C$ to an unramified double cover which coincides with the double cover $\tilde C_\varepsilon \to C$ associated to $\varepsilon$.
\end{theorem}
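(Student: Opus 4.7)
The plan is to pull back the natural cover $\tilde S_\ca \to S_\ca \to \Gamma_\varepsilon$ constructed in Section~\ref{sec:natural_double_cover} along $C \hookrightarrow \Gamma_\varepsilon$, and to show (i) that the resulting cover of $C$ is étale and (ii) that it coincides with $\tilde C_\varepsilon$.

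\emph{Étaleness.} The cover $\tilde S_\ca \to \Gamma_\varepsilon$ is étale away from the locus $\Gamma_\varepsilon \cap \cd_1$, where the conic $\ca \cdot v$ becomes a double line. By Table~\ref{tbl:strata}, for $(C,\varepsilon)$ not odd, $\Gamma_\varepsilon$ is either irreducible (types (1)--(3)) or degenerate (bielliptic). In the irreducible cases, $\Gamma_\varepsilon \cap \cd_1$ is a finite set of singular points; by inspecting the explicit symmetrizations produced in the proof of Theorem~\ref{thm:prym} stratum by stratum, I would verify that these points do not lie on $C$. In the degenerate (bielliptic) case, I would use Remark~\ref{rem:cone_double_cover} to define the natural cover of the cone $\Gamma_\varepsilon$ branched only at its vertex, and check that $C$ avoids this vertex.

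\emph{Identification.} The étale double cover $\tilde C \to C$ from (i) is classified by some $\eta \in \Pic(C)[2]$; we wish to show $\eta = \varepsilon$. Using the local description in Section~\ref{sec:natural_double_cover}, the cover is obtained by adjoining a square root of a $2\times 2$ principal minor $\Delta = a_{12}^2 - a_{11}a_{22}$ of $\ca_V(v)$. Since the entries $a_{ij}\in V = H^0(\omega_C)$, the restriction $\Delta|_C$ lies in $H^0(\omega_C^{\otimes 2})$, and $\eta$ is the unique $2$-torsion class for which $\Delta|_C$ is the square of a rational section of $\omega_C\otimes \eta$. The explicit form of $\ca$ produced in the proof of Theorem~\ref{thm:prym} realizes $\ca_V\colon V^\vee \to \sym^2 U$ via the multiplication pairing on $U = H^0(\omega_C \otimes \varepsilon)$; along $C$ the rank of $(a_{ij})$ drops to $2$, and its $2\times 2$ minors factor as squares of elements of $H^0(\omega_C \otimes \varepsilon)$. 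Consequently $\eta = \varepsilon$.

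\emph{Main obstacle.} The principal difficulty is establishing the factorization $\Delta|_C = s^2$ with $s \in H^0(\omega_C \otimes \varepsilon)$ uniformly across the strata: the bielliptic case requires descent to the underlying plane cubic $E$ via Section~\ref{sec:degenerate_cubics} (where Proposition~\ref{prop:sym_of_E} explicitly identifies the double cover with the one classified by a $2$-torsion class of $E$), and the even case needs a separate local analysis at the vanishing theta-null. A further subtlety is the $k$-rationality: the choice of $2\times 2$ minor is canonical only over $\kbar$, so to conclude the cover is $\tilde C_\varepsilon$ over $k$ one must patch together minors Galois-equivariantly, using the quadratic relation among them noted in Section~\ref{sec:natural_double_cover} together with Proposition~\ref{prop:intro_k_symmetrization}.
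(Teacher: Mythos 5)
Your overall strategy---restrict the natural cover $\tilde{\cs}\to\cs$ of Section~\ref{sec:natural_double_cover} to $C\subset\Gamma_\varepsilon$ and identify the resulting $2$-torsion class by a function-field computation with the $2\times 2$ minors of $\ca_V(v)$---is viable and genuinely different from the paper's identification. The paper works instead with divisor classes on the resolution $S_\varepsilon$: writing $2L$ for the reduced exceptional divisor, the square root $L$ of the branch divisor determines the cover, and since $\co_{S_\varepsilon}(H-L)|_C\simeq\omega_C\otimes\varepsilon$ (Lemma~\ref{lem:prym_map_reverse}) while $\co_{S_\varepsilon}(H)|_C\simeq\omega_C$, one gets $\co_{S_\varepsilon}(L)|_C\simeq\varepsilon$ directly; this is made explicit in Lemma~\ref{lem:blow_down_is_prym}. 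Your \'etaleness step can also be shortened: no stratum-by-stratum inspection is needed, because $C=Q_C\cap\Gamma_\varepsilon$ is smooth (Corollary~\ref{cor:odd_is_evil}) and a smooth complete intersection cannot pass through a singular point of $\Gamma_\varepsilon$, whose singular locus contains $\Gamma_\varepsilon\cap\cd_1$; in the bielliptic case the same reasoning excludes the vertex.

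The gap sits at the crux of your identification. The assertion that the principal $2\times2$ minors of $\ca_V(v)$ restrict on $C$ to squares of elements of $\H^0(\omega_C\otimes\varepsilon)$ is true, but it does not follow from ``$\ca_V$ realizing the multiplication pairing on $U$'': $\ca_V$ is a map $V^\vee\to\sym^2 U$ and is not a multiplication map. What is actually needed is: (a) for $v\in C$ the matrix $A_v=\ca_V(v)$ has rank exactly $2$, so $\adj(A_v)$ is a nonzero rank-one symmetric tensor $c(v)\,r(v)r(v)^{T}$ supported on the kernel vector $r(v)$; hence the cofactor $a_{11}a_{22}-a_{12}^2=(\adj A_v)_{33}$ equals $c(v)$ times the square of a coordinate of $r(v)$; and (b) the kernel map $v\mapsto[r(v)]$, i.e.\ the birational inverse of the adjugation map $\xc_\varepsilon$ restricted to $C$, is the Prym-canonical map $\rho_\varepsilon$, so that the zero divisor of $(a_{11}a_{22}-a_{12}^2)|_C$ is twice the pullback under $\rho_\varepsilon$ of a line in $\pp_\varepsilon$, i.e.\ twice a divisor in $|\omega_C\otimes\varepsilon|$. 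Point (b) is precisely Lemma~\ref{lem:prym_map_reverse} (and, in the bielliptic case, the factorization of $\rho_\varepsilon$ through $E$ together with Proposition~\ref{prop:sym_of_E}); without it your computation only shows that the induced cover corresponds to \emph{some} $\eta\in\Pic(C)[2]$, namely the one for which $\omega_C\otimes\eta$ induces the kernel map, and nothing yet pins down $\eta=\varepsilon$. Once (a) and (b) are supplied the argument closes uniformly across the non-odd strata, and the $k$-rationality worry you raise evaporates: the cover $\tilde{\cs}\to\cs$ is defined over $k$ coordinate-freely, and the minors are only a birational description of it, so no Galois patching of minors is required.
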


\begin{notation}\label{not:prym_canonical}
  The line bundle $\omega_C \otimes \varepsilon$ induces the \emph{Prym-canonical map} $\rho_\varepsilon\colon C \to \pp_\varepsilon$ where $\pp_\varepsilon := \Pp \H^0(\omega_C\otimes \varepsilon)$. We will use $C_\varepsilon$ to refer to the image $\rho_\varepsilon(C)$.
\end{notation}

\begin{theorem}
  If $\varepsilon$ is not odd or bielliptic, then the adjunction map $\xc_\varepsilon\colon \pp_\varepsilon \ratto \ppp$ of $C_\varepsilon$ is defined by cubics, the image of $\xc_\varepsilon$ is $\Gamma_\varepsilon$ and the adjugate of $\xc_\varepsilon$ is the symmetrization $\xq_\varepsilon$.
\end{theorem}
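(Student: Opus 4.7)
The plan is to identify the adjunction map $\xc_\varepsilon$ of the Prym-canonical sextic $C_\varepsilon\subset\pp_\varepsilon$ with the adjugation map attached by Section~\ref{sec:adjugation_map} to the symmetrization $\xq_\varepsilon$ constructed in Theorem~\ref{thm:prym}. All three assertions then follow from the properties recorded in Section~\ref{sec:cubic_symmetroids}.

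I would first record the basic invariants of $C_\varepsilon$. Since $\varepsilon$ is neither bielliptic nor odd, $\omega_C\otimes\varepsilon$ is base point free and $\rho_\varepsilon\colon C\to\pp_\varepsilon$ is birational onto a plane sextic $C_\varepsilon$ of arithmetic genus $\binom{5}{2}=10$ and geometric genus $g(C)=4$. Consequently, the conductor scheme $\Delta\subset\pp_\varepsilon$ has total $\delta$-invariant $6$, and the classical adjunction formula yields a four-dimensional linear system of cubics through $\Delta$ whose pullback to the normalization $C$ is precisely $|\omega_C|$. This proves that $\xc_\varepsilon$ is defined by cubics, and canonically identifies its target with $\ppp=\Pp H^0(\omega_C)$ so that $\xc_\varepsilon\circ\rho_\varepsilon\colon C\to\ppp$ is the canonical embedding.

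Next I would invoke Theorem~\ref{thm:prym} to obtain $(\Gamma_\varepsilon,\xq_\varepsilon)$. By Table~\ref{tbl:strata}, $\Gamma_\varepsilon$ is irreducible and non-degenerate under our hypothesis, so by Theorem~\ref{T:symmetroid_classification} the underlying tensor $\ca$ is of type (1), (2), or (3), and Proposition~\ref{prop:parametrization} provides a birational cubic adjugation $\xc_\ca\colon\pp_\varepsilon\ratto\Gamma_\varepsilon$ whose components are the entries of $\adj\ca_U$. To match $\xc_\varepsilon$ with $\xc_\ca$ it suffices to check equality along $C_\varepsilon$: any cubic form on $\pp_\varepsilon\simeq\Pp^2$ vanishing on a plane sextic must vanish identically, so two cubic maps $\pp_\varepsilon\ratto\ppp$ that agree on $C_\varepsilon$ agree everywhere. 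Unpacking the construction of $(\Gamma_\varepsilon,\xq_\varepsilon)$ from $\varepsilon$ in Theorem~\ref{thm:prym}, one reads off that for each $p\in C$ the point $\rho_\varepsilon(p)\in\pp_\varepsilon$ is the singular point of the singular conic in the linear system determined by $\xq_\varepsilon$ whose class in $\Gamma_\varepsilon$ coincides with the canonical image of $p$; by the characterization of $\xc_\ca$ in Section~\ref{sec:adjugation_map}, this says $\xc_\ca(\rho_\varepsilon(p))$ is the canonical image of $p$, matching $\xc_\varepsilon\circ\rho_\varepsilon$.

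With $\xc_\varepsilon=\xc_\ca$ in hand, its image equals that of $\xc_\ca$, which is $\Gamma_\varepsilon$ by Proposition~\ref{prop:parametrization}; moreover, since $\xc_\ca$ is by construction the adjugate of $\xq_\varepsilon$, so is $\xc_\varepsilon$, giving the third assertion. The main obstacle is the matching step above: one must verify, by inspecting the construction of $\Gamma_\varepsilon$ from $\varepsilon$, that the singular point on $\pp_\varepsilon$ of the conic corresponding to $p\in C$ is indeed $\rho_\varepsilon(p)$. Everything else is bookkeeping around the results of Section~\ref{sec:cubic_symmetroids}.
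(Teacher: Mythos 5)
Your first step is sound and is essentially the paper's: the adjoint system of the plane sextic $C_\varepsilon$ consists of cubics through the conductor and cuts out $|\omega_C|$ on the normalization. (The paper phrases this on the blow-up $S_\varepsilon$ via $\omega_{S_\varepsilon}(C)\equiv 3\ell-\sum E_{ij}$; note that your count of the conductor implicitly uses that the singularities are six double points rather than a triple point plus three double points, which is exactly where the hypothesis ``not odd'' enters, via Proposition~\ref{prop:triple_point} and Lemma~\ref{lem:mult_seq}.)

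The second half has a genuine gap, in two respects. First, the matching argument is logically invalid as written: if two maps $\pp_\varepsilon\ratto\ppp$ given by cubics $f_i$ and $g_i$ agree as \emph{projective} maps along $C_\varepsilon$, what vanishes on $C_\varepsilon$ are the $2\times 2$ minors $f_ig_j-f_jg_i$, which are sextics; they are therefore only forced to be multiples of the equation of $C_\varepsilon$, not zero. (For a single pair of cubics $f_0g_1-f_1g_0$ \emph{is} generically an irreducible sextic, so two cubic maps really can agree exactly along a sextic.) The fact you quote -- a cubic form vanishing on a sextic vanishes identically -- does not apply, because agreement of projective maps does not give you the vanishing of differences of the defining cubics. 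Second, and more fundamentally, the step you defer as ``the main obstacle'' is circular in the paper's logical architecture: in the non-odd, non-bielliptic case the pair $(\Gamma_\varepsilon,\xq_\varepsilon)$ of Theorem~\ref{thm:prym} is \emph{constructed} as the image of the adjunction map, with the symmetrization supplied a posteriori by Catanese's theorem that a cubic surface carrying the right even set of nodes (equivalently, the double cover branched only at the singularities, Theorem~\ref{thm:unique_double_cover}) admits a symmetric determinantal representation (Lemma~\ref{lem:S_maps_to_Gamma}, via \cite{catanese81}*{Theorem 2.19}). There is no independent description of $\xq_\varepsilon$ against which one could verify that $\rho_\varepsilon(p)$ is the singular point of the conic attached to the canonical image of $p$; that verification is not bookkeeping but is precisely the content of the theorem. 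A non-circular proof must show directly that the image of the adjoint map is a \emph{cubic} (the degree computation of \cite{catanese83}*{p.~37}) and that it is a \emph{symmetroid} (the even-set-of-nodes argument); your proposal imports both facts through Theorem~\ref{thm:prym} without supplying them.
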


The following theorem highlights our insistence on keeping track of symmetrizations as well as the cubic symmetroids.

\begin{theorem}
  If $\varepsilon$ is bielliptic then $\Gamma_\varepsilon$ is a cone over a cubic plane curve $E \subset \pp$. The symmetrizations of $\Gamma_\varepsilon$ correspond to symmetrizations of $E$, each inducing a different line bundle of order two on $C$ (one of which is $\varepsilon$).
\end{theorem}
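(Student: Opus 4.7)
The plan is to identify $C_\varepsilon$ with a plane cubic through a factorisation of the Prym-canonical map, then realise $\Gamma_\varepsilon$ as the cone over this plane cubic, and finally invoke Section~\ref{sec:degenerate_cubics} together with Proposition~\ref{prop:sym_of_E} to classify the symmetrizations.

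First I would factor the Prym-canonical map through the bielliptic cover. Write $\varepsilon = \xb^*\varepsilon'$ with $\varepsilon' \in \jac_E[2] \setminus \{0\}$. Riemann--Hurwitz gives a branch divisor $B$ of degree six on $E$, and the standard structure of double covers yields $\xb_*\co_C = \co_E \oplus \eta^\vee$ with $\eta^{\otimes 2} = \co_E(B)$; Grothendieck duality then gives $\xb_*\omega_C = \co_E \oplus \eta$. The projection formula implies
\[
\H^0(C, \omega_C \otimes \xb^*\varepsilon') = \H^0(E, \varepsilon') \oplus \H^0(E, \eta \otimes \varepsilon') = \H^0(E, \eta \otimes \varepsilon'),
\]
since $\varepsilon'$ is nontrivial of degree zero on the elliptic curve. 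The line bundle $\eta \otimes \varepsilon'$ has degree three and is very ample, embedding $E$ as a smooth plane cubic in $\pp_\varepsilon$. Consequently $\rho_\varepsilon$ factors as $C \xrightarrow{\xb} E \hookrightarrow \pp_\varepsilon$ and $C_\varepsilon = E$.

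Next I would construct $\Gamma_\varepsilon$ using the bielliptic involution $\sigma$ acting on $V = \H^0(\omega_C)$. Galois descent applied to $\xb_*\omega_C = \co_E \oplus \eta$ decomposes $V = V^+ \oplus V^-$ with $V^+ \cong \H^0(\omega_E)$ one-dimensional and $V^- \cong \H^0(\eta)$ three-dimensional. The extended involution on $\ppp = \Pp V$ fixes an isolated point $P$ coming from $V^+$ and a plane $\pi$ coming from $V^-$; projection from $P$ onto $\pi$ realises $\xb\colon C \to E \subset \pi$. Define $\Gamma_\varepsilon$ as the cone over $E \subset \pi$ with vertex $P$: it is a cubic surface containing $C$. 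The symmetrization of $E$ attached to $\varepsilon'$ by Proposition~\ref{prop:sym_of_E} extends, via the cone construction of Section~\ref{sec:degenerate_cubics}, to a degenerate symmetrization $\xq_\varepsilon$ of $\Gamma_\varepsilon$. This yields the pair required by Theorem~\ref{thm:prym}.

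For the classification, any symmetrization $\ca$ of $\Gamma_\varepsilon$ must be degenerate since $\Gamma_\varepsilon$ is a cone: $\ca_V\colon V^\vee \to \sym^2 U$ necessarily has a one-dimensional kernel corresponding to the vertex, and the induced injection $V^\vee/K \hookrightarrow \sym^2 U$ is exactly a symmetrization of the base cubic $E$. Together with Proposition~\ref{prop:sym_of_E} this produces a canonical bijection between the symmetrizations of $\Gamma_\varepsilon$ and $\jac_E[2] \setminus \{0\}$, whose elements pull back via $\xb$ to bielliptic two-torsion classes on $C$ including $\varepsilon$ itself. To see these three classes on $C$ are distinct I would verify that $\xb^*\colon \jac_E \to \jac_C$ is injective: if $\xb^*\xi = \co_C$ then the projection formula gives $\xi \otimes (\co_E \oplus \eta^\vee) \cong \co_E \oplus \eta^\vee$, forcing $\xi = \co_E$ because $\eta^{\otimes 2} = \co_E(B)$ has positive degree and is nontrivial.

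The delicate point is identifying the symmetrization of $\Gamma_\varepsilon$ built from $\varepsilon'$ through the cone construction with the symmetrization $\xq_\varepsilon$ supplied by Theorem~\ref{thm:prym} via the natural multiplication map $V^\vee \to \sym^2 U$. This requires checking that the kernel of this multiplication map is precisely $\ann(V^-)$ and that the induced injection on $V^\vee/\ann(V^-)$ recovers the $\varepsilon'$-symmetrization of $E$. A clean way to organize this identification is through the associated double covers: the symmetrization of $E$ attached to $\varepsilon'$ is characterized by the étale double cover $\tilde E_{\varepsilon'} \to E$ described in Section~\ref{sec:natural_double_cover}, whose pullback along $\xb$ is the étale double cover $\tilde C_\varepsilon \to C$ associated with $\varepsilon$, consistent with Theorem~\ref{thm:induced_double_cover}.
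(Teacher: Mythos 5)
Your proposal is correct and follows essentially the same route as the paper: the unique degree-three bundle $\eta$ on $E$ with $\xb^*\eta\simeq\omega_C$ (the paper's $\mu$, obtained there via the ramification divisor rather than Grothendieck duality) gives the cone structure and the factorization of the Prym-canonical map through $|\eta\otimes\varepsilon'|$, and the symmetrizations are classified by reducing to the plane cubic $E$ via Section~\ref{sec:degenerate_cubics} and Proposition~\ref{prop:sym_of_E}. The only cosmetic differences are that you prove the injectivity of $\xb^*$ directly where the paper cites Mumford, and you phrase the projection to the base of the cone through the eigenspace decomposition of $\H^0(\omega_C)$ rather than the inclusion $\H^0(\mu)\hookrightarrow\H^0(\omega_C)$.
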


Over the algebraic closure of the base field, the symmetroids of type (1)--(3) admit a unique symmetrization (Remark~\ref{rem:unique_symmetrization}) whereas a degenerate cubic symmetroid is a cone over a plane cubic and admits three symmetrizations (Section~\ref{sec:degenerate_cubics}).  

\begin{corollary}
  If $Q_C$ is smooth, then isomorphism classes of line bundles of order two on $C$ are in bijection with irreducible cubic symmetroids containing $C$ taken together with their symmetrizations.
\end{corollary}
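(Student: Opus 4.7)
The plan is to assemble the bijection from results already established earlier in the paper. The forward assignment $[\varepsilon] \mapsto (\Gamma_\varepsilon, \xq_\varepsilon)$ is well-defined by Theorem~\ref{thm:prym}, and the reversibility claim in Theorem~\ref{thm:prym_from_intro} shows this assignment is injective. Since $Q_C$ is smooth, $C$ admits no vanishing theta-null (Section~\ref{sec:prym_basic_defs}), so no $\varepsilon$ is odd and $(C,\varepsilon)$ lies in $\cR_4^{\mathrm{o}}$ or $\cR_4^{\mathrm{biell}}$. The Remark following Theorem~\ref{thm:prym_from_intro} then confirms that $\Gamma_\varepsilon$ is irreducible in both cases: irreducible and normal in $\cR_4^{\mathrm{o}}$, and a cone over a smooth plane cubic in $\cR_4^{\mathrm{biell}}$.

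For surjectivity, suppose $\Gamma \subset \ppp$ is an irreducible cubic symmetroid containing $C$ equipped with a symmetrization $\xq$. I would apply the natural double cover construction of Section~\ref{sec:natural_double_cover} to produce a cover $\tilde S_{\xq} \to \Gamma$ branched only over the singular locus of $\Gamma$. Restricting this cover to $C$ yields a degree-two cover $\tilde C \to C$. The key claim is that $\tilde C \to C$ is unramified and non-trivial, hence corresponds to a unique class $\varepsilon \in \Jac_C[2] \setminus \{0\}$. By Theorem~\ref{thm:induced_double_cover} the pair $(\Gamma_\varepsilon, \xq_\varepsilon)$ induces the very same unramified double cover of $C$, and uniqueness of a double cover of $\Gamma$ branched only at its singularities---Theorem~\ref{thm:unique_double_cover} together with Corollary~\ref{cor:unique_moduli} in the irreducible normal cases, and Proposition~\ref{prop:sym_of_E} with Remark~\ref{rem:cone_double_cover} in the degenerate case---then forces $(\Gamma, \xq) \simeq (\Gamma_\varepsilon, \xq_\varepsilon)$.

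The hard part will be verifying the claim in the previous paragraph: the pullback of the natural double cover to $C$ must be both unramified and non-trivial. Unramifiedness reduces to a local computation at the points where $C$ meets the singular locus of $\Gamma$, handled using the singularity types from Theorem~\ref{T:symmetroid_classification}; for the irreducible normal types (1)--(3) this amounts to analyzing finitely many isolated singular points, while the cone case requires controlling whether $C$ passes through the vertex and understanding how the cover behaves there via the induced degree-two map $C \to E$ to the base cubic. Non-triviality is the assertion $\varepsilon \neq 0$, which reduces to connectedness of the pullback; this should follow because the natural double cover of an irreducible cubic symmetroid is connected and $C$ is not contained in any proper closed subvariety of $\Gamma$ along which the cover could split.
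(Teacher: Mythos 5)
Your overall architecture---forward map from Theorem~\ref{thm:prym}, injectivity from reversibility, exclusion of the odd case because $Q_C$ is smooth, and a case split into non-degenerate symmetroids versus cones over smooth plane cubics---matches the paper's. The genuine gap is in the surjectivity step. Given $(\Gamma,\xq)$ you extract a class $\varepsilon$ from the restriction of the natural double cover to $C$ and then invoke Theorem~\ref{thm:unique_double_cover} and Corollary~\ref{cor:unique_moduli} to conclude $(\Gamma,\xq)\simeq(\Gamma_\varepsilon,\xq_\varepsilon)$. But those results assert uniqueness of a double cover of a \emph{fixed} symmetroid branched over its singularities; they do not prevent two distinct symmetroids containing $C$ from restricting to the same unramified cover $\tilde C_\varepsilon\to C$. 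Showing that $(\Gamma,\xq)$ and $(\Gamma_\varepsilon,\xq_\varepsilon)$ induce the same $\varepsilon$ only establishes that the reverse assignment is well defined; surjectivity requires that reverse-then-forward is the identity on pairs, i.e.\ that $\Gamma$ literally equals $\Gamma_\varepsilon$ inside $\ppp$. The paper closes exactly this gap with Lemma~\ref{lem:prym_map_reverse}: the minimal resolution $S\to\Gamma$ carries a second blow-down $\pi\colon S\to\pp$, given by the class $H-L$ with $2L$ the reduced exceptional divisor, whose restriction to $C$ is the Prym-canonical map $\rho_\varepsilon$; hence $\Gamma$ is the image of the adjunction map of $C_\varepsilon$ and is therefore $\Gamma_\varepsilon$ by the very construction of Theorem~\ref{thm:prym_map_general}. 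You need that lemma (or an equivalent divisor-class computation on $S$), not the uniqueness of covers.

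A secondary weakness is your treatment of non-triviality of the restricted cover. The assertion that it ``should follow because the natural double cover of an irreducible cubic symmetroid is connected and $C$ is not contained in any proper closed subvariety along which the cover could split'' is not an argument: a connected cover of $\Gamma$ branched at its nodes can perfectly well restrict to a split cover on a curve avoiding the branch locus, and there is no ``splitting locus'' controlling this. The verification is again divisor-theoretic (compare Lemma~\ref{lem:blow_down_is_prym}): on $S$ one has $N_1+\cdots+N_4\equiv 2(2\ell-\sum E_{ij})$, the class $\varepsilon$ is the restriction of $2\ell-\sum E_{ij}$ to $C$, and one identifies $\omega_C\otimes\varepsilon$ with $\ell|_C$, from which $\varepsilon\neq 0$ follows because the resulting map lands in $\pp$ rather than realizing the full canonical system. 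Your observation that unramifiedness is automatic because the smooth complete intersection $C=Q_C\cap\Gamma$ misses the singular locus of $\Gamma$ is correct and is the right way to dispose of that point.
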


If $Q_C$ is singular, then there are infinitely many cubic symmetroids of type (7) containing $C$---consider the union of $Q_C$ with a plane that is not tangent to $Q_C$. Only $120$ of these symmetroids appear via the construction in Theorem~\ref{thm:prym} and their linear component is a tritangent of $C$. The odd case is different in many ways, see Section~\ref{sec:odd_case}.

\begin{corollary}\label{cor:odd_is_evil}
  The sextic $C$ is the complete intersection of $\Gamma_\varepsilon$ and $Q_C$ if and only if $\Gamma_\varepsilon$ is not of type~(7). The symmetroid $\Gamma_\varepsilon$ is of type~(7) precisely when $\varepsilon$ is odd. 
\end{corollary}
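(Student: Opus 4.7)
The plan is to combine a Bezout-type argument for the first equivalence with the case analysis recorded in Table~\ref{tbl:strata} for the second.

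For the complete intersection claim, suppose first that $Q_C \not\subset \Gamma_\varepsilon$; since $Q_C$ is irreducible, this is equivalent to $\Gamma_\varepsilon$ and $Q_C$ sharing no irreducible component. Then by Bezout their intersection is a curve of degree $2\cdot 3=6$, and as it contains the degree-$6$ curve $C$ it must equal $C$ scheme-theoretically. Conversely, if $\Gamma_\varepsilon$ is of type~(7), then $\Gamma_\varepsilon=Q'\cup H$ with $Q'$ an irreducible quadric cone and $H$ a plane. Because the canonical embedding of $C$ is non-degenerate, $C$ spans $\ppp$ and in particular is not contained in $H$; hence $C\subset Q'$, and by uniqueness of the quadric through $C$ one has $Q'=Q_C$. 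Thus $Q_C\subset\Gamma_\varepsilon$, and $\Gamma_\varepsilon \cap Q_C$ properly contains $C$.

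For the second equivalence I invoke Table~\ref{tbl:strata}, whose entries are established in the course of proving Theorem~\ref{thm:prym}. If $\varepsilon$ is not odd, the table records that $\Gamma_\varepsilon$ is either irreducible (of one of the types~(1)--(5)) or degenerate (a cone over a smooth plane cubic); in particular it is not of type~(7). Conversely, if $\varepsilon$ is odd then by the table $Q_C$ is a quadric cone and $\Gamma_\varepsilon$ is reducible; combined with the preceding paragraph, $Q_C$ must be a component of $\Gamma_\varepsilon$, so $\Gamma_\varepsilon=Q_C\cup H$ with $H$ a plane. Since $Q_C$ is an irreducible quadric cone, this rules out types~(6) (which requires a smooth quadric component) and~(8) (which has no irreducible quadric component), leaving only type~(7).

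The principal obstacle is establishing the reducibility of $\Gamma_\varepsilon$ together with the identification of the quadric component in the odd case; these do not follow from the formal considerations here but from the explicit construction carried out in the proof of Theorem~\ref{thm:prym}, which gives $\Gamma_\varepsilon=Q_C\cup H_\eta$ for the tritangent plane $H_\eta$ associated to the odd theta characteristic $\eta=\theta\otimes\varepsilon$. Granting that, the Bezout and uniqueness arguments above complete the classification.
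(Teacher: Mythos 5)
Your proof is correct and follows essentially the route the paper intends: the corollary is left unproved there precisely because it falls out of the case analysis of Section~\ref{sec:prym_canonical_map} (irreducible or degenerate-cone $\Gamma_\varepsilon$ in the non-odd cases, and $\Gamma_\varepsilon=Q_C\cup H_\eta$ with $H_\eta$ avoiding the vertex of $Q_C$ in the odd case), combined with the routine B\'ezout and uniqueness-of-$Q_C$ observations you supply. You correctly identify that the only non-formal input is the construction from the proof of Theorem~\ref{thm:prym}, so nothing further is needed.
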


In proving these results we make sure that the relevant constructions can be performed over the base field $k$. However, as the main properties of these constructions can be checked by passing to an algebraic closure of $k$ we will assume until Section~\ref{sec:descend_epsilon} that $k$ is algebraically closed to ease the exposition.
To ensure that our constructions carry through when $k$ is not algebraically closed, we prove the result below in Section~\ref{sec:descend_epsilon}.

\begin{proposition}\label{prop:descend_epsilon}
  If $[\varepsilon]\in \Pic(C^{\sep})[2]$ is $\Gal(\ksep/k)$-invariant then there exists a line bundle $\varepsilon$ on $C$ over $k$ that represents the class $[\varepsilon]$.
\end{proposition}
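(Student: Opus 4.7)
The plan is to use the cubic symmetroid $\Gamma_\varepsilon$ as a Galois-stable intermediary. Starting from any line-bundle representative $\varepsilon_0$ of the class $[\varepsilon]$ over $\ksep$, the construction of Theorem~\ref{thm:prym_from_intro} produces a cubic symmetroid $\Gamma_{\varepsilon_0}\subset\PP^3_{\ksep}$ containing $C_{\ksep}$. Because this subscheme depends only on the isomorphism class of $\varepsilon_0$, Galois invariance of $[\varepsilon]$ forces $\Gamma_{\varepsilon_0}$ to be Galois-stable as a subscheme of $\PP^3$, so it descends to a closed subscheme $\Gamma\subset\PP^3_k$ containing $C$.

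With $\Gamma$ defined over $k$, Proposition~\ref{prop:intro_k_symmetrization} furnishes a symmetrization $\ca$ of $\Gamma$ over $k$. The natural double cover construction of Section~\ref{sec:natural_double_cover} applied to $(\Gamma,\ca)$ then produces a double cover $\tilde S_\ca\to\Gamma$ entirely over $k$. Restricting this cover to $C\subset\Gamma$ yields an unramified double cover $\pi\colon\tilde C\to C$ over $k$, and the decomposition $\pi_\ast\co_{\tilde C}=\co_C\oplus\cl^\vee$ defines a line bundle $\cl\in\Pic(C)[2]$ over $k$. In the degenerate (bielliptic) case I would instead work with the plane cubic $E\subset\pp$ obtained by projecting $\Gamma$ from its vertex: $E$ descends to $k$ together with $\Gamma$, and Proposition~\ref{prop:sym_of_E} combined with Proposition~\ref{prop:intro_k_symmetrization} produces a symmetrization of $E$ over $k$, from which the double cover and hence $\cl$ are obtained analogously.

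To conclude, I would base change to $\ksep$ and identify $\cl$ with $\varepsilon_0$. By Theorem~\ref{thm:induced_double_cover}, the natural double cover restricted to $C$ agrees with $\tilde C_{\varepsilon_0}\to C_{\ksep}$, whose associated line bundle is $\varepsilon_0$, giving $[\cl]=[\varepsilon]$. The odd case, excluded from Theorem~\ref{thm:induced_double_cover}, is handled separately: by Corollary~\ref{cor:odd_is_evil}, $\Gamma$ equals the union of $Q_C$ with a tritangent plane $H_{\varepsilon_0}$, and Galois invariance of $[\varepsilon]$ shows $H_{\varepsilon_0}$ is defined over $k$; the resulting odd theta characteristic is then defined over $k$ and determines $\varepsilon$ directly.

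The main obstacle is controlling quadratic twists. Corollary~\ref{cor:unique_moduli} shows that a cubic symmetroid over $\ksep$ determines its natural double cover only up to quadratic twist, so the descended subscheme $\Gamma$ alone does not rigidify $\cl$: a priori, the line bundle extracted from $\tilde C\to C$ might differ from $\varepsilon_0$ by an element of $k^\times/k^{\times 2}$. It is precisely the symmetrization $\ca$ over $k$, provided by Proposition~\ref{prop:intro_k_symmetrization}, that pins down the construction canonically (via the explicit minors formula of Section~\ref{sec:natural_double_cover}) and forces $\cl$ to agree with $\varepsilon$ rather than a twist.
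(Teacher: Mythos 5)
Your treatment of the main stratum (non\-/degenerate symmetroid, $\varepsilon$ neither odd nor bielliptic) is correct and takes a genuinely different route from the paper. Both arguments begin by descending $\Gamma_\varepsilon$ and its symmetrization to $k$; but where you then build the natural double cover over $k$, restrict it to $C$, and extract $\varepsilon$ as the $(-1)$-eigensheaf of $\pi_*\co_{\tilde C}$, the paper instead observes that the domain of the $k$-symmetrization is canonically $\Pp\H^0(\omega_C\otimes\varepsilon)$ and is an honest $\PP^2$ over $k$, so that $|\omega_C\otimes\varepsilon|$ contains a $k$-rational effective divisor $D$ and one may take $\varepsilon=\co_C(D)\otimes\omega_C^\vee$. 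Your route is fine, but your closing worry about quadratic twists is moot for this purpose: twisting the cover $\tilde C\to C$ by $d\in k^\times$ changes the algebra structure on $\co_C\oplus\cl^\vee$ (i.e., the trivialization of $\cl^{\otimes 2}$), not the line bundle $\cl$ itself, so \emph{every} twist already yields the same class and no rigidification is needed there.

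The two special cases contain genuine gaps. In the odd case, the $k$-rational tritangent gives you the odd theta characteristic $\eta=\theta\otimes\varepsilon$ over $k$ (its contact divisor is Galois-stable, hence $k$-rational), but this does not ``determine $\varepsilon$ directly'': to pass from $\eta$ to $\varepsilon=\eta\otimes\theta^\vee$ you need the vanishing theta-null $\theta$ to be representable by a line bundle over $k$, and a priori its obstruction in $\Br(k)$ is the class of the conic $q_C=\bs_{[\theta]}$, which you have not shown to vanish. (It does vanish --- e.g.\ because the obstruction of $[\varepsilon]$ is $2$-torsion while the Brauer--Severi variety of $\H^0(\omega_C\otimes\varepsilon)$ has dimension $h^0(\omega_C\otimes\varepsilon)-1=2$, so its index divides $3$ --- but some such argument must be supplied; the paper avoids the issue by treating the type~(7) case together with the other non-degenerate types.) In the bielliptic case the gap is sharper: Proposition~\ref{prop:intro_k_symmetrization} is stated, and proved, only for \emph{non-degenerate} symmetroids, so it cannot be invoked to produce a symmetrization of the plane cubic $E$ over $k$; moreover, descending the symmetrization of $E$ attached to $[\varepsilon']$ is essentially equivalent to descending a line bundle in the class $[\varepsilon']$, which is the very statement being proved, so the step is circular as written. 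The paper supplies the missing content by an explicit construction: the map $\psi\colon E\to\Jac_E$, $p\mapsto[3p-\ell]$, has degree $9$, so $D=\psi^*([\varepsilon']-[\co_E])$ is a $k$-rational divisor with $[3D]=9[\varepsilon']=[\varepsilon']$, and pulling back $\co_E(3D)$ to $C$ gives the desired representative.
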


\subsection{The general case: irreducible symmetroids}\label{sec:prym_map_general}

Suppose that $(C,\varepsilon)$ is neither odd nor bielliptic. We use Notation~\ref{not:prym_canonical} freely.

\begin{theorem}\label{thm:prym_map_general}
  Adjunction on the Prym canonical curve $C_\varepsilon \subset \pp_\varepsilon$ gives a cubic rational map $\xc_\varepsilon \colon \pp_\varepsilon \ratto \ppp$ such that the image of $\xc_\varepsilon$ is an irreducible non-degenerate cubic symmetroid $\Gamma_\varepsilon$ containing $C \subset \ppp$. Furthermore, every irreducible non-degenerate cubic symmetroid containing $C$ arises in this way.
\end{theorem}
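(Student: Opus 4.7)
The plan is to proceed in four stages: (i) the Prym-canonical map $\rho_\varepsilon$ embeds $C$ birationally as a plane sextic $C_\varepsilon$ with six ordinary nodes; (ii) the adjoint system of cubics through these nodes defines $\xc_\varepsilon\colon \pp_\varepsilon \ratto \ppp$, whose image is a cubic surface $\Gamma_\varepsilon$ containing the canonical $C$; (iii) $\Gamma_\varepsilon$ carries a natural symmetrization coming from a multiplication map, making it a symmetroid; (iv) every irreducible non-degenerate symmetroid containing $C$ arises this way.

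\medskip

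For (i) and (ii), Riemann-Roch gives $h^0(\omega_C \otimes \varepsilon) = 3$, so $\pp_\varepsilon \simeq \PP^2$, and $|\omega_C \otimes \varepsilon|$ is base-point-free because $h^0(\varepsilon(p)) = 0$ for all $p \in C$. The map $\rho_\varepsilon$ is birational onto its image, since any factorization through a lower-degree cover to a plane curve is ruled out by gonality constraints on a non-hyperelliptic genus-four curve, except in the bielliptic case that we excluded. Hence $C_\varepsilon$ is a plane sextic of arithmetic genus ten and geometric genus four, carrying $\delta$-invariant six; the singular points are parametrized by pairs $p+q$ on $C$ with $h^0(\omega_C \otimes \varepsilon(-p-q)) = 2$, and the non-odd hypothesis ensures each is an ordinary node rather than a tacnode or cusp. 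Classical plane-curve adjunction identifies the space $|\mathcal{O}_{\PP^2}(3) \otimes \mathcal{I}_Z|$ with $\H^0(C, \omega_C)$, of dimension $4$, and the restriction of the induced map to $C$ via $\rho_\varepsilon$ is the canonical map, so $\Gamma_\varepsilon := \overline{\xc_\varepsilon(\pp_\varepsilon)}$ contains $C$. On the blow-up at the six nodes the linear system $3H - \sum E_i$ is base-point-free with self-intersection $3$, so $\Gamma_\varepsilon$ has degree three; irreducibility follows from that of $\pp_\varepsilon$ and non-degeneracy (not a cone) from the fact that $C$ spans $\ppp$.

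\medskip

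For (iii), I would construct the symmetrization $\sigma\colon V^\vee \to \sym^2 U$, with $V = \H^0(\omega_C)$ and $U = \H^0(\omega_C \otimes \varepsilon)$, as follows: for $v \in V^\vee$ determining a hyperplane $H_v \subset \ppp$, let $\sigma(v) \in \sym^2 U$ be the unique conic in $\pp_\varepsilon$ passing through the six points $\rho_\varepsilon(H_v \cdot C)$. Existence and uniqueness follow from residuation on $C_\varepsilon$: these six points together with the six nodes cut out on $C_\varepsilon$ a complete intersection with a unique conic. Linearity and injectivity of $\sigma$ are straightforward dimension and irreducibility considerations. The core identification $\det \sigma = \text{(equation of } \Gamma_\varepsilon)$ up to scalar follows from Lemma~\ref{lem:gauss} together with the reflexivity theorem: for $v$ dual to a smooth point $p \in \Gamma_\varepsilon$, the hyperplane $H_v$ is tangent to $\Gamma_\varepsilon$ precisely when $\sigma(v)$ is singular at $\xc_\varepsilon^{-1}(p)$, so $\det \sigma$ vanishes on a dense open of $\Gamma_\varepsilon$; since both sides are cubic in $v$, they must agree.

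\medskip

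For (iv), given an irreducible non-degenerate cubic symmetroid $\Gamma \supset C$, the natural double cover $\tilde\Gamma \to \Gamma$ of Section~\ref{sec:natural_double_cover} restricts along $C$ to an unramified double cover, defining a unique nonzero $\varepsilon \in \Pic(C)[2]$; applying the forward construction to this $\varepsilon$ reproduces $\Gamma$ by uniqueness of the symmetrization up to isomorphism over $\kbar$ (Remark~\ref{rem:unique_symmetrization}). I expect the main obstacle to be step (iii) --- specifically verifying that $\det \sigma$ equals the defining cubic of $\Gamma_\varepsilon$ rather than some cubic containing it; secondarily, step (i), where one must exclude non-nodal singularities on $C_\varepsilon$ under the non-odd, non-bielliptic hypothesis to ensure that the adjoint count in step (ii) yields exactly a four-dimensional linear system.
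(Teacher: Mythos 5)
Your overall architecture (blow up the Prym-canonical sextic, map back by the adjoint cubics, observe the image is a cubic surface containing the canonical $C$) matches the paper's, but two of your steps have genuine gaps. First, in step (i) you assert that the non-odd hypothesis forces the six double points of $C_\varepsilon$ to be ordinary nodes. This is false, and it is precisely the genericity assumption the theorem is meant to remove: what the hypothesis actually yields (Lemma~\ref{lem:mult_seq}, via Proposition~\ref{prop:triple_point}) is only that the \emph{multiplicity sequence} is $(2,2,2,2,2,2)$, i.e.\ six double points \emph{counting infinitely near ones}. When some of these points are infinitely near, $C_\varepsilon$ has tacnodes or worse and $\Gamma_\varepsilon$ is a symmetroid of type~(2) or~(3) with $A_3$ or $A_5$ singularities; these cases do occur (Remark~\ref{rem:regular-xq}), and your ``six ordinary nodes, adjoint ideal $\ci_Z$'' setup does not cover them. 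The paper therefore works throughout on the iterated blow-up $S_\varepsilon$ and with the divisor class $\omega_{S_\varepsilon}(C)$, never with a reduced planar scheme of six nodes.

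Second, step (iii) --- which you correctly identify as the crux --- does not go through as written. Your $\sigma$ assigns a conic to a \emph{hyperplane} $H_v\subset\ppp$; the determinant of such an assignment is a cubic form on the space of hyperplanes and so cuts out a surface in the dual space $\vppp$, not the surface $\Gamma_\varepsilon\subset\ppp$. The genuine symmetrization is a linear map $V^\vee\to\sym^2 U$ whose determinant locus lives in $\Pp V=\ppp$, and the conic attached to a \emph{point} $p\in\ppp$ is $\xq_\varepsilon\inv(H_p)$, which meets $C_\varepsilon$ in the Gauss-map preimage of the dual plane $H_p$ --- not in $\rho_\varepsilon(H\cdot C)$ for any plane section $H$. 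Moreover the linearity of your $\sigma$ is asserted rather than proved, and your verification that $\det\sigma$ cuts out $\Gamma_\varepsilon$ invokes Lemma~\ref{lem:gauss}, which already presupposes that a symmetrization of $\Gamma_\varepsilon$ is in hand, so the argument is circular. The real content needed at this point is Catanese's theorem that a cubic surface equipped with the relevant even set of nodes/torsion sheaf admits a symmetric determinantal representation; the paper imports this as \cite{catanese81}*{Theorem 2.19} through Lemma~\ref{lem:S_maps_to_Gamma}, and no elementary residuation argument is offered as a substitute. Finally, in step (iv) the appeal to uniqueness of the symmetrization of a fixed $\Gamma$ does not by itself show that the forward construction applied to the induced $\varepsilon$ returns the \emph{same} surface; the paper closes this loop directly (Lemma~\ref{lem:prym_map_reverse}) by exhibiting a map from the resolution of $\Gamma$ to $\pp$ restricting to $\rho_\varepsilon$ on $C$, so that $\Gamma$ is the image of the adjoint system of $C_\varepsilon$ by construction.
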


The method of proof follows that of Catanese~\cite{catanese83}. Our contribution to this case is Lemma~\ref{lem:mult_seq} which allows us to take Catanese's argument to its natural limit. This gives us the additional case where $(C,\varepsilon)$ is even.

\begin{proof}[Proof of Theorem~\ref{thm:prym_map_general}]
  Resolve the singularities of the Prym canonical sextic $\rho_\varepsilon(C) \subset \pp_\varepsilon$ by a minimal number of blow-ups $S_\varepsilon \to \pp_\varepsilon$. The linear system $\omega_S(C)$ induces a map $S_\varepsilon \to \ppp$ which restricts on $C \subset S_\varepsilon$ to the canonical embedding $C \toi \ppp$. The image of $S_\varepsilon$ in $\ppp$ is an irreducible non-degenerate cubic symmetroid (Lemma~\ref{lem:S_maps_to_Gamma}). The inverse of the blow-up map $\pp_\varepsilon \ratto S_\varepsilon$ composed with $S_\varepsilon \to \ppp$ gives the desired map $\xc_\varepsilon \colon \pp_\varepsilon \ratto \ppp$, which is defined by cubics. The last sentence of the theorem follows from Lemma~\ref{lem:prym_map_reverse}.
\end{proof}

\begin{definition}
   In resolving $C_\varepsilon$ one blow-up at a time the multiplicity of the node being blown-up is recorded at each step to form the \emph{multiplicity sequence of $C_\varepsilon$}. The sequence is ordered to be non-decreasing.
\end{definition}

\begin{lemma}\label{lem:mult_seq}
  The multiplicity sequence of $C_\varepsilon$ is $(3,2,2,2)$ when $\varepsilon$ is odd and $(2,2,2,2,2,2)$ otherwise.
\end{lemma}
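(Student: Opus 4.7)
The plan is to combine the $\delta$-invariant count with a Brill--Noether analysis of the fibres of $\rho_\varepsilon$. Since $(C,\varepsilon)$ is not bielliptic, $\rho_\varepsilon$ is birational onto its image, so $C_\varepsilon\subset\pp_\varepsilon$ is a plane sextic of geometric genus $4$; the drop from arithmetic genus $10$ forces $\sum_i\binom{m_i}{2}=6$ along the multiplicity sequence. An effective divisor $D$ of degree $m$ on $C$ whose image is a single multiplicity-$m$ point of $C_\varepsilon$ (with $m$ distinct branches) satisfies $h^0(\omega_C\otimes\varepsilon-D)=h^0(\omega_C\otimes\varepsilon)-1=2$, which by Riemann--Roch and $\varepsilon^\vee\simeq\varepsilon$ translates to $h^0(\varepsilon\otimes\co_C(D))=m-1$; non-ordinary singularities obey the same condition upon replacing $D$ by the divisor encoding the branch multiplicities.

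First I would rule out $m\geq 4$: if $m=4$, then $\varepsilon\otimes\co_C(D)$ has degree $4$ and $h^0\geq 3$, so Clifford's theorem forces this bundle to be $\omega_C$, giving $\co_C(D)\simeq\omega_C\otimes\varepsilon$ with $h^0(D)=3$. Riemann--Roch then produces a $g^1_2$ on $C$, contradicting non-hyperellipticity. For $m=3$ the condition becomes that $\varepsilon\otimes\co_C(D)$ is a $g^1_3$, hence one of the trigonal pencils on $C$ (rulings of $Q_C$); writing such a pencil as $\theta$, this is the same as asking that $\theta\otimes\varepsilon$ be effective.

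The main step is to decide when $\theta\otimes\varepsilon$ is effective. If $Q_C$ is singular, $\theta$ is the vanishing theta-null, $\theta\otimes\varepsilon$ is again a theta characteristic, and is effective precisely when it is odd --- that is, exactly when $\varepsilon$ is odd --- with $h^0(\theta\otimes\varepsilon)=1$ since $h^0=2$ would force $\theta\otimes\varepsilon=\theta$ and $\varepsilon\simeq\co_C$. If $Q_C$ is smooth, the two trigonal pencils $\theta_1,\theta_2$ are distinct and satisfy $\theta_1\otimes\theta_2\simeq\omega_C$, and I claim $h^0(\theta_i\otimes\varepsilon)=0$ for every $\varepsilon\neq 0$. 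Indeed, $\theta_1\neq\theta_2$ gives $h^0(\theta_2\otimes\theta_1^\vee)=0$, so Riemann--Roch yields $h^0(\theta_1^{\otimes 2})=3$; the surjectivity of the trigonal map $\phi_1\colon C\to\p$ makes $\sym^2\H^0(\theta_1)\to\H^0(\theta_1^{\otimes 2})$ an injection between equidimensional spaces, hence an isomorphism. Thus every divisor in $|\theta_1^{\otimes 2}|$ has the form $F_1+F_2$ with $F_i\in|\theta_1|$. An effective $E\in|\theta_1\otimes\varepsilon|$ would give $2E\in|\theta_1^{\otimes 2}|$, and since the corresponding section $s_E^2$ is a perfect square in $\sym^2\H^0(\theta_1)$, unique factorisation forces $F_1=F_2=E\in|\theta_1|$, hence $\varepsilon\simeq\co_C$, a contradiction; a brief check tracking ramification of $\phi_1$ at points of $\supp(E)$ handles non-reduced $E$ as well.

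Combining these, when $\varepsilon$ is odd, the unique divisor $D\in|\theta\otimes\varepsilon|$ yields a single (generically ordinary) triple point contributing $\binom{3}{2}=3$ to $\delta$, and the remaining $\delta=3$ is exhausted by three ordinary nodes, giving the sequence $(3,2,2,2)$. Otherwise no triple point exists, every singularity is a node, and $\delta=6$ forces six of them, giving $(2,2,2,2,2,2)$. The step I expect to be most delicate is the smooth-$Q_C$ vanishing $h^0(\theta_i\otimes\varepsilon)=0$; the symmetric-square factorisation argument above is the cleanest route I see, and it has the pleasant feature of working uniformly without needing to distinguish between the $\cR_4^\mathrm{o}$ and $\cR_4^\mathrm{even}$ strata.
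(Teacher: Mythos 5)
Your strategy---a $\delta$-invariant count on the plane sextic combined with a Brill--Noether analysis of the fibres of $\rho_\varepsilon$---is sound and in fact more self-contained than the paper's proof, which simply cites Catanese's computation for the dichotomy between $(3,2,2,2)$ and $(2,2,2,2,2,2)$ and then invokes Proposition~\ref{prop:triple_point}. Your key vanishing $h^0(\theta\otimes\varepsilon)=0$ for smooth $Q_C$ is proved essentially as in the paper's Lemma~\ref{lem:twisted_pencils}: the isomorphism $\sym^2\H^0(\theta)\isoto\H^0(\theta^{\otimes 2})$ realizes $C$ as a trigonal cover of a conic, and a divisor $2E\in|\theta^{\otimes 2}|$ must come from a tangent line. (A cleaner way to close that step: two distinct members $F_1\neq F_2$ of the base-point-free pencil $|\theta|$ are disjoint, so $F_1+F_2=2E$ would force each $F_i$ to be an even divisor of odd degree $3$; this needs no separate discussion of non-reduced $E$.)

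There is, however, a genuine gap in the odd case: from ``there is a triple point and $\delta=6$'' you conclude $(3,2,2,2)$, but $\delta=6$ is equally compatible with the sequence $(3,3)$---either a second triple point elsewhere, or an infinitely near triple point (all three branches at $n_0$ sharing a tangent). Both must be excluded. A second distinct triple point would produce a second effective divisor in $|\theta\otimes\varepsilon|$, contradicting $h^0(\theta\otimes\varepsilon)=1$, which you do establish; an infinitely near one would make the common tangent line $L$ satisfy $L\cdot C_\varepsilon\geq 6=\deg C_\varepsilon$, whence $\rho_\varepsilon^*(L)=2D$, so $2(\theta\otimes\varepsilon)\simeq\omega_C\otimes\varepsilon$ and hence $\varepsilon\simeq\co_C$, a contradiction. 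Separately, your $m=4$ step is garbled: equality in Clifford's theorem cannot ``force the bundle to be $\omega_C$'' (it has degree $4$), and the ensuing claim $\co_C(D)\simeq\omega_C\otimes\varepsilon$ is inconsistent with $\deg D=4$. The correct reading of Clifford equality for a special bundle of degree strictly between $0$ and $2g-2$ is that $C$ is hyperelliptic, which is already the contradiction you want (equivalently: projection from a point of multiplicity $4$ on a sextic is a $g^1_2$). Finally, ``three ordinary nodes'' and ``six nodes'' overstate what you can conclude---the residual double points may be tacnodes or worse, as the paper's analysis of the odd case shows---but since the multiplicity sequence records only multiplicities, this does not affect the statement being proved.
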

\begin{proof}
  The computation given in~\cite{catanese83}*{pg.\ 36--37} implies that the multiplicity sequence of $C_\varepsilon$ is either $(3,2,2,2)$ or $(2,2,2,2,2,2)$. Now use Proposition~\ref{prop:triple_point}, which states that $C_\varepsilon$ admits a triple point if and only if $\varepsilon$ is odd.
\end{proof}

The curve $C$ embeds into $S_\varepsilon$ as the proper transform of $C_\varepsilon$. By adjunction, the line bundle $\omega_{S_\varepsilon}(C)$ restricts to the canonical bundle on $C$.

\begin{lemma}\label{lem:S_maps_to_Gamma}
The line bundle $\omega_{S_\varepsilon}(C)$ induces a map $\psi_\varepsilon\colon S_\varepsilon \to \ppp$ which restricts to the canonical embedding of $C$. The image of the map $\psi_\varepsilon\colon S_\varepsilon \to \ppp$ is a cubic symmetroid.
\end{lemma}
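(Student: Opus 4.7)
The plan is to carry out a divisor-class computation on $S_\varepsilon$, use a Kodaira-type vanishing to control $h^0$ and the restriction to $C$, and then invoke a Catanese-style resolution to exhibit the symmetric determinantal representation of the image.

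First, I would set up intersection theory on $S_\varepsilon$. By Lemma~\ref{lem:mult_seq} the resolution $\pi\colon S_\varepsilon \to \pp_\varepsilon$ consists of six blow-ups at (possibly infinitely near) double points. Writing $H = \pi^*\co_{\pp_\varepsilon}(1)$ and $\bar E_1,\ldots,\bar E_6$ for the total transforms of the six exceptional divisors, the standard blow-up formulas give $K_{S_\varepsilon} = -3H + \sum_i \bar E_i$, while the proper transform $C\subset S_\varepsilon$ of $C_\varepsilon$ satisfies $C = 6H - 2\sum_i \bar E_i$. Hence $K_{S_\varepsilon}+C = 3H - \sum_i \bar E_i$, and using pairwise orthogonality of the $\bar E_i$ together with $\bar E_i^2 = -1$ we obtain $(K_{S_\varepsilon}+C)^2 = 3$ and $(K_{S_\varepsilon}+C)\cdot C = 6 = \deg\omega_C$.

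Next, from the short exact sequence
\begin{equation*}
0 \to \co_{S_\varepsilon}(K_{S_\varepsilon}) \to \omega_{S_\varepsilon}(C) \to \omega_C \to 0
\end{equation*}
(the last arrow being restriction composed with adjunction) and the vanishing $\H^i(S_\varepsilon, K_{S_\varepsilon}) = 0$ for $i = 0, 1$ (by rationality of $S_\varepsilon$), the restriction $\H^0(S_\varepsilon, \omega_{S_\varepsilon}(C)) \overset{\sim}{\to} \H^0(C, \omega_C)$ is an isomorphism. In particular $h^0(\omega_{S_\varepsilon}(C)) = 4$ and the restricted linear series on $C$ is the full canonical system. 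Base-point-freeness of $|\omega_{S_\varepsilon}(C)|$ on $C$ follows from bpf of $|\omega_C|$ and surjectivity of restriction; off $C$ it follows because no conic of $\pp_\varepsilon$ passes through all six nodes, as any such conic would be a component of the irreducible sextic $C_\varepsilon$. Thus $\omega_{S_\varepsilon}(C)$ defines a morphism $\psi_\varepsilon\colon S_\varepsilon \to \ppp$ whose restriction to $C$ is the canonical embedding. Since $\psi_\varepsilon|_C$ is a closed embedding, $\psi_\varepsilon$ is birational onto its image, which therefore has degree $(K_{S_\varepsilon}+C)^2 = 3$.

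The main obstacle is showing that the cubic $\Gamma_\varepsilon = \psi_\varepsilon(S_\varepsilon)$ is a symmetroid. Following Catanese~\cite{catanese83}, I would resolve the ideal $I\subset \co_{\pp_\varepsilon}$ of the six nodes by Hilbert-Burch: since $I$ has codimension two and $h^0(\co_{\pp_\varepsilon}(3)\otimes I) = 4$, there is a $3\times 4$ matrix of linear forms in $z_0, z_1, z_2$ whose four maximal minors generate $I$ in degree three. These four cubics are exactly the components of $\psi_\varepsilon$, so $\psi_\varepsilon$ coincides with the adjugation map $\xc_\ca$ of Section~\ref{sec:adjugation_map} for the tensor $\ca$ associated to this matrix. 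The delicate point, which I expect to be the main work, is that the two-torsion structure $\varepsilon^{\otimes 2}\simeq\co_C$ forces the module of linear syzygies of $I$ to be self-dual under an appropriate pairing, allowing the matrix to be chosen so that it comes from a symmetric tensor $\ca \in V\otimes \sym^2 U$ with $V = \H^0(\omega_C)$ and $U = \H^0(\omega_C\otimes \varepsilon)$. Then $\Gamma_\varepsilon = Z(\det \ca_V)$ is a cubic symmetroid in the sense of Section~\ref{sec:cubic_symmetroids}, and verifying this self-duality uniformly across the non-odd strata of Table~\ref{tbl:strata} is where separate case analysis will be needed.
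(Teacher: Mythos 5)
Your overall route is the same as the paper's, which itself follows Catanese: compute $(K_{S_\varepsilon}+C)^2=3$ and $(K_{S_\varepsilon}+C)\cdot C=6$ from the multiplicity sequence of Lemma~\ref{lem:mult_seq}, use the restriction sequence and rationality of $S_\varepsilon$ to see that $|\omega_{S_\varepsilon}(C)|$ cuts out the full canonical system on $C$, and then appeal to a symmetric determinantal statement for the image cubic. The numerology and the cohomological step are correct. However, two points do not hold up as written.

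First, your justification of base-point-freeness off $C$ is invalid: a conic through all six nodes of the sextic $C_\varepsilon$ meets it with multiplicity at least $2$ at each node, giving total intersection $12=2\cdot 6$, which is exactly what B\'ezout allows; such a conic is therefore \emph{not} forced to be a component of $C_\varepsilon$. The claim that the six nodes impose independent conditions on cubics and that the resulting net of cubics is base-point-free needs a genuine argument (it is carried out in \cite{catanese83}, and extending it to the even stratum is precisely what Lemma~\ref{lem:mult_seq} is for); as stated your Hilbert--Burch resolution $0\to\co(-4)^3\to\co(-3)^4\to I\to 0$ also silently assumes the six points lie on no conic. Second, and more seriously, the heart of the lemma --- that the image cubic is a \emph{symmetroid} --- is not proved: you correctly identify that the order-two structure of $\varepsilon$ should force a self-duality of the linear syzygies making the $3\times 4$ matrix symmetrizable, but you explicitly defer this as ``the main work.'' This self-duality is exactly the content of Catanese's theorem that a surface with an even set of nodes (equivalently, admitting a double cover branched only at the nodes) carries a symmetric determinantal representation, which the paper invokes as \cite{catanese81}*{Theorem 2.19}. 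Without either citing that result or proving the self-duality, the proposal establishes only that the image is a cubic surface, not that it is a symmetroid.
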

\begin{proof}
  The degree of $\psi_\varepsilon(S_\varepsilon)$ is readily computed as in~\cite{catanese83}*{pg.\ 37}. We replace the hypothesis in \emph{loc.\ cit.}\ that $C$ admits no vanishing theta-null with the observation in Lemma~\ref{lem:mult_seq} that the multiplicity sequence of $C_\varepsilon$ is $(2,2,2,2,2,2)$. That $\Gamma_\varepsilon$ is a cubic symmetroid follows from~\cite{catanese81}*{Theorem 2.19}  as demonstrated in~\cite{catanese83}*{Theorem 1.3}.
\end{proof}

Composing $\psi_\varepsilon$ with the rational inverse of the blow-up map $\pi_\varepsilon$ gives the rational map $\xc_\varepsilon\colon \pp_\varepsilon \ratto \ppp$ whose image is a cubic symmetroid containing $C$. 
A divisor class computation on $S_\varepsilon$ implies that $\xc_\varepsilon$ is defined by a linear system of cubics.

Conversely, take an irreducible non-degenerate cubic symmetroid $\Gamma$ containing $C \subset \ppp$ and a symmetrization $\xq\colon\pp \to \ppp$ of $\Gamma$. We have an induced double cover of $\Gamma$ branched over the nodes (Section~\ref{sec:natural_double_cover}) which induces a line bundle $\varepsilon$ on $C$ of order two. Let $S \to \Gamma$ be a minimal resolution of singularities and identify $C$ with its preimage in $S$.

\begin{lemma}\label{lem:prym_map_reverse}
  There is a map $\pi\colon S \to \pp$ whose restriction $\pi|_C\colon C \to \pp$ is $\rho_\varepsilon$.
\end{lemma}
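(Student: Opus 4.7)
Since $\Gamma$ is irreducible and non-degenerate, Theorem~\ref{T:symmetroid_classification} places it in type~(1), (2), or~(3), so by Proposition~\ref{prop:parametrization} the adjugation map $\xc_\ca\colon\pp \ratto \Gamma$ is birational. Remark~\ref{rem:unique_symmetrization} describes its birational inverse explicitly: blow up each singular point of $\Gamma$ exactly once (this is the minimal resolution $\psi\colon S \to \Gamma$) and then contract the resulting $(-1)$-curves down to $\pp$. The latter chain of contractions produces the desired morphism $\pi\colon S \to \pp$, which on $S$ satisfies $\xc_\ca \circ \pi = \psi$ as morphisms.

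To identify $\pi|_C$ with $\rho_\varepsilon$, it suffices to show $H|_C \simeq \omega_C \otimes \varepsilon$ for $H := \pi^*(\co_\pp(1))$; since both $\H^0(\co_\pp(1))$ and $\H^0(\omega_C\otimes\varepsilon)$ are three-dimensional, the induced map on global sections then identifies $\pp$ with $\pp_\varepsilon$ and $\pi|_C$ with the Prym-canonical map. Set $L := \psi^*(\co_\ppp(1))$, so that $L|_C = \omega_C$ by adjunction on $S$. Using Lemma~\ref{lem:gauss} together with $\xc_\ca \circ \pi = \psi$, the relation $\xq = \gamma_\ca \circ \xc_\ca$ gives $\xq \circ \pi = \gamma_\ca \circ \psi$ as morphisms $S \to \vppp$. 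Pulling back $\co_\vppp(1)$ in two ways and using that $\xq$ is defined by quadrics yields
\begin{equation*}
  2H \;=\; \pi^*\xq^*(\co_\vppp(1)) \;=\; \psi^*\gamma_\ca^*(\co_\vppp(1)).
\end{equation*}
The Gauss map $\gamma_\ca$ is defined by the partial derivatives of the cubic $\det\ca_V$, whose common vanishing on $\Gamma$ is precisely the singular locus. A local calculation at each singular point shows that these partials pull back to $S$ with a factor of the corresponding exceptional curve, so $\psi^*\gamma_\ca^*(\co_\vppp(1)) = 2L - E$ in $\Pic(S)$, where $E$ is an effective divisor supported on the exceptional locus of $\psi$. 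Thus $2H = 2L - E$ in $\Pic(S)$.

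The natural double cover $\tilde S \to S$ of Section~\ref{sec:natural_double_cover} is classified by a line bundle $\delta \in \Pic(S)$ with $\delta^{\otimes 2} \simeq \co_S(E)$, and the specific $\delta$ (among possible square roots) is singled out by the symmetrization $\xq$. Since $S$ is rational, $\Pic(S)$ is torsion-free, so the equation $2H = 2L - E$ lifts uniquely to $H = L - \delta$. Restricting to $C$: the canonically embedded $C$ is smooth and disjoint from the isolated nodes of $\Gamma$, so its proper transform in $S$ meets no exceptional curve; hence $E|_C = 0$ and $\delta|_C$ is a 2-torsion class on $C$. By the construction of $\varepsilon$ in Section~\ref{sec:natural_double_cover}, the restriction $\tilde S|_C \to C$ is the double cover $\tilde C_\varepsilon \to C$ classified by $\varepsilon$, so $\delta|_C = \varepsilon$. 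Combining, $H|_C = \omega_C \otimes \varepsilon^\vee \simeq \omega_C \otimes \varepsilon$, as required.

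The main obstacle is the local computation at each singularity type~(1), (2), (3) showing that the partial derivatives of $\det\ca_V$ pull back with precisely the right multiplicity along the exceptional curves to give $E$ divisible by~2 in $\Pic(S)$, with the distinguished square root matching the natural double cover. A secondary check is the identification $\delta|_C = \varepsilon$, which amounts to matching the double cover of Section~\ref{sec:natural_double_cover}, restricted to $C$, with the Kummer-theoretic cover on $C$ defined by $\varepsilon$.
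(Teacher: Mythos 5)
Your proof is correct in substance and arrives at exactly the divisor-class identity that the paper's (very terse) proof relies on: the paper sets $H=\psi^*\co_{\ppp}(1)$, takes $L$ with $2L$ equivalent to the reduced exceptional divisor of $\psi$, and cites Catanese for the fact that $|\co_S(H-L)|$ induces a map restricting to $\rho_\varepsilon$ on $C$; in your notation that is precisely $\pi^*\co_{\pp}(1)=L-\delta$ with $\delta|_C=\varepsilon$. What you do differently is derive this identity yourself, by combining $\xq=\gamma_\ca\circ\xc_\ca$ (Lemma~\ref{lem:gauss}), the fact that $\xq$ is given by quadrics, and a base-locus computation for the Gauss map at the singular points; the paper instead outsources the computation to \cite{catanese83}*{p.~37} and only carries it out explicitly later, in Lemma~\ref{lem:blow_down_is_prym}, for the Cayley case. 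Your route is more self-contained and makes transparent why the square root $\delta$ of the exceptional class is the one classifying the natural double cover of Section~\ref{sec:natural_double_cover}, hence why $\delta|_C=\varepsilon$. Two small imprecisions: irreducibility and non-degeneracy alone do not place $\Gamma$ in types (1)--(3) of Theorem~\ref{T:symmetroid_classification}, since the non-normal types (4) and (5) are also irreducible and non-degenerate (their exclusion comes from $\Gamma$ containing the smooth canonical curve $C$, cf.\ Remark~\ref{rem:regular-xq}, and is implicit in the paper's phrase ``branched over the nodes''); and the deferred local computation is genuinely needed for the $A_3$ and $A_5$ singularities of types (2) and (3), where one must check both that the fixed part $E$ of $\psi^*\gamma_\ca^*\co_{\vppp}(1)$ is twice the class $\delta$ classifying the double cover, and that this cover---a priori built on the partial resolution $S_\ca$ of Section~\ref{sec:natural_double_cover} rather than on the minimal resolution $S$---restricts to $C$ as claimed. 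Neither issue affects the generic (Cayley) case, and both are of the same nature as what the paper itself leaves to the reference.
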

\begin{proof}
  Let $H \subset S$ be the pullback of a hyperplane section from $\Gamma$. There is a divisor $L$ on $S$ such that $2L$ is equivalent to the reduction of the exceptional divisor of $S \to \Gamma$ (see proof of~\cite{catanese83}*{Theorem 1.3}). The line bundle $\co_S(H-L)$ induces the map $\pi$ restricting to $\rho_\varepsilon$ on $C$ as computed in~\cite{catanese83}*{end of p.~37}.
\end{proof}

\subsection{The bielliptic case}\label{sec:bielliptic} Let us recall that symmetrizations of a degenerate cubic $\Gamma$ are in bijection with the double covers of $\Gamma$ (Section~\ref{sec:degenerate_cubics}).

\begin{proposition}
  There is a natural bijection between bielliptic elements of order two $\varepsilon \in \jac_C[2]$ and symmetrizations of degenerate cubic symmetroids containing $C$.
\end{proposition}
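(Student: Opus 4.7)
The plan is to construct the bijection in both directions using a cone construction on the bielliptic side and a projection-from-apex construction on the symmetroid side.

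For the forward direction, I would start with a bielliptic class $\varepsilon=\xb^*(\varepsilon')$ where $\xb\colon C\to E$ is a double cover of a genus one curve and $\varepsilon'\in\jac_E[2]\setminus\{0\}$. The bielliptic involution $\iota$ on $C$ acts on $V:=\H^0(\omega_C)$, decomposing it as $V^+\oplus V^-$. The projection formula combined with Riemann--Hurwitz identifies $\xb_*\omega_C=\co_E\oplus \cl$ with $\deg\cl=3$, so $\dim V^+=1$ and $V^-\cong \H^0(E,\cl)$ is three-dimensional; the linear system $|\cl|$ is basepoint-free and embeds $E$ as a smooth plane cubic $\bar{E}\subset |V^-|$. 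In the canonical model $C\subset\ppp=|V|$, the point $o:=|V^+|$ lies off $C$ (since $|V^-|$ is basepoint-free on $C$), and projection from $o$ restricts on $C$ to $\xb$. Hence $C$ lies on the cone $\Gamma_\varepsilon\subset\ppp$ with apex $o$ over $\bar{E}\subset |V^-|$, which is a degenerate cubic symmetroid in the sense of Section~\ref{sec:degenerate_cubics}. Proposition~\ref{prop:sym_of_E} then identifies $\varepsilon'$ with a symmetrization of $\bar E$, which extends to a symmetrization $\xq_\varepsilon$ of $\Gamma_\varepsilon$ by precomposing with the quotient $V^\vee\twoheadrightarrow (V^-)^\vee$.

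For the reverse direction, given a degenerate cubic symmetroid $\Gamma$ containing $C$ with a symmetrization $\xq$, express $\Gamma$ as the cone over a smooth plane cubic $\bar E$ with apex $o\in\ppp$, and apply the classical projection-degree formula
\[
\deg(\pi|_C)\cdot \deg\bar E \;=\; \deg C-\mult_o(C),
\]
i.e.\ $3\deg(\pi|_C)=6-\mult_o(C)$. Smoothness of $C$ forces $\mult_o(C)\in\{0,1\}$, and only $\mult_o(C)=0$ yields an integer degree; hence $o\notin C$, and $\xb:=\pi|_C\colon C\to\bar E$ is a cover of degree two, furnishing a bielliptic structure. The symmetrization $\xq$ descends to a symmetrization of $\bar E$ and, via Proposition~\ref{prop:sym_of_E}, produces a unique class $\varepsilon'\in\jac_{\bar E}[2]\setminus\{0\}$. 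Setting $\varepsilon:=\xb^*(\varepsilon')$ yields the desired bielliptic class on $C$.

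The principal obstacle is verifying that the two constructions are mutually inverse, and in particular that the class $\varepsilon$ recovered in the reverse direction matches the input of the forward direction. I would address this via the compatibility of natural double covers: by Remark~\ref{rem:cone_double_cover}, $\xq$ endows $\Gamma$ with a double cover branched only at $o$, which restricts on $\bar E$ to the double cover classified by $\varepsilon'$ and whose pullback along $\xb$ to $C$ is the unramified double cover classified by $\xb^*(\varepsilon')=\varepsilon$. Together with the uniqueness of the apex as the isolated $\iota$-fixed point in $\ppp$ and the basepoint-free argument showing $o\notin C$, this yields the asserted bijection.
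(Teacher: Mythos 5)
Your proof is correct and follows essentially the same route as the paper: the forward direction realizes $C$ on the cone over the plane cubic model of $E$ given by the three-dimensional subsystem $\H^0(E,\mu)\subset\H^0(\omega_C)$ (your anti-invariant eigenspace $V^-$ is exactly this subspace, and the paper's Lemma~\ref{lem:mu_exists} supplies the same $\mu$), while the reverse direction projects from the apex and invokes Proposition~\ref{prop:sym_of_E}. Your added details --- the eigenspace decomposition, the degree count $3\deg(\pi|_C)=6-\mult_o(C)$ forcing $o\notin C$ and a degree-two cover, and the double-cover compatibility check --- only make explicit what the paper leaves implicit.
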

\begin{proof}
  Assume $\varepsilon$ is bielliptic and let $\xb\colon C \to E$, $\varepsilon' \in \jac_E[2]$ such that $\xb^*\varepsilon'\simeq\varepsilon$ be as in Definition~\ref{def:bielliptic}. We prove in Lemma~\ref{lem:mu_exists} that there is a unique cubic line bundle $\mu$ on $E$ satisfying $\xb^*\mu \simeq \omega_C$. This provides a cubic model $E\hookrightarrow \PP^2=|\mu|^\vee$. 
  
  Let $f\colon  E \to \pp = \Pp \H^0(\mu)$ denote the cubic embedding of $E$ via the line bundle $\mu$.  The inclusion $\H^0(\mu) \toi \H^0(\omega_C)$ yields a projection map $\pr\colon \Pp \H^0(\omega_C) \ratto \Pp \H^0(\mu)$ satisfying $\pr|_C = f\circ \xb$. Therefore, $C$ is contained in the cone over $f(E) \subset \pp$.

Conversely, if $\Gamma$ is a cone over a plane cubic curve $E$ and $\Gamma$ contains $C$ then projecting from the node of $\Gamma$ recovers a double cover $C \to E$. Symmetrizations of $\Gamma$ and $E$ are in bijection. Now use Proposition~\ref{prop:sym_of_E}.
\end{proof}

\begin{lemma}\label{lem:mu_exists}
  Let $\xb\colon C \to E$ be a double cover of a curve of genus one. There is a degree three line bundle
  $\mu$ on $E$ over $k$, unique up to isomorphism, such that $\xb^*\mu \simeq \omega_C$.
\end{lemma}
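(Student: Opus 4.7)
The plan is to exploit the involution $\sigma\colon C\to C$ associated to the degree-two cover and reduce the statement to a Hom-computation on $E$. Since $\chr k \neq 2$, the pushforward $\xb_*\co_C$ decomposes into $\sigma$-eigenspaces as $\co_E\oplus\cn$, where $\cn$ is a line bundle on $E$ defined over $k$ (the involution is intrinsic to $\xb$). Riemann--Hurwitz yields a branch divisor $B$ of degree $6$ on $E$, and the standard relation $\cn^{\otimes 2}\simeq \co_E(-B)$ then forces $\deg \cn = -3$. My candidate is $\mu:=\cn^{\vee}$, of degree $3$.

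For existence I would invoke Grothendieck--Serre duality for the finite morphism $\xb$; combined with $\omega_E\simeq\co_E$ it gives
\[
  \xb_*\omega_C \;\simeq\; \shom_{\co_E}(\xb_*\co_C,\omega_E) \;\simeq\; \co_E\oplus\cn^\vee \;=\; \co_E\oplus\mu.
\]
Adjunction then yields
\[
  \operatorname{Hom}_{\co_C}(\xb^*\mu,\omega_C) \;\simeq\; \operatorname{Hom}_{\co_E}(\mu,\co_E\oplus\mu),
\]
which visibly contains $\id_\mu$ in the second summand. The resulting nonzero map $\xb^*\mu\to\omega_C$ is automatically an isomorphism because both sides are line bundles of the same degree $6$ on the connected curve $C$.

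For uniqueness, suppose $\mu'$ is another degree-three line bundle on $E$ with $\xb^*\mu'\simeq\omega_C$, and set $N:=\mu'\otimes\mu^{-1}$, so that $\xb^* N\simeq \co_C$. Adjunction provides a nonzero map $N\to \xb_*\co_C\simeq \co_E\oplus \cn$. The component into $\cn$ must vanish, since $\deg\cn - \deg N = -3 < 0$ forbids nonzero maps of line bundles in that direction; hence the projection $N\to \co_E$ is nonzero, and a nonzero map between degree-zero line bundles on a smooth curve is an isomorphism, so $N\simeq \co_E$ and $\mu'\simeq \mu$.

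The principal subtlety is ensuring the construction descends to the non-algebraically closed field $k$, rather than merely to $\kbar$. This is handled by the fact that the involution $\sigma$, and hence the eigenspace decomposition of $\xb_*\co_C$, is defined over $k$; all subsequent functorial duality and Hom computations are then automatically valid over $k$. The main conceptual step is identifying the correct candidate for $\mu$---namely the dual of the $\sigma$-anti-invariant summand of $\xb_*\co_C$---after which both existence and uniqueness follow from formal manipulations.
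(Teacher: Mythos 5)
Your proof is correct, and it rests on the same eigenspace decomposition $\xb_*\co_C\simeq\co_E\oplus\cn$ (with $\mu=\cn^\vee$, $\deg\cn=-3$) that the paper's proof uses; the two halves are just argued by different mechanisms. For existence, the paper observes that $\xb^*\cn$ is the ideal sheaf of the ramification divisor $R$, so $\xb^*\mu\simeq\co_C(R)\simeq\omega_C$ directly from Riemann--Hurwitz and the triviality of $\omega_E$; you instead run duality for the finite flat map $\xb$ together with adjunction to produce a nonzero map $\xb^*\mu\to\omega_C$ and conclude by comparing degrees. These are two faces of the same relative-duality statement, and both are sound. The more substantive difference is in uniqueness: the paper cites the injectivity of $\xb^*\colon\jac_E\to\jac_C$ from Mumford, whereas your computation of $\operatorname{Hom}_{\co_E}(N,\co_E\oplus\cn)$, with the $\cn$-component killed because $\deg\cn<\deg N$, is a self-contained proof of exactly that injectivity for a ramified double cover (and makes visible why the same argument would fail in the \'etale case, where $\deg\cn=0$). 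Your closing remark that the involution, hence the eigenspace decomposition, is defined over $k$ is also the correct way to handle descent; it implicitly uses $\chr k\neq 2$, which the paper assumes throughout.
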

\begin{proof}
  Use the Riemann--Hurwitz sequence to see that the ramification divisor $R \subset C$ of $\xb$ is a canonical divisor. Let $B = \xb(R)$ be the branch locus on $E$. By the theory of double covers, there is a line bundle $\mu$ of degree three such that $\mu^{\otimes 2} \simeq \co_E(B)$ and $\xb_*\co_C = \co_E \oplus \mu^\vee$. It is a standard observation that the natural map $\xb^* \mu^\vee \to \co_C$ realizes $\xb^*\mu^\vee$ as the ideal sheaf of the ramification locus. Consequently, $\xb^* \mu \simeq \co_C(R) \simeq \omega_C$. Uniqueness of such $\mu$ follows from the injectivity of the pullback map $\xb^*\colon  \jac_E \to \jac_C$~\cite{mumford--prym}*{pg.\ 332}.
\end{proof}

\begin{proposition}
  Using the notation above, the Prym canonical map $\rho_\varepsilon \colon  C \to \pp$ factors through $\xb\colon  C \to E$ and a map $f'\colon E \to \pp$ given by the complete linear system $\mu\otimes \varepsilon'$.
\end{proposition}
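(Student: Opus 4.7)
The plan is to show directly that every section of $\omega_C\otimes\varepsilon$ descends along $\xb$, so that the Prym canonical map is the composition of $\xb$ with a map from $E$ defined by $|\mu\otimes\varepsilon'|$. The key ingredient is the projection formula applied to the push-forward computation of Lemma~\ref{lem:mu_exists}.

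First, since $\xb^*\mu\simeq\omega_C$ by Lemma~\ref{lem:mu_exists} and $\xb^*\varepsilon'\simeq\varepsilon$ by the bielliptic hypothesis, we have a canonical isomorphism $\omega_C\otimes\varepsilon\simeq \xb^*(\mu\otimes\varepsilon')$. Using the projection formula together with the decomposition $\xb_*\co_C\simeq \co_E\oplus\mu^\vee$ established in the proof of Lemma~\ref{lem:mu_exists}, I would compute
\[
\xb_*(\omega_C\otimes\varepsilon)\;=\;(\mu\otimes\varepsilon')\otimes\xb_*\co_C\;\simeq\;(\mu\otimes\varepsilon')\oplus\varepsilon'.
\]
Taking global sections and using that $\xb$ is affine yields
\[
\H^0(C,\omega_C\otimes\varepsilon)\;\simeq\;\H^0(E,\mu\otimes\varepsilon')\,\oplus\,\H^0(E,\varepsilon').
\]

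Next, I would observe that the second summand vanishes: since $\varepsilon'\in\jac_E[2]$ is nonzero and $E$ is a smooth proper curve of genus one, $\varepsilon'$ is a nontrivial degree zero line bundle, hence has no nonzero global sections. Therefore the pullback map $\xb^*\colon \H^0(E,\mu\otimes\varepsilon')\to \H^0(C,\omega_C\otimes\varepsilon)$ is an isomorphism. Since the Prym canonical map is defined by the complete linear system $|\omega_C\otimes\varepsilon|$ and every section comes from $E$, one sees immediately that $\rho_\varepsilon$ factors as $\rho_\varepsilon=f'\circ\xb$, where $f'\colon E\to \pp=\Pp\H^0(\omega_C\otimes\varepsilon)$ is induced by the complete linear system $|\mu\otimes\varepsilon'|$.

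There is essentially no obstacle: the only point that needs attention is the vanishing of $\H^0(E,\varepsilon')$, for which the nontriviality of $\varepsilon'$ as a degree zero bundle on a genus one curve is used (and this is where the assumption that $\varepsilon'$ has order exactly two, rather than being trivial, enters). As a sanity check one may note that $\deg(\mu\otimes\varepsilon')=3\geq 2g(E)+1$, so $\mu\otimes\varepsilon'$ is in fact very ample and $f'$ is an embedding of $E$ as a plane cubic in $\pp$, consistent with the rest of Section~\ref{sec:bielliptic}.
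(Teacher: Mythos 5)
Your proof is correct. It reaches the same key point as the paper's argument --- that $\xb^*\colon \H^0(E,\mu\otimes\varepsilon')\to\H^0(C,\omega_C\otimes\varepsilon)$ is an isomorphism, which immediately gives the factorization --- but by a different route. The paper simply observes that $\xb^*(\mu\otimes\varepsilon')\simeq\omega_C\otimes\varepsilon$ and that both linear systems have the same number of sections (a dimension count: Riemann--Roch gives $h^0=3$ on each side, using $h^0(\varepsilon)=0$ on $C$ and $\deg(\mu\otimes\varepsilon')=3$ on the genus one curve $E$). You instead push forward via the projection formula and the decomposition $\xb_*\co_C\simeq\co_E\oplus\mu^\vee$ from Lemma~\ref{lem:mu_exists}, identifying the complement of the image of $\xb^*$ with $\H^0(E,\varepsilon')$ and killing it by the nontriviality of $\varepsilon'$ (which indeed follows from the nontriviality of $\varepsilon=\xb^*\varepsilon'$). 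Your version is slightly more structural: it avoids Riemann--Roch on $C$ and explains \emph{why} the two dimensions agree rather than computing each separately, at the modest cost of invoking the projection formula and affineness of $\xb$. Both arguments are complete and correct.
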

\begin{proof}
  Since $\xb^* \mu \simeq \omega_C$ and $\xb^*\varepsilon' \simeq \varepsilon$ we find $\xb^*(\mu\otimes\varepsilon') \simeq \omega_C \otimes \varepsilon$. Since both linear systems have the same number of sections, we are done.
\end{proof}

\subsection{The odd case}\label{sec:odd_case}

Assume $C$ admits a vanishing theta-null $\theta$, fix an odd theta characteristic $\eta$, and let $\varepsilon=\theta\otimes \eta^\vee$. Recall $C$ is contained in a singular quadric $Q_C$. We argue that the natural cubic symmetroid $\Gamma_\varepsilon$ associated with $(C,\varepsilon)$ is the union $Q_C \cup H_\eta$.

Together with Proposition~\ref{prop:triple_point}, the computation given in~\cite{catanese83}*{pg.\ 36--37} implies that the multiplicity sequence of $C_\varepsilon$ is $(3,2,2,2)$ and the image of the adjunction map $\xc_\varepsilon \colon \pp \ratto \ppp$ is $Q_C$. To further study $\xc_\varepsilon$ we state the following theorem, see Section~\ref{sec:odd_singularities} for the proof.

\begin{theorem}
  The curve $C_\varepsilon$ has a unique triple point $n_0$ and it satisfies $\rho_\varepsilon^*(n_0) \equiv \eta$. There is a bijection between points $u\in C$ with $u \le \eta$ and singularities $n_u$ of $C_\varepsilon \setminus n_0$. The bijection is so that $\langle n_0,n_u \rangle$ is the maximally tangent line to the branch of $C_\varepsilon$ at $n_0$ containing $u$. Furthermore, there exists a unique line $L_0$ satisfying $L_0 \cap C_\varepsilon = \{n_u \mid u \le \eta\}$. If $\ell u \le \eta$ but $(\ell+1)u \not\le \eta$ then $n_u$ is of type $A_{2\ell-1}$ or $A_{2\ell}$. The ray of $Q_C$ containing $u$ is tangential to $C$ iff $n_u$ is of type $A_{2\ell}$.
\end{theorem}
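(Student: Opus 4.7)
The plan is to analyze everything through the Prym-canonical linear system $|\omega_C\otimes\varepsilon|=|\theta\otimes\eta|$, exploiting $\omega_C\simeq\eta^{\otimes 2}\simeq\theta^{\otimes 2}$. The key structural observation is that the multiplication map $H^0(\theta)\otimes H^0(\eta)\hookrightarrow H^0(\omega_C\otimes\varepsilon)$ is injective with $2$-dimensional image (using $h^0(\theta)=2$ and $h^0(\eta)=1$) and base locus exactly $D_\eta$. Choosing a basis $s_0,s_1,s_2$ of $H^0(\omega_C\otimes\varepsilon)$ with $s_0,s_1$ spanning this subspace, the three points of $D_\eta$ collapse to $n_0=[0{:}0{:}1]$, so $\rho_\varepsilon^*(n_0)\supseteq D_\eta$. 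Since the multiplicity of $C_\varepsilon$ at $n_0$ equals $\deg\eta=3$, we obtain $\rho_\varepsilon^*(n_0)=D_\eta\equiv\eta$ and $n_0$ is a triple point. For uniqueness, any second triple point $n$ with preimage $D_n$ would force $h^0(\omega_C\otimes\varepsilon(-D_n))\ge 2$, hence $\omega_C\otimes\varepsilon-D_n\sim\theta$ (as $|\theta|$ is the unique $g^1_3$ on $C$ because $Q_C$ is a cone), giving $D_n\sim\eta$; since $h^0(\eta)=1$, we conclude $D_n=D_\eta$ and $n=n_0$.

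For each $u\in\supp(\eta)$, let $D_u\in|\theta|$ be the unique fiber of the trigonal pencil through $u$ (geometrically, the ray of $Q_C$ through $u$) and set $R_u:=D_u-u$. Using $D_u\sim\theta$, the class of $\omega_C\otimes\varepsilon-R_u$ equals $\eta+u$, and Riemann--Roch on a non-hyperelliptic genus-four curve yields $h^0(\eta+u)=1+h^0(\eta-u)=2$ precisely when $u\le\eta$, the extra section arising from effectivity of $\eta-u$. Consequently, evaluation of $H^0(\omega_C\otimes\varepsilon)$ at the scheme $R_u$ has $1$-dimensional image, collapsing $R_u$ into a single point $n_u\in\PP^2$, distinct from $n_0$ in generic cases. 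The line $\tau_u=\langle n_0,n_u\rangle$ will be identified with the line through $n_0$ in the tangent direction of the $u$-branch via the section $s_u\cdot t_0$, where $s_u\in H^0(\theta)$ is the unique section (up to scalar) vanishing to maximal order at $u$: its divisor on $C$ is $D_\eta+D_u$, simultaneously exhibiting that $\tau_u$ attains maximal intersection with the $u$-branch at $n_0$ and passes through $n_u=\rho_\varepsilon(R_u)$. The assignment $u\mapsto n_u$ is the claimed bijection.

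For the line $L_0$, consider the divisor $\Delta:=\sum_{u\in\supp(\eta)}\ell\cdot R_u$ with $\ell=\mult_u(\eta)$, whose class is $\sum\ell D_u-\eta\sim 3\theta-\eta\sim\omega_C\otimes\varepsilon$ (the difference being $2\varepsilon=0$). Since $h^0(\co_C)=1$, there is a unique section of $\omega_C\otimes\varepsilon$ (up to scalar) with zero divisor $\Delta$, hence a unique line $L_0\subset\PP^2$ pulling back to $\Delta$; as each $R_u$ maps entirely to $n_u$ under $\rho_\varepsilon$, the set-theoretic equality $L_0\cap C_\varepsilon=\{n_u:u\le\eta\}$ follows. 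For the singularity types, the multiplicity sequence $(3,2,2,2)$ of Lemma~\ref{lem:mult_seq}, together with an $h^0$-based delta calculation, forces $\delta(n_u)=\ell$, so $n_u$ is of type $A_{2\ell-1}$ or $A_{2\ell}$. Distinguishing the two reduces to local analysis: $A_{2\ell-1}$ corresponds to two distinct branches at $n_u$ (tacnode-type, when $R_u$ is reduced), while $A_{2\ell}$ corresponds to a single branch with higher ramification (cusp-type, when $R_u$ is non-reduced), and this dichotomy is controlled by whether the ray $D_u$ of $Q_C$ is non-tangent to $C$.

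The main technical obstacle will be this final type distinction: aligning the local-analytic behavior of $\rho_\varepsilon$ at the possibly confluent points of $R_u$ with the combinatorial data $(\ell,\ell_u:=\mult_u(D_u))$ requires tracking higher-order terms of sections in local coordinates, especially in the degenerate scenarios where $\ell_u\ge 2$ so that $R_u$ includes $u$ itself, and the collapse to a single image $n_u$ must be re-examined via a deeper jet calculation to match the tangency of the ray with the parity of the $A_n$ type.
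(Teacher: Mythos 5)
Your overall strategy---working entirely inside the linear system $|\omega_C\otimes\varepsilon|=|\theta\otimes\eta|$ and reading off singularities from $h^0$-drops---is exactly the paper's, and several pieces are correct and essentially identical to the paper's argument: the identification $\rho_\varepsilon^*(n_0)=D_\eta$ via the pencil $H^0(\theta)\cdot H^0(\eta)$, uniqueness of the triple point from uniqueness of the $g^1_3$, the construction $u\mapsto n_u$ from $h^0(\eta+u)=1+h^0(\eta-u)$, the maximal-tangency statement via the section with divisor $D_\eta+D_u$, and the computation $\sum\ell_i(D_{u_i}-u_i)\equiv 3\theta-\eta\equiv\omega_C\otimes\varepsilon$ producing $L_0$. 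However, there are two genuine gaps. First, you only prove one direction of the bijection: you build $n_u$ from each $u\le\eta$ but never show that \emph{every} singularity of $C_\varepsilon\setminus n_0$ arises this way. The paper's Lemma~\ref{lem:node_bijection} does this by taking an arbitrary singular point $n\ne n_0$, a length-two subscheme $x+y\le\rho_\varepsilon^*(n)$, setting $z$ with $x+y+z\equiv\theta$, and deducing $2=h^0(\theta\otimes\eta-x-y)=h^0(\eta+z)$, hence $z\le\eta$. Relatedly, ``distinct from $n_0$ in generic cases'' is not enough: the statement needs $n_u\ne n_0$ always, and the paper gets this unconditionally from Lemma~\ref{lem:vertex_avoidance} ($H_\eta$ misses the vertex of $Q_C$, so a ray of the cone meets $\supp\eta$ in at most one point, whence $R_u\not\le\eta$).

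Second, the final sentence of the theorem --- the exponent $\ell$ and the $A_{2\ell-1}$ versus $A_{2\ell}$ dichotomy --- is the substantive part, and you explicitly leave it as ``a deeper jet calculation.'' No jet calculation is needed, and asserting that ``an $h^0$-based delta calculation forces $\delta(n_u)=\ell$'' is not a proof. The paper's route is divisor-theoretic: writing $\rho_\varepsilon^*(n_u)=x+y$, the singularity admits a line of contact order $\ge 4$ iff $D:=\omega_C\otimes\varepsilon-2x-2y$ is effective, iff the residual divisor $E=\omega_C-D\equiv\eta-u+x+y$ is dominated by a canonical (hyperplane) divisor, iff the ray $L$ through $u$ and the line $L'\subset H_\eta$ with $L'\cdot C\ge\eta-u$ meet --- which happens precisely when $2u\le\eta$; iterating gives the exponent, and unibranchedness is equivalent to $x=y$, i.e.\ to the ray being tangent to $C$. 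Note also that your proposed criterion ``$A_{2\ell}$ iff $R_u$ is non-reduced'' is the right one, but tying it to ``the ray is tangent to $C$'' requires first ruling out tangency of the ray \emph{at $u$ itself} (again via vertex avoidance), a case your local-analytic framing would have to confront head-on.
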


From this description, the geometry of the map $\xc_\varepsilon$ can be made explicit. After resolving the singularities of $C_\varepsilon$ by blow-ups, we obtain a total of three distinguished lines: the total transform of the lines $\langle n_0,n_u \rangle$ and, if $H_\eta$ has higher order of contact, exceptional divisors not intersecting the proper transform of $C_\varepsilon$. These three lines are contracted to points $u \le \eta$, with $\ell$ lines contracting to $u$ if $\ell u \le \eta$ but $(\ell+1)u \not\le \eta$. The exceptional divisor over $n_0$ maps to $H_\eta \cap Q_C$ and the distinguished line $L_0$ maps to the node of~$Q_C$. 

As in the other cases of $(C,\varepsilon)$, four curves are contracted to four singular points of $\Gamma_\varepsilon$, counted appropriately. Moreover, as $(C,\varepsilon)$ is deformed, $\Gamma_\varepsilon$ deforms into a Cayley cubic whose four singularities come out of these four special points, namely the contact points of $H_\eta$ and the node of $Q_C$.

An important distinction between the odd case and remaining cases is that there is no natural double cover of the cone (or its blow-up $S_\varepsilon$) which induces the double cover of $C$ corresponding to $\varepsilon$. Also, the usual trick of recovering the symmetric cubic $\Gamma_\varepsilon$ from the adjugate of $\xc_\varepsilon$ does not work here, as $\adj(\xc_\varepsilon)$ maps $\pp$ onto a circle so we can only recover $Q_C$ and not $H_\eta$. 

\begin{remark}
  Since $H_\eta$ is determined by $\xc_\varepsilon$, we can also determine $\xq_\varepsilon$ from $\xc_\varepsilon$. In computations, this amounts to augmenting $\adj(\xc_\varepsilon)$ with the square of a linear form cutting out $L_0$. 
\end{remark}

\subsection{Singularities of $C_\varepsilon$ when $\varepsilon$ is odd}\label{sec:odd_singularities} 

\begin{proposition}\label{prop:triple_point}
  The curve $C_\varepsilon$ has a point of multiplicity three if and only if $\varepsilon$ is odd.
\end{proposition}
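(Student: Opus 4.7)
The plan is to rephrase ``$C_\varepsilon$ has a point of multiplicity three'' as a linear equivalence condition on $C$, then analyze it via the structure of $W^1_3$, splitting on whether $Q_C$ is singular. A point $n \in C_\varepsilon$ has multiplicity at least three precisely when there is an effective divisor $D$ of degree three mapping set-theoretically to $n$; taking the pencil of hyperplanes through $n$, this condition is equivalent to $h^0(\omega_C \otimes \varepsilon(-D)) \geq 2$. Applying Serre duality (and $\varepsilon^{-1} \simeq \varepsilon$) converts this to $h^0(\varepsilon + D) \geq 2$. Since $\varepsilon + D$ has degree three, Clifford's theorem bounds $h^0 \leq 2$, so the condition sharpens to $\varepsilon + D \in W^1_3$ for some effective $D$ of degree three.

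Next I would use the classification recalled in Section~\ref{sec:prym_basic_defs}: $W^1_3 = \{\theta\}$ with $2\theta \equiv K$ when $Q_C$ is singular, and $W^1_3 = \{L_1, L_2\}$ with $L_1 + L_2 \equiv K$, $L_1 \ne L_2$, when $Q_C$ is smooth. The triple-point condition becomes: $L - \varepsilon$ is effective for some $L \in W^1_3$. When $Q_C$ is singular, $\theta - \varepsilon \simeq \theta \otimes \varepsilon$ is itself a theta characteristic of degree three, and Clifford rules out $h^0 = 2$ (which would force $\theta \otimes \varepsilon \equiv \theta$, hence $\varepsilon = 0$). So $\theta \otimes \varepsilon$ is effective exactly when $h^0 = 1$, i.e.\ exactly when $\theta \otimes \varepsilon$ is odd, which is the definition of $\varepsilon$ being odd.

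The main obstacle is the smooth-$Q_C$ case, where I must show $L_i - \varepsilon$ is never effective for nontrivial 2-torsion $\varepsilon$. Suppose for contradiction $L_1 - \varepsilon \equiv E$ for some effective $E$ of degree three. Since $2\varepsilon \equiv 0$, we obtain $2E \equiv 2L_1$. The key input is the isomorphism $\sym^2 H^0(L_1) \isoto H^0(L_1^{\otimes 2})$: both sides have dimension three (the target by Riemann-Roch, using that $K - 2L_1 \equiv L_2 - L_1$ has no sections as $L_1 \ne L_2$), and injectivity follows because a basis $(s_0, s_1)$ of $H^0(L_1)$ consists of algebraically independent rational functions on $C$, their ratio being the nonconstant trigonal map $\phi_1 \colon C \to \Pp^1$. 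Hence every divisor in $|2L_1|$ splits as a sum $D_1 + D_2$ of fibers of $\phi_1$.

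Writing $2E = D_1 + D_2$ finishes the argument. If $D_1 \ne D_2$, the fibers are disjoint, so matching multiplicities of $2E$ forces each $D_i$ to have all-even multiplicities --- impossible since $\deg D_i = 3$ is odd. Thus $D_1 = D_2$, whence $E = D_1 \in |L_1|$, so $E \equiv L_1$ and $\varepsilon \equiv 0$, contradicting nontriviality of $\varepsilon$. The symmetric argument rules out $L_2 - \varepsilon$ being effective. This exhausts all cases and proves the proposition.
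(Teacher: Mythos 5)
Your proof is correct and follows essentially the same route as the paper: the reduction to asking whether $\cl\otimes\varepsilon$ is effective for some trigonal pencil $\cl$, and the key step ruling this out when $\cl^{\otimes 2}\not\simeq\omega_C$ via the isomorphism $\sym^2 \H^0(\cl)\isoto \H^0(\cl^{\otimes 2})$ together with the parity-of-multiplicities argument on pairs of fibers, are precisely the content of the paper's Lemma~\ref{lem:twisted_pencils}. The only cosmetic differences are that you set up the equivalence via Serre duality and $W^1_3$ rather than by projecting from the triple point, and you should add the one-line remark that multiplicity $\geq 4$ is impossible because $C$ is not hyperelliptic.
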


\begin{proof}
  There are no points of multiplicity four or higher on the sextic $C_\varepsilon$ since $C$ is not hyperelliptic. Suppose $n \in C_\varepsilon$ is a triple point and let $D = \rho_\varepsilon^*(n)$ be the preimage. Projecting from the triple point $n$ gives the trigonal pencil $\cl := \omega_C\otimes\varepsilon(-D)$. Observe that $\cl \otimes \varepsilon \simeq \omega_C(-D)$ is effective and apply Lemma~\ref{lem:twisted_pencils}.

  Conversely, if $\omega_C \otimes \varepsilon \simeq \theta \otimes \eta$ where $\theta$ is a vanishing theta-null and $\eta$ is an odd theta characteristic, then let $D$ be the divisor in $|\eta|$. Now $\omega_C\otimes\varepsilon(-D) \simeq \theta$ has two sections. Therefore, $\rho_\varepsilon$ contracts $D$ to a node of multiplicity three.
\end{proof}

\begin{lemma}\label{lem:twisted_pencils} If $\cl$ is a trigonal pencil on $C$ and $\varepsilon \in \jac_C[2]$ is an element of order two for which $|\cl \otimes \varepsilon| \neq \emptyset$ then $\cl$ is a vanishing theta-null and $\varepsilon$ is odd.
\end{lemma}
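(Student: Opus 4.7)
The plan is to reduce everything to a parity argument on the linear system $|\cl^{\otimes 2}|$.

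First I would observe that it suffices to prove $\cl^{\otimes 2}\simeq\omega_C$, for then $\cl$ is a theta characteristic with $h^0(\cl)=2$, hence a vanishing theta-null $\theta$. Once this is known, $h^0(\theta\otimes\varepsilon)\ge 1$ by hypothesis; and since, as recalled in Section~\ref{sec:prym_basic_defs}, a non-hyperelliptic genus-$4$ curve carries at most one vanishing theta-null, $\theta\otimes\varepsilon$ cannot itself be a $g^1_3$, so $h^0(\theta\otimes\varepsilon)=1$ and $\varepsilon$ is odd.

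Suppose for contradiction that $\cl^{\otimes 2}\not\simeq\omega_C$. Then $h^1(\cl^{\otimes 2})=h^0(\omega_C\otimes\cl^{\otimes -2})=0$ (a nontrivial degree-$0$ bundle), and Riemann-Roch gives $h^0(\cl^{\otimes 2})=3$. The multiplication map $\sym^2 \H^0(\cl)\to \H^0(\cl^{\otimes 2})$ is injective: if $s,t$ is a basis of $\H^0(\cl)$ and some nonzero quadratic polynomial in $s,t$ vanished as a section of $\cl^{\otimes 2}$, it would express a nontrivial conic on $\p$ whose pullback under the trigonal map $\phi\colon C\to\p$ is zero, which is impossible because $\phi$ is dominant. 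Hence the multiplication map is an isomorphism of three-dimensional spaces, and every divisor in $|\cl^{\otimes 2}|$ is a pullback $\phi^*(a+b)=F_a+F_b$ for some $a,b\in\p$, where $F_x:=\phi^*(x)$.

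Now pick $D\in |\cl\otimes\varepsilon|$. Since $\varepsilon^{\otimes 2}\simeq\co_C$, the divisor $2D$ lies in $|(\cl\otimes\varepsilon)^{\otimes 2}|=|\cl^{\otimes 2}|$, so $2D=F_a+F_b$ for some $a,b\in\p$. The key step is a parity argument: if $a\ne b$, the supports of $F_a$ and $F_b$ are disjoint (they lie over distinct points of $\p$), so each point of $\supp(D)$ lies in exactly one of them and contributes there with an even multiplicity, namely twice its multiplicity in $D$; but then $\deg F_a$ would be a sum of even positive integers, contradicting $\deg F_a=3$. Hence $a=b$, and comparing divisors yields $D=F_a\in|\cl|$. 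But then $\cl$ and $\cl\otimes\varepsilon$ share an effective divisor, so $\cl\simeq\cl\otimes\varepsilon$ and $\varepsilon\simeq\co_C$, contradicting $\varepsilon\ne 0$.

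The main obstacle is the parity step, which is elementary but relies critically on $\deg F_a=3$ being odd; this is ultimately the reason the lemma is special to the trigonal (rather than, say, tetragonal) situation.
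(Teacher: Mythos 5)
Your proof is correct and follows essentially the same route as the paper's: assume $\cl^{\otimes 2}\not\simeq\omega_C$, deduce $\sym^2\H^0(\cl)\isoto\H^0(\cl^{\otimes 2})$ so that every divisor in $|\cl^{\otimes 2}|$ is a sum of two trigonal fibers, and use the oddness of $\deg F_a=3$ to force $2D=2F_a$ and hence $\varepsilon=0$. The paper phrases the parity step via tangent lines to the conic model of $C$ in $\Pp\H^0(\cl^{\otimes 2})$ rather than via multiplicities directly, and it leaves the final deduction that $\varepsilon$ is odd implicit, which you spell out; these are only cosmetic differences.
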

\begin{proof}
  We have $\deg(\cl)=3$ and $h^0(\cl)=h^0(\omega_C\otimes\cl^\vee)=2$. If $\cl^{\otimes 2}\not\simeq\omega_C$ then $h^0(\cl^{\otimes 2})=3$ and $\sym^2 \H^0(\cl) \isoto \H^0(\cl^{\otimes 2})$.
From $|\cl^{\otimes2}|$ we obtain a model of $C$ as a trigonal cover of a conic $Q\subset\PP H^0(\cl^{\otimes 2})$.
For each $D \in |\cl\otimes \varepsilon|$ we have $2D \in |\cl^{\otimes 2}|$. However, if a line is not tangential to $Q$ then it has no chance of inducing a divisor divisible by two since the map $C \to Q$ is of degree 3. A tangent line to $Q$ does pull back to an even divisor $2D'$, but then $D' \in |\cl|$. As a result, $|\cl\otimes\varepsilon| = \emptyset$.
\end{proof}

\begin{corollary}\label{cor:odd_bielliptic}
  The pair $(C,\varepsilon)$ cannot be both bielliptic and odd.
\end{corollary}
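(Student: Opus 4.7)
The plan is to use the bielliptic structure to bound $h^0(\omega_C \otimes \varepsilon(-D))$ above by $1$ for every effective divisor $D$ of degree $3$ on $C$, and then observe that $\varepsilon$ being odd would force this quantity to be exactly $2$, yielding a contradiction.

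Suppose $\varepsilon = \xb^*\varepsilon'$ is bielliptic, with $\xb\colon C \to E$ the corresponding double cover and $\varepsilon' \in \Jac_E[2]$. Lemma~\ref{lem:mu_exists} gives $\omega_C = \xb^*\mu$, so $\omega_C \otimes \varepsilon = \xb^*(\mu \otimes \varepsilon')$. Since $\xb_*\co_C = \co_E \oplus \mu^\vee$ and $\varepsilon'$ is a nontrivial degree-zero line bundle on $E$ (so $h^0(\varepsilon') = 0$), the projection formula yields an isomorphism $\xb^*\colon H^0(E, \mu \otimes \varepsilon') \isoto H^0(C, \omega_C \otimes \varepsilon)$; in particular every global section of $\omega_C \otimes \varepsilon$ is pulled back from $E$.

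Given an effective divisor $D = \sum_p m_p\, p$ of degree $3$ on $C$, the pulled-back section $\xb^* t$ vanishes along $D$ if and only if $t$ vanishes at each $q \in \xb(\supp D)$ to order at least $c_q := \max_{p \in \xb\inv(q)} \lceil m_p/e_p \rceil$, where $e_p \in \{1,2\}$ is the ramification index of $\xb$ at $p$. Because $\sum_{p \mapsto q} e_p = 2$ for every $q$, one has $\sum_{p \mapsto q} m_p \leq 2 c_q$; summing over $q$ gives $2 \deg D' \geq \deg D = 3$, where $D' := \sum_q c_q\, q$. Hence $\deg D' \geq 2$, and the identification above yields $h^0(\omega_C \otimes \varepsilon(-D)) = h^0(E, (\mu \otimes \varepsilon')(-D')) \leq 1$, since a line bundle of degree at most $1$ on a curve of genus one has at most one global section.

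Now suppose for contradiction that $\varepsilon$ is also odd. Then $C$ carries a vanishing theta-null $\theta$ and $\eta := \theta \otimes \varepsilon$ is an odd theta characteristic with $h^0(\eta) \geq 1$. Any $D \in |\eta|$ is effective of degree $3$ and satisfies $\omega_C \otimes \varepsilon(-D) \simeq \theta$, whence $h^0(\omega_C \otimes \varepsilon(-D)) = h^0(\theta) = 2$, contradicting the bound $\leq 1$ established above. The only delicate step is the local inequality $\sum_{p \mapsto q} m_p \leq 2 c_q$; it is immediate once the two cases $e_p \in \{1,2\}$ are tracked separately, but it is the crux of the argument.
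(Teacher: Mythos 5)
Your proof is correct. The paper disposes of this corollary in one sentence by juxtaposing two geometric facts it has already established: for a bielliptic pair the Prym-canonical map $\rho_\varepsilon$ factors through $\xb\colon C\to E$ and its image is a smooth plane cubic, whereas for an odd pair the image $C_\varepsilon$ has a triple point (Proposition~\ref{prop:triple_point}). Your argument replaces both geometric statements by their divisor-theoretic content. On the odd side you re-derive exactly the ``conversely'' half of Proposition~\ref{prop:triple_point}: a divisor $D\in|\eta|$ satisfies $\omega_C\otimes\varepsilon(-D)\simeq\theta$, so $h^0(\omega_C\otimes\varepsilon(-D))=2$ (and indeed $\geq 2$ already suffices, which is immediate from $\theta$ being a vanishing theta-null). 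On the bielliptic side, your pushforward computation --- $\xb_*\co_C=\co_E\oplus\mu^\vee$, $h^0(\varepsilon')=0$ because $\varepsilon'$ must be nontrivial by injectivity of $\xb^*$ on Jacobians, hence every section of $\omega_C\otimes\varepsilon$ is pulled back from $E$ --- together with the local inequality $\sum_{p\mapsto q}m_p\le 2c_q$ (correctly checked in both the ramified and unramified cases) shows every effective degree-three divisor imposes at least two conditions on $|\omega_C\otimes\varepsilon|$; this is precisely the linear-system translation of ``the Prym-canonical image is a smooth cubic.'' Your route is longer but more self-contained: it uses only Lemma~\ref{lem:mu_exists} and the projection formula, and avoids invoking the structure of $\rho_\varepsilon$ or the singularity analysis of $C_\varepsilon$ as black boxes. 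The paper's route is essentially free because it has already paid for those results elsewhere.
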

\begin{proof}
   The Prym-canonical image of a bielliptic pair is a smooth cubic whereas the Prym canonical image of an odd pair has a triple point.
\end{proof}

For the rest of this section we assume $(C,\varepsilon)$ is odd with $\theta$ the vanishing theta-null and $\eta=\theta \otimes \varepsilon$ with $H_\eta$ the corresponding tritangent. 

\begin{lemma}\label{lem:vertex_avoidance}
  The tritangent $H_\eta$ does not pass through the vertex of $Q_C$.
\end{lemma}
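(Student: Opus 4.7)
The plan is to argue by contradiction: assume the tritangent plane $H_\eta$ contains the vertex $v$ of $Q_C$, and derive a parity contradiction using the structure of the trigonal pencil $|\theta|$.

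First, I will use the fact that $Q_C$ is a quadric cone with vertex $v$, so any plane through $v$ cuts $Q_C$ in a singular conic. Hence if $v \in H_\eta$, the intersection $H_\eta \cap Q_C$ is a union of two lines $L_1, L_2$ through $v$ (possibly coinciding), each of which is a ruling of $Q_C$. Since the trigonal pencil on $C$ is realized by projection from $v$, the restriction of each ruling to $C$ is a fiber of this pencil, i.e., $D_i := L_i \cdot C \in |\theta|$ for $i = 1,2$. Combining with the tritangent condition $H_\eta \cdot C = 2D_\eta$ gives the divisor identity
\[
  2D_\eta \;=\; D_1 + D_2.
\]

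I then split into cases. If $L_1 = L_2$, then $D_1 = D_2$ and the identity above forces $D_\eta = D_1 \in |\theta|$, hence $\eta \simeq \theta$; this contradicts the parity assumption that $\eta$ is odd while the vanishing theta-null $\theta$ is even.

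The main step is the case $L_1 \neq L_2$. Here $L_1, L_2$ are distinct rulings through $v$, so they map to distinct points of the base $\mathbb{P}^1$ of the trigonal pencil, and consequently the supports of $D_1$ and $D_2$ are disjoint. Since $D_1 \leq D_1 + D_2 = 2D_\eta$, the support of $D_1$ is contained in the support of $D_\eta$, and for every point $p$ in $\mathrm{supp}(D_1)$ the multiplicity satisfies
\[
  \mathrm{mult}_p(D_1) \;=\; 2\,\mathrm{mult}_p(D_\eta) - \mathrm{mult}_p(D_2) \;=\; 2\,\mathrm{mult}_p(D_\eta),
\]
using that $\mathrm{supp}(D_2)$ is disjoint from $\mathrm{supp}(D_1)$. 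Thus every multiplicity in $D_1$ is even, while $\deg D_1 = 3$ is odd — contradiction. This parity trick is the key insight; setting it up via the disjointness of supports of distinct fibers of $\pi\colon C\to\mathbb{P}^1$ is the step that requires care, but once one realizes it, the argument is immediate. No further geometric input is needed, so I do not anticipate a serious obstacle beyond identifying this structural observation.
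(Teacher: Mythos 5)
Your proof is correct and follows essentially the same route as the paper: the plane through the vertex meets the cone in rulings whose restrictions to $C$ have disjoint supports, so evenness of $H_\eta\cdot C=2D_\eta$ forces each degree-three divisor $L_i\cdot C$ to be even, a parity contradiction. Your explicit treatment of the tangent case $L_1=L_2$ (ruled out since it would force $\eta\simeq\theta$, contradicting that $\eta$ is odd while the vanishing theta-null is even) is a small but welcome addition that the paper's proof passes over by simply asserting the two lines are distinct.
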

\begin{proof}
  Suppose it does not. Then $H_\eta \cdot Q_C = L_1 + L_2$ where $L_i$ are distinct lines. The point $L_1 \cap L_2$ is the vertex of $Q_C$ and is not contained in $C$. Since $H_\eta \cdot C = (L_1 + L_2) \cdot_{Q_C} C$ is an even divisor, each $L_i \cdot_{Q_C} C$ must be even, which is a contradiction.
\end{proof}

\begin{lemma}\label{lem:node_bijection}
  The singularities of $C_\varepsilon$ that are distinct from the triple point $n_0$ are in bijection with the contact points of the tritangent $H_\eta$. 
\end{lemma}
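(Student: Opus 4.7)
I would reduce the problem to a line-bundle identification via Riemann--Roch, after realizing projection from the triple point $n_0$ as the trigonal map. Using the isomorphism $\omega_C \otimes \varepsilon \simeq \theta \otimes \eta$, multiplication by the unique section of $\eta$ identifies $H^0(\theta)$ with the $2$-dimensional subspace $H^0(\omega_C \otimes \varepsilon(-D_\eta)) \subset H^0(\omega_C \otimes \varepsilon)$. This subspace parametrizes precisely the lines of $\pp_\varepsilon$ through $n_0 = \rho_\varepsilon(D_\eta)$, so such a line pulls back along $\rho_\varepsilon$ to a divisor of the form $D_\eta + u + v + w$ with $u + v + w \in |\theta|$; equivalently, $\pr_{n_0} \circ \rho_\varepsilon$ coincides with the trigonal map $\phi_\theta$ away from $\supp D_\eta$.

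Now suppose $n$ is a singular point of $C_\varepsilon$ distinct from $n_0$. The line through $n_0$ and $n$ pulls back to $D_\eta + u + v + w$ with $u + v + w \in |\theta|$, where $\{u, v\}$ is a pair identified by $\rho_\varepsilon$ at $n$ and $w$ is the residual point. The identification $\rho_\varepsilon(u) = \rho_\varepsilon(v)$ is equivalent to $h^0(\omega_C \otimes \varepsilon(-u-v)) \geq 2$. Since $u + v + w \sim \theta$ as divisors, the key simplification is $\omega_C \otimes \varepsilon(-u-v) \simeq \theta(-u-v) \otimes \eta \simeq \eta(w)$, and Riemann--Roch on the degree $4$ line bundle $\eta(w)$ gives $h^0(\eta(w)) = 1 + h^0(\eta - w)$, using $\omega_C \simeq \eta^{\otimes 2}$. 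Thus $h^0(\eta(w)) \geq 2$ if and only if $w \leq D_\eta$, i.e., $w$ is a contact point of $H_\eta$.

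Conversely, each contact point $w \leq D_\eta$ determines a unique divisor $u + v \in |\theta - w|$ (a $0$-dimensional system, as $\theta$ is base-point-free on the non-hyperelliptic $C$), and hence a well-defined singular point $n_w := \rho_\varepsilon(u) = \rho_\varepsilon(v)$. To complete the bijection I would verify that distinct contact points yield distinct $n_w$ (otherwise the corresponding residual divisors in $|\theta|$ would coincide), and that $n_w \neq n_0$ (otherwise $u$ or $v$ would lie in $\supp D_\eta$, forcing $D_\eta \sim \theta$ and hence $\varepsilon = 0$, contrary to assumption). The main obstacle I anticipate is handling the non-generic configurations where $D_\eta$ has repeated support, i.e., $H_\eta$ has higher-order contact with $C$: the set-theoretic bijection persists by the same Riemann--Roch computation, but one must be careful that the resulting $n_w$ is a genuine (possibly worse than nodal) singular point and that no additional singularities of $C_\varepsilon$ arise from ramification of $\rho_\varepsilon$ outside the collision pairs $u+v$ identified above.
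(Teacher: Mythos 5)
Your proposal follows the paper's argument almost exactly: projection from the triple point realizes the trigonal map, lines through $n_0$ pull back to $D_\eta$ plus a divisor of $|\theta|$, and the computation $h^0(\omega_C\otimes\varepsilon(-u-v))=h^0(\eta(w))=1+h^0(\eta-w)$ is precisely how the paper identifies singular points $n\neq n_0$ with contact points $w\le D_\eta$. The forward direction, the injectivity check, and your closing caveats about higher-order contact all match the paper (which defers the multiplicity refinements to the subsequent lemma on $A_{2\ell-1}$ versus $A_{2\ell}$ singularities).

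The one step that does not hold as written is your verification that $n_w\neq n_0$. From $n_w=n_0$ you correctly conclude that $u$ or $v$ lies in $\supp D_\eta$ (the set-theoretic fibre over $n_0$ is $\supp D_\eta$), but this does not "force $D_\eta\sim\theta$": knowing that a point of the divisor $u+v+w\in|\theta|$ lies in the support of $D_\eta$ yields no linear equivalence between $D_\eta$ and $\theta$ by itself. The paper closes this gap geometrically: the points of $u+v+w$ lie on a single ray of the cone $Q_C$, and Lemma~\ref{lem:vertex_avoidance} (the tritangent $H_\eta$ misses the vertex of $Q_C$) prevents that ray from meeting $H_\eta$ in two distinct points, so no point of $u+v$ besides possibly $w$ itself can lie on $H_\eta$. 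Your step can instead be salvaged by divisor arithmetic --- for instance, if $u+w\le D_\eta$ with $u\neq w$, writing $D_\eta=u+w+s$ gives $\varepsilon\equiv s-v$, whence either $\varepsilon=0$ or $2s\equiv 2v$ makes $C$ hyperelliptic --- but this needs a short case analysis (including the sub-case $u=v=w$) and uses non-hyperellipticity of $C$, not merely $\varepsilon\neq 0$. As stated, that deduction is a genuine though repairable gap in an otherwise faithful reproduction of the paper's proof.
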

\begin{proof}
  Let $n \in C_\varepsilon \setminus n_0$ be a singular point. The line $L:=\langle n_0,n \rangle$ pulls back on $C$ to a divisor $D + \eta$ where $D \in |\theta|$ as in Proposition~\ref{prop:triple_point}. If $x,y \in C$ are such that $\rho_\varepsilon^*(n)\ge x+y$ and we set $z\in C$ so that $x+y+z \equiv \theta$ then $2= h^0(\theta\otimes\eta-x-y)=h^0(\eta+z)$, which implies $z \le \eta$. 

  Conversely, suppose $z \le \eta$ and let $D \in |\theta|$ be the unique divisor such that $D \ge z$. We have $h^0(\theta\otimes\eta-(D-z))=h^0(\eta+z)=2$ which implies that the divisor $D-z$ maps to a singular point $n$ of $C_\varepsilon$. The points on $D-z$ lie on a ray of the cone $Q_C$ so that $D-z \neq \eta$ as a result of Lemma~\ref{lem:vertex_avoidance}. Since $\rho^*_\varepsilon(n_0)\equiv \eta$ we have $n \neq n_0$.
\end{proof}

For $u \le \eta$ let $n_u \in C_\varepsilon$ denote the corresponding singularity, that is $\rho_\varepsilon^*(n_u)+u \in |\theta|$. 

\begin{remark}\label{rem:direction_at_n0}
  The proof of Lemma~\ref{lem:node_bijection} implies that $n_u$ lies on the line maximally tangent to the branch of $C_\varepsilon$ at $n_0$ containing $z$. 
\end{remark}

\begin{lemma}
  If $\ell u \le \eta$ and $(\ell+1)u \not\le \eta$ then $n_u$ is of type $A_{2\ell-1}$ or $A_{2\ell}$. The ray of $Q_C$ containing $u$ is tangential to $C$ if and only if $n_u$ is of type $A_{2\ell}$.
\end{lemma}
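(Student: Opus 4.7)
The plan is to split the analysis of $n_u$ by the structure of the trigonal fiber $D_u$ containing $u$. By Lemma~\ref{lem:node_bijection}, the preimage divisor $\rho_\varepsilon^*(n_u) = D_u - u$ is effective of degree $2$, so we write it as $p + q$ (possibly $p = q$). The case $p \ne q$ means $D_u = u + p + q$ with $p,q \ne u$, so the ray of $Q_C$ through $u$ is not tangent to $C$, while $p = q$ means $D_u = u + 2p$, i.e.\ the ray is tangent to $C$ at $p$. This dichotomy is precisely the ``if and only if'' in the second assertion, so it remains to identify the singularity type.

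In the case $p \ne q$, the scheme-theoretic preimage $\rho_\varepsilon^{-1}(n_u) = \{p,q\}$ consists of two distinct smooth points, so $n_u$ has two smooth branches meeting with some contact order $m$, giving a singularity of type $A_{2m-1}$. The contact order is the maximal $k$ with $h^0(\omega_C \otimes \varepsilon(-k(p+q))) \ge 1$. Since $p + q \sim \theta - u$ as divisor classes,
\[
\omega_C \otimes \varepsilon - k(p+q) \sim (1-k)\theta + \eta + ku.
\]
Riemann-Roch, combined with $\omega_C \sim 2\theta$, $\eta \sim \theta + \varepsilon$, and the nontriviality of $\varepsilon$ as a $2$-torsion class, shows the resulting $h^0$ is positive precisely when $ku \le \eta$. (The crux of the case analysis is $h^0(\varepsilon + 2u) \ne 0$ iff $2u \le \eta$, and $h^0(3u - \eta) \ne 0$ iff $3u = \eta$.) Therefore $m = \ell$ and $n_u$ is of type $A_{2\ell - 1}$.

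In the case $p = q$, the preimage $\rho_\varepsilon^{-1}(n_u)$ is a single length-$2$ non-reduced point at $p$, so $\rho_\varepsilon$ fails to be an immersion at $p$ and $n_u$ has a single branch, necessarily of type $A_{2m'}$. To compute $m'$, one runs through the vanishing-order sequence of $\omega_C \otimes \varepsilon$ at $p$: using $\theta \sim 2p + u$, one has $\omega_C \otimes \varepsilon - mp \sim (2-m)p + u + \eta$, and Riemann-Roch applied for $m = 0, 1, 2, \ldots$ produces the sequence $3, 2, 2, 1, 1, \ldots, 1, 0$ in which the last drop occurs at $m = 4$ when $\ell = 1$ and at $m = 2\ell + 1$ when $\ell \ge 2$. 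Pick an affine chart with coordinates $y_1, y_2$ coming from sections whose pullbacks to $C$ have orders $2$ and either $3$ (if $\ell = 1$) or $2\ell$ (if $\ell \ge 2$) at $p$. For $\ell = 1$ the orders $2$ and $3$ already generate the semigroup of an $A_2$ cusp. For $\ell \ge 2$ one subtracts a scalar multiple of $y_1^\ell$ from $y_2$ to kill the leading order-$2\ell$ term; generically the resulting series has order exactly $2\ell + 1$, so the local ring $\co_{C_\varepsilon, n_u}$ equals $k[\![t^2, t^{2\ell+1}]\!]$, the local ring of an $A_{2\ell}$ singularity.

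The main obstacle is the cancellation step in the $\ell \ge 2$ subcase: one must check that the coefficient of $t^{2\ell+1}$ in $y_2 - c\, y_1^\ell$ is actually nonzero. Its vanishing would force an effective representative of a certain degree-$1$ divisor class (essentially $2u + \varepsilon - p$), and Riemann-Roch together with the nontriviality of $\varepsilon$ in $\jac_C[2]$, plus the fact that $p$ is the particular ramification point of $|\theta|$ rigidly determined by $u$, rules this out. A side obstacle is the handling of the degenerate situations in which $u$ itself is a ramification point of $|\theta|$ (so $D_u$ involves $u$ with multiplicity $\ge 2$); these are peripheral to the generic argument and can be addressed by the same Riemann-Roch bookkeeping or bypassed by noting they force coincidences between contact points of $\eta$ that do not occur for $(C,\varepsilon)$ in our stratum.
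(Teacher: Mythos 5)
Your skeleton is reasonable and your divisor-class computations are correct: $\rho_\varepsilon^*(n_u)\equiv\theta-u$, the identity $\omega_C\otimes\varepsilon - k(p+q)\equiv(1-k)\theta+\eta+ku$, and the equivalences $h^0(\varepsilon+2u)\neq 0\iff 2u\le\eta$ and $h^0(3u-\eta)\neq 0\iff 3u=\eta$ all check out; they are exactly the computations the paper performs in synthetic form (via the intersecting lines $L$ and $L'$). The dichotomy ``$p\neq q$ gives two branches, hence $A_{\mathrm{odd}}$; $p=q$ gives one branch, hence $A_{\mathrm{even}}$'' is also the paper's route to the second assertion.

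The genuine gap is in the two-branch case, in the sentence ``The contact order is the maximal $k$ with $h^0(\omega_C\otimes\varepsilon(-k(p+q)))\ge 1$.'' That $h^0$ detects a line $L$ with $(L\cdot B_1)_{n_u}\ge k$ and $(L\cdot B_2)_{n_u}\ge k$; the existence of such a line forces the branches to agree to order $\ge k$, but the converse fails. Two smooth branches $y=x^2$ and $y=x^2+x^3$ agree to order $3$ (an $A_5$), yet no line has contact $\ge 3$ with both: the common tangent $y=0$ has contact exactly $2$ with each. So your computation only yields $m\ge\ell$, i.e.\ the singularity is \emph{at least} $A_{2\ell-1}$, and nothing in your argument caps it from above. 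The missing upper bound is global, not local: one needs $\sum_u\delta(n_u)=3=\deg\eta$, which is precisely what Lemma~\ref{lem:mult_seq} (the multiplicity sequence $(3,2,2,2)$ of the sextic $C_\varepsilon$) supplies and which the paper invokes as the very first step of its proof; combined with $\delta(n_u)\ge\ell_u$ and $\sum_u\ell_u=3$ it forces equality everywhere and closes your argument. The unibranch case carries the same kind of risk; there you do flag the cancellation step as the obstacle, but the claimed vanishing sequence (last drop at $m=2\ell+1$) is itself asserted rather than derived, and deriving it is comparable in difficulty to the statement being proved. In short: import the global $\delta$-count from Lemma~\ref{lem:mult_seq} and the proof is repairable; as written it establishes only one of the two required inequalities on the singularity type.
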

\begin{proof}
  Recall from Lemma~\ref{lem:mult_seq} that the multiplicity sequence of $C_\varepsilon$ is $(3,2,2,2)$. In light of Lemma~\ref{lem:node_bijection}, this finishes the proof if $H_\eta$ has three distinct contact points.

  In general, fix $u\le \eta$ and let $\rho_\varepsilon^*(n_u)=x+y$. The multiplicity sequence of $n_u$ consists only of $2$'s and we look for higher tangents. Let $D := \omega\otimes \varepsilon - 2x-2y$ and $E = \omega_C - D$. Since $x+y+u \equiv \theta$ we have $E \equiv \eta-u+x+y$, which is effective. Obviously, $D$ is effective precisely when $E$ is dominated by a canonical divisor. There exists a unique ray $L$ of $Q_C$ satisfying $L\cdot C \ge x+y$ and then $L\cdot C = x+y+u$. There exists a unique line $L'$, necessarily contained in $H_\eta$, such that $L'\cdot C \ge \eta-u$. Then $E \le \omega_C$ if and only if $L$ and $L'$ intersect.

  We claim $L$ and $L'$ intersect precisely when $u \le \eta-u$. If $u \le \eta-u$ then $u\in L'$ and thus $L\cap L' \neq \emptyset$. If $L \cap L' \neq \emptyset$ then $L\cap H_\eta =u$ implies $u \in L'$. Assuming $u \not \le \eta-u$ we get $L' \cdot C \ge \eta$, which is a contradiction as there is a unique hyperplane $H$ with $H\cdot C \ge \eta$.  
  
  Therefore, $n_u$ admits a higher tangent ($D \ge 0$) precisely when $2u \le \eta$. It is clear from the previous argument that $3u \le \eta$ if and only if the higher tangent has contact order $6$ at $n_u$. The singularity $n_u$ being unibranch is equivalent to $x=y$, which implies $L$ is tangent to $C$ at $x$.
\end{proof}

\begin{lemma}
  There exists a unique line $L_0 \subset \pp$ such that $L_0 \cap C_\varepsilon = \{n_u \mid u \le \eta_u\}$.
\end{lemma}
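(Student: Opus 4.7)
The plan is to identify $L_0$ via the effective divisor on $C$ that it must pull back to under the Prym-canonical map $\rho_\varepsilon$. Write $\eta = \sum_u m_u\, u$ as a sum over distinct points $u \le \eta$ with $\sum_u m_u = 3$. For each such $u$, let $D_u \in |\theta|$ be the unique member of the trigonal pencil containing $u$, and set $E_u := D_u - u$, which equals $\rho_\varepsilon^*(n_u)$ by the proof of Lemma~\ref{lem:node_bijection} and has degree $2$. Since $\rho_\varepsilon(u) = n_0 \neq n_u$, the point $u$ is not in $\supp E_u$, so $D_u$ contains $u$ with multiplicity exactly one. The candidate divisor for $\rho_\varepsilon^*(L_0)$ is then
\[D^* := \sum_u m_u\, E_u.\]

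The key step is the divisor-class calculation showing $D^* \in |\omega_C \otimes \varepsilon|$. From $E_u \equiv \theta - u$,
\[D^* \equiv \sum_u m_u(\theta - u) = 3\theta - \eta \equiv 2\theta + \varepsilon \equiv \omega_C \otimes \varepsilon,\]
using $\omega_C \equiv 2\theta$, $\eta \equiv \theta + \varepsilon$, and $2\varepsilon \equiv 0$. Moreover $\deg D^* = 2\sum m_u = 6 = \deg(\omega_C \otimes \varepsilon)$, so $D^*$ saturates the full degree of the linear system and is cut out by a section $\sigma \in \H^0(\omega_C \otimes \varepsilon)$ unique up to scalar. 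The zero locus $L_0 := Z(\sigma) \subset \pp$ is then a line with $\rho_\varepsilon^*(L_0) = D^*$. Its support lies in $\bigcup_u \supp E_u = \rho_\varepsilon^{-1}\{n_u : u \le \eta\}$, forcing $L_0 \cap C_\varepsilon \subseteq \{n_u : u \le \eta\}$, and equality holds because $E_u \le D^*$ ensures $L_0$ passes through each $n_u$. The line $L_0$ avoids $n_0$ because no $E_u$ contains $u$ in its support, so $D^* \not\ge \eta = \rho_\varepsilon^*(n_0)$.

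For uniqueness, any line $L$ with $L \cap C_\varepsilon \subseteq \{n_u : u \le \eta\}$ pulls back to an effective divisor $\rho_\varepsilon^*(L)$ of degree $6$ in $|\omega_C \otimes \varepsilon|$ with support in $\bigcup_u \supp E_u$. These constraints should pin $\rho_\varepsilon^*(L)$ to the divisor $D^*$, forcing $L = L_0$.

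The main obstacle will be justifying this uniqueness in the non-reduced cases $m_u \ge 2$, where $n_u$ has type $A_{2\ell-1}$ or $A_{2\ell}$ with $\ell \ge 2$. In this setting one must rule out alternate effective divisors $\sum_u a_u E_u$ with $\sum a_u = 3$ and support contained in $\bigcup \supp E_u$ that also lie in $|\omega_C \otimes \varepsilon|$. Any such alternative would produce two distinct effective divisors in the same linear equivalence class, hence a nontrivial pencil inside $\H^0(\omega_C \otimes \varepsilon)$ of sections vanishing on the common part of their supports; the plan is to derive a contradiction from this using $h^0(\omega_C \otimes \varepsilon) = 3$ together with the fact that $\varepsilon$ is a nontrivial $2$-torsion class, which prevents the $E_u$ from interacting too freely inside $|\theta|$.
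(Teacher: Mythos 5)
Your existence argument is exactly the paper's: writing $\eta=\sum_i\ell_iu_i$ with $\theta-u_i\equiv x_i+y_i$, the identity $3\theta-\eta\equiv\sum_i\ell_i(x_i+y_i)$ exhibits an effective divisor in $|\omega_C\otimes\varepsilon|=|3\theta\otimes\eta^\vee|$ supported inside $\bigcup_i\rho_\varepsilon^{-1}(n_{u_i})$, hence a line meeting $C_\varepsilon$ exactly in the $n_{u_i}$. The paper's proof consists of precisely this computation and leaves uniqueness implicit, so you are not missing anything it supplies.

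That said, the uniqueness step you flag as the ``main obstacle'' does not require the $h^0(\omega_C\otimes\varepsilon)=3$ machinery or any analysis of alternative combinations $\sum_u a_uE_u$; it is elementary plane geometry. If $\eta$ is supported on at least two distinct points, then there are at least two distinct singularities $n_{u_i}$, and two distinct points of $\pp$ lie on a unique line. In the remaining case $\eta=3u$, the single extra singularity $n_u$ is of type $A_5$ or $A_6$, whose tangent cone is a double line; a line meeting the sextic $C_\varepsilon$ only at $n_u$ must do so with multiplicity $6>2$, so it must be the unique line through $n_u$ in that tangent direction. So the gap closes in two sentences rather than via the linear-system argument you sketch.
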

\begin{proof}
  Write $\eta = \sum_{i} \ell_i u_i$ where $\ell_i \in \{1,2,3\}$ and let $\theta-u_i = x_i+y_i$. Then $3\theta\equiv \sum_i \ell_i(x_i+y_i+u_i)$ so that $\omega_C\otimes \varepsilon = 3\theta\otimes \eta^\vee $ contains the divisor $\sum_{i} \ell_i(x_i+y_i)$. Recall $\rho_\varepsilon^*(n_{u_i}) = x_i+y_i$.
\end{proof}

\subsection{Proof of Proposition~\ref{prop:descend_epsilon}}\label{sec:descend_epsilon}

Let $\varepsilon$ be an order two line bundle on $C$ defined over $\ksep$ and suppose that the isomorphism class $[\varepsilon]$ is $\Gal(\ksep/k)$-invariant. We show that a representative for the class $[\varepsilon]$ exists over $k$.

Using Theorem~\ref{thm:prym} we get a cubic symmetroid $\Gamma_\varepsilon\subset\PP^3=|\omega_C|$ over $\ksep$, but since the construction of $\Gamma_\varepsilon$ only depends on the isomorphism class of $\varepsilon$, we see that the cubic $\Gamma_\varepsilon$ is $\Gal(\ksep/k)$-invariant and hence defined over $k$.

The construction of $\Gamma_\varepsilon$ canonically identifies $\PP \H^0(\omega_C\otimes \varepsilon)$ with the domain of the symmetrization map $\xq_\ca$ (see Section~\ref{sec:gauss_map}). From  Proposition~\ref{prop:intro_k_symmetrization} it follows that for non-degenerate $\Gamma_\varepsilon$, the symmetrization map is defined over $k$ as well, and that the domain is in fact isomorphic to $\PP^2$ over $k$. As a result, $\PP \H^0(\omega\otimes \varepsilon)$ has $k$-rational points, which means that $|\omega_C\otimes\varepsilon|$ contains effective divisors that are defined over $k$. It follows that we can assume $\varepsilon$ is defined over $k$.
  
When $\varepsilon$ is bielliptic (see Definition~\ref{def:bielliptic}) we get degenerate cubics $\Gamma_\varepsilon$ (Section~\ref{sec:degenerate_cubics}). In this case, $\Gamma_\varepsilon$ is a cone over a cubic plane curve $E \subset \pp$ defined over $k$. Let $\ell$ be the degree three divisor class on $E$ corresponding to the pullback of a line on $\pp$. Observe that $\jac_E$ contains a $k$-rational point $[\varepsilon']$, this is the class which pulls back to $[\varepsilon]$ on $C$. Define the map $\psi\colon E\to\Jac_E$ given by $p\mapsto [3p-\ell]$. Obtain the degree 0 divisor $D = \psi^*([\varepsilon']-[\co_E])$, which is defined over $k$. We see that $[3D] \equiv \psi_*(D) \equiv 9([\varepsilon']-[\co_E]) \equiv [\varepsilon']-[\co_E]$. As a consequence, $\co_E(3D)$ represents the class $[\varepsilon']$. Pulling back the line bundle $\co_E(3D)$ to $C$ gives a representative of $[\varepsilon]$.

\section{Constructing the genus three curve}\label{sec:prym_construction}

Let $C\subset \ppp$ be a canonically embedded curve of genus four with $\varepsilon$ a line bundle on $C$ of order two. We give an explicit construction of $X_\varepsilon$ with the pair of tetragonal pencils $(\cl_1,\cl_2)$.

Let $\xq_\varepsilon \colon  \pp_\varepsilon \to \vppp$ be the symmetrization corresponding to $\varepsilon$ obtained from Theorem~\ref{thm:prym} and let $\vGam_\varepsilon=\xq_\varepsilon(\pp)$ be the strict dual of $\Gamma_\varepsilon$. The quadric containing $C$ is $Q_C$ and $\vQ_C \subset \vppp$ is the dual of $Q_C$. When $Q_C$ is singular, $\vQ_C$ denotes the plane dual to the vertex of $Q_C$ and we write $\vq_C \subset \vQ_C$ for the conic parametrizing the enveloping planes of $Q_C$.

\begin{theorem}\label{thm:main_prym}
  If $\varepsilon$ is not odd, then the image of $X_\varepsilon$ under its canonical map is the pullback $\xq_\varepsilon\inv(\vQ_C)$ of the dual quadric $\vQ_C$ by the symmetrization $\xq_\varepsilon$. 
\end{theorem}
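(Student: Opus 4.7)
The plan is to identify $\xq_\varepsilon\inv(\vQ_C)$ with the canonical image of $X_\varepsilon$ by first computing the degree of this pullback, and then matching the natural tetragonal pencil(s) on it, coming from the rulings of $\vQ_C$, with the pencils on $X_\varepsilon$ supplied by the trigonal construction of Proposition~\ref{P:trigonal_construction}.

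Since $\xq_\varepsilon\colon \pp_\varepsilon \to \vppp$ is given by quadratic forms, the scheme-theoretic preimage of $\vQ_C$ is cut out by a quartic when $Q_C$ is smooth and by a conic when $Q_C$ is a cone (using the convention that $\vQ_C$ is then the plane dual to the vertex). A plane quartic has arithmetic genus $3$, matching the canonical model of the non-hyperelliptic $X_\varepsilon$ in $\cR_4^{\mathrm{o}}$ and $\cR_4^{\mathrm{biell}}$; a plane conic matches the canonical image of the hyperelliptic $X_\varepsilon$ in the even strata, where the canonical map of $X_\varepsilon$ is $2\colon 1$ onto a conic. This already gives the right candidate on each stratum of Table~\ref{tbl:strata}.

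To match pencils I would use Lemma~\ref{lem:obvious}: a line $\ell\subset\vQ_C$ is the intersection of two hyperplanes dual to points $[v_1],[v_2]\in L$, where $L\subset Q_C$ is the line in the opposite ruling that is polar to $\ell$. Consequently $\xq_\varepsilon\inv(\ell)$ is the base locus of the pencil of conics $\{Z(\ca\cdot v)\colon [v]\in L\}\subset\pp_\varepsilon$, a length-$4$ subscheme of $\xq_\varepsilon\inv(\vQ_C)$. Letting $\ell$ sweep out a ruling of $\vQ_C$ produces a $g^1_4$ on the pullback. On the other side, $L\cdot C$ is a fiber of a trigonal pencil on $C$, and Proposition~\ref{P:trigonal_construction} supplies a tetragonal cover $X_\varepsilon\to\PP^1$ whose fibers parametrize partitions of the lifted $6$-element fibers of $\tilde C_\varepsilon\to\PP^1$ into two triples. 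The key point is that the four base points of $\{Z(\ca\cdot v)\colon [v]\in L\}$ are naturally in bijection with these partitions via the two line branches at each singular conic in the pencil; this is exactly the data carried by the natural double cover of $\Gamma_\varepsilon$ from Section~\ref{sec:natural_double_cover}, which by Theorem~\ref{thm:induced_double_cover} restricts to $\tilde C_\varepsilon\to C$. Combined with Lemma~\ref{lem:gauss}, this identifies the two $g^1_4$'s and yields the desired equality $\xq_\varepsilon\inv(\vQ_C)\simeq X_\varepsilon$ as subschemes of $\pp_\varepsilon$.

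The main obstacle is handling the four non-odd strata uniformly, since $\Gamma_\varepsilon$, $Q_C$, and the rank of $\xq_\varepsilon$ all degenerate differently. In the bielliptic strata $\Gamma_\varepsilon$ is a cone, so $\xq_\varepsilon$ factors through projection from its vertex and the two rulings of $\vQ_C$ pull back to the same pencil on $\xq_\varepsilon\inv(\vQ_C)$, explaining the self-residual condition $\cl_1\simeq\cl_2$ in Table~\ref{tbl:strata}. In the even strata, $\vQ_C$ is only a plane so $\xq_\varepsilon\inv(\vQ_C)$ is a conic, and one must then recover the hyperelliptic $X_\varepsilon$ as the double cover of this conic branched along its intersection with $\xq_\varepsilon\inv(\vq_C)$. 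A uniform argument should carry the pencil identification out on the resolution $S_\varepsilon$ from Lemma~\ref{lem:S_maps_to_Gamma} and its canonical double cover $\tilde S_\varepsilon$ of Section~\ref{sec:natural_double_cover}, where base locus phenomena of $\xq_\varepsilon$ are resolved away and the bijection with partitions of fibers of $\tilde C_\varepsilon\to\PP^1$ becomes an honest geometric incidence on $\tilde S_\varepsilon$.
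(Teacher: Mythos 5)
Your proposal follows essentially the same route as the paper's proof: identify the tetragonal pencils as pullbacks of the rulings of $\vQ_C$, use the singular conics attached to points of $C\subset\Gamma_\varepsilon$ to partition the tetragonal fibers into two pairs so that $(C,X_\varepsilon)$ fit into the trigonal construction, invoke Theorem~\ref{thm:induced_double_cover} together with the natural double cover of Section~\ref{sec:natural_double_cover} to match that labeling with $\tilde C_\varepsilon\to C$, and then run the same stratum-by-stratum degenerations (cone $\Gamma_\varepsilon$ giving self-residual pencils, singular $Q_C$ giving the hyperelliptic double cover of the conic). The one step the paper makes explicit that your sketch glosses over is the verification that $\xq_\varepsilon\inv(\vQ_C)$ is reduced (and, in the even strata, that the eight points $\xq_\varepsilon\inv(\vq_C)$ are distinct), which is needed before the degree count identifies the pullback with the canonical image.
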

\begin{remark}
  We do not exclude the possibility that $X_\varepsilon$ is hyperelliptic, whose canonical image is a conic. This happens precisely when $\varepsilon$ is even. In this case, the eight branch points on the conic $\xq_\varepsilon\inv(\vQ_C)$ are $\xq_\varepsilon\inv(\vq_C)$.
\end{remark}

It remains to identify the two tetragonal pencils $(\cl_1,\cl_2)$ on $X$ corresponding to $(C,\varepsilon)$. In fact, these pencils play a central role in the proof of Theorem~\ref{thm:main_prym}. 

\begin{theorem}\label{thm:prym_construction}
  If $Q_C$ is smooth then the two linear systems $|\cl_1|$ and $|\cl_2|$ on $X_\varepsilon$ are obtained by pulling back the two rulings of $\vQ_C$.
\end{theorem}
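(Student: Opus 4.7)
The plan is to describe the ruling pullbacks, verify that they are tetragonal pencils satisfying the residual property $\cl_1 \otimes \cl_2 \simeq \omega_{X_\varepsilon}^{\otimes 2}$, and then match them with the abstractly defined pair $\{\cl_1, \cl_2\}$ produced in the main body of Section~\ref{sec:prym_construction}.

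For the degree computation, I would write each ruling as a projection $\pi_i : \vQ_C \to \PP^1$ whose fibers are lines, and form the composition $g_i = \pi_i \circ \xq_\varepsilon|_{X_\varepsilon} : X_\varepsilon \to \PP^1$. A generic fiber equals $\xq_\varepsilon^{-1}(\ell_t)$ for $\ell_t \subset \vQ_C$ the corresponding ruling line. Since $\ell_t$ is cut out in $\vppp$ by two hyperplanes and $\xq_\varepsilon$ is defined by quadrics on $\pp_\varepsilon$ (Section~\ref{sec:gauss_map}), the preimage is the scheme-theoretic intersection of two conics in $\pp_\varepsilon$, of length $4$ by B\'ezout. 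As $\ell_t \subset \vQ_C$, this length-$4$ scheme lies on $X_\varepsilon = \xq_\varepsilon^{-1}(\vQ_C)$ by Theorem~\ref{thm:main_prym}, so each $g_i$ is a degree-$4$ map and yields a $g^1_4$ on $X_\varepsilon$.

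The residual relation follows by pulling back the hyperplane class. The hyperplane section of $\vQ_C$ is the sum of the two ruling classes; pulled back by $\xq_\varepsilon$ it becomes a conic class on $\pp_\varepsilon$, whose restriction to the canonical plane-quartic model $X_\varepsilon$ (smooth since $Q_C$ smooth forces $X_\varepsilon$ non-hyperelliptic by Table~\ref{tbl:strata}) is linearly equivalent to $2 K_{X_\varepsilon}$, because hyperplane sections of the canonical embedding restrict to $K_{X_\varepsilon}$. Hence the two ruling-pullback pencils sum to $2 K_{X_\varepsilon}$, matching the residual property of $\{\cl_1, \cl_2\}$.

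To identify these with the intrinsic $\{\cl_1, \cl_2\}$, I would appeal to the reverse construction of Section~\ref{sec:reverse_from_intro}: there $Q_{X,\kappa}$ is built as the pullback of the Segre embedding $|W_1|\times|W_2| \hookrightarrow |W_1 \otimes W_2|$ into $|\MUV|$, so its two rulings are tautologically $|\cl_1|$ and $|\cl_2|$ via $W_i = \H^0(X, \cl_i)$. Theorem~\ref{thm:reconstruction} identifies $|\MUV|$ with $\Pp\H^0(\omega_C)$ and $Q_{X,\kappa}$ with $Q_C$, so projective duality transfers the rulings to $\vQ_C$. The main obstacle is pinning down this compatibility: one must verify that the symmetrization $\xq_\varepsilon$ appearing in the forward direction intertwines with the labeling of the rulings coming from the reverse construction, which reduces to a diagram chase through the commutative square defining $\MUV$ together with the Gauss-adjugation framework of Section~\ref{sec:adjugation_map}.
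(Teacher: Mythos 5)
Your first two steps are fine but they only establish that the two ruling pullbacks form \emph{some} residual pair of $g^1_4$'s on $X_\varepsilon$. That is far from an identification: every nonzero point of $\kum_{X_\varepsilon}$ determines a residual pair of tetragonal pencils, so ``matching the residual property'' does not single out the pair $\{\cl_1,\cl_2\}$ associated to $(C,\varepsilon)$. Worse, the identification step you propose is circular: the pair $\{\cl_1,\cl_2\}$ is defined via the trigonal construction (Proposition~\ref{P:trigonal_construction}) applied to $(C,\varepsilon)$ and the two trigonal pencils cut by the rulings of $Q_C$, and the proof of Theorem~\ref{thm:reconstruction} in Section~\ref{sec:reverse-general} takes as an \emph{input} the fact that $\cl_1,\cl_2$ are the pullbacks of the rulings of $\vQ_C$ under $\xq_\varepsilon$ --- i.e.\ precisely the statement you are trying to prove. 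You cannot invoke the reverse construction here.

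The missing content is the verification that the pair $(C,X_\varepsilon)$, equipped with the trigonal pencil $f_C$ from a ruling of $Q_C$ and the tetragonal pencil $f_X$ from the dual ruling of $\vQ_C$, actually satisfies the defining relation of the trigonal construction: $f_C$ must be the cubic resolvent of $f_X$, meaning each fiber of $f_C$ parametrizes the $(2,2)$-partitions of the corresponding fiber of $f_X$. This is Lemma~\ref{lem:partition}: a point $x\in C$ lies on $\Gamma_\varepsilon$, so the pullback $\xq_\varepsilon\inv(H_x)$ of its dual plane is a \emph{singular} conic $\ell_1\cup\ell_2$, and each component $\ell_i$ meets each fiber divisor $D_j = X_\varepsilon\cdot\xq_\varepsilon\inv(\vL_j)$ in exactly two points, producing the required partition. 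One must also check that $X_\varepsilon$ is reduced, and that the double cover labeling these partitions is the cover $\tC_\varepsilon\to C$ determined by $\varepsilon$; the latter follows from Theorem~\ref{thm:induced_double_cover} together with the description of the natural double cover of $\Gamma_\varepsilon$ in Section~\ref{sec:natural_double_cover}, which marks the two components of each singular conic. Without these steps (the outline of Section~\ref{sec:strategy}) the theorem is not proved.
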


When $Q_C$ is singular there are some arithmetic subtleties which we deal with in Section~\ref{sec:subtle_arithmetic}. For now assume $k=\overline{k}$, let $S$ be the double cover of the plane $\vQ_C$ branched over $\vq_C$ and note $S \simeq \p \times \p$. Let $\overline{Y} = \vGam_\varepsilon \cap \vQ_C$ and $Y$ the preimage of $\overline{Y}$ in $S$.

\begin{theorem}\label{thm:prym_construction_even}
  If $Q_C$ is singular and $\Gamma_\varepsilon$ is non-degenerate ($\varepsilon$ is even, not bielliptic) then $X_\varepsilon$ is the normalization of $Y$. The two tetragonal pencils $(\cl_1,\cl_2)$ on $X_\varepsilon$ are induced by the two rulings of $S$. The pencils $\cl_1$ and $\cl_2$ are conjugated by the hyperelliptic involution on $X_\varepsilon$.
\end{theorem}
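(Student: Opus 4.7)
The plan is to identify the normalization of $Y$ with $X_\varepsilon$ by matching Weierstrass divisors on $\PP^1$, and then read off the tetragonal pencils directly from the two rulings of $S\simeq\PP^1\times\PP^1$.

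\textbf{First step.} Since $\varepsilon$ is even and not bielliptic, $\Gamma_\varepsilon$ is irreducible and non-degenerate of type~(2) or~(3), so by Remark~\ref{rem:regular-xq} the symmetrization $\xq_\varepsilon$ is a regular morphism. Theorem~\ref{thm:main_prym} identifies the canonical image of $X_\varepsilon$ inside $\pp_\varepsilon$ with the smooth conic $\mathcal{C}:=\xq_\varepsilon^{-1}(\vQ_C)$; the hyperelliptic double cover $X_\varepsilon\to\mathcal{C}$ is branched at the eight points $\mathcal{C}\cap\xq_\varepsilon^{-1}(\vq_C)$.

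\textbf{Second step.} Next I would observe that $\xq_\varepsilon|_{\mathcal{C}}\colon\mathcal{C}\to\overline{Y}$ is birational: via Lemma~\ref{lem:gauss}, $\xq_\varepsilon=\gamma_\varepsilon\circ\xc_\varepsilon$ factors through the birational parametrization $\xc_\varepsilon$ and the Gauss map $\gamma_\varepsilon$, which is birational for non-degenerate irreducible types, and restriction to a plane section preserves this. Thus $\mathcal{C}\simeq\PP^1$ is the normalization of $\overline{Y}$, and the eight Weierstrass points above correspond to $\overline{Y}\cap\vq_C$. On the other hand, the fiber product definition of $Y$ makes $Y\to\overline{Y}$ a double cover branched precisely along $\overline{Y}\cap\vq_C$. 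Normalizing, the projection $\tilde{Y}\to\mathcal{C}$ becomes a double cover of $\PP^1$ with the same eight-point branch locus as $X_\varepsilon\to\mathcal{C}$. Since a hyperelliptic curve is determined up to isomorphism by its Weierstrass divisor, I conclude $X_\varepsilon\simeq\tilde{Y}$ compatibly with the projections to $\mathcal{C}$.

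\textbf{Third step.} Since $\overline{Y}$ is a plane quartic (intersection of the Steiner quartic $\vGam_\varepsilon$ with the plane $\vQ_C$), the preimage $Y$ has bidegree $(4,4)$ on $S\simeq\PP^1\times\PP^1$. Each of the two rulings meets $Y$ in four points and therefore cuts out a degree-four pencil on $Y$ which descends under normalization to a tetragonal pencil on $X_\varepsilon$. The deck transformation $\sigma$ of $S\to\vQ_C$ swaps the two factors of $\PP^1\times\PP^1$, as the quotient of $\PP^1\times\PP^1$ by the swap is $\sym^2\PP^1\simeq\PP^2$ branched along the diagonal conic, matching the role of $\vq_C\subset\vQ_C$. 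Under $X_\varepsilon\simeq\tilde{Y}$ of the second step, $\sigma$ restricts to the deck transformation of $\tilde{Y}\to\mathcal{C}$, i.e.\ the hyperelliptic involution of $X_\varepsilon$; consequently it exchanges the two tetragonal pencils, as claimed.

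\textbf{Main obstacle.} The delicate point is verifying that the two tetragonal pencils obtained this way really coincide with the pair $(\cl_1,\cl_2)$ of residual pencils (satisfying $\cl_1\otimes\cl_2\simeq\omega_{X_\varepsilon}^{\otimes 2}$) built in Section~\ref{sec:prym_construction}. One would reconcile them by identifying the composition $X_\varepsilon\simeq\tilde{Y}\hookrightarrow S\to\PP^1$ (projection to either factor) with the tetragonal cover produced by Proposition~\ref{P:trigonal_construction} applied to the unique trigonal pencil on $C$ (namely projection from the vertex of $Q_C$). The residual relation then reduces to the identity $\co_S(1,1)|_Y\simeq\pi^*\co_{\overline{Y}}(1)$, where $\pi\colon Y\to\overline{Y}$ is the natural double cover.
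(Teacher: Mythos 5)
Your proposal has a genuine gap, and in fact two. First, the opening move is circular relative to the paper's logic: you invoke Theorem~\ref{thm:main_prym} \emph{together with the remark} that the hyperelliptic cover $X_\varepsilon\to\overline{X}$ is branched exactly at the eight points $\xq_\varepsilon^{-1}(\vq_C)$. But in the even case that remark is not an independently available fact --- it is proved only as a byproduct of Theorem~\ref{thm:prym_construction_even} itself. The paper's Section~\ref{sec:even_case} runs the argument in the opposite direction: it \emph{constructs} the candidate $X$ as the normalization of $Y$ (equivalently, the double cover of $\overline{X}$ branched over $B$), and only then proves that this concrete curve is $X_\varepsilon$, i.e.\ that $\Jac_X=\Prym(\tC_\varepsilon/C)$. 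Once you assume the branch locus of $X_\varepsilon\to\overline{X}$ is $B$, matching Weierstrass divisors is easy, but you have assumed precisely the hard content. Second, the step you flag as the ``main obstacle'' --- that the ruling pencils on $S$ are the tetragonal pencils produced by the trigonal construction applied to $(C,\varepsilon)$ --- is not an optional reconciliation; it \emph{is} the proof. The paper settles it by showing that each $x\in C$, lying on $\Gamma_\varepsilon$, makes $\xq_\varepsilon^{-1}(H_x)$ a singular conic whose two components partition the degree-eight divisor of $\overline{X}$ into $2+2$, by using Lemma~\ref{lem:conjugacy} to lift this to a partition of each fiber $D_i$ of the ruling pencils on $X$, and by invoking Theorem~\ref{thm:induced_double_cover} together with Section~\ref{sec:natural_double_cover} to identify the resulting labeling with the double cover $\tC_\varepsilon\to C$. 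Your one-sentence sketch (``identify the composition with the tetragonal cover produced by Proposition~\ref{P:trigonal_construction}'') does not supply any of this.

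There is also a smaller unjustified step: you assert that normalization turns $Y\to\overline{Y}$ into a double cover of $\PP^1$ with the \emph{same} eight-point branch locus. This requires knowing that $Y$ is irreducible (otherwise $\tilde{Y}$ is two copies of $\PP^1$ and the cover is split) and that the singularities of $Y$ do not absorb ramification. The paper handles both at once with an arithmetic-genus count: $Y$ is a $(4,4)$-curve of arithmetic genus nine, a reducible $Y$ would force two $(2,2)$-components of arithmetic genus one that cannot carry the required singularities, and the six singularities of the irreducible $Y$ leave geometric genus three, which by Riemann--Hurwitz forces exactly eight branch points. Your third step and the conjugacy claim via the deck transformation of $S$ are correct and agree with Lemma~\ref{lem:conjugacy}, but they rest on the unestablished identification above.
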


\begin{theorem}
  If $Q_C$ is singular and $\Gamma_\varepsilon$ is a cone ($\varepsilon$ is even and bielliptic) then $X_\varepsilon \simeq Y \times_{\overline{Y}} \xq_\varepsilon\inv(\overline{Y})$. The two pencils $\cl_1\simeq \cl_2$ induced from the rulings of $S$ coincide as $Y$ is the diagonal of $S$.
\end{theorem}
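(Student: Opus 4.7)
My plan is to parallel Theorem~\ref{thm:prym_construction_even} and track how the collapse of $\vGam_\varepsilon$ from a surface to a plane forces one to replace the normalization of $Y$ by a fiber product. The key inputs are Section~\ref{sec:bielliptic}, which presents $\Gamma_\varepsilon$ as a cone over a smooth plane cubic $E$, and Section~\ref{sec:degenerate_cubics}, which factors $\xq_\varepsilon$ through the plane $\Pi\subset\vppp$ dual to the vertex of $\Gamma_\varepsilon$.

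First I would identify $\overline{Y}$ and $Y$. The induced map $\overline{\xq}_\varepsilon\colon\pp_\varepsilon\to\Pi$ is a symmetrization of $E$ in the sense of Proposition~\ref{prop:sym_of_E}, and so is dominant; thus $\vGam_\varepsilon=\Pi$. The vertex of $\Gamma_\varepsilon$ corresponds to the bielliptic pencil $C\to E$, whereas the vertex of $Q_C$ corresponds to the (distinct) trigonal pencil, so $\Pi\neq\vQ_C$ and $\overline{Y}=\Pi\cap\vQ_C$ is a line $\ell\subset\vQ_C$. The preimage of $\ell$ in the double cover $S\to\vQ_C$ is then a bidegree-$(1,1)$ curve $Y$, a double cover of $\ell$ ramified at $\ell\cap\vq_C$; in particular $Y\simeq\PP^1$ and both ruling projections $Y\to\PP^1$ are isomorphisms.

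Next I would show $Y$ is the diagonal and match $X_\varepsilon$ with the fiber product. Extending Theorem~\ref{thm:prym_construction_even} to our setting, the tetragonal pencils on $X_\varepsilon$ are induced by the two rulings of $S$. In the bielliptic stratum $\{\cl_1,\cl_2\}$ is self-residual, so $\cl_1\simeq\cl_2$; this forces the two ruling projections $Y\to\PP^1$ to coincide under the natural identification $\PP^1\simeq|\cl_1|\simeq|\cl_2|$, i.e.\ $Y$ is the diagonal of $S\simeq\PP^1\times\PP^1$. Setting $K:=\xq_\varepsilon^{-1}(\overline{Y})$, Theorem~\ref{thm:main_prym} identifies $K$ with the hyperelliptic canonical image of $X_\varepsilon$, a smooth conic, so $X_\varepsilon\to K$ is the hyperelliptic cover of degree $2$; the map $\xq_\varepsilon|_K\colon K\to\ell$ has degree $4$, inherited from $\overline{\xq}_\varepsilon$. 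A Riemann--Hurwitz computation on $F:=Y\times_\ell K$ gives $g(F)=3$, matching $g(X_\varepsilon)$. The universal property applied to the canonical map $X_\varepsilon\to K$ and the tetragonal map $X_\varepsilon\to Y$ (produced from $\cl_1\simeq\cl_2$ and the isomorphism $|\cl_1|\simeq Y$) then yields a morphism $X_\varepsilon\to F$; comparing degrees of the two induced covers of $\ell$ (both equal to $8$) forces this morphism to be an isomorphism.

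The main obstacle I foresee is extending the identification of the rulings of $S$ with the pencils $\cl_1,\cl_2$ from Theorem~\ref{thm:prym_construction_even} (where $X_\varepsilon$ was the normalization of $Y$) to our fiber-product setting. A clean alternative is a specialization argument: as $(C,\varepsilon)$ moves in $\cR_4^\mathrm{even}$ and limits onto $\cR_4^\mathrm{biell,even}$, the cubic $\Gamma_\varepsilon$ flattens to a cone, $\vGam_\varepsilon$ collapses from a Steiner quartic to $\Pi$, $\overline{Y}$ specializes from a rational quartic to the line $\ell$, and $Y$ from a $(4,4)$-curve to the $(1,1)$-curve. The canonical image $K$, previously swallowed inside $Y$ via the normalization, separates from $Y$ in the limit and must be reintroduced as the additional factor of the fiber product.
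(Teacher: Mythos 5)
Your setup of the geometry is consistent with the paper's: $\vGam_\varepsilon$ collapses to the plane $H_\varepsilon$ dual to the vertex of the cone $\Gamma_\varepsilon$, so $\overline{Y}=H_\varepsilon\cap\vQ_C$ is a line, $Y$ is an irreducible $(1,1)$-curve in $S$ (hence the diagonal), and the fiber product with the degree-$4$ cover $\overline{X}=\xq_\varepsilon\inv(\overline{Y})\to\overline{Y}$ has genus three by Riemann--Hurwitz (the paper's lemma that the eight points $\xq_\varepsilon\inv(\vq_C)$ are distinct is exactly what your count needs). However, the core of the argument is circular. The curve $X_\varepsilon$ is \emph{defined} via the trigonal construction (Proposition~\ref{P:trigonal_construction}); Theorem~\ref{thm:main_prym} and the identification of the pencils with the rulings of $S$ are the \emph{conclusions} being established case by case in Section~\ref{sec:prym_construction}, and the entries of Table~\ref{tbl:strata} (including ``self-residual'' on the bielliptic stratum) are a summary of those conclusions. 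You invoke Theorem~\ref{thm:main_prym} to identify $K=\xq_\varepsilon\inv(\overline{Y})$ with the canonical image of $X_\varepsilon$, invoke the table to get $\cl_1\simeq\cl_2$, and invoke an unproved extension of Theorem~\ref{thm:prym_construction_even} to get a tetragonal map $X_\varepsilon\to Y$ --- precisely the facts this theorem is responsible for proving in the even-and-bielliptic case. (Your derivation of ``$Y$ is the diagonal'' from $\cl_1\simeq\cl_2$ is also backwards; the correct and easy direction is that any irreducible $(1,1)$-curve is a graph of an isomorphism between the rulings, whence the two induced pencils coincide.)

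What is missing is the actual verification that the fiber product $X=\overline{X}\times_{\overline{Y}}Y$, equipped with $\cl=f^*\co_Y(1)$, is the output of the trigonal construction applied to $(C,\varepsilon)$. The paper does this directly: it checks that $\cl$ is $\iota$-invariant with $\cl^{\otimes2}\simeq\omega_X^{\otimes2}$ but $\cl\not\simeq\omega_X$, and then shows that a general point $p\in C$, lying on $\Gamma_\varepsilon$, produces a \emph{singular} conic $\xq_\varepsilon\inv(H_p)=L_1\cup L_2$ whose two components cut the fiber of $X\to\overline{Y}$ over $H_p\cap\overline{Y}$ into two sets of two --- the $(2,2)$-partition that characterizes the cubic resolvent, as laid out in Section~\ref{sec:strategy}. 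Your degree-and-genus bookkeeping cannot substitute for this: there are many genus-three curves with a degree-$8$ map to $\ell$, and nothing in your argument ties the fiber product to $\Prym(\tC_\varepsilon/C)$. The specialization argument you sketch as a fallback is not carried out and would itself require identifying the limit of the family of curves $X_\varepsilon$ over the boundary stratum, which is no easier than the direct verification.
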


\subsection{Strategy of proof}\label{sec:strategy}
We sketch the strategy for smooth $Q_C$ and $\Gamma_\varepsilon$ not a cone, the other cases being analogous. The Jacobian of the pullback $X := \xq^{-1}_\varepsilon(\vQ_C)$ is the Prym variety of $C$ if $X$ is reduced and the pair $(C,X)$ fit into Recillas' trigonal construction (see~\cite{Recillas1974} or~\cite{donagi}*{\S 2.4} and Proposition~\ref{P:trigonal_construction}). 

A ruling of $Q_C$ induces a trigonal map $f_C\colon C \to L$ and the dual ruling in $\vQ_C$ induces a tetragonal pencil $f_X\colon X \to L$ obtained by pulling back the lines in $\vQ_C$ via $\xq_\varepsilon$. Notice that we canonically identified the target $L$'s of $f_C$ and $f_X$.

Any point $x \in C$, by the virtue of being in $\Gamma_\varepsilon$, will induce a singular conic $q_x$ on $\pp$. We will prove that the two components of this singular conic partition the fiber $f_X\inv(f_C(p))$ into two sets of two. This establishes that $(C,X)$ fit into Recillas' trigonal construction.

It remains to show that the double cover $\tilde C_\varepsilon \to C$ induced by $\varepsilon$ is obtained by labeling the partitions induced on the fibers of $f_X$. Equivalently, we may show $\tilde C_\varepsilon$ marks the components of the singular conics parametrized by $C$. Theorem~\ref{thm:induced_double_cover} states that the double cover $\tilde C_\varepsilon \to C$ is induced by the natural double cover of the cubic symmetroid $\Gamma_\varepsilon$. Section~\ref{sec:natural_double_cover} proves that this cover is obtained by labeling the components of the singular conics parametrized by $\Gamma_\varepsilon$. This concludes the outline of proof. 

\subsection{General case} \label{sec:prym--general} 

We assume $(C,\varepsilon)$ is not bielliptic, odd or even. 

\begin{lemma}
  The curve $X = \xq_\varepsilon\inv(\vQ_C)$ is reduced.
\end{lemma}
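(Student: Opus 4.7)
The plan is to show that $X$ is generically reduced. Since $X$ is defined by the single equation $\vQ_C\circ\xq_\varepsilon$ on the smooth surface $\pp_\varepsilon$, it is a Cartier divisor---a plane quartic---and therefore has no embedded points. For such a scheme, generic reducedness is equivalent to reducedness, so it suffices to exhibit a dense open set of reduced points.

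Under our assumptions we are in the stratum $\cR_4^{\mathrm{o}}$ of Table~\ref{tbl:strata}: $Q_C$ is smooth and $\Gamma_\varepsilon$ is an irreducible non-degenerate cubic symmetroid of type~(1), (2), or~(3). By Remark~\ref{rem:regular-xq}, the symmetrization $\xq_\varepsilon$ is then a morphism, and its image $\vGam_\varepsilon$ is an irreducible Steiner quartic. Since $\xq_\varepsilon$ is defined by four quadrics and $\vGam_\varepsilon$ has degree 4, a standard count shows $\xq_\varepsilon\colon\pp_\varepsilon\to\vGam_\varepsilon$ is birational.

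I would then exploit the tetragonal structure on $X$ coming from a ruling of $\vQ_C$. Fix a ruling $L\simeq\p$ and consider $f_X\colon X\to\vQ_C\to L$. For $p\in L$ the fiber is $\xq_\varepsilon^{-1}(l_p)$, where $l_p\subset\vQ_C$ is the line corresponding to $p$; by Bezout this has length 4. The goal is to argue that for generic $p$ this length-4 fiber consists of four distinct reduced points, which would immediately give generic reducedness of $X$. This relies on three points: (i) a generic line $l_p$ in the ruling is not contained in the Steiner quartic $\vGam_\varepsilon$, since the latter contains only finitely many lines while the ruling is a $\p$-family; (ii) a generic line $l_p$ meets $\vGam_\varepsilon$ transversely in four distinct points; and (iii) these four points lie in the open locus where $\xq_\varepsilon$ is an isomorphism.

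The main obstacle is point (iii). The non-isomorphism locus $E$ of the birational morphism $\xq_\varepsilon$ is a closed subset of $\vGam_\varepsilon$ of dimension at most one. If $E\not\subset\vQ_C$, then $E\cap\vQ_C$ is zero-dimensional and a generic line in the one-parameter ruling avoids this finite set, establishing (iii). To rule out $E\subset\vQ_C$ one must either identify $E$ explicitly for each of the types (1)--(3) of cubic symmetroid---using the descriptions in Theorem~\ref{T:symmetroid_classification} and Section~\ref{sec:adjugation_map}---and verify directly that the resulting curve cannot be contained in the dual quadric of the quadric cutting out $C$, or argue by a deformation/openness argument: reducedness is an open condition in families of plane quartics, and one first verifies the statement for sufficiently general $(C,\varepsilon)\in\cR_4^{\mathrm{o}}$ where $l_p\cap E=\emptyset$ can be checked by a dimension count, then spreads out to the entire stratum.
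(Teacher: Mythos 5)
Your reduction to generic reducedness is sound: $X$ is an effective Cartier divisor on the smooth surface $\pp_\varepsilon$, hence Cohen--Macaulay with no embedded points, so reducedness follows from reducedness at the generic points. The birationality of $\xq_\varepsilon$ onto the Steiner quartic is also correct (a general line meets $\vGam_\varepsilon$ in four points and pulls back to the four-point intersection of two conics). But the proof is not complete: step (iii) is a genuine gap that you flag but do not close. You offer two remedies, and neither works as stated. The explicit case-by-case identification of the non-isomorphism locus $E$ is never carried out, and it is not a priori clear why $E$ (or, for that matter, the ramification locus of the finite part of $\xq_\varepsilon$, which also threatens step (ii)) cannot lie inside $\vQ_C$ for some special $(C,\varepsilon)$ in the stratum. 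The deformation argument is worse: openness of reducedness in a family only propagates the conclusion from a point to a neighborhood, so verifying the statement for a sufficiently general member of $\cR_4^{\mathrm{o}}$ does not ``spread out to the entire stratum'' --- that would require the non-reduced locus to be closed \emph{and} empty, which is what you are trying to prove. There is also a secondary unaddressed issue in step (ii): a general line of the ruling meets $\vGam_\varepsilon$ in four distinct points only if no component of $\vQ_C\cap\vGam_\varepsilon$ is everywhere tangent to the ruling, which is again close to the statement being proved.

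The paper avoids all of this with a short local argument at a hypothetical non-reduced point. If $X$ is non-reduced, pick a point $p$ of a non-reduced component at which $\xc_\varepsilon$ is defined and $\dd\xq_\varepsilon|_p$ has full rank; then the image plane of $\dd\xq_\varepsilon|_p$ is tangent to $\vQ_C$ at $\xq_\varepsilon(p)$. Combining $\xq_\varepsilon=\gamma_\varepsilon\circ\xc_\varepsilon$ (Lemma~\ref{lem:gauss}) with the reflexivity theorem for dual varieties, this tangency dualizes to the statement that $\Gamma_\varepsilon$ is tangent to $Q_C$ at $\xc_\varepsilon(p)$, contradicting the smoothness of the complete intersection $C=Q_C\cap\Gamma_\varepsilon$ (Corollary~\ref{cor:odd_is_evil}). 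This converts the problem into a single tangency computation via projective duality, exactly the tool your approach is missing when it tries to control the exceptional locus $E$. If you want to salvage your route, the duality argument is also what you would need to rule out $E\subset\vQ_C$.
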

\begin{proof}
If $X$ is non-reduced, we can find a singular point $p \in X$ where $\xc_\varepsilon$ is well defined and the differential $\dd \xq_\varepsilon|_p$ is of full rank.  Then the plane $H_{\xq} := \im \dd \xq_\varepsilon |_p$ must be tangential to $\vQ_C$ at $\xq_\varepsilon(p)$. The identity $\xq_\varepsilon = \gamma_\varepsilon \circ \xc_\varepsilon$ (Lemma~\ref{lem:gauss}) implies $\xq_\varepsilon(p)$ is dual to the tangent plane $H_{\xc}$ of $\Gamma_\varepsilon$ at $\xc_\varepsilon(p)$.  Since $H_{\xq}$ is tangential to $\xq_\varepsilon(\pp)$ at $\xq_\varepsilon(p)$, $\xc_\varepsilon(p)$ is dual to $H_\xq$ by the reflexivity theorem.  With $H_\xq$ tangent to $\vQ_C$ at $\xq_\varepsilon(p)$, $H_\xc$ must be tangent to $Q_C$ at $\xc_\varepsilon(p)$. This contradicts that $C$ is a smooth complete intersection of $Q_C$ and $\Gamma_\varepsilon$ (Corollary~\ref{cor:odd_is_evil}).
\end{proof}

\begin{lemma}\label{lem:partition}
  General $x \in C$ naturally partitions $f_X\inv(f_C(x)) \subset X$ into two sets of two.
  
\end{lemma}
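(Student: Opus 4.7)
My plan is to reinterpret the tetragonal fiber $f_X^{-1}(f_C(x))$ as the base locus of a pencil of conics on $\pp_\varepsilon$ that has $q_x$ as one of its singular members, and then invoke the classical fact that for a pencil of conics with four base points in general position, the three singular members realize the three ways of pairing up the base points into disjoint pairs of two.

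First I will use the fact that via the symmetrization, $\ppp$ embeds in $|\sym^2 U|$ as a four-dimensional linear system of conics on $\pp_\varepsilon = \Pp U$, in such a way that $y \in \ppp$ parametrizes the conic $Z(y) = q_y = \xq_\varepsilon^{-1}(H_y)$ (Lemma~\ref{lem:obvious}). Any line $L \subset \ppp$ therefore determines a pencil of conics on $\pp_\varepsilon$ whose base locus equals $\xq_\varepsilon^{-1}(\vL)$, where $\vL = \bigcap_{y \in L} H_y$ is the corresponding line in $\vppp$. Taking $L = L_x \subset Q_C$ to be the ruling line through $x$ computing $f_C(x)$, and $\vL_x \subset \vQ_C$ its dual, I obtain
\[
f_X^{-1}(f_C(x)) \;=\; \xq_\varepsilon^{-1}(\vL_x) \;=\; \bigcap_{y \in L_x} q_y,
\]
exhibiting the fiber as the base locus of the pencil parametrized by $L_x$.

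Next, I identify the singular members of this pencil. Because $L_x \subset Q_C$ and $C = Q_C \cap \Gamma_\varepsilon$, the intersection $L_x \cap \Gamma_\varepsilon$ equals $L_x \cap C$, which for generic $x$ consists of three distinct points (B\'ezout on the cubic $\Gamma_\varepsilon$ restricted to $L_x$). Thus the pencil has three singular members $q_y$ for $y \in L_x \cap C$, and $q_x$ is one of them. For general $x$ the four base points lie in general position on $\pp_\varepsilon$, so by the standard description of a generic pencil of four-point conics, each of its three singular members is a pair of lines realizing one of the three $2+2$ pairings of the base points. Writing $q_x = \ell_1 \cup \ell_2$ then yields the desired partition of $f_X^{-1}(f_C(x))$.

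The main point requiring care is the genericity hypothesis: one needs $L_x \not\subset \Gamma_\varepsilon$, the three points of $L_x \cap C$ to be distinct, and $x$ to be a smooth point of $\Gamma_\varepsilon$ so that $q_x$ splits into two distinct lines rather than degenerating to a double line. Each of these conditions excludes only a proper closed subset of $C$, so they all hold for general $x$, and the argument goes through.
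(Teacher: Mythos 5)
Your proof is correct, and it takes a genuinely different route from the paper's. The paper works with the whole plane $H_x\subset\vppp$ dual to $x$: since $H_x\cap\vQ_C=\vL_1\cup\vL_2$, the singular conic $\xq_\varepsilon\inv(H_x)=\ell_1\cup\ell_2$ cuts out both tetragonal fibers $D_1+D_2$ at once, and the key count --- each $\ell_i$ meets each $D_j$ in exactly two points --- comes from observing that $\xq_\varepsilon(\ell_i)$ is a conic in $H_x$ and hence meets the line $\vL_j$ in two points. You instead restrict to the single ruling line $L_x$, recognize the tetragonal fiber $\xq_\varepsilon\inv(\vL_x)$ as the base locus of the pencil of conics $\{q_y\}_{y\in L_x}$ (via Lemma~\ref{lem:obvious}, just as in the paper), identify the singular members of that pencil with the trigonal fiber $L_x\cap\Gamma_\varepsilon=L_x\cap C$, and invoke the classical fact that the three singular members of a pencil through four points in general position realize the three $2{+}2$ pairings. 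Your version has the virtue of making the cubic-resolvent description of Section~\ref{sec:trigonal} geometrically literal: the three points of $f_C\inv(f_C(x))$ visibly index the three partitions of $f_X\inv(f_C(x))$. The paper's more local count is what transfers with essentially no change to the bielliptic and even cases (Sections~\ref{sec:bielliptic_case} and~\ref{sec:even_case}), where the four-base-point pencil picture is less immediate. One point worth making explicit in your write-up: the condition that no three of the four base points are collinear is not an independent genericity assumption but follows from $L_x\not\subset\Gamma_\varepsilon$ (three collinear base points would force every member of the pencil to contain that line, hence be singular), and $L_x\not\subset\Gamma_\varepsilon$ holds for \emph{every} $x$ since $L_x\subset\Gamma_\varepsilon\cap Q_C=C$ is impossible.
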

\begin{proof}
  Let $L_1,L_2 \subset Q_C$ be the two lines of $Q_C$ passing through $x$ with duals $\vL_1,\vL_2 \subset \vQ_C$. Up to relabeling, $f_C^*f_C(x)=L_1 \cdot C$ and $f_X^*f_C(x) = X \cdot \xq_\varepsilon^*(\vL_1)$.

 Let $H_x \subset \vppp$ be the plane dual to $x$. The conic$\xq_\varepsilon\inv(H_x) \subset \pp_\varepsilon$ induces on $X$ the degree eight divisor $D_1+D_2$ where $D_i := X \cdot \xq_\varepsilon\inv(\vL_i)$. Since $x\in \Gamma_\varepsilon$, the pullback $\xq_\varepsilon\inv(H_x)$ is a \emph{singular} conic $\ell_1 \cup \ell_2$. Therefore, $D_1+D_2$ is partitioned into two, namely $\ell_1 \cdot X$ and $\ell_2\cdot X$. We claim $\ell_i$'s contain precisely two points in each $D_i$. Indeed, we may view $\xq_\varepsilon(l_i)$ as a (possibly non-reduced) conic in $H_x$ which can only intersect $\vL_j$ at two points.
\end{proof}

\subsection{Bielliptic non-even case}\label{sec:bielliptic_case} Assuming $\varepsilon$ is bielliptic but not even, $\Gamma_\varepsilon$ is a cone but $Q_C$ is not. Then $\xq_\varepsilon$ realizes $\pp$ as a a fourfold cover of the plane $H_\varepsilon$ dual to the vertex of~$\Gamma_\varepsilon$. 

\begin{lemma}
  The conic $\vQ_C \cap H_\varepsilon$ is smooth and $X = \xq_\varepsilon\inv(\vQ_C)$ is reduced. 
\end{lemma}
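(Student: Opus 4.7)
The plan is to derive both claims from the geometric fact that the vertex $v_0 \in \ppp$ of the cone $\Gamma_\varepsilon$ does not lie on $Q_C$. I would argue this by contradiction: if $v_0 \in Q_C$, then since $C$ is contained in the complete intersection $\Gamma_\varepsilon \cap Q_C$ and both have degree six in $\ppp$, Bezout gives $C = \Gamma_\varepsilon \cap Q_C$ as schemes, so $v_0 \in C$. Near $v_0$ the defining cubic form of $\Gamma_\varepsilon$ vanishes to order three (it is centered at the cone vertex), and its restriction to the smooth surface $Q_C$ is the local equation of $C$, forcing a triple point at $v_0$---contradicting smoothness of $C$.

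Smoothness of $D := \vQ_C \cap H_\varepsilon$ then follows from biduality: $H_\varepsilon$ is tangent to the smooth quadric $\vQ_C$ iff its dual point $v_0 \in \ppp$ lies on $(\vQ_C)^\vee = Q_C$, which we have ruled out. For the reducedness of $X = \xq_\varepsilon\inv(D)$, I would express $X$ as the zero locus in $\pp$ of the degree-four polynomial $F := g \circ \xq_\varepsilon$, where $g$ is a quadratic form on $H_\varepsilon$ defining $D$. If $Z \subset X$ were a non-reduced component, then at a generic $p \in Z$ one has $\dd F|_p = \dd g|_{\xq_\varepsilon(p)} \circ \dd \xq_\varepsilon|_p = 0$; since $\dd g|_{\xq_\varepsilon(p)} \neq 0$ by smoothness of $D$, the rank of $\dd \xq_\varepsilon|_p$ is at most one, placing $p$ on the ramification curve $R$ of the fourfold cover $\xq_\varepsilon\colon \pp \to H_\varepsilon$. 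Thus $Z \subset R$. The curve $R$ is cut out by the $3 \times 3$ Jacobian determinant of the three conics defining $\xq_\varepsilon$, hence is a cubic; and the map assigning to each singular member of this pencil its unique node identifies $R$ with the smooth plane cubic $E$. Since $2 \deg Z \leq \deg F = 4$ forces $Z$ to be a line or conic while $R$ is a smooth irreducible cubic, we reach a contradiction.

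The main obstacle will be verifying the identification of $R$ with the smooth cubic $E$, which requires showing that no two singular conics in the defining pencil share a node. This reduces to a standard property of the symmetric determinantal representation of $E$ associated to $\varepsilon' \in \jac_E[2]$.
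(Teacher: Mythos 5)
Your proof is correct, and its first half coincides with the paper's: the paper likewise notes that a singular section $\vQ_C\cap H_\varepsilon$ would force $H_\varepsilon$ to be tangent to $\vQ_C$, hence by biduality would put the vertex of $\Gamma_\varepsilon$ on $Q_C$, making $C=\Gamma_\varepsilon\cap Q_C$ singular at that vertex. For reducedness the paper argues downstairs in one line: the fourfold cover $\xq_\varepsilon\colon\pp_\varepsilon\to H_\varepsilon$ is ``branched along a smooth cubic,'' and the smooth conic $\vQ_C\cap H_\varepsilon$ cannot be a component of it. You argue upstairs, with the ramification cubic $R$ cut out by the Jacobian determinant and the degree bound $2\deg Z\le 4$; this is a more careful rendering of the same idea, and arguably the more defensible one, since the smooth cubic genuinely lives in $\pp_\varepsilon$ (it is $R\simeq E$), whereas its image in $H_\varepsilon$ is the dual curve of $E$, a sextic --- still irreducible, so the paper's conclusion survives, but your formulation never has to describe that image. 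The one gap you flag closes easily: if two distinct singular members of the net shared a node $z$, every conic in the pencil they span would be singular at $z$, so an entire line of the net would lie inside the discriminant curve, contradicting the irreducibility of the smooth cubic $E$; injectivity of the node map $E\to R$, together with $\deg R\le 3$, then identifies $R$ with an irreducible cubic, which is all your final contradiction requires.
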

\begin{proof}
  If the intersection is singular then $H_\varepsilon$ is tangential to $\vQ_C$. Then the vertex of $\Gamma_\varepsilon$ is on $Q_C$, in which case $C = Q_C \cap \Gamma_\varepsilon$ would have to be singular. 

  The map $\xq_\varepsilon\colon  \pp_\varepsilon \to H_\varepsilon$ is branched along a smooth
  cubic in $H_\varepsilon$. Then $X$ must be reduced since the conic $\vQ_C \cap H_\varepsilon$ cannot be contained in the branch locus.
\end{proof}

\noindent The map $\xq_\varepsilon|_X \colon X \to \vQ_C \cap H_\varepsilon$ is a tetragonal pencil, which we will denote by $f_X$. 

\begin{lemma}
  General $x \in C$ naturally partitions $f_X\inv(f_C(x)) \subset X$ into two sets of two.
\end{lemma}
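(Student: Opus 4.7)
The strategy is to mirror the proof of Lemma~\ref{lem:partition} essentially verbatim, using that $x \in \Gamma_\varepsilon$ forces the conic $q_x = \xq_\varepsilon^{-1}(H_x)$ on $\pp_\varepsilon$ of Lemma~\ref{lem:obvious} to degenerate into a union of two lines. What changes in the cone setting is that $\xq_\varepsilon$ no longer maps birationally onto its image: it factors through the plane $H_\varepsilon \subset \vppp$ dual to the vertex $w_0$ of $\Gamma_\varepsilon$ as a generically four-to-one cover.

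First I would match up the bases of $f_C$ and $f_X$. Fix the ruling of $Q_C$ defining $f_C$ and, for $x \in C$, let $L_x \subset Q_C$ be the line of that ruling through $x$ and $\vL_x \subset \vQ_C$ its dual. The assignment $L' \mapsto \vL' \cap H_\varepsilon$ gives a canonical isomorphism from the base of $f_C$ to the smooth conic $\vQ_C \cap H_\varepsilon$, under which $f_C(x)$ corresponds to $p_x := \vL_x \cap H_\varepsilon$, so $f_X^{-1}(f_C(x)) = \xq_\varepsilon^{-1}(p_x)$ has four points for generic $x$. Since $x \in L_x$ forces $\vL_x \subset H_x$, the point $p_x$ lies in $H_x$, which combined with Lemma~\ref{lem:obvious} gives $\xq_\varepsilon^{-1}(p_x) \subset q_x$. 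The conic $q_x$ is singular (as $x \in \Gamma_\varepsilon$) and is not all of $\pp_\varepsilon$ (as $x \neq w_0$: the vertex is not on $C$, since $C \to E$ is obtained by projection from $w_0$), so $q_x = \ell_1 \cup \ell_2$.

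It remains to verify that each $\ell_i$ meets $\xq_\varepsilon^{-1}(p_x)$ in exactly two points. Since $\xq_\varepsilon$ factors through $H_\varepsilon$ and $\ell_i \subset \xq_\varepsilon^{-1}(H_x)$, the image $\xq_\varepsilon(\ell_i)$ sits in the line $H_x \cap H_\varepsilon$; and because $\xq_\varepsilon|_{\ell_i}$ is cut out by quadratic forms on $\ell_i \simeq \p$, this restriction is a degree two map onto that line, contributing two preimages of $p_x$ in each $\ell_i$. The one thing requiring care is that $\xq_\varepsilon|_{\ell_i}$ is not constant for general $x$; since $\xq_\varepsilon$ is generically finite onto $H_\varepsilon$, it contracts only finitely many curves, so for $x$ outside a proper closed subset of $C$ neither component of $q_x$ is contracted. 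Discarding in addition the finitely many $x$ for which the node of $q_x$ itself lies over $p_x$ yields the desired unordered partition into two sets of two.
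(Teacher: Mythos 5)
Your proof is correct and follows essentially the same route as the paper's: identify the fiber of $f_X$ over $f_C(x)$ with $\xq_\varepsilon\inv(p_x)\subset\xq_\varepsilon\inv(H_x)$, use that $x\in\Gamma_\varepsilon$ forces this conic to break into two lines, and check that each line carries two of the four fiber points. The paper compresses the final degree count into a citation of the general-case partition lemma, whereas you spell out that $\xq_\varepsilon$ restricts to a degree-two map from each component onto the line $H_x\cap H_\varepsilon$ and handle the generic avoidance of contracted components and of the node lying over $p_x$ --- extra care, but not a different argument.
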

\begin{proof}
  There is a line $L \subset Q_C$ for which $f_C^*f_C(x) = L \cdot C$. Then $D := \xq_\varepsilon\inv(p) \cdot X$ is the fiber $f_X^*f_C(x)$. Since $x \in \Gamma_\varepsilon$, the pullback $\xq_\varepsilon\inv(H_x)$ is the union of two distinct lines $\ell_1$ and $\ell_2$. As in Lemma~\ref{lem:partition} we see that $X\cdot \ell_i$ induces the desired partition of $D$.
\end{proof}

The argument that $X$ corresponds to $\varepsilon$ is analogous to the one given in Section~\ref{sec:strategy}. This concludes the proof of Theorem~\ref{thm:main_prym} for this case. We note the following.

\begin{lemma}\label{lem:alternative_Q}
  Projecting from the vertex of $\Gamma_\varepsilon$ realizes the quadric $Q_C$ as the double cover of the plane $H_\varepsilon^\vee$ branched over the smooth conic dual to $\xq_\varepsilon(X) \subset H_\varepsilon$.
\end{lemma}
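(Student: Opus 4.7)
The plan is to split the claim into three parts: (i) the projection from the vertex $v$ of $\Gamma_\varepsilon$ restricts to a degree-two morphism $\pi_v \colon Q_C \to H_\varepsilon^\vee$; (ii) its branch locus $B$ is a smooth conic parametrizing the lines through $v$ tangent to $Q_C$; and (iii) $B$ coincides in $H_\varepsilon^\vee$ with the dual of the smooth conic $\vQ_C \cap H_\varepsilon = \xq_\varepsilon(X)$.

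For (i) and (ii), the key input is that $v \notin Q_C$, which I would verify by contradiction: were $v \in Q_C$, then $v \in C = Q_C \cap \Gamma_\varepsilon$, but $v$ is the vertex of the cone $\Gamma_\varepsilon$ and hence a singular point of $\Gamma_\varepsilon$, forcing $C$ to be singular at $v$ against smoothness. Once $v \notin Q_C$, $\pi_v|_{Q_C}$ is a finite double cover whose ramification locus is the smooth conic $R = Q_C \cap P_v$, where $P_v$ is the polar plane of $v$ with respect to $Q_C$; since $v \notin P_v$, the map $\pi_v$ sends $R$ isomorphically onto $B$, and by construction $[L] \in B$ iff the line $L \ni v$ it represents is tangent to $Q_C$.

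Step (iii) is the main obstacle, and I plan to handle it with a standard duality argument. Identifying $H_\varepsilon^\vee$ with the space of lines through $v$ in $\ppp$, the line $\ell_L \subset H_\varepsilon$ dual to $[L] \in H_\varepsilon^\vee$ is precisely the pencil of hyperplanes containing $L$. Since $\ell_L \cap \vQ_C$ parametrizes tangent hyperplanes to $Q_C$ that contain $L$, one sees that $L$ is tangent to $Q_C$ iff $\ell_L \cap \vQ_C$ is a single double point, iff $\ell_L$ is tangent to the conic $\vQ_C \cap H_\varepsilon$, which by definition of dual conic is the condition $[L] \in (\vQ_C \cap H_\varepsilon)^\vee$. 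Combined with $\xq_\varepsilon(X) = \vQ_C \cap H_\varepsilon$---valid because $\xq_\varepsilon$ factors through $H_\varepsilon$, the Gauss image of a cone lying in the plane dual to its vertex---this yields $B = (\xq_\varepsilon(X))^\vee$. As a sanity check in coordinates, with $v = (1{:}0{:}0{:}0)$ and $A = \bigl(\begin{smallmatrix} a & b^T \\ b & A' \end{smallmatrix}\bigr)$ the matrix of $Q_C$, a direct computation gives $B = Z(x^T(aA' - bb^T)x)$, while the Schur complement identity $((A^{-1})_{\ge 1,\ge 1})^{-1} = A' - a^{-1}bb^T$ shows that the dual of $\vQ_C \cap H_\varepsilon$ has defining matrix proportional to $aA' - bb^T$, confirming the identification.
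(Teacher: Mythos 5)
Your proposal is correct and follows essentially the same route as the paper: the paper's proof simply notes that $\xq_\varepsilon(X)=H_\varepsilon\cap\vQ_C$ and then \emph{asserts} the classical fact that projecting a smooth quadric from a point off it yields a double cover branched over the conic dual to the section of the dual quadric by the plane dual to the centre of projection. You supply a complete verification of that asserted fact (via the tangent-line/tangent-plane duality and the Schur-complement check), which is a legitimate expansion rather than a different argument.
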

\begin{proof}
  By construction of $X$, we have $\xq_\varepsilon(X) = H_\varepsilon \cap \vQ_C$. Note that $H_\varepsilon$ is the plane dual to the vertex of $\Gamma_\varepsilon$.  The projection of $Q_C$ from this vertex gives a branch locus in $H_\varepsilon^\vee$ dual to $H_\varepsilon \cap \vQ_C$.
\end{proof}

\subsection{Even non-bielliptic case} \label{sec:even_case}

Assume $(C,\varepsilon)$ is even but not bielliptic. Thus, $Q_C$ is a cone but $\Gamma_\varepsilon$ is not. The dual $\vQ_C$ is a plane containing $\vq_C$.

\begin{lemma}\label{lem:reduced8}\label{lem:Xbar_is_smooth}
  The conic $\overline{X}=\xq_\varepsilon\inv(\vQ_C)$ is smooth and $B:=\xq_\varepsilon\inv(\vq_C)$ is reduced.
\end{lemma}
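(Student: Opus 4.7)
Smoothness of $\overline{X}$ follows at once from Lemma~\ref{lem:obvious}: the subscheme $\overline{X} = \xq_\varepsilon^{-1}(\vQ_C)$ of $\pp_\varepsilon$ coincides with the conic cut out by the quadratic form $\ca \cdot v$, where $v \in \ppp$ is the point dual to the plane $\vQ_C$---that is, the vertex of the cone $Q_C$. This conic is smooth exactly when $\ca \cdot v$ has full rank $3$, equivalently when $v \notin \Gamma_\varepsilon$. If $v$ were on $\Gamma_\varepsilon$, then $v \in Q_C \cap \Gamma_\varepsilon = C$, and since $v$ is the singular point of the cone $Q_C$, the complete intersection $C$ would be singular at $v$, contradicting the smoothness of $C$.

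For reducedness of $B$, fix $b \in B$ with image $H := \xq_\varepsilon(b) \in \vq_C$. Choosing coordinates in which $\vQ_C = \{y_3 = 0\}$ and $\vq_C = \{y_3 = 0,\ q^\vee(y_0, y_1, y_2) = 0\}$ (where $q^\vee$ is the dual form to the base conic of $Q_C$), we have $\xq_\varepsilon = (q_0, q_1, q_2, q_3)$, and $B$ is locally cut out by $q_3 = 0$ and $q^\vee(q_0, q_1, q_2) = 0$. Non-reducedness at $b$ amounts to the linear dependence of $d(q_3)|_b$ and $a_0\, dq_0|_b + a_1\, dq_1|_b + a_2\, dq_2|_b$ in $T^\vee_b \pp_\varepsilon$, where $a := \nabla q^\vee(H_0, H_1, H_2)$. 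Equivalently, for some scalar $\lambda$ the covector $(a_0, a_1, a_2, -\lambda)$ annihilates the image of the differential $d\xq_\varepsilon|_b$. Provided $d\xq_\varepsilon|_b$ has rank $2$---equivalently, $\hat b := \xc_\varepsilon(b)$ is a smooth, non-parabolic point of $\Gamma_\varepsilon$---reflexivity combined with Lemma~\ref{lem:gauss} identifies this cokernel direction with $\hat b \in \ppp$, and non-reducedness forces $(\hat b_0, \hat b_1, \hat b_2) \propto (a_0, a_1, a_2)$.

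Geometrically, $a$ is the point of tangency on the base conic $q \subset \pp$ of the line (in $\pp$) dual to $H$, and the ruling $\ell_H \subset Q_C$ along which $H$ is the enveloping plane is exactly the line joining $v$ to $a$. The proportionality above therefore says $\hat b \in \ell_H$, hence $\hat b \in Q_C \cap \Gamma_\varepsilon = C$. Since $\hat b \neq v$ (as $v \notin \Gamma_\varepsilon$), $\hat b$ is a smooth point of $Q_C$ with $T_{\hat b} Q_C = H$, while $T_{\hat b}\Gamma_\varepsilon = H$ by definition of $\hat b$ and Lemma~\ref{lem:gauss}. Thus $Q_C$ and $\Gamma_\varepsilon$ share the tangent plane $H$ at $\hat b$, so $C = Q_C \cap \Gamma_\varepsilon$ is singular at $\hat b$---a contradiction. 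The main obstacle is the degenerate case in which $d\xq_\varepsilon|_b$ has rank $\le 1$: the identification of the cokernel direction with $\hat b$ via reflexivity breaks down, and such exceptional $b$ (which lie over the parabolic curve of $\Gamma_\varepsilon$ or over its singularities) must be handled separately via the explicit normal forms of Theorem~\ref{T:symmetroid_classification} for types (1)--(3), checking in each case that the conclusion still holds.
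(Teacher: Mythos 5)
Your first paragraph is exactly the paper's argument for smoothness: $\overline{X}=Z(\ca\cdot v)$ with $v$ the vertex of $Q_C$ (Lemma~\ref{lem:obvious}), and this conic is singular only if $[v]\in\Gamma_\varepsilon$, which would put the vertex on $C=Q_C\cap\Gamma_\varepsilon$ and make $C$ singular. That part is complete and correct.

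For reducedness of $B$ your strategy is also the paper's --- transfer the degeneracy through reflexivity to a common tangent plane of $Q_C$ and $\Gamma_\varepsilon$ at a point of $C$, contradicting smoothness of the complete intersection --- but your coordinate execution leaves a genuine hole that you yourself flag and then do not close. The rank-$\le 1$ case is not a removable technicality in your setup: at a point $b\in B$ where $\rk\,d\xq_\varepsilon|_b\le 1$, the annihilator of $\im\,d\xq_\varepsilon|_b$ in the $3$-dimensional cotangent space has dimension $\ge 2$ and therefore \emph{automatically} meets the $2$-dimensional conormal space of $\vq_C$, so $B$ is automatically non-reduced there. Hence your argument only works if you first show that $B$ avoids the locus where $d\xq_\varepsilon$ drops rank (for types (1)--(3) this is a small but nonempty set, e.g.\ the preimages of the pinch points of the Steiner quartic), and the proposal ends by promising that verification rather than performing it. The paper sidesteps the coordinate computation entirely: it phrases the conclusion as ``$\vq_C$ is tangential to the dual surface $\vGam_\varepsilon$'' and invokes reflexivity between $\Gamma_\varepsilon$ and $\vGam_\varepsilon$ directly, so that the contact condition is read off on the image surface (via the conormal variety) rather than through the differential of the parametrization $\xq_\varepsilon$; this is both shorter and avoids singling out the ramification locus of $\xq_\varepsilon$. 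To repair your version, either carry out the promised check on the normal forms of Theorem~\ref{T:symmetroid_classification}, or replace the pointwise differential computation by the conormal-variety formulation of reflexivity as the paper does.
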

\begin{proof}
  The vertex of $Q_C$ cannot lie on $\Gamma_\varepsilon$ so the pullback $\xq_\varepsilon\inv(\vQ_C)$ is smooth. If $B$ is not reduced, $\vq_C$ is tangential to the dual $\vGam_\varepsilon$ of the cubic symmetroid $\Gamma_\varepsilon$. By the reflexivity theorem, $\Gamma_\varepsilon$ is tangential to $Q_C$ and $C$ is singular.
\end{proof}

The conic $\overline{X}$ has eight distinguished points $B$ and we can construct a hyperelliptic curve $X \to \overline{X}$ as the double cover of $\overline{X}$ branched over $B$. We show that the Jacobian of $X$ is the Prym variety determined by $(C,\varepsilon)$. Consider a double cover $S$ of the plane $\vQ_C$ branched over the conic $\vq_C$. Write $\overline{Y}$ for the singular plane quartic $\xq_\varepsilon(\overline{X}) = \vQ_C \cap \vGam_\varepsilon$. Let $Y \subset S$ be the preimage of $\overline{Y}$. 

\begin{lemma}
  The curve $Y$ is irreducible and of geometric genus three.
\end{lemma}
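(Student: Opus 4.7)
The plan is to identify the smooth hyperelliptic curve $X$ of genus three (the double cover of $\overline{X}$ branched over $B$) as the normalization of $Y$, from which both claims follow at once.

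First I would set up the pullback. The double cover $S\to\vQ_C$ branched over $\vq_C$ pulls back along $\xq_\varepsilon|_{\overline{X}}\colon \overline{X}\to\vQ_C$ to the double cover of $\overline{X}$ branched at $B=\xq_\varepsilon^{-1}(\vq_C)\cap \overline{X}$. By Lemma~\ref{lem:reduced8}, $B$ is reduced of degree eight and $\overline{X}\simeq\p$, so this pullback is precisely $X$. The universal property of the fiber product then yields a map $X\to S$ whose image lies in $Y=\xq_\varepsilon^{-1}(\overline{Y})\cap S$, since $\overline{X}$ maps into $\overline{Y}$.

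The key step is to show $\xq_\varepsilon|_{\overline{X}}\colon \overline{X}\to\overline{Y}$ is birational. The map $\xq_\varepsilon$ is defined by quadratic forms on $\pp_\varepsilon$, so its restriction to the smooth conic $\overline{X}$ is given by a linear series of divisors of degree four on $\p$; hence $\overline{Y}$ is a plane curve of degree $4/d$ in $\vQ_C$, where $d$ is the degree of $\overline{X}\to\overline{Y}$. Since $\overline{Y}=\vGam_\varepsilon\cap\vQ_C$ has degree four as a divisor in $\vQ_C$, the only way $d$ can fail to be $1$ is if $\overline{Y}$ is non-reduced, i.e., $\vGam_\varepsilon$ is tangent to $\vQ_C$ along a positive-dimensional locus. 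By the reflexivity theorem, dualizing such a tangency forces $\Gamma_\varepsilon$ and $Q_C$ to be tangent along a curve in $\ppp$, making their intersection $C$ non-reduced---contradicting the smoothness of $C$.

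With $\overline{X}\to\overline{Y}$ birational, the induced map $X\to Y$ is birational as well, so the smooth irreducible curve $X$ is the normalization of $Y$. This gives at once that $Y$ is irreducible and of geometric genus $g(X)=3$. The main technical obstacle is the birationality step: ruling out non-reducedness of $\overline{Y}$. The reflexivity argument is clean, but one could alternatively examine how the eight distinct points of $B$ must distribute on $\overline{Y}\cap\vq_C$ under a putative 2-to-1 map and derive a contradiction with Lemma~\ref{lem:reduced8} from there.
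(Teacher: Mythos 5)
Your proof is correct in outline but takes a genuinely different route from the paper's. The paper works entirely inside $S\simeq\p\times\p$: it observes that $Y$ is a $(4,4)$-curve of arithmetic genus nine, subtracts the six singular points of $Y$ lying over the three (counted with multiplicity) singularities of the rational quartic $\overline{Y}$ to land on geometric genus three, and rules out reducibility by noting that a component would be a $(2,2)$-curve of arithmetic genus one, too small to carry those singularities. You instead exhibit the normalization directly: the double cover of $\overline{X}\simeq\p$ branched over the eight reduced points of $B$ is a smooth irreducible genus-three curve mapping finitely and birationally onto the reduced divisor $Y$, which yields both claims at once. Your route has the advantage of simultaneously establishing the identification of the desingularization of $Y$ with the double cover of $\overline{X}$ branched over $B$, which is exactly what the paper needs in the following sentence and in Theorem~\ref{thm:prym_construction_even}. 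Both arguments ultimately rest on the birationality of $\overline{X}\to\overline{Y}$ and on $\deg\overline{Y}=4$; the paper assumes these silently, while you at least attempt to justify them.

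That justification, however, contains a misstated dualization. If $\vGam_\varepsilon\cap\vQ_C$ were non-reduced along a curve, reflexivity does \emph{not} give that $\Gamma_\varepsilon$ and $Q_C$ are tangent along a curve: the plane $\vQ_C$ is the dual of the \emph{vertex} of $Q_C$, not the dual variety of the cone $Q_C$ (which is the conic $\vq_C$). What reflexivity actually yields is that if the plane $\vQ_C$ is tangent to $\vGam_\varepsilon$ at a smooth point, then the point dual to that plane --- the vertex of $Q_C$ --- lies on the dual of $\vGam_\varepsilon$, namely on $\Gamma_\varepsilon$. This still produces the contradiction you want, since the vertex of $Q_C$ lying on $\Gamma_\varepsilon$ forces $C=Q_C\cap\Gamma_\varepsilon$ to be singular at the cone point; it is precisely the first assertion of Lemma~\ref{lem:Xbar_is_smooth}. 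You should also dispose of the possibility that the non-reduced locus is carried by the singular curve of $\vGam_\varepsilon$ (a union of lines, for the relevant types): a multiple line in $\vGam_\varepsilon\cap\vQ_C$ either leaves a residual component, contradicting the irreducibility of $\overline{Y}=\xq_\varepsilon(\overline{X})$, or would force the irreducible conic $\overline{X}=\xq_\varepsilon\inv(\vQ_C)$ to lie in $\xq_\varepsilon\inv$ of a line, i.e.\ to be a common component of two other conics in the net, contradicting the injectivity of $\ca_\varepsilon$. With these two repairs your argument is complete.
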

\begin{proof}
The double cover $S \to \vQ_C$ is just a projection map therefore $Y$ is a $(4,4)$-curve in $S \simeq \p\times\p$ of arithmetic genus nine.

  Counting with multiplicities, $\overline{Y}$ has three singularities since it is a degree four plane curve of geometric genus 0. If $Y$ were reducible, it would have to break into two isomorphic copies of $\overline{X}$, each embedded as a $(2,2)$-curve. But a $(2,2)$-curve in $\p\times\p$ has arithmetic genus one and, therefore, cannot support three singularities. This proves $Y$ is irreducible.
   With six singularities, this makes $Y$ a genus three curve ramified exactly over the eight distinguished points of $\overline{Y}$. 
\end{proof}

Define $X$ to be the desingularization of $Y$, so that $X \to \overline{X}$ is a hyperelliptic curve branched over $B$. The two rulings of $S$ induce a pair $(\cl_1,\cl_2)$ of tetragonal pencils on $X$. It is clear that the target of the trigonal pencil of $C$ is identified with the target of both of these tetragonal pencils on $X$. Let $\iota\colon X \to X$ denote the hyperelliptic involution.

\begin{lemma}\label{lem:conjugacy}
  The two pencils $\cl_1$ and $\cl_2$ are conjugate under the hyperelliptic involution. Moreover, for any $p \in X$ and any $D \in |\cl_i|$ we have $p + \iota(p) \not\le D$.
\end{lemma}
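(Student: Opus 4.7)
\emph{Proof sketch.} The strategy is to trace the geometry through the double cover $S \to \vQ_C$. Realizing $\vQ_C = \sym^2 \p$ and $S = \p \times \p$, the deck involution of $S \to \vQ_C$ is the swap of factors, which exchanges the two rulings. The cover $Y \to \overline{Y}$ is the restriction of $S \to \vQ_C$, and since $\overline{X}$ is the normalization of $\overline{Y}$, lifting the swap to the normalization $X$ of $Y$ produces an involution whose quotient is $\overline{X}$; this is therefore the hyperelliptic involution $\iota$. Because $\cl_1$ and $\cl_2$ are pullbacks to $X$ of the two rulings of $S$, the swap exchanges them, yielding $\iota^*\cl_1 \simeq \cl_2$.

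For the non-inclusion, suppose $p + \iota(p) \le D$ for some $D \in |\cl_1|$ and write $D$ as the preimage in $X$ of $R_1 \cap Y$ for a ruling $R_1 = \{s_0\} \times \p \subset S$. If $y = (s_0, t_0)$ denotes the image of $p$ in $Y$, then the image of $\iota(p)$ is the swap $(t_0, s_0)$, and both must lie on $R_1$, forcing $t_0 = s_0$. Hence $y$ lies on the diagonal $\Delta \subset S$, so $p = \iota(p)$ is a Weierstrass point.

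It remains to exclude $2p \le D$, i.e.\ that $R_1$ is tangent to $Y$ at $y$. By Lemma~\ref{lem:reduced8}, $B$ reduced implies that $\overline{Y}$ is smooth and transverse to $\vq_C$ at $\pi(y)$, which in turn gives that $Y$ is smooth at $y$. Because $Y$ is cut out by a swap-invariant equation in local coordinates $(u,v)$ centered at $y$ (with swap acting as $(u,v) \leftrightarrow (v,u)$), smoothness at this fixed point of the swap forces the linear part of the defining equation to be a non-zero multiple of $u + v$, so the tangent line of $Y$ at $y$ is the anti-diagonal $\{u + v = 0\}$. This is transverse to the ruling $R_1 = \{u = 0\}$, so $R_1$ meets $Y$ transversely at $y$, contradicting $2p \le D$. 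The main technical step is this last tangent-direction argument, which relies crucially on Lemma~\ref{lem:reduced8} to guarantee smoothness of $Y$ at the diagonal points; without it, one could not rule out $R_1$ being tangent to $Y$.
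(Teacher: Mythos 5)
Your proof of the first claim is correct and is essentially the paper's own argument: the deck involution of $S \to \vQ_C$ swaps the two rulings and descends through the normalization to the hyperelliptic involution of $X$, so the two pencils are conjugate. (The paper phrases this in terms of the divisors $Y\cdot\vL_1$, $Y\cdot\vL_2$ over a common tangent line $\vL$ of $\vq_C$, but the content is the same.)

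For the second claim the paper takes a different and much shorter route: $|\cl_i|$ is a base-point-free \emph{complete} pencil (note $h^0(\cl_i)=2$ because $\cl_i\not\simeq\omega_X$, which follows from $\cl_1\not\simeq\cl_2$ and $\cl_1\otimes\cl_2\simeq\omega_X^{\otimes 2}$; this is also what lets you write every $D\in|\cl_i|$ as a ruling section), and if some member contained $p+\iota(p)$, then, since $p+\iota(p)$ moves in the hyperelliptic pencil, the residual degree-two divisor would be a base locus of $|\cl_i|$ --- a contradiction. This avoids all local geometry of $Y$. Your geometric argument, by contrast, has a gap at exactly the step you flag as the crux: Lemma~\ref{lem:reduced8} does \emph{not} give smoothness of $Y$ (equivalently of $\overline{Y}$) at points over $\vq_C$. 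The statement ``$B$ is reduced'' only says that each local branch of $\overline{Y}$ meets $\vq_C$ with multiplicity one; it is perfectly compatible with $\overline{Y}$ having a node on $\vq_C$ with two transverse branches, in which case $Y$ acquires a non-nodal singularity on the diagonal and your ``linear part of a swap-invariant equation'' computation, which presupposes a nonzero linear part, collapses. (Excluding singular points of $\overline{Y}$ on $\vq_C$ is a genuine extra geometric assertion; the genus count $p_a(Y)=9$, $g(Y)=3$ does not force it.) The same issue affects your second paragraph, where ``$p=\iota(p)$'' silently assumes $Y$ is unibranch at $y$. Both steps can be repaired by arguing branch by branch: a branch of $\overline{Y}$ transverse to $\vq_C$ lifts to a \emph{single} ramified branch of $Y$, which in local coordinates $(w,b)$ with deck involution $w\mapsto -w$ is parametrized by $b-b_0=\beta w^2+\cdots$, hence is smooth with tangent the anti-diagonal and meets each ruling with multiplicity one; this simultaneously shows that two distinct points of $X$ cannot lie over one diagonal point of $Y$ and that $2p\not\le D$. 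As written, though, the proof is incomplete.
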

\begin{proof}
  For any $\vL \subset \vQ_C$ tangent to $\vq_C$, let $\vL_1$ and $\vL_2$ stand for the two lines in $S$ above $\vL$. The intersections $Y \cdot \vL_i$ must be conjugate as they map to the same four points $\vL \cdot \overline{Y}$. It is clear that a base-point-free tetragonal pencil cannot contain a pair of conjugate points.
\end{proof}

Let us fix one of the rulings of $S$ and consider the associated tetragonal pencil $f_{X} \colon X \to \p$. In light of Section~\ref{sec:strategy}, it remains to prove the following result.

\begin{lemma}
  Every $x \in C$ partitions the fiber $f_{X}\inv(f_C(x)) \subset X$ into two sets of two.
\end{lemma}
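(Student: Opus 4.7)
The plan is to adapt the strategy of Lemma~\ref{lem:partition} and its bielliptic analogue in Section~\ref{sec:bielliptic_case}, while tracking the extra layer provided by the double cover $S\to\vQ_C$.

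First I unravel the geometry of $f_C(x)$ and $f_X^{-1}(f_C(x))$. The unique ruling $L_x\subset Q_C$ through $x$ has a tangent plane $T_{L_x}$ passing through the vertex of $Q_C$, so $[T_{L_x}]\in \vq_C$. Under the identification of the base of $f_C$ with $\vq_C$ via tangent lines, $f_C(x)$ corresponds to $[T_{L_x}]$. The plane $H_x$ dual to $x$ meets $\vQ_C$ in the pencil of planes containing $L_x$, namely the tangent line $\vL_x$ to $\vq_C$ at $[T_{L_x}]$. This tangent line lifts to two lines $\vL_x^{(1)},\vL_x^{(2)}\subset S$ lying in the two rulings of $S$, and for the ruling associated to the chosen tetragonal pencil one has $f_X^{-1}(f_C(x))=Y\cap \vL_x^{(1)}$.

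Next, since $x\in \Gamma_\varepsilon$, Lemma~\ref{lem:obvious} ensures that the conic $\xq_\varepsilon^{-1}(H_x)\subset \pp_\varepsilon$ is singular, a union $\ell_1\cup\ell_2$ of two distinct lines. Intersecting with the conic $\overline{X}=\xq_\varepsilon^{-1}(\vQ_C)$ yields
\[
  \overline{X}\cdot\xq_\varepsilon^{-1}(\vL_x) \;=\; (\ell_1\cdot \overline{X})+(\ell_2\cdot \overline{X}),
\]
which for general $x$ is a reduced $0$-cycle of four points with a canonical $2+2$ splitting by $\ell_1$ versus $\ell_2$.

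Finally I transfer this partition to $X$. The map $\xq_\varepsilon|_{\overline{X}}\colon \overline{X}\to \overline{Y}$ is generically bijective, so the four points above correspond bijectively to $\overline{Y}\cap \vL_x$. The restriction $Y\to \overline{Y}$ of the double cover $S\to \vQ_C$ is a degree-two cover, and the chosen ruling of $S$ selects one sheet above each point of $\overline{Y}\cap \vL_x$, yielding a canonical bijection with $Y\cap \vL_x^{(1)}=f_X^{-1}(f_C(x))$. Composing, the four points of $f_X^{-1}(f_C(x))$ biject with the four points of $\overline{X}\cap (\ell_1\cup\ell_2)$ and inherit the $2+2$ partition. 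The main obstacle is this last step: one must check that for generic $x$ the four points avoid the singular locus of $\overline{Y}$ (so that $\overline{X}\to \overline{Y}$ is a local isomorphism near them) and that the sheet of $Y\to \overline{Y}$ determined by the ruling of $S$ is consistently selected across the four points. Both facts follow from the construction of $X$ as the desingularization of $Y$ and the birationality of $\xq_\varepsilon|_{\overline{X}}$, combined with the fact that for generic $x$ the tangent line $\vL_x$ meets $\vq_C\cap \overline{Y}$ only at the single tangency point $[T_{L_x}]$.
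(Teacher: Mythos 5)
Your proof is correct and follows essentially the same route as the paper: the singular conic $\xq_\varepsilon^{-1}(H_x)=\ell_1\cup\ell_2$ splits the degree-four divisor on $\overline{X}$ into $2+2$, and this partition is lifted to the fiber of the tetragonal pencil because that fiber maps bijectively onto the four points downstairs. The only difference is presentational: where you re-derive the sheet-selection fact directly from the geometry of the two lines $\vL_x^{(1)},\vL_x^{(2)}$ over the tangent line $\vL_x$, the paper cites Lemma~\ref{lem:conjugacy} (no divisor in $|\cl_i|$ contains a conjugate pair), which encodes exactly the same statement.
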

\begin{proof}
  A point $x \in C$ is contained in a ray $L \subset Q_C$ which has dual $\vL \in \vQ_C$, which is a tangent of $\vq_C$. Let $D$ be the divisor on $\overline{X}$ induced by $\vL$. Equivalently, $D = \overline{X} \cdot \xq_\varepsilon(H_x)$, where $H_x\subset \vppp$ is the plane dual to $x$. However, since $x \in \Gamma_\varepsilon$ the pullback of $H_x$ breaks into two lines, each of which intersects $\overline{X}$ at two points.

  Let $(D_1,D_2) \in |\cl_1|\times |\cl_2|$ denote the pair of divisor induced by pulling back $\vL$ to $S$. Then $D_i$ lies over $D$ and by Lemma~\ref{lem:conjugacy} no two points of $D_i$ map to the same points in $D$. As a result, the partitioning induced on $D$ induces a partition on $D_i$ into two sets of two.
\end{proof}

\subsection{Even and bielliptic case} \label{sec:prym_even_and_bi}

We now assume that $(C,\varepsilon)$ is even and bielliptic. In this case $\xq_\varepsilon(\pp_\varepsilon)$ is a plane $H_\varepsilon \subset \vppp$ which is transversal to the plane $\vQ_C$.  

\begin{lemma}
  The curve $\overline{X}:=\xq_\varepsilon\inv(\vQ_C)$ is a smooth conic. The eight points $B:=\xq_\varepsilon\inv(\vq_C) \subset \overline{X}$ are all distinct.
\end{lemma}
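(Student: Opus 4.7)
The plan is to reduce both assertions to the smoothness of $C = Q_C \cap \Gamma_\varepsilon$, using Lemma~\ref{lem:obvious} to translate pullbacks under $\xq_\varepsilon$ into the determinantal structure of the symmetrization $\ca$.

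\textbf{Smoothness of $\overline{X}$.} Let $v_Q \in \ppp$ denote the vertex of the cone $Q_C$, so that $\vQ_C$ is precisely the hyperplane $H_{v_Q} \subset \vppp$ dual to $v_Q$. Lemma~\ref{lem:obvious} then identifies $\overline{X} = \xq_\varepsilon^{-1}(\vQ_C)$ with the conic cut out by $\ca \cdot v_Q$ on $\pp_\varepsilon$. By the defining determinantal equation of the cubic symmetroid, this conic is degenerate if and only if the symmetric matrix $\ca \cdot v_Q$ has rank at most two, i.e., precisely when $v_Q \in \Gamma_\varepsilon$. But if $v_Q$ were on $\Gamma_\varepsilon$, then $v_Q \in Q_C \cap \Gamma_\varepsilon = C$, and since $v_Q$ is the singular vertex of $Q_C$, the curve $C$ would inherit this singularity, contradicting smoothness. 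Hence $\overline{X}$ is a smooth conic.

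\textbf{Distinctness of $B$.} The hypothesis that $H_\varepsilon$ and $\vQ_C$ are transversal planes in $\vppp$ (equivalently, $v \neq v_Q$, guaranteed because otherwise $v \in Q_C$ would force $C$ singular at $v$) means $L := \vQ_C \cap H_\varepsilon$ is a line. Because $\xq_\varepsilon$ factors through $H_\varepsilon$, we have $B = \xq_\varepsilon^{-1}(\vq_C \cap L)$. Bezout gives $\deg(\vq_C \cap L) = 2$, and the 4-to-1 map $\xq_\varepsilon \colon \pp_\varepsilon \to H_\varepsilon$ multiplies this by $4$, yielding the expected count of $8$ with multiplicity. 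Distinctness amounts to showing (i) $L$ meets $\vq_C$ transversally in two distinct points and (ii) $\xq_\varepsilon$ is unramified above each of these points.

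For (i), the line $L$ parametrizes the pencil of planes in $\ppp$ through $\overline{v\, v_Q}$. A tangency of $L$ to $\vq_C$ at some $y$ would exhibit a plane through $\overline{v\, v_Q}$ that is tangent to $Q_C$ in a higher-order (``stationary'') way along the pencil; by the reflexivity theorem, this transfers into a tangency of $Q_C$ and $\Gamma_\varepsilon$ at a point of $C$, making $C$ singular. For (ii), a ramification point of $\xq_\varepsilon$ over $y$ forces the intersection $\overline{X} \cap Z(\xq_\varepsilon^\ast q^\ast)$ (where $q^\ast$ defines $\vq_C$) to be non-reduced, i.e., the smooth conic $\overline{X}$ becomes tangent to the quartic $Z(\xq_\varepsilon^\ast q^\ast)$ in $\pp_\varepsilon$; chasing this tangency through Lemma~\ref{lem:obvious} and reflexivity again exhibits a tangency of $\Gamma_\varepsilon$ with $Q_C$ along a point of $C$, contradicting smoothness.

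\textbf{Main obstacle.} The identity $\xq_\varepsilon = \gamma_\varepsilon \circ \xc_\varepsilon$ from Lemma~\ref{lem:gauss}, which powered the clean reflexivity argument in Lemma~\ref{lem:reduced8} for the non-bielliptic even case, is \emph{unavailable} here: $\ca$ is now of cone type, outside the range of types (1)--(6) covered by that lemma. Step (ii) therefore cannot be reduced to an intersection on $\vGam_\varepsilon$ inside a plane of $\vppp$; instead, one must work directly with $\xq_\varepsilon$ as a 4-to-1 map to the plane $H_\varepsilon$ and show that the branch locus of this map avoids the two points $\vq_C \cap L$, extracting this avoidance from the smoothness of $C$ via the position of the line $\overline{v\, v_Q}$ relative to $Q_C$. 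This is where the bulk of the genuine work lies.
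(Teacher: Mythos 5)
Your first claim (smoothness of $\overline{X}$) is proved correctly and is exactly the paper's argument: the vertex $v_Q$ of $Q_C$ cannot lie on $\Gamma_\varepsilon$, since it would then be a singular point of $C=Q_C\cap\Gamma_\varepsilon$, so the conic $Z(\ca\cdot v_Q)=\overline{X}$ is nonsingular. Your decomposition of the second claim into (i) non-tangency of $L=\vQ_C\cap H_\varepsilon$ to $\vq_C$ and (ii) non-ramification of $\xq_\varepsilon$ over the two points of $\vq_C\cap H_\varepsilon$ is also the right one. However, your mechanism for (i) is not justified: a tangency of $L$ to $\vq_C$ does not, via reflexivity, produce ``a tangency of $Q_C$ and $\Gamma_\varepsilon$ at a point of $C$.'' What it produces, after dualizing inside the plane $\vQ_C$ (the dual of $\vq_C$ there is the base conic of the cone $Q_C$), is that the line $\overline{v\,v_Q}$ joining the vertex $v$ of $\Gamma_\varepsilon$ to $v_Q$ is a ray of $Q_C$. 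Then $v\in Q_C\cap\Gamma_\varepsilon=C$, and $C$ is singular at $v$ simply because $v$ is the vertex of the cone $\Gamma_\varepsilon$ --- no reflexivity and no tangency of the two surfaces is involved. This is precisely the paper's argument for (i).

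Step (ii) you leave entirely open, and that is a genuine gap --- though, to your credit, the paper's own proof is also silent on it, proving only the non-tangency (i). Here is how to close it. A point $p_i\in\vq_C\cap H_\varepsilon$ is dual to a plane $H_i$ tangent to $Q_C$ along a ray $R_i$, and $H_i$ contains both $v_Q$ (being tangent to the cone $Q_C$) and $v$ (since $p_i\in H_\varepsilon$). By Lemma~\ref{lem:obvious}, the fibre $\xq_\varepsilon^{-1}(p_i)=\bigcap_{q\in H_i}Z(\ca\cdot q)$ is the base locus of a \emph{pencil} of conics (a pencil because $\ca\cdot q$ depends only on the image of $q$ under projection from $v$), which contains the smooth member $\overline{X}=Z(\ca\cdot v_Q)$ and whose singular members are parametrized by $\ell_i\cap E$, where $\ell_i$ is the projection of $H_i$ from $v$ and $E$ is the plane cubic over which $\Gamma_\varepsilon$ is a cone. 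Since $E$ is smooth, every singular member has rank two, so the base locus consists of four distinct points if and only if $\ell_i\cap E$ is reduced; and projection from $v$ identifies $\ell_i\cap E$ with $R_i\cdot C$ as schemes. Finally, $R_1+R_2$ is cut on $Q_C$ by the polar plane of $v$ with respect to $Q_C$, so $R_1\cdot C+R_2\cdot C$ is the ramification divisor of the bielliptic double cover $C\to E$, which is reduced because that cover is separable of degree two and $\chr k\neq 2$. Hence both fibres are reduced and the eight points of $B$ are distinct.
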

\begin{proof}
  For the first claim, the proof of Lemma~\ref{lem:Xbar_is_smooth} applies verbatim. For the second, we need to show that $H_\varepsilon$ is not tangential to $\vq_C$. Indeed, if $p\in\vq_C \cap H_\varepsilon$ were a point of tangency, then the vertex of $\Gamma_\varepsilon$ would be on a ray of $Q_C$ and $C$ would not be smooth.
\end{proof}

The line $\overline{Y} = \vQ_C \cap H_\varepsilon$ has two distinguished points $\{p_1,p_2\}=\overline{Y} \cap \vq_C$. Consider the double cover $Y$ of $\overline{Y}$ branched over $p_1,p_2$. Observe that $X$ fits into the following Cartesian diagram:
\begin{equation}\label{eq:even_bi--X}
  \begin{tikzcd}
    X\arrow[dr, phantom, "\ulcorner", very near start] \arrow[r,"f"] \arrow[d,"2:1"'] & Y \arrow[d,"2:1"] \\
    \overline{X} \arrow[r, "4:1"] & \overline{Y}
  \end{tikzcd}
\end{equation}
As $Y$ is of genus zero, the map $f\colon X \to Y$ is a tetragonal pencil. Let $\cl = f^* \co_{Y}(1)$. 

\begin{lemma}
  The line bundle $\cl$ is invariant under the involution of $X$, i.e., $\cl \simeq \iota^* \cl$. Moreover, $\cl^{\otimes 2} \simeq \omega_X^{\otimes 2}$ but $\cl \not\simeq \omega_X$.
\end{lemma}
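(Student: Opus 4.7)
The plan is to exploit the Cartesian diagram~\eqref{eq:even_bi--X}, which presents $X$ as the fibered product $\overline{X} \times_{\overline{Y}} Y$, with first projection $\pi\colon X \to \overline{X}$ (degree two, branched over the eight points of $B$) and second projection $f\colon X \to Y$ (degree four). Since $X \to \overline{X} \simeq \PP^1$ is a double cover branched at $8 = 2g+2$ points, the hyperelliptic involution $\iota$ is the covering involution of $\pi$. Under the fiber product description this acts trivially on the $\overline{X}$-component and as the nontrivial deck transformation $\tau\colon Y \to Y$ of the double cover $h\colon Y \to \overline{Y}$ on the $Y$-component, so $f \circ \iota = \tau \circ f$. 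Since $Y$ has genus zero, $\tau^*\co_Y(1)$ is a degree one bundle on $Y \simeq \PP^1$, hence isomorphic to $\co_Y(1)$, and this yields $\iota^*\cl \simeq \cl$.

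For the squared-canonical identity, I will chase both sides down to $\overline{X}$ and compare degrees. Using the commutativity $h \circ f = g \circ \pi$, where $g\colon \overline{X} \to \overline{Y}$ is the degree four restriction of $\xq_\varepsilon$, and noting that $h^*\co_{\overline{Y}}(1) \simeq \co_Y(2)$ on $Y \simeq \PP^1$, I obtain
\[
\cl^{\otimes 2} \simeq f^*h^*\co_{\overline{Y}}(1) = \pi^*g^*\co_{\overline{Y}}(1).
\]
Since $g^*\co_{\overline{Y}}(1)$ has degree four on $\overline{X} \simeq \PP^1$, it is isomorphic to $\co_{\overline{X}}(4)$. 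On the other hand, the standard formula for the canonical bundle of a hyperelliptic curve gives $\omega_X^{\otimes 2} \simeq \pi^*\co_{\overline{X}}(2g-2) = \pi^*\co_{\overline{X}}(4)$, so $\cl^{\otimes 2} \simeq \omega_X^{\otimes 2}$.

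Finally, to see $\cl \not\simeq \omega_X$, I will compare divisorial shapes of elements of $|\cl|$ and $|\omega_X|$. On a hyperelliptic curve of genus three, $\omega_X \simeq (\grd{1}{2})^{\otimes 2}$, so every divisor in $|\omega_X|$ has the form $\pi^{-1}(t_1) + \pi^{-1}(t_2)$, with $\pi$-image supported on at most two distinct points, each occurring with multiplicity two. By contrast, for generic $y \in Y$ (say, with $h(y)$ avoiding both $\{p_1, p_2\}$ and the branch locus of $g$), the fiber $f^{-1}(y) \in |\cl|$ consists of four distinct points of $X$ mapping bijectively under $\pi$ to the four distinct points of $g^{-1}(h(y))$. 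This pattern is incompatible with the canonical form, so $\cl$ is not canonical. The main obstacle is the correct identification of $\iota$ with the covering involution of $\pi$ and the tracking of its action through the fiber product; the remaining steps are direct consequences of the Cartesian structure and of the standard canonical bundle formula for hyperelliptic curves.
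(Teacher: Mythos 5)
Your proof is correct. The first two claims are handled essentially as in the paper: the $\iota$-invariance comes from $f\circ\iota=\tau\circ f$ together with the fact that any degree-one bundle on $Y\simeq\PP^1$ is $\tau$-invariant, and the identity $\cl^{\otimes 2}\simeq\omega_X^{\otimes 2}$ comes from chasing $\co_{\overline{Y}}(1)$ around the Cartesian square and comparing degrees on $\overline{X}\simeq\PP^1$ (note only that the paper's $\co_{\overline{X}}(1)$ denotes $\co_{\pp}(1)|_{\overline{X}}$, of degree two on the conic, whereas you index by degree on $\PP^1$; since line bundles on $\PP^1$ are determined by degree the two conventions agree). Where you genuinely diverge is the third claim. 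The paper argues that if $\cl\simeq\omega_X$ then $f$ factors as the canonical map $X\to\overline{X}\subset\pp$ followed by projection from a point, forces that point to lie on two lines each meeting the conic $\overline{X}$ in four branch points, and derives a contradiction from B\'ezout. You instead use that on a hyperelliptic genus-three curve $\omega_X\simeq\pi^*\co_{\PP^1}(2)$, so every canonical divisor is $\pi^*(t_1+t_2)$ and has $\pi$-image supported on at most two points, while a generic fiber $f^{-1}(y)=g^{-1}(h(y))\times\{y\}$ is four reduced points with four distinct $\pi$-images (using that $g$ is separable in characteristic $\neq 2$). Your argument is more elementary --- it avoids the projective geometry of the conic and the Weierstrass points entirely --- at the cost of invoking the explicit structure of the canonical system of a hyperelliptic curve; both are sound.
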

\begin{proof}
  The involution $\iota\colon X \to X$ commutes with the involution of $Y$ over $\overline{Y}$. Since the line bundle $\co_Y(1)$ is invariant under this involution, so is its pullback $\cl$. 
  
  The line bundle $\cl^{\otimes 2}$ is the pullback of $\co_{Y}(2)$, which in turn is the pullback of $\co_{\overline{Y}}(1)$. The pullback of $\co_{\overline{Y}}(1)$ to $\overline{X}$ is $\co_{\overline{X}}(2)$ since the map $\xq_\varepsilon$ is defined by quadrics. Moreover, the pullback of $\co_{\overline{X}}(1)$ to $X$ is $\omega_X$. This proves the second statement. 
  
  Finally, if $\cl \simeq \omega_X$ then the map $X \to Y$ must be the composition of the canonical map $X \to \overline{X} \toi \pp$ with a projection $\pp \ratto \p$. On the other hand, the eight Weierstrass points on $X$ all get mapped to the two ramification points of $Y$ over $\overline{Y}$. This means that the point of projection $\pp \ratto \p$ must be the intersection of two lines each of which contain four branch points of $X \to \overline{X}$. This is impossible since $\overline{X}$ is of degree two in $\pp$.
\end{proof}

\begin{proposition}
  The triple $(X,\cl,\cl)$ corresponds to $(C,\varepsilon)$ via the trigonal construction.
\end{proposition}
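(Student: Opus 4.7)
The plan is to invoke Proposition \ref{P:trigonal_construction}: I will identify the base $Y$ of the tetragonal pencil $f\colon X\to Y$ with the base $L$ of the (unique) trigonal pencil on $C$, then verify the partition and double-cover conditions of the strategy outlined in Section \ref{sec:strategy}. The rulings of the cone $Q_C$ are canonically parametrized by $\vq_C\subset\vQ_C$, so $L=\vq_C$. The map $q\mapsto T_q\vq_C\cap\overline{Y}$ realizes $\vq_C$ as a double cover of $\overline{Y}$ branched exactly at $\{p_1,p_2\}=\overline{Y}\cap\vq_C$, yielding a canonical identification $\vq_C\cong Y$.

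For the partition, fix generic $x\in C$ and let $\xi\in\vq_C$ correspond to the ruling $L$ of $Q_C$ through $x$. I first claim $H_x$ is tangent to $\vq_C$ at $\xi$. Indeed, any $\xi'\in H_x\cap\vq_C$ is dual to a tangent plane $\Pi_{L'}$ of $Q_C$ along some ruling $L'$ with $x\in \Pi_{L'}$; since $\Pi_{L'}\cap Q_C=2L'$ this forces $x\in L'$, hence $L'=L$ and $\xi'=\xi$. So $H_x\cap\vq_C=\{\xi\}$ with multiplicity two, whence $T_\xi\vq_C\subset H_x$. Under $Y\cong\vq_C$ the image $\eta\in\overline{Y}$ of $\xi$ is therefore $H_x\cap\overline{Y}$, and in particular $\eta\in H_x$, so $\xq_\varepsilon^{-1}(\eta)\subset\xq_\varepsilon^{-1}(H_x)$. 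Both $\overline{X}\cap\xq_\varepsilon^{-1}(\eta)$ and $\overline{X}\cap\xq_\varepsilon^{-1}(H_x)$ consist of four points for generic $x$ (the first because $\overline{X}\to\overline{Y}$ has degree $4$, the second by B\'ezout on the conic $\overline{X}$), so the inclusion is an equality. Because $x\in\Gamma_\varepsilon$ and $x$ is not the vertex, Lemma \ref{lem:obvious} identifies $\xq_\varepsilon^{-1}(H_x)$ with a rank-two singular conic $\ell_1\cup\ell_2\subset\pp$. The partition $(\overline{X}\cap\ell_1)\sqcup(\overline{X}\cap\ell_2)$ has blocks of size two and, via the Cartesian square \eqref{eq:even_bi--X}, lifts bijectively to a $2+2$ partition of $f^{-1}(\xi)\subset X$.

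The labeling $\{\ell_1,\ell_2\}$ is precisely the datum of the natural double cover of the cone $\Gamma_\varepsilon$ (Section \ref{sec:natural_double_cover} and Remark \ref{rem:cone_double_cover}), and by Theorem \ref{thm:induced_double_cover} this cover restricts on $C$ to $\tC_\varepsilon\to C$. Combining the partition with the cubic-resolvent description in the proof of Proposition \ref{P:trigonal_construction} yields $\Jac_X=\Prym(\tC_\varepsilon/C)$, so $(X,\cl,\cl)$ is the triple associated to $(C,\varepsilon)$ via the trigonal construction.

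The main obstacle is that in this doubly-degenerate stratum the two pencils from the general strategy collapse to a single $\cl$, so one might fear that we have lost the data needed to recover the cubic resolvent. The resolution is that, for generic $x\in C$, the singular conic $\xq_\varepsilon^{-1}(H_x)$ still breaks into two distinct components $\ell_1\cup\ell_2$, and the partition it induces on $f^{-1}(\xi)$ together with the labeling of its components furnish exactly the ingredients called for by Proposition \ref{P:trigonal_construction}.
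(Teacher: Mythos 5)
Your proof is correct and follows essentially the same route as the paper: the key step in both is that for a smooth point $x\in\Gamma_\varepsilon$ the pullback $\xq_\varepsilon^{-1}(H_x)$ is a rank-two conic $\ell_1\cup\ell_2$ whose components split the four points of $\overline{X}$ lying over $H_x\cap\overline{Y}$ into two pairs, lifted through the Cartesian square \eqref{eq:even_bi--X}, with the labeling of $\{\ell_1,\ell_2\}$ supplied by the natural double cover of the cone $\Gamma_\varepsilon$ and matched to $\tC_\varepsilon\to C$ via Theorem~\ref{thm:induced_double_cover}. The only (harmless) divergence is bookkeeping: you pin down the base of the tetragonal pencil by identifying $Y$ with $\vq_C$ via tangent lines and partition the single fiber over $\xi$, whereas the paper leaves that identification implicit and instead partitions both members of the conjugate pair of divisors over $q_1,q_2\in Y$.
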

\begin{proof}
  As in the previous cases, we will show that a general point $p \in C$ induces a pair of divisors $(D_1,D_2) \in |\cl| \times |\cl|$ and a $(2,2)$-partition of each $D_i$. The dual plane $H_p$ of $p$ intersects $\overline{Y}$ at a single point $q$ with preimages $q_1,q_2 \in Y$. The preimages of $q_1$ and $q_2$ in $X$ give a pair of conjugate divisors, say $D_1$ and $D_2$. 
  
  Since $p$ is a smooth point of $\Gamma_\varepsilon$, the pullback of $H_p$ is a union of two distinct lines $L_1,L_2 \subset \pp$ each intersecting $\overline{X}$ in two points. Both $D_1$ and $D_2$ lie over these four points in $\overline{X}$ hence both $D_1$ and $D_2$ are partitioned.
\end{proof}

\noindent We make the following observation for future reference. 

\begin{lemma}\label{lem:even_bi_Q}
  The projection of the cone $Q_C$ from the vertex of $\Gamma_\varepsilon$ ramifies over two distinct lines $L_1,L_2$. The duals $p_1,p_2$ of $L_1,L_2$ are the branch points of $Y \to \overline{Y}$ given in \eqref{eq:even_bi--X}. 
\end{lemma}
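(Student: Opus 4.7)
The plan is to compute the branch locus of the projection $\pi_v\colon Q_C\to H_\varepsilon^\vee$ from the vertex $v$ of $\Gamma_\varepsilon$ and identify it with $L_1\cup L_2$, where the two branch lines dually correspond to $p_1,p_2\in\vq_C\cap\overline{Y}$. First I would check that $v\notin Q_C$: since $v$ is a singular point of $\Gamma_\varepsilon$, if $v\in Q_C$ then $C=Q_C\cap\Gamma_\varepsilon$ would be singular at $v$, contradicting smoothness of $C$. Hence $\pi_v$ is a genuine finite degree-two morphism to the projective plane $H_\varepsilon^\vee$, and its branch locus is cut out by the standard discriminant form $B(v,y)^2-q(v)q(y)$, where $q$ defines $Q_C$ and $B$ is its polarization. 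A brief rank computation, using $q(v)\neq 0$ and $\rk(q)=3$, shows this is a rank-$2$ quadratic form on the projection plane, hence factors as a product of two distinct linear forms cutting out lines $L_1$ and $L_2$.

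Next I would identify $L_1,L_2$ with $p_1,p_2$ geometrically. A point $y\in H_\varepsilon^\vee$ lies in the branch locus iff the line $\overline{vy}$ is tangent to $Q_C$ at some smooth $x\in Q_C$, equivalently $v\in T_xQ_C$. The tangent planes to $Q_C$ at smooth points are parametrized by $\vq_C\subset\vQ_C$, so those passing through $v$ form $\vq_C\cap H_v$, where $H_v\subset\vppp$ is the plane of hyperplanes through $v$. The crucial identification $H_v=H_\varepsilon$ follows directly from Section~\ref{sec:degenerate_cubics}: by construction $H_\varepsilon$ is precisely the plane of $\vppp$ dual to the vertex of the cone $\Gamma_\varepsilon$. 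Therefore
\[
\vq_C\cap H_v \;=\; \vq_C\cap H_\varepsilon\cap\vQ_C \;=\; \vq_C\cap\overline{Y} \;=\; \{p_1,p_2\},
\]
with distinctness already established at the start of Section~\ref{sec:prym_even_and_bi}. Each $p_i$ represents a plane $\Pi_i\subset\ppp$ through $v$ tangent to $Q_C$ along a ruling, and the pencil of lines through $v$ inside $\Pi_i$ consists of tangent lines to $Q_C$ (they meet the doubled ruling of tangency in a single point with multiplicity two); this pencil projects to a branch line in $H_\varepsilon^\vee$. Since the branch locus has already been shown to consist of exactly two lines and every tangent direction from $v$ lies in one of the $\Pi_i$, these pencils must recover $L_1$ and $L_2$, and the standard line-point duality between $H_\varepsilon^\vee$ and $H_\varepsilon$ sends $L_i\mapsto p_i$.

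The main obstacle is bookkeeping rather than substance: one has to juggle the two dualities $\ppp\leftrightarrow\vppp$ and $H_\varepsilon^\vee\leftrightarrow H_\varepsilon$ while cleanly justifying $H_v=H_\varepsilon$ from the degenerate-symmetrization structure of Section~\ref{sec:degenerate_cubics}. Once these identifications are pinned down, the remaining content is a routine instance of polar duality for a quadric cone.
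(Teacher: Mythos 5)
Your proof is correct and follows essentially the same route as the paper's: both identify the ramification with the rays of $Q_C$ whose tangent planes pass through the vertex of $\Gamma_\varepsilon$, note that such tangent planes are parametrized by $\vq_C\cap H_\varepsilon=\overline{Y}\cap\vq_C=\{p_1,p_2\}$, and conclude that the branch lines are the projections of these two planes, hence dual to $p_1,p_2$. Your explicit rank computation of the discriminant form $B(v,y)^2-q(v)q(y)$ is a small self-contained addition that directly yields the distinctness of $L_1,L_2$, where the paper instead leans on the earlier observation that $H_\varepsilon$ is not tangent to $\vq_C$.
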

\begin{proof}
  Projection from the vertex of $\Gamma_\varepsilon$ naturally realizes $Q_C$ as a double cover of the plane $H_\varepsilon^\vee$ dual to $H_\varepsilon$. Let $R_i$ be the ray in $Q_C$ mapping to $L_i$. Note that the tangent plane of a ray $R \subset Q_C$ contains the vertex of $\Gamma_\varepsilon$ iff $R=R_1$ or $R_2$.

  The tangent planes of the rays are parametrized by $\vq_C$ and the planes containing the vertex of $\Gamma_\varepsilon$ are parametrized by $H_\varepsilon$. Therefore, $\{p_1,p_2\} = H_\varepsilon\cap \vq_C$ represent the tangent planes $H_i$ of $R_i$. The lines $L_1,L_2 \subset H_\varepsilon^\vee$ are projections of $H_1, H_2$ and therefore are dual to the points $p_1,p_2 \in H_\varepsilon$. Recall $H_\varepsilon \cap \vq_C$ are the branch points of $\overline{Y} \to Y$.
\end{proof}

\subsection{Arithmetic subtleties in the even case}\label{sec:subtle_arithmetic}

When $C$ has a vanishing theta-null $\theta$ over $\ksep$ and $[\varepsilon]$ is even, we see that $\Gamma_\varepsilon$ and $Q_C$ determine the conic $\overline{X}_\varepsilon$ but not any particular twist of the hyperelliptic curve $X_\varepsilon$. We now show that the correct twist may nevertheless be determined using the given symmetrization $q_\varepsilon$ of $\Gamma_\varepsilon$ and paying attention to fields of definition during the constructions.

The quadric $Q_C$ is a cone over a conic $q_C$ and projection from the vertex of $Q_C$ gives the unique trigonal map $C\to q_C$. The conic $q_C$ is the Brauer--Severi variety $\bs_{[\theta]}$. Let us first consider the case where $[\varepsilon]$ is not bielliptic, where we obtain the genus $0$ curve $\overline{Y}=\vQ_C\cap\vGam_\varepsilon$ (Section~\ref{sec:even_case}). Consider a double cover $S$ of the plane $\vQ_C$ branched over $\vq_C$. The surface $S$ admits a smooth quadric model in $\PP^3$: if $q(x_0,x_1,x_2)=0$ is a model for $q_C$, then $x_3^2-q(x_0,x_1,x_2)$ provides a model for $S$. Observe that the surface $S$ is isomorphic to $\PP^1\times\PP^1$ over $\kbar$.

Let $\tC\to C$ be the double cover obtained by pulling back $C$ along the distinguished double cover of $\Gamma_\varepsilon$ induced by its symmetrization as described in Section~\ref{sec:natural_double_cover}. Similarly, there is a distinguished choice of $S$ over $\vQ_C$: the one that has split rulings. It is the one for which the space quadric model has square discriminant, which we can attain by scaling $q$ appropriately. In $S$ we find the double cover $Y\to\overline{Y}$ branched over $\vq_C\cap \overline{Y}$. This gives us models over the base field of the required curves to let the construction of Section~\ref{sec:even_case} go through. The fact that $S$ has split rulings ensures our selected twists are trigonally related.

\begin{remark}
  When $X$ is hyperelliptic, unlike the general case, the Prym varieties of twists of the distinguished cover $\tC\to C$ are also Jacobians: the corresponding curves are the quadratic twists of $X\to \overline{X}$.
\end{remark}

If $[\varepsilon]$ is both even and bielliptic, the map $\xq_\varepsilon\colon \PP^2_\varepsilon\to H_\varepsilon$ expresses the canonical model $\overline{X}$ as a tetragonal cover of $\overline{Y}=H_\varepsilon\cap \vQ_C$. We can follow the non-bielliptic case and construct the double cover $S\to\vQ_C$, branched along $q_C$, with split rulings. This gives us a double cover $Y\to\overline{Y}$, and we obtain $X= \overline{X}\times_{\overline{Y}}Y$.

\section{Reverse construction from genus three curves}\label{sec:reverse}

In this section we prove Theorem~\ref{thm:reconstruction}. We continue with the notation from Section~\ref{sec:reverse_from_intro}. If the intersection $\Gamma_{X,\kappa} \cap Q_{X,\kappa}$ happens to be a curve of geometric genus four, then the construction in Section~\ref{sec:prym_construction} clearly recovers $(X,\kappa)$. We will prove the converse: if $(X,\kappa)$ is obtained from $(C,\varepsilon)$, we will show that the intersection $\Gamma_{X,\kappa} \cap Q_{X,\kappa}$ is the canonical model of $C$ and the double cover of $\Gamma_{X,\kappa}$ associated to the symmetrization $\xq_{X,\kappa}$ recovers $\varepsilon$ on $C$. 

Despite the uniformity of the statement, the proof requires the analysis of four cases, as in Table~\ref{tbl:strata}. We treat each separately in Sections~\ref{sec:reverse-general}-\ref{sec:reverse-hyp-bi}.

Recall that the class $\kappa$ corresponds to an unordered pair of linear equivalence classes $\{[\cl_1],[\cl_2]\}$ of line bundles. We initially assume that $\cl_1$ and $\cl_2$ are defined over $k$ and remove this assumption in Section~\ref{sec:descend_Q_Gamma}.

\subsection{Preliminaries} \label{sec:prelims}

Let $\ppp = \Pp\H^0(\omega_C)$ and $\vppp= |\H^0(\omega_C)|$. Recall $Q_C$ is the quadric containing $C \subset \ppp$. The symmetrization induced by $\varepsilon$ (Section~\ref{sec:prym_canonical_map}) is denoted by  
\[
  \ca_\varepsilon \colon  \H^0(\omega_C)^\vee \to \sym^2\H^0(\omega_C\otimes \varepsilon).
\]
The cubic symmetroid $\Gamma_\varepsilon$ corresponding to $\varepsilon$ is the pullback of the discriminant locus in $|\sym^2\H^0(\omega_C\otimes \varepsilon)|$ to $\ppp=\Pp\H^0(\omega_C) = |\H^0(\omega_C)^\vee|$. 

In Section~\ref{sec:prym_construction} we construct the canonical image of a curve $X$ in $\Pp \H^0(\omega_C\otimes \varepsilon)$. We thus identify $\H^0(\omega_C\otimes \varepsilon)$ and $\H^0(\omega_X)$ canonically up to scaling. The symmetrization map becomes
\begin{equation}
  \label{eq:reconstruction--symmetrization}
  \ca_\varepsilon \colon  \H^0(\omega_C)^\vee \to \sym^2\H^0(\omega_X).
\end{equation}
The cubic symmetroid $\Gamma_\varepsilon$ is still the pullback of the discriminant locus via \eqref{eq:reconstruction--symmetrization}. 

When $X$ is hyperelliptic, its canonical image $\overline{X}$ is a conic in $\pp$. Let $q_{\overline{X}} \in \sym^2 \H^0(\omega_X)$ be the defining equation of $\overline{X}$. We make use of the following simple observations.
\begin{lemma}\label{lem:j1-injective}
  The natural restriction map $j_1\colon \sym^2 \H^0(\omega_X) \to \H^0(\omega_X^{\otimes 2})$ is an isomorphism when $X$ is not hyperelliptic. If $X$ is hyperelliptic, the kernel of $j_1$ is spanned by $q_{\overline{X}}$. \qed
\end{lemma}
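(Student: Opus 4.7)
The plan is to reduce the statement to a dimension count together with a direct geometric analysis in each of the two cases (hyperelliptic vs.\ non-hyperelliptic), exploiting that $X$ has genus three.

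First, I will record dimensions. Since $X$ is a smooth proper curve of genus $g=3$, we have $h^0(\omega_X)=3$, so $\dim \sym^2\H^0(\omega_X)=\binom{3+1}{2}=6$. By Riemann--Roch applied to $\omega_X^{\otimes 2}$ (which has degree $4g-4=8>2g-2$, so $h^1=0$), we get $h^0(\omega_X^{\otimes 2})=3g-3=6$. Hence the source and target of $j_1$ have the same dimension, and showing injectivity suffices to establish that $j_1$ is an isomorphism in the non-hyperelliptic case; in the hyperelliptic case, the codimension of the image will equal the dimension of the kernel.

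In the non-hyperelliptic case, the canonical map embeds $X$ as a smooth plane quartic in $\pp=\Pp\H^0(\omega_X)$. An element $q\in\sym^2\H^0(\omega_X)$ is a conic on $\pp$, and $j_1(q)$ is the divisor it cuts on $X$. Thus $q\in\ker j_1$ iff $q$ vanishes identically on $X$. But a nonzero conic has degree $2$, while the canonical quartic has degree $4$; Bezout forces their intersection to consist of at most $8$ points unless they share a component. Irreducibility of the canonical model then rules out any nontrivial common component, so $q=0$, giving the claimed isomorphism.

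In the hyperelliptic case, the canonical map factors as a double cover $\pi\colon X\to \overline{X}\subset\pp$ onto the smooth conic $\overline{X}=Z(q_{\overline{X}})$. I will factor $j_1$ as the composition
\[
\sym^2\H^0(\omega_X)\xrightarrow{\ \rho\ } \H^0(\overline{X},\co_{\overline{X}}(2))\xrightarrow{\ \pi^*\ }\H^0(X,\omega_X^{\otimes 2}),
\]
where $\rho$ is restriction of conics on $\pp$ to $\overline{X}$ and $\pi^*$ is pullback. The restriction map $\rho$ has kernel exactly the ideal of $\overline{X}$ in degree $2$, which is the one-dimensional space $\langle q_{\overline{X}}\rangle$. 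Since $\pi$ is finite and surjective, $\pi^*$ is injective on global sections, so $\ker j_1=\ker \rho=\langle q_{\overline{X}}\rangle$ as desired.

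The only mildly subtle point is the identification $\pi^*\co_{\overline{X}}(1)\simeq\omega_X$ implicit in the factorization (equivalently, that the hyperelliptic $X$ really is recovered by pulling back $\co_{\overline{X}}(2)$ via the canonical map); this is immediate from the construction of the canonical map as the composition of $\pi$ with the degree two embedding $\overline{X}\hookrightarrow\pp$. No genuinely hard step arises: the bulk of the argument is the classical dimension count, with Bezout handling the non-hyperelliptic case and the explicit ideal of a smooth conic handling the hyperelliptic case.
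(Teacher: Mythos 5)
Your proof is correct; the paper treats this lemma as immediate (it is stated with a \qed and no argument), and your dimension count ($6=6$ via Riemann--Roch) combined with the B\'ezout argument in the non-hyperelliptic case and the factorization through the conic $\overline{X}$ in the hyperelliptic case is exactly the standard reasoning the authors are implicitly invoking (a special case of Max Noether's theorem). No gaps.
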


\begin{lemma}\label{lem:j2-injective}
  The multiplication map $j_2 \colon  W_1 \otimes W_2 \to \H^0(\omega_X^{\otimes 2})$ is injective if and only if the two pencils $\cl_1$ and $\cl_2$ are not isomorphic. \qed
\end{lemma}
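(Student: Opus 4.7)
The plan is to split into two cases according to whether $\cl_1 \simeq \cl_2$, and handle each direction separately. Before splitting, I would record two preliminary facts: since $\omega_X^{\otimes 2} \simeq \cl_1 \otimes \cl_2$ is base-point-free, each $\cl_i$ must be base-point-free as well, for any base point of $\cl_i$ would be a base point of the tensor product; and since $\deg \cl_i = 4$ on the genus three curve $X$, Riemann--Roch gives $h^0(\cl_i) - h^0(\omega_X \otimes \cl_i^{\vee}) = 2$, so $h^0(\cl_i) = 2$ unless $\cl_i \simeq \omega_X$ (in which case $h^0(\cl_i) = 3$).

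For the direction that $\cl_1 \simeq \cl_2$ implies $j_2$ is not injective, I would identify $W_1$ with $W_2$ via an isomorphism $\cl_1 \simeq \cl_2$ and choose any two linearly independent sections $s, t \in W_1 = W_2$ (possible since $h^0(\cl_1) \geq 2$). The antisymmetric tensor $s \otimes t - t \otimes s$ is then a nonzero element of $W_1 \otimes W_2$ that is killed by $j_2$.

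For the converse, suppose $\cl_1 \not\simeq \cl_2$. I would first rule out the possibility $\cl_i \simeq \omega_X$: if $\cl_1 \simeq \omega_X$, then the relation $\cl_1 \otimes \cl_2 \simeq \omega_X^{\otimes 2}$ forces $\cl_2 \simeq \omega_X \simeq \cl_1$, a contradiction. Hence both $\cl_i$ are base-point-free line bundles with $h^0 = 2$, and the base-point-free pencil trick applies: tensoring the evaluation sequence $0 \to \cl_1^{\vee} \to W_1 \otimes \co_X \to \cl_1 \to 0$ by $\cl_2$ and passing to global sections identifies $\ker j_2$ with $\H^0(\cl_2 \otimes \cl_1^{\vee})$. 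Since $\cl_2 \otimes \cl_1^{\vee}$ is a nontrivial degree-zero line bundle, this cohomology group vanishes and $j_2$ is injective.

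The one technical point that warrants attention is verifying the hypotheses of the pencil trick, which amounts to the Riemann--Roch computation above together with the observation that $\cl_1 \simeq \omega_X$ collapses the pair into the canonical case $\cl_1 \simeq \cl_2 \simeq \omega_X$; once this is handled, the rest of the argument is routine.
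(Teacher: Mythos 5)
Your route---the base-point-free pencil trick for injectivity and the antisymmetric tensor $s\otimes t-t\otimes s$ for the converse---is surely the argument the authors had in mind (the paper omits the proof entirely), and all of it is fine except one step: the claim that base-point-freeness of $\cl_1\otimes\cl_2\simeq\omega_X^{\otimes 2}$ forces each $\cl_i$ to be base-point-free. That inference is invalid. A base point of $\cl_1$ is a base point of the \emph{image} of $j_2$, i.e.\ of the subsystem of $|\omega_X^{\otimes 2}|$ spanned by the products $s\cdot t$; since $\dim(W_1\otimes W_2)=4<6=h^0(\omega_X^{\otimes 2})$ that subsystem is proper, so its having a base point says nothing about $|\omega_X^{\otimes 2}|$. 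Moreover the conclusion is actually false in the generality of the lemma's setup: for $\cn\simeq\co_X(p-q)$ with $p\neq q$ one has $h^0(\cl_1(-p))=1+h^0(\cn^\vee(p))=1+h^0(\co_X(q))=2=h^0(\cl_1)$, so every section of $\cl_1=\omega_X(p-q)$ vanishes at $p$.

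Whether this sinks the argument depends on the case. If $X$ is not hyperelliptic, the moving parts of $\cl_1=\omega_X(p-q)$ and $\cl_2=\omega_X(q-p)$ are the base-point-free, mutually non-isomorphic trigonal pencils $\omega_X(-q)$ and $\omega_X(-p)$, and running your pencil trick on these still yields $\ker j_2=\H^0(\co_X(q-p))=0$; the statement survives, but your proof needs this extra reduction. If $X$ is hyperelliptic, however, $\omega_X(-q)\simeq g^1_2(q')$ with $q'$ the conjugate point, so both $\cl_1$ and $\cl_2$ have moving part equal to the \emph{same} $g^1_2$, and $j_2$ then kills a nonzero antisymmetric tensor even though $\cl_1\not\simeq\cl_2$: the lemma as literally stated fails for such $\kappa$. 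The repair is to take base-point-freeness of the $\cl_i$ as a standing hypothesis (this is what ``tetragonal pencil'' should mean here, and it holds in every application the paper makes of the lemma: in the hyperelliptic case Theorem~\ref{thm:prym_construction_even} and Lemma~\ref{lem:conjugacy} guarantee that no divisor of $|\cl_i|$ contains a conjugate pair $p+\iota(p)$, which rules out the degenerate configuration above), rather than attempting to derive it from $\omega_X^{\otimes 2}$.
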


\subsection{Non-hyperelliptic with distinct pencils}\label{sec:reverse-general} 

Assume $X$ is not hyperelliptic and $\cl_1,\cl_2$ are distinct. From Section~\ref{sec:prym_construction} we conclude $Q_C$ is smooth and $\Gamma_\varepsilon$ is non-degenerate.

\begin{proof}[Proof of Theorem~\ref{thm:reconstruction} in Case 1]
  Since $\Gamma_\varepsilon$ is non-degenerate the symmetrization $\ca_\varepsilon$ is injective. Lemma~\ref{lem:j2-injective} implies $j_2$ is injective.
  
  The pencils $\cl_1$ and $\cl_2$ are constructed in Section~\ref{sec:prym--general} by pulling back the rulings of the dual quadric $\vQ_C$ via $\xq_\varepsilon$. For a point $p\in Q_C$ the plane $H_p$ is tangential to $\vQ_C$ and $H_p \cap \vQ_C$ is the union of two lines in $\vQ_C$. Therefore, the symmetrization establishes an isomorphism
  \[
    Q_C \isoto |W_1| \times |W_2| \colon  p \mapsto \xq_\varepsilon(H_p) \cdot X.
  \]

  Since $Q_C$ spans the ambient space $\ppp$, $j_1\circ \ca_\varepsilon(\H^0(\omega_C))^\vee = j_2(W_1 \otimes W_2)$. Thus $\Pp \H^0(\omega_C)$ is canonically identified with $|\MUV|$. Under this identification, $Q_C$ is the pullback of $|W_1| \times |W_2|$ and $\Gamma_\varepsilon$ is, by definition, the pullback of the discriminant locus from $\sym^2 \H^0(\omega_X)$.
\end{proof}

\subsection{Non-hyperelliptic with coincident pencils}\label{sec:reverse-general-bi}  Assume $X$ is not hyperelliptic but $\cl_1\simeq \cl_2$. For the sake of exposition, let $\cl:= \cl_1 = \cl_2$ and $W := W_1 = W_2$.

The multiplication map $j_2 \colon  W\otimes W \to \H^0(\omega_X^{\otimes 2})$ is not an isomorphism and factors through $\sym^2 W \toi \H^0(\omega_X^{\otimes 2})$. We will use the decomposition: \begin{tikzcd}[cramped, sep=small] 0 \arrow[r] & \bigwedge^2 W \arrow[r] & W\otimes W \arrow[r] & \sym^2 W \arrow[r] & 0 .  \end{tikzcd} 

The map $j_1$ is an isomorphism (Lemma~\ref{lem:j1-injective}). Define $T \subset \sym^2 \H^0(\omega_X)$ as the pullback of $\sym^2 W$, so that \eqref{eq:reverse--M} factors as follows:
\begin{equation}\label{eq:reverse--general-bi}
  \begin{tikzcd}
    \MUV \arrow[dr, phantom, "\ulcorner", very near start] \arrow[r,"\sim"]\arrow[d,twoheadrightarrow] & W_1\otimes W_2 \arrow[d,"j_2",twoheadrightarrow]\\
    T \arrow[dr, phantom, "\ulcorner", very near start]\arrow[d,hookrightarrow] \arrow[r,"\sim"] & \sym^2 W \arrow[d,hookrightarrow] \\
    \sym^2 \H^0(\omega_X) \arrow[r,"\sim", "j_1"'] & \H^0(\omega_X^{\otimes 2}).
  \end{tikzcd}
\end{equation}

We may identify $|\MUV|$ with $|W\otimes W|$ and $|T|$ with $|\sym^2 W|$. Note that the cubic $\Gamma_{(X,\cl,\cl)}$ will be a cone over the cubic curve $E \subset |T| \simeq |\sym^2 W|$ obtained by pulling back the discriminant locus $\cd \subset |\sym^2 \H^0(\omega_X)|$. The vertex of $\Gamma_{(X,\cl,\cl)}$ in $|W\otimes W|$ is $[\bigwedge^2 W]$. Our goal is to show that $|W|\times |W|$ cuts out $C$ from within $\Gamma_{(X,\cl,\cl)}$.

\begin{proof}[Proof of Theorem~\ref{thm:reconstruction} in Case 2]
  From Section~\ref{sec:prym_construction} we know $\Gamma_\varepsilon$ is a cone and $Q_C$ is smooth. Let $H_\varepsilon \subset \vppp$ be the plane dual to the vertex of $\Gamma_\varepsilon$. Projecting $Q_C$ from the vertex of $\Gamma_\varepsilon$ realizes $Q_C$ as a double cover of the plane $H_\varepsilon^\vee$ branched over a smooth conic $q \subset H_\varepsilon^\vee$. 
  
  In what follows, we identify $H_\varepsilon^\vee$ with $|\sym^2 W|$. The identification $|\MUV| \isoto  |W\otimes W|$ sends the vertex of $\Gamma_\varepsilon$ to the point $[\bigwedge^2 W]$. It can be checked directly that $|W|\times |W|$ is the unique quadric in $|W\otimes W|$ which when projected from $[\bigwedge^2 W]$ is branched over the second Veronese image $v_2(|W|)$ of $|W|$ within $|\sym^2 W|$. We need to show that $q= v_2(|W|)$. 
  
  From Section~\ref{sec:bielliptic_case} we get $H_\varepsilon=\xq_\varepsilon(\pp)$. From Lemma~\ref{lem:alternative_Q} we get that the dual of $q$ is $\xq_\varepsilon(X)$. The space $T$ defines $\xq_\varepsilon$ so we may identify $H_\varepsilon$ with $\Pp T$ and $H_\varepsilon^\vee$ with $|T| \simeq |\sym^2 W|$. The linear system inducing the restricted map $\xq_\varepsilon|_X$ is $\sym^2 W \to \omega_X^{\otimes 2}$. Therefore, $\xq_\varepsilon|_X$ factors through the map $X \to \Pp W$ induced by the pencil $W \to \cl$ followed by the second Veronese $v_2\colon  \Pp W \to \Pp\sym^2 W$. The dual of the conic $v_2(\Pp W)$ is $v_2(|W|)$, as claimed.
\end{proof}

\subsection{Hyperelliptic with distinct pencils} \label{sec:reverse_hyperelliptic}

Assume $X$ is hyperelliptic, with hyperelliptic involution $\iota\colon X \to X$, and $\cl_1 \not\simeq\cl_2$. Recall $\overline{X}$ is the canonical image of $X$, cut out by $q_{\overline{X}}$. Let $\co_{\overline{X}}(1)=\co_{\pp}(1)|_{\overline{X}}$.

\begin{lemma}\label{lem:reconstruction--invariant}
  The image of $\sym^2 \H^0(\omega_X) \to \H^0(\omega_X^{\otimes 2})$ coincides with the space of $\iota$-invariant $2$-forms $\H^0(\omega_X^{\otimes 2})^\iota$.\qed
\end{lemma}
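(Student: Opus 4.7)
The plan is to prove the equality by showing containment in one direction and then matching dimensions.

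First I would establish the easy inclusion. Because $X$ is hyperelliptic of genus three, the canonical map factors as $X \to X/\iota \simeq \p$ followed by the degree-two Veronese embedding $\p \hookrightarrow \pp$ whose image is $\overline{X}$. In particular, every section in $\H^0(\omega_X)$ is pulled back from $\p$ and is therefore $\iota$-invariant. Hence any product $s\cdot s'$ with $s,s'\in \H^0(\omega_X)$ is $\iota$-invariant, which gives $\operatorname{im}(j_1)\subseteq \H^0(\omega_X^{\otimes 2})^\iota$.

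Next I would compute the two dimensions. By Lemma~\ref{lem:j1-injective} the kernel of $j_1$ is one-dimensional (spanned by $q_{\overline{X}}$), so $\dim\operatorname{im}(j_1)=\binom{4}{2}-1=5$. For the invariant part, let $\pi\colon X\to \p$ be the hyperelliptic quotient. Since $\omega_X \simeq \pi^*\co_{\p}(2)$, the projection formula gives
\[
  \pi_*\omega_X^{\otimes 2} \;\simeq\; \co_{\p}(4)\otimes \pi_*\co_X \;\simeq\; \co_{\p}(4)\oplus \co_{\p}(-1),
\]
using the standard decomposition $\pi_*\co_X\simeq \co_{\p}\oplus \co_{\p}(-(g+1))$ for a hyperelliptic double cover of genus $g=3$. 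The $\iota$-invariant summand is $\co_{\p}(4)$, so $\dim \H^0(\omega_X^{\otimes 2})^\iota = h^0(\co_{\p}(4)) = 5$.

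Combining the inclusion with equality of dimensions yields the claim. No real obstacle is anticipated: the only subtlety is knowing the decomposition of $\pi_*\co_X$ together with the identification of the invariant summand with the pullback part, which is standard for hyperelliptic double covers.
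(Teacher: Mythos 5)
Your proof is correct in substance; the paper itself offers no proof of this lemma (it is stated with a \qed as a standard fact), and your argument --- the inclusion $\operatorname{im}(j_1)\subseteq \H^0(\omega_X^{\otimes 2})^\iota$ because every canonical section of a hyperelliptic curve is $\iota$-invariant, followed by a dimension count giving $5=5$ --- is exactly the expected one. One arithmetic slip: with $\pi_*\co_X\simeq\co_{\p}\oplus\co_{\p}(-(g+1))=\co_{\p}\oplus\co_{\p}(-4)$ for $g=3$, the projection formula gives $\pi_*\omega_X^{\otimes 2}\simeq\co_{\p}(4)\oplus\co_{\p}$, not $\co_{\p}(4)\oplus\co_{\p}(-1)$; your version would make the total $h^0$ equal to $5$ rather than $h^0(\omega_X^{\otimes 2})=3g-3=6$, which is a useful internal consistency check you would have failed. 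The slip is harmless here because the summand you actually use, the invariant part $\co_{\p}(4)$ with $h^0=5$, is identified correctly, so the conclusion stands.
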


Let $(W_1\otimes W_2)^\iota$ be the $\iota$-invariant subspace of $ W_1\otimes W_2$. As a consequence of Lemmas~\ref{lem:j2-injective} and~\ref{lem:reconstruction--invariant} the diagram \eqref{eq:reverse--M} factors as follows:
\begin{equation}
  \begin{tikzcd}
    \MUV \arrow[dr, phantom, "\ulcorner", very near start] \arrow[r,twoheadrightarrow]\arrow[d,hookrightarrow] & (W_1\otimes W_2)^\iota \arrow[dr, phantom, "\ulcorner", very near start]\arrow[d,hookrightarrow] \arrow[r,hookrightarrow] & W_1\otimes W_2 \arrow[d,hookrightarrow] \\
    \sym^2 \H^0(\omega_X) \arrow[r,twoheadrightarrow] & \H^0(\omega_X^{\otimes 2})^\iota \arrow[r,hookrightarrow] & \H^0(\omega_X^{\otimes 2}).
  \end{tikzcd}
\end{equation}

\begin{proof}[Proof of Theorem~\ref{thm:reconstruction} in Case 3]

From Section~\ref{sec:prym_construction} we know $\Gamma_\varepsilon$ is non-degenerate ($\ca_\varepsilon$ is injective) and $Q_C$ is a cone. The plane dual to the vertex of $Q_C$ is $\vQ_C$ and $\vq_C \subset \vQ_C$ is the strict dual of $Q_C$. Note, $Q_C$ is naturally a cone over the dual $q_C$ of $\vq_C$.

First we need to identify $\MUV$ with $\H^0(\omega_C)^\vee$. For this we will prove $j_1\circ\ca_\varepsilon(\H^0(\omega_C)^\vee)=j_2((W_1\otimes W_2)^\iota)$. Use $\cl_1,\cl_2$ to construct $X \to \Pp W_1 \times \Pp W_2 \toi \Pp(W_1\otimes W_2)$. From Theorem~\ref{thm:prym_construction_even} we see there is a natural projection from $\Pp(W_1\otimes W_2)$ onto $\vQ_C$ and the ramification locus of the double cover $\Pp W_1 \times \Pp W_2 \to \vQ_C$ is the smooth conic $\vq_C$.

  In Section~\ref{sec:even_case} we saw that the image of $X$ under this projection is $\xq_\varepsilon(\overline{X})$. As this map factors through $\overline{X}$, we must be projecting onto an $\iota$-invariant plane; by dimension reasons, this plane must be $\Pp(W_1\otimes W_2)^\iota$. Since $\xq_\varepsilon|_{\overline{X}}$ is defined by the (degenerate) linear system $\H^0(\omega_C)^\vee \to \co_{\overline{X}}(2)$, we conclude $j_1\circ \ca_\varepsilon(\H^0(\omega_C)^\vee) =j_2((W_1 \otimes W_2)^\vee)$. 

  With $\MUV=\H^0(\omega_C)^\vee$ the cubics $\Gamma_\varepsilon$ and $\Gamma_{X,\kappa}$ coincide as they are both the pullback of the discriminant locus.  We identified $\vQ_C$ with $\Pp(W_1\otimes W_2)^\iota$ and the ramification locus of the double cover $\Pp W_1 \times \Pp W_2 \to \Pp(W_1\otimes W_2)^\iota$ with $\vq_C \subset \vQ_C$. The dual $q_C$ of $\vq_C$ is the intersection of the dual space quadric $|W_1| \times |W_2|$ with the plane $|(W_1\otimes W_2)^\iota|$ inside $|W_1\otimes W_2|$. The pullback of $|W_1|\times |W_2|$ onto $|\MUV|$ gives a cone over $q_C$ with vertex dual to the plane $\vQ_C$; this cone is $Q_C$.
\end{proof}

\subsection{Hyperelliptic with coincident pencils} \label{sec:reverse-hyp-bi}

Now $X$ is hyperelliptic and $\cl:=\cl_1=\cl_2$, $W:=W_1=W_2$. We continue to use $\overline{X}$ and $q_{\overline{X}}$ from previous section.

The map $j_2\colon W\otimes W \to \H^0(\omega_X^{\otimes 2})$ factors through $\sym^2 W$ with kernel $\bigwedge^2 W \toi W \otimes W$. By Lemma~\ref{lem:j1-injective}, the kernel of $j_1$ is generated by $q_{\overline{X}}$ and $j_1$ factors through $\H^0(\omega_X^{\otimes 2})^\iota$ (Lemma~\ref{lem:reconstruction--invariant}). Let $T$ stand for the image of $\MUV$ in $\sym^2 \H^0(\omega_X)$. The diagram \eqref{eq:reverse--M} degenerates to:
\begin{equation}
  \begin{tikzcd}
    \MUV \arrow[dr, phantom, "\ulcorner", very near start] \arrow[r,twoheadrightarrow] \arrow[d,twoheadrightarrow] & (W\otimes W)^\iota \arrow[dr, phantom, "\ulcorner", very near start]\arrow[d,twoheadrightarrow] \arrow[r,hookrightarrow] & W\otimes W \arrow[d,twoheadrightarrow]\\
    T \arrow[dr, phantom, "\ulcorner", very near start] \arrow[r,twoheadrightarrow]\arrow[d,hookrightarrow] & (\sym^2 W)^\iota \arrow[d,hookrightarrow] \arrow[dr, phantom, "\ulcorner", very near start]\arrow[r,hookrightarrow] & \sym^2 W \arrow[d,hookrightarrow] \\
    \sym^2 \H^0(\omega_X) \arrow[r,twoheadrightarrow] &  \H^0(\omega_X^{\otimes 2})^\iota \arrow[r,hookrightarrow] & \H^0(\omega_X^{\otimes 2})
  \end{tikzcd}
\end{equation}
 
\begin{lemma}\label{lem:reconstruction--Lbar}
  The curve $X$ and its canonical image $\overline{X}$ fit into the diagram
  \begin{equation}\label{eq:reconstruction} 
  \begin{tikzcd}
    X\arrow[dr, phantom, "\ulcorner", very near start] \arrow[r,"\cl"] \arrow[d,"2:1"'] & \Pp W \arrow[d,"2:1"] \arrow[r,"v_2",hookrightarrow] & \Pp \sym^2 W \arrow[ld,dashed]\\
    \overline{X} \arrow[r, "q_\varepsilon"] & \Pp (\sym^2 W)^\iota
  \end{tikzcd}
\end{equation}
where $v_2$ is the second Veronese embedding and the dashed arrow is the projection map.
\end{lemma}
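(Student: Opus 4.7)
The plan is to identify each arrow in the diagram by the linear system of sections it determines on $X$, and then to show that the two compositions $X\to\Pp(\sym^2 W)^\iota$ come from the same linear subsystem of $\H^0(\omega_X^{\otimes 2})$. Since $\cl^{\otimes 2}\simeq\omega_X^{\otimes 2}$ and $h^0(\cl)=2\neq h^0(\omega_X)=3$, we must have $\cl\simeq\omega_X\otimes\varepsilon'$ for a nontrivial two-torsion $\varepsilon'$, so $\iota^*\cl\simeq\cl$ and $\iota$ acts on $W$ with an eigenspace decomposition $W=W^+\oplus W^-$. Compatibility with the diagram preceding the lemma, namely $\dim(\sym^2 W)^\iota=2$, forces $\dim W^\pm=1$; hence $\sym^2 W=(\sym^2 W)^\iota\oplus(W^+\otimes W^-)$ with the invariant summand $(\sym^2 W)^\iota=\sym^2 W^+\oplus\sym^2 W^-$ two-dimensional. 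The dashed arrow is then the linear projection away from the antiinvariant line $\Pp(W^+\otimes W^-)\subset\Pp\sym^2 W$; restricted to the Veronese conic $v_2(\Pp W)$ it is a $2{:}1$ cover onto $\Pp(\sym^2 W)^\iota$ ramified at $v_2([W^\pm])$, whose covering involution is induced by the $\iota$-action on $W$.

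Next I would verify commutativity square by square. For the right square, $|\cl|\colon X\to\Pp W$ is $\iota$-equivariant (since $\iota^*\cl\simeq\cl$) and descends to $X/\iota=\overline{X}$, while the composition of $v_2$ with the projection onto $\Pp(\sym^2 W)^\iota$ built above realizes $\Pp W/\iota$ as $\Pp(\sym^2 W)^\iota$. For the left square, $\xq_{X,\kappa}$ is defined by the linear system $\MUV\subset\sym^2\H^0(\omega_X)$ of conics on $\pp$, and its restriction to the canonical conic $\overline{X}$ corresponds to composing with the surjection $\sym^2\H^0(\omega_X)\twoheadrightarrow\H^0(\omega_X^{\otimes 2})^\iota$ provided by Lemmas~\ref{lem:j1-injective} and~\ref{lem:reconstruction--invariant}. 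By the large diagram preceding the lemma, the induced map $\MUV\to\H^0(\omega_X^{\otimes 2})^\iota$ factors as $\MUV\twoheadrightarrow(\sym^2 W)^\iota\hookrightarrow\H^0(\omega_X^{\otimes 2})^\iota$, so $\xq_{X,\kappa}|_{\overline{X}}$ lands in the linear subvariety $\Pp(\sym^2 W)^\iota$ and is exactly the map $q_\varepsilon$. Pulled back to $X$ via the hyperelliptic quotient, both compositions $X\to\Pp(\sym^2 W)^\iota$ in the diagram are defined by the same subsystem $(\sym^2 W)^\iota\subset\H^0(\omega_X^{\otimes 2})$---either by restriction to $\overline{X}$ or by multiplication of sections of $\cl$---so the two paths agree.

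For the Cartesian property of the left square, both $X$ and the fibered product $\overline{X}\times_{\Pp(\sym^2 W)^\iota}\Pp W$ are degree-$8$ covers of $\Pp(\sym^2 W)^\iota$ (the two paths give $2\cdot 4$ and $4\cdot 2$ respectively), and the natural morphism from $X$ to the fibered product is birational; smoothness of $X$ then forces it to be an isomorphism. The main obstacle is the bookkeeping in the large commutative diagram preceding the lemma: one must verify that the image of $\MUV$ in $\H^0(\omega_X^{\otimes 2})$ under either face is exactly $(\sym^2 W)^\iota$, so that $q_\varepsilon$ lands in $\Pp(\sym^2 W)^\iota$ without any further projection being necessary.
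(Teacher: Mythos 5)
Your proposal arrives at the right picture, but by a longer and somewhat riskier road than the paper's. The paper's proof is short because it treats \eqref{eq:reconstruction} as a refinement of the Cartesian diagram \eqref{eq:even_bi--X} already established in the forward construction: since $\cl=f^*\co_Y(1)$ and $h^0(\cl)=2$, pullback of sections identifies $W=\H^0(\cl)$ with $\H^0(\co_Y(1))$, so $Y=\Pp W$; and since $Y\to\overline{Y}$ is the quotient by $\iota$, sections of $\co_{\overline{Y}}(1)$ pull back to the $\iota$-invariant sections of $\co_Y(2)$, so $\overline{Y}=\Pp(\sym^2 W)^\iota$ and the vertical map factors through $v_2$. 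Commutativity and the Cartesian property then come for free from \eqref{eq:even_bi--X}, where $X$ is by construction the fibre product. Your eigenspace analysis $W=W^+\oplus W^-$ and the description of the dashed arrow as projection away from $[W^+\otimes W^-]$ are correct, and indeed reappear in the paper's proof of Theorem~\ref{thm:reconstruction} in Case 4.

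Two steps in your argument need repair. First, you identify the bottom arrow by asserting that $\xq_{X,\kappa}|_{\overline{X}}$ ``is exactly the map $q_\varepsilon$'' and that its linear system is the image of $\MUV$. But the arrow in the lemma is the \emph{forward} symmetrization attached to $(C,\varepsilon)$; the statement that its image in $\sym^2\H^0(\omega_X)$ coincides with $T=\im(\MUV)$ is precisely the content of the lemma that \emph{follows} this one in the paper, so as written your treatment of the left square presupposes a fact not yet available. This is avoidable: the forward construction already tells you that $q_\varepsilon|_{\overline{X}}$ is the $4{:}1$ map onto $\overline{Y}$, and the identification $\overline{Y}=\Pp(\sym^2 W)^\iota$ comes from pulling back sections as above, with no reference to $\MUV$. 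Second, your Cartesian argument (``birational $+$ smooth source $\Rightarrow$ isomorphism'') is incomplete: a finite birational map from a smooth curve only exhibits the normalization of the target, so you must also know that $\overline{X}\times_{\Pp(\sym^2 W)^\iota}\Pp W$ is irreducible and normal, i.e.\ that the $4{:}1$ and $2{:}1$ covers of $\overline{Y}$ do not ramify over the same points. This is true (the eight points of $B=\xq_\varepsilon^{-1}(\vq_C)$ are distinct, so $\overline{X}\to\overline{Y}$ is unramified over $p_1,p_2$), but it has to be said --- or one simply quotes the forward construction, in which $X$ \emph{is} defined as this fibre product.
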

\begin{proof}
  The diagram \eqref{eq:reconstruction} refines \eqref{eq:even_bi--X}. Use the notation $Y$ and $\overline{Y}$ introduced there. By definition of $\cl$ as the pullback of $\co_Y(1)$, there is a canonical identification of $Y$ with $\Pp W$. In diagram \eqref{eq:even_bi--X} the vertical map $Y \to \overline{Y}$ is given by the quotient of $Y$ via $\iota$. Therefore, the pullback of $\co_{\overline{Y}}(1)$ to $Y$ identifies $\H^0(\co_{\overline{Y}}(1))$ with the $\iota$-invariant sections of $\co_Y(2)$. As a result, $\overline{Y}$ is canonically identified with $\Pp (\sym^2 W)^\iota$ and the map $Y \to \overline{Y}$ factors through the second Veronese.
\end{proof}

  We may now write $\overline{Y}$ for the projective line $\Pp(\sym^2 W)^\iota$ and $Y$ for the projective line $\Pp W$. Denote the two branch points of the double cover $Y \to \overline{Y}$ by $p_1$ and $p_2$.

\begin{lemma}
  The image of the symmetrization $\ca_\varepsilon$ coincides with $T$. More precisely, the space $(\sym^2 W)^\iota$ is the image of the following composition of maps
  \[
  \pushQED{\qed}
    \H^0(\omega_C)^\vee \overset{\ca_\varepsilon}{\too} \sym^2 \H^0(\omega_X) \to \co_{\overline{X}}(2) \isoto \H^0(\omega_X^{\otimes 2})^\iota. \qedhere
  \popQED
  \]
\end{lemma}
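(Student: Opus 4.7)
The plan is to prove both assertions by analyzing how $\ca_\varepsilon$ pulls linear forms on $\vppp$ back to quadrics on $\pp$, and then how these quadrics restrict further to $\overline{X}$.

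First, I would verify that $q_{\overline{X}} \in \mathrm{image}(\ca_\varepsilon)$. The hyperplane $\vQ_C \subset \vppp$ is the zero locus of some linear form $\ell \in \H^0(\omega_C)^\vee$. By the construction of $\xq_\varepsilon$ as the symmetrization, the pullback of $\ell$ to $\pp$ equals $\ca_\varepsilon(\ell) \in \sym^2 \H^0(\omega_X)$, and this quadric vanishes on $\xq_\varepsilon^{-1}(\vQ_C) = \overline{X}$. Since $\ker(j_1) = \langle q_{\overline{X}}\rangle$ by Lemma~\ref{lem:j1-injective}, $\ca_\varepsilon(\ell)$ must be a nonzero scalar multiple of $q_{\overline{X}}$.

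Next, I would show $\mathrm{image}(\ca_\varepsilon) \subseteq T$ by checking that its image under $j_1$ lies inside $\sym^2 W$. The map $\xq_\varepsilon$ factors as $\pp \twoheadrightarrow H_\varepsilon \hookrightarrow \vppp$, and its restriction to $\overline{X}$ maps into $H_\varepsilon \cap \vQ_C = \overline{Y}$. Lemma~\ref{lem:reconstruction--Lbar} identifies this restriction with the map $\overline{X} \to \Pp(\sym^2 W)^\iota$ whose defining linear system is $(\sym^2 W)^\iota$, sitting naturally inside $\H^0(\omega_X^{\otimes 2})^\iota \simeq \H^0(\overline{X},\co_{\overline{X}}(2))$. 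Consequently each restriction $\ca_\varepsilon(\ell)|_{\overline{X}}$ lies in $(\sym^2 W)^\iota$, which translates to $j_1(\ca_\varepsilon(\ell)) \in (\sym^2 W)^\iota \subseteq \sym^2 W$, hence $\mathrm{image}(\ca_\varepsilon) \subseteq j_1^{-1}(\sym^2 W) = T$.

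A dimension count then closes the first claim. Because $\Gamma_\varepsilon$ is a cone, $\ca_\varepsilon$ has one-dimensional kernel (the vertex), giving $\dim \mathrm{image}(\ca_\varepsilon) = 3$. On the other hand, using $\mathrm{image}(j_1) = \H^0(\omega_X^{\otimes 2})^\iota$ from Lemma~\ref{lem:reconstruction--invariant} and the fact that $\sym^2 W \cap \H^0(\omega_X^{\otimes 2})^\iota = (\sym^2 W)^\iota$ has dimension $2$, we get $\dim T = \dim\ker(j_1) + 2 = 3$, so $\mathrm{image}(\ca_\varepsilon) = T$. For the sharper ``more precisely'' statement, the image of $T$ under $j_1$ has dimension $3-1=2$ (because $q_{\overline{X}} \in T \cap \ker j_1$) and is contained in the $2$-dimensional space $(\sym^2 W)^\iota$, so the two coincide.

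The main obstacle is securing the inclusion $j_1 \circ \ca_\varepsilon(\H^0(\omega_C)^\vee) \subseteq (\sym^2 W)^\iota$: this requires a careful compatibility check between the two descriptions of the morphism $\overline{X} \to \overline{Y}$ (as the restriction of $\xq_\varepsilon$ on one side, and as the map defined by $(\sym^2 W)^\iota$ from Lemma~\ref{lem:reconstruction--Lbar} on the other). Once both are verified to agree under the canonical identification $\overline{Y} = \Pp(\sym^2 W)^\iota$, the pullback linear systems must coincide as subspaces of $\H^0(\overline{X},\co_{\overline{X}}(2))$, yielding the desired containment.
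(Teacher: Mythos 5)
Your proof is correct. The paper in fact states this lemma without any proof (note the QED symbol placed inside the displayed formula), so there is no argument of the authors to compare against line by line; what you write --- identifying the restriction of the image of $\ca_\varepsilon$ to $\overline{X}$ with the linear system defining $\overline{X}\to\overline{Y}=\Pp(\sym^2 W)^\iota$ via Lemma~\ref{lem:reconstruction--Lbar}, and then closing both assertions by dimension counts using $\ker j_1=\langle q_{\overline{X}}\rangle$, the one-dimensional kernel of $\ca_\varepsilon$ for a degenerate symmetroid, and $\dim(\sym^2 W)^\iota=2$ --- is precisely the verification the authors leave implicit, and every ingredient you invoke is available at that point in the text. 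The only micro-step worth making explicit is why $\ca_\varepsilon(\ell)\neq 0$ for the linear form $\ell$ cutting out $\vQ_C$: the point of $\ppp$ dual to $\vQ_C$ is the vertex of $Q_C$, which does not lie on $\Gamma_\varepsilon$ and hence is not the vertex of the cone, so $\ell\notin\ker\ca_\varepsilon$ (equivalently, $\xq_\varepsilon\inv(\vQ_C)$ is a conic rather than all of $\pp$).
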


\begin{proof}[Proof of Theorem~\ref{thm:reconstruction} for Case 4]
  Let $E$ be the pull back of the discriminant locus in $|\sym^2 \H^0(\omega_X)|$ to $|T|$. Then $\Gamma_{(X,\cl,\cl)}$ is a cone over $E$. Thus there is an isomorphism $\Gamma_\varepsilon \simeq \Gamma_{(X,\cl,\cl)}$ but we do not have a canonical isomorphism at the moment. Nevertheless, there is a quadric in $|\MUV|$ cutting out $C$ from $\Gamma_{(X,\cl,\cl)}$. We will show that this quadric is $Q_{(X,\cl,\cl)}$. 

  Lacking a canonical identification of the ambient spaces of $\Gamma_\varepsilon$ and $\Gamma_{(X,\cl,\cl)}$ we need to use the plane $|T|$ instead. Lemma~\ref{lem:even_bi_Q} implies that the projection of $Q_C$ from the vertex of $\Gamma_\varepsilon$ realizes $Q_C$ as a double cover of $|T|$ branched over two lines, say $L_1$ and $L_2$, which are dual to the two points $p_1$ and $p_2$ in $\overline{Y} \subset \Pp T$. We need to show $\Gamma_{(X,\cl,\cl)}$ has the same property.

There exists a $(+1)$-eigenvector $w_1 \in W$ and a $(-1)$-eigenvector $w_2 \in W$.  The map $Y \to \overline{Y}$ is the quotient of $Y = \Pp W$ by $\iota$ with ramification points that are dual to the points $[w_1]$ and $[w_2]$ in $|W|$. The corresponding branch points $p_1,p_2 \in \overline{Y}$ are then dual to the points $r_1=[w_1^2]$ and $r_2=[w_2^2]$. That is, the lines $L_1$ and $L_2$ are obtained by pulling back $r_1$ and $r_2$ via the projection $|T| \ratto |(\sym^2 W)^\iota|$.

The quadric cone $Q_{(X,\cl,\cl)}$ is obtained by pulling back the smooth quadric $|W|\times |W|$ in $|W \otimes W|$ to $|\MUV|$. This pullback factors through the plane $|(W\otimes W)^\iota|$ and $|W| \times |W|$ intersects this plane to give a conic $q$ over which $Q_{(X,\cl,\cl)}$ is a cone. By construction, the conic $q$ is the locus of $\iota$-invariant rank-1 tensors and can be described as the image of the following map:
\begin{equation}
  |W| \to |(W\otimes W)^\iota| \colon  [w] \mapsto [w\otimes \iota(w)].
  \label{eq:reconstruct-even-bi--parametrization}
\end{equation}

In order to determine the ramification locus of $Q_{(X,\cl,\cl)} \to |T|$, it is sufficient to determine the ramification points of the projection of $q$ down to $|(\sym^2 W)^\iota|$. Using the parametrization \eqref{eq:reconstruct-even-bi--parametrization} of $q$ we can write the projection map as follows:
  \[
    |W| \isoto q \tos |(\sym^2 W)^\iota| \colon  [w] \mapsto [w\cdot \iota(w)].
  \]
  Clearly, the two $\iota$-invariant points $[w_1],[w_2] \in |W|$ are the ramification points. The branch points are $r_1 = [w_1^2]$ and $r_2 = [w_2^2]$. Just as $Q_{(X,\cl,\cl)}$ is the cone over $q$, the branch locus of $Q_{(X,\cl,\cl)} \to |T|$ is the cone over $r_1$ and $r_2$, which is $L_1 \cup L_2$.
\end{proof}

\subsection{Descending $Q_{X,\kappa}$ and $\Gamma_{X,\kappa}$ to the ground field} \label{sec:descend_Q_Gamma}

The following result explains why the constructions given in previous subsections descend to the base field~$k$.
\begin{proposition}\label{P:QGammadescent}
	$X$ be a curve of genus three and let $\kappa\in \kum_X(k)\setminus \{0\}$. Then $Q_{X,\kappa}$ and $\Gamma_{X,\kappa}$ are defined over $k$.
\end{proposition}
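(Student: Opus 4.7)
The plan is to exploit Galois descent, setting $G := \Gal(\ksep/k)$. Since $\kappa \in \kum_X(k)$, the unordered pair of classes $\{[\cl_1], [\cl_2]\}$ is $G$-stable: each $\tau \in G$ either fixes both individually, or swaps them. The ambient spaces $\sym^2 \H^0(\omega_X)$ and $\H^0(\omega_X^{\otimes 2})$ carry canonical $k$-structures, and the multiplication map $j_1$ is $k$-linear. The strategy is first to show that $\MUV$ descends to a $k$-vector space, and then to deduce descent of $\Gamma_{X,\kappa}$ and $Q_{X,\kappa}$ by reinterpreting them as pullbacks of $k$-rational subvarieties.

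The first key observation is that the image $T := j_2(W_1 \otimes W_2) \subseteq \H^0(\omega_X^{\otimes 2})$ depends only on the pair of isomorphism classes $\{[\cl_1], [\cl_2]\}$: rescaling a representative $\cl_i$ rescales the embedding without changing its image. Hence $T$ is $G$-stable and descends to a $k$-subspace of $\H^0(\omega_X^{\otimes 2})$. When $\cl_1 \not\simeq \cl_2$, Lemma~\ref{lem:j2-injective} gives that $j_2$ is injective, so the pullback $\MUV$ is canonically isomorphic to $j_1^{-1}(T) \subseteq \sym^2 \H^0(\omega_X)$, which is visibly defined over $k$. In the self-residual case $\cl_1 \simeq \cl_2 =: \cl$, the class $\cn := \cl \otimes \omega_X^\vee$ is $2$-torsion and automatically $G$-invariant; thus $W := \H^0(\cl)$ has Brauer class of order dividing $2$, making $W \otimes W$ (and hence $\MUV$) descend via the refined diagram~\eqref{eq:reverse--general-bi}.

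Once $\MUV$ is defined over $k$, the projections $|\MUV| \to |\sym^2 \H^0(\omega_X)|$ and $|\MUV| \to |W_1 \otimes W_2|$ are $k$-morphisms. The discriminant hypersurface $\cd \subset |\sym^2 \H^0(\omega_X)|$ is cut out by the determinant of the generic symmetric $3 \times 3$ matrix, hence defined over $k$, so its pullback $\Gamma_{X,\kappa}$ is defined over $k$. For the quadric, the Segre variety $|W_1| \times |W_2| \subset |W_1 \otimes W_2|$ is $G$-stable: a $\tau$ fixing each class individually preserves each ruling, while a $\tau$ swapping the classes interchanges the two rulings of the Segre---either way, the underlying variety is preserved. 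Pulling back to $|\MUV|$ yields $Q_{X,\kappa}$ as a $k$-subvariety.

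The main obstacle is handling the self-residual case, where the individual pencil $\cl$ may fail to descend at the level of line bundles; the saving grace is the $2$-torsion nature of $\cn$, which annihilates the Brauer obstruction to descending $W \otimes W$, so that one need only trace this descent carefully through the pullback diagram~\eqref{eq:reverse--general-bi}.
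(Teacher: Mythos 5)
Your proof is correct and rests on the same underlying mechanism as the paper's --- the Galois-stability of the canonical image of $W_1\otimes W_2$ inside $\H^0(\omega_X^{\otimes 2})$ --- but the two arguments are packaged differently. The paper proceeds geometrically: it forms the Brauer--Severi varieties $\bs_{[\cl_i]}$ over the (at most quadratic) field of definition of the individual classes, considers the divisor-addition morphism $\xa\colon\bs_1\times\bs_2\to\Pp^5=|\omega_X^{\otimes 2}|$ (descended by Weil restriction when Galois swaps the classes), notes that $\im(\xa)$ is Galois-invariant, and then locates $Q_{X,\kappa}$ and $\Gamma_{X,\kappa}$ case by case following Table~\ref{tbl:strata}, including a discussion of which quadratic twist of $Q_{X,\kappa}$ to take in the self-residual case. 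You instead run linear-algebraic Galois descent on the subspace $T=j_2(W_1\otimes W_2)$ and kill the obstruction in the self-residual case via $2\cdot\mathrm{ob}([\cl])=\mathrm{ob}([\omega_X^{\otimes 2}])=0$; this is more uniform and avoids Weil restriction. Two small points deserve tightening, though neither is a genuine gap. First, when Galois swaps $[\cl_1]$ and $[\cl_2]$ the space $|W_1\otimes W_2|$ carries no a priori $k$-structure, so the assertion that the Segre is ``$G$-stable'' should be read as a statement about its image in $|T|$ --- the locus of classes of divisors $D_1+D_2$ with $D_i\in|\cl_i|$, which is intrinsic --- with $Q_{X,\kappa}$ then obtained by pulling back along the $k$-rational projection $|\MUV|\ratto|T|$. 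Second, diagram~\eqref{eq:reverse--general-bi} only covers the non-hyperelliptic self-residual case; when $X$ is hyperelliptic and the pencil is self-residual both $j_1$ and $j_2$ have kernels and $\MUV\not\simeq W\otimes W$, but descent of $\MUV$ still follows because it is the fibre product of two $k$-rational maps once $W\otimes W$ is given a descent datum compatible with $j_2$ (possible precisely because the squared cocycle is a coboundary).
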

\begin{proof}

  Let $\kappa$ correspond to the unordered pair of classes $\{[\cl_1], [\cl_2]\}$ of line bundles, where $\cl_2$ is identified with $\omega_X \otimes \cl_1^\vee$. By hypothesis, $\kappa$ is defined over $k$, so each line bundle class $[\cl_i]$ is defined over at most a quadratic extension $k'$ of $k$. We write $\bs_i=\bs_{[\cl_i]}$ for the Brauer--Severi variety defined over $k'$ corresponding to $[\cl_i]$. The Brauer--Severi variety corresponding to $\omega_X^{\otimes 2}$ is a $\Pp^5$. Divisor addition is represented by a morphism of $k'$-varieties
  \[
   \xa\colon \bs_1 \times \bs_2 \to \Pp^5.
  \]
  Observe that the action of $\Gal(k'/k)$ is limited to swapping $\bs_1,\bs_2$, so the image of $\xa$ is Galois-invariant and therefore defined over $k$. In fact, if $k'\neq k$ then we can descend $\xa$ to $k$ by replacing $\bs_1\times \bs_2$ with the Weil restriction of scalars of $\bs_1$ relative to $k'/k$.
  
 First assume $X$ is not hyperelliptic and the pencils are not self-residual. Then the image of $\xa$ is a quadric surface $Q_{X,\kappa}$ which spans a linear projective 3-space $\ppp$. The identification of $\Pp^5 = |\omega_X^{\otimes 2}|$ with $|\sym^2 \H^0(\omega_X)|$ gives us the discriminant locus $\cd$ which intersected with our $\ppp$ gives $\Gamma_{X,\kappa}$.

 Second, assume $X$ is not hyperelliptic but the pencils are self-residual, so $[L_1]=[L_2]$. In particular, the Brauer--Severi varieties $\bs_1=\bs_2$ are defined over $k$. The map $\xa$ is defined over $k$ and its image is a $\pp$ in $\Pp^5$ over $\kbar$ and therefore over $k$. We denote the branch locus with $q_{\xa}$, which is defined over $k$ and is the image of the diagonal of $\bs_1 \times \bs_1$. We identify $\Pp^5$ with $|\sym^2 \H^0(\omega_X)|$ once again to obtain a plane cubic $E \subset \pp$ as the pullback of the discriminant locus. We can take $Q_{X,\kappa}$ as a double cover of $\PP^2$ branched over $q_\xa$. Then $Q_{X,\kappa}$ admits a smooth quadric model in $\PP^3$, the cover $Q_{X,\kappa}\to\PP^2$ extends to a projection $\PP^3\dashrightarrow\PP^2$ and $\Gamma_{X,\kappa}$ is the cone over $E$. There is choice here: the double cover $Q_{X,\kappa}\to\PP^2$ admits quadratic twists. We can normalize it to have square discriminant, so that its rulings are defined over $k$. It is straightforward to check that taking a quadratic twist of $Q_{X,\kappa}$ corresponds to taking a quadratic twist of the double cover $C\to E$.

 Third, assume $X$ is hyperelliptic but the pencils are not self-residual. As in the first case, we find a quadric surface $Q' \subset \ppp_k \subset |\H^0(\omega_X^{\otimes 2})|$. However, this quadric surface is not $Q_{X,\kappa}$. We have a projection $|\sym^2\H^0(\omega_X)| \ratto |\H^0(\omega_X^{\otimes 2})|$ and the pullback of the $\ppp_k$ above is another projective 3-space $|\MUV| \subset |\sym^2 \H^0(\omega_X)|$. In $|\MUV|$ we find our $\Gamma_{X,\kappa}$ as usual by the pullback of the discriminant locus. The pullback of $Q'$ to $|\MUV|$ is our $Q_{X,\kappa}$ which is a singular quadric cone.

 Finally, assume $X$ is hyperelliptic and the pencils are self-residual. This is a combination of the last two constructions. The image of $\xa$ is a rational $\pp_k$ with branch locus a conic defined over $k$. As in the previous case, pull these back to $|\sym^2 \H^0(\omega_X)|$ via the projection to $|\H^0(\omega_X^{\otimes 2})|$. The pullback of the plane $\pp_k$ is another plane $\pp_k$ but the pullback of the quadric branch locus is a pair of lines. Now this latter $\pp_k$ contains also a cubic plane curve $E$ as the discriminant locus. Proceed as in the second case to find the cone $\Gamma_{X,\kappa}$ over $E$ and a double cover $Q_{X,\kappa}$ of $\pp_k$ branched over the two lines.  
\end{proof}

\section{Milne's correspondence between tritangents and bitangents}\label{sec:classical}

Let $C \subset \ppp$ be a \emph{generic} canonical curve of genus four with a line bundle $\varepsilon$ of order two. We drop the subscripts and denote the quadric $Q_C$ containing $C$ by $Q$, the plane quartic $X_\varepsilon$ associated to $(C,\varepsilon)$ by $X$. The goal of this section is to show that there is a correspondence between bitangents of $X$ and pairs of tritangents of $C$ differing by $\varepsilon$ where two tritangents $H_1$ and $H_2$, $H_i \cdot C=2D_i$, are said to \emph{differ by $\varepsilon$} if $D_1-D_2 \equiv \varepsilon$.

Let $\Gamma$ be the cubic symmetroid corresponding to $\varepsilon$ and $\xq\colon  \pp \to \vppp$ its symmetrization, and $\xc \colon  \pp \ratto \Gamma \subset \ppp$ its adjugate (Section~\ref{sec:adjugation_map}). A line $L \subset \pp$ will be called \emph{generic} if it avoids the base locus of $\xc$.  For generic $L$, $\xq(L)\subset \vppp$ is a conic and $\xc(L) \subset \ppp$ is a twisted cubic. As $C$ is generic, we assume the bitangents of $X$ are generic (Remark~\ref{rem:generic_bitangents}). For a generic line $L \subset \pp$ let $\Lambda_L$ be the enveloping cone of the family of planes defined by $\xq(L) \subset \vppp$. Let $T_L$ stand for the twisted cubic $\xc(L) \subset \Gamma$. We give a modern proof, outlined in Section~\ref{sec:milnes_proof}, of the following statement.

\begin{theorem}[Milne~\cite{Milne1923}]\label{thm:milne}
  Let $L \subset \pp$ be a bitangent of the curve $X$. The enveloping cone $\Lambda_L$ intersects $Q$ along two conics. The two planes spanned by these conics are a pair of tritangents of $C$ that differ by $\varepsilon$.
\end{theorem}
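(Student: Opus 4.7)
My approach has two main parts: first, establish that $\Gamma$ and $\Lambda_L$ are mutually tangent along the twisted cubic $T_L=\xc(L)$, so that $\Gamma\cap\Lambda_L=2T_L$ as a subscheme of $\ppp$; second, use the bitangency of $L$ to $X$ to force $\Lambda_L\cap Q$ to split into two plane conics, and identify their planes as tritangents of $C$ whose contact divisors sum to a divisor in $|\omega_C\otimes\varepsilon|$.

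For the first part, Lemma~\ref{lem:gauss} identifies $\xq(L)$ with the family of tangent planes to $\Gamma$ along $T_L$. These planes all lie in the plane $\Pi_L\subset\vppp$ dual to some $v_L\in\ppp$, so they all pass through $v_L$, and $\Lambda_L$ is a quadric cone with vertex $v_L$. A short computation with Euler's identity on the defining cubic of $\Gamma$ shows that the characteristic line of the family at parameter $l$---the generator of $\Lambda_L$ along which $\xq(l)$ is tangent---contains both $v_L$ and $\xc(l)$. In particular $T_L\subset\Lambda_L$, and at each point of $T_L$ the surfaces $\Gamma$ and $\Lambda_L$ share the tangent plane $\xq(l)$. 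Since $\Gamma\cap\Lambda_L$ and $2T_L$ both have degree six, we conclude $\Gamma\cap\Lambda_L=2T_L$ scheme-theoretically.

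For the second part, I project from $v_L$ onto the $\pp$ of lines through $v_L$. Then $\Lambda_L$ maps onto its base conic $B_L$, while $Q$ (missing $v_L$ for generic $C$) becomes a double cover branched over the conic $B_Q$ parametrizing lines from $v_L$ tangent to $Q$. Restricting, $\Lambda_L\cap Q\to B_L$ is a double cover branched over $B_L\cap B_Q$. The bitangency of $L$ to $X=\xq\inv(\vQ_C)$ translates into bitangency of $\xq(L)$ and $\vQ_C\cap\Pi_L$ as conics in $\Pi_L$, and dualizing gives bitangency of $B_L$ and $B_Q$ in the target $\pp$, so $B_L\cdot B_Q=2p_1'+2p_2'$. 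A double cover of $\p$ with such a square branch divisor splits as two disjoint sections, giving $\Lambda_L\cap Q=K_1\cup K_2$ with each $K_i$ mapping isomorphically to $B_L$ and of degree two in $\ppp$; each $K_i$ is therefore a plane conic spanning a unique plane $H_i$ with $K_i=H_i\cap Q$.

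To conclude, $\Gamma\cap\Lambda_L=2T_L$ and $K_i\subset\Lambda_L$ give $K_i\cdot\Gamma=2(K_i\cdot T_L)$ on $K_i$; combined with $K_i\cap\Gamma=K_i\cap Q\cap\Gamma=H_i\cap C$, this produces $H_i\cdot C=2D_i$ with $D_i:=K_i\cap T_L=T_L\cap H_i$ effective of degree three, so $H_i$ is a tritangent. Moreover $D_1+D_2=T_L\cdot C$ is the six-point intersection of $C$ with a twisted cubic on $\Gamma_\varepsilon$, and the proposition stated just before the theorem provides the natural bijection between divisors in $|\omega_C\otimes\varepsilon|$ and twisted cubics on $\Gamma_\varepsilon$ cutting them out, so $D_1+D_2\in|\omega_C\otimes\varepsilon|$, i.e.~$H_1$ and $H_2$ differ by $\varepsilon$. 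The main obstacle is the duality step in part two: I need to verify that the bitangency condition in $\Pi_L$ propagates via projection from $v_L$ into bitangency of $B_L$ and $B_Q$ in the target $\pp$, which requires identifying the branch locus of $Q\to\pp$ with the dual conic of $\vQ_C\cap\Pi_L$ and checking compatibility of the dualities between planes through $v_L$ and lines through $v_L$.
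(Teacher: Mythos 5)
Your argument is correct and follows the same overall strategy as the paper: show $\Gamma\cdot\Lambda_L=2T_L$, use projective duality to translate bitangency of $L$ to $X$ into the splitting of $\Lambda_L\cap Q$ into two conics, deduce that the spanning planes are tritangents from the evenness of $\Gamma\cdot\Lambda_L$, and identify $D_1+D_2$ with $T_L\cdot C\in|\omega_C\otimes\varepsilon|$. The differences are in how the two key sub-steps are executed. For the tangency of $\Lambda_L$ and $\Gamma$ along $T_L$ you work forwards from the envelope via the Gauss-map identity $\xq=\gamma\circ\xc$ and characteristic lines (the missing justification that the characteristic line at $l$ passes through $\xc(l)$ is reflexivity: it is dual to the tangent line of $\xq(L)$ at $\xq(l)$, which lies in the plane dual to $\xc(l)$); the paper instead works on the resolution $S\to\Gamma$, where the isomorphism $|\co_{\ppp}(2)|\simeq|\vomega_S^{\otimes2}|$ produces the unique quadric with $\psi^*\Lambda\equiv 2\pi^*(L)+\sum_i N_i$ and only afterwards identifies it with the envelope of $\xq(L)$ (Lemma~\ref{lem:cone_through_nodes}, Proposition~\ref{prop:envelope}). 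Your duality step is precisely the content of the paper's Lemma~\ref{lem:cone_lemma}, whose Case~3 carries out exactly the verification you flag as the main obstacle (that the branch conic of $Q\to\pp$ is dual to $\vQ_C\cap\Pi_L$ and that bitangency of the conics in $\Pi_L$ dualizes to bitangency of $B_L$ and $B_Q$); your square-discriminant criterion for the splitting of the double cover $\Lambda_L\cap Q\to B_L$ is equivalent to the paper's count of honest ramification points. The one step you should not leave to a citation is the last: that $T_L\cdot C$ lies in $|\omega_C\otimes\varepsilon|$, and not merely in some $g^2_6$, is exactly where $\varepsilon$ enters the statement, and the proposition you invoke is in the paper a consequence of this circle of results rather than an independent input. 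The fact actually needed is that the blow-down $S\to\pp$ restricts on $C$ to the Prym-canonical map $\rho_\varepsilon$ (Lemma~\ref{lem:blow_down_is_prym}, yielding Corollary~\ref{cor:DLinTL} and Proposition~\ref{prop:DL}), which in turn rests on Theorem~\ref{thm:induced_double_cover} identifying the double cover of $\Gamma$ branched at its nodes with the $\varepsilon$-cover of $C$; with that substitution your proof is complete.
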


\noindent Starting from a pair of tritangents $H_1$ and $H_2$ of $C$ differing by $\varepsilon$, one can reverse the construction to obtain a bitangent of $X$ as follows. Let $D_1$ and $D_2$ be divisors on $C$ such that $H_i \cdot C = 2D_i$. Since $C$ is generic we assume that each $D_i$ is supported on three points.

\begin{corollary}\label{cor:milne_cor1}
 There is a twisted cubic $T$ passing through the six points in the support of $D_1$ and $D_2$. The twisted cubic $T$ is contained in $\Gamma$ and its preimage $\xc\inv(T)$ is a bitangent line of $X$. 
\end{corollary}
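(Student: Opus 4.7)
The plan is to derive the corollary by combining the preceding proposition (existence of a twisted cubic through a generic $D \in |\omega_C \otimes \varepsilon|$) with Theorem~\ref{thm:milne}, viewed as the inverse of the construction $(H_1,H_2) \mapsto L$.

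A divisor-class calculation disposes of the first two assertions. Since $H_i \cdot C = 2D_i$ we have $2D_i \equiv \omega_C$, so $D_1 + D_2 \equiv \omega_C + (D_1 - D_2) \equiv \omega_C \otimes \varepsilon$ (the sign is irrelevant because $\varepsilon$ is $2$-torsion). Hence $D := D_1 + D_2$ is an effective representative of $\omega_C \otimes \varepsilon$, and for generic $C$ and a generic pair of tritangents it is a generic element of $|\omega_C \otimes \varepsilon|$. The preceding proposition then produces a twisted cubic $T$ through the support of $D$ and characterises $\Gamma$ as the unique cubic containing both $C$ and $T$, so $T \subset \Gamma$.

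For the bitangency claim, $\Gamma$ is a Cayley cubic in the generic setting, so $\xc\colon \pp \dashrightarrow \Gamma$ is birational by Proposition~\ref{prop:parametrization} and induces a birational correspondence between lines in $\pp$ and twisted cubics on $\Gamma$. Hence $L := \xc\inv(T)$ is a well-defined line. To show $L$ is a bitangent of $X$, I plan to identify the construction $\Psi\colon (H_1,H_2) \mapsto L$ as inverse to the forward map $\Phi\colon L' \mapsto (H_1^{L'}, H_2^{L'})$ of Theorem~\ref{thm:milne}. This reduces to the geometric identity $T_{L'} \cdot C = D_1^{L'} + D_2^{L'}$ where $T_{L'} := \xc(L')$; once this is established, uniqueness of the twisted cubic through six generic points on $C$ forces $\xc\inv(T_{L'}) = L'$, so $\Psi \circ \Phi = \mathrm{id}$ on generic data, and in particular $L = \Psi(H_1,H_2)$ lies in the image of $\Phi$ and is therefore a bitangent of $X$.

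The main obstacle is the identity $T_{L'} \cdot C = D_1^{L'} + D_2^{L'}$, which is the geometric core of Milne's correspondence. I would approach it via Lemma~\ref{lem:gauss} ($\xq = \gamma \circ \xc$): the conic $\xq(L') \subset \vppp$ then parametrises the tangent planes to $\Gamma$ along $T_{L'}$, and the four points of $\xq(L') \cap \vQ_C$ are precisely those tangent planes along $T_{L'}$ that are also tangent to $Q_C$. Milne's enveloping-cone analysis shows that for a bitangent $L'$ these four tangent planes collapse into two tritangent planes each counted twice; at the three contact points of each tritangent the twisted cubic $T_{L'}$ meets $C$, yielding $T_{L'} \cdot C = D_1^{L'} + D_2^{L'}$ as required. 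Everything else in the argument is formal bookkeeping.
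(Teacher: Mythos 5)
Your divisor-class computation $D_1+D_2\equiv\omega_C\otimes\varepsilon$ is correct and matches the paper, but the rest of the argument has two problems. First, a circularity in how you get $T\subset\Gamma$: you produce $T$ as \emph{the} twisted cubic through the six points and then conclude $T\subset\Gamma$ because ``$\Gamma$ is the unique cubic containing both $C$ and $T$.'' That uniqueness statement is Corollary~\ref{cor:milne_cor2}, whose proof in the paper \emph{uses} the present corollary (it needs to already know that $\Gamma$ contains $T$). The non-circular route, which is what the paper does, is to let $L\subset\pp$ be the line with $\rho_\varepsilon^*(L)=D_1+D_2$ and \emph{define} $T:=\xc(L)$; then $T\subset\Gamma$ by construction, and $T$ passes through the support of $D_1+D_2$ by Corollary~\ref{cor:DLinTL}.

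Second, and more seriously, your bitangency argument does not close. Establishing $\Psi\circ\Phi=\mathrm{id}$ on bitangents only shows that $\Phi$ is injective and that every \emph{bitangent} is recovered by $\Psi$ from its associated tritangent pair; it does not show that $\Psi(H_1,H_2)$ is a bitangent for an \emph{arbitrary} pair of tritangents differing by $\varepsilon$. For that you would need $(H_1,H_2)$ to lie in the image of $\Phi$, i.e.\ the surjectivity of $\Phi$ onto such pairs, which you never prove (it could be rescued by the count $28=28$, but you do not invoke it). The paper sidesteps this entirely by using the biconditional in Proposition~\ref{prop:bits_trits}: since $(H_1\cup H_2)\cdot C=2D_1+2D_2=2D_L$, the reducible quadric $H_1\cup H_2$ lies in the pencil $\cp_L$, and the ``only if'' direction of that proposition immediately forces $L$ to be a bitangent of $X$. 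That one-line observation is the missing ingredient; your enveloping-cone discussion at the end is essentially the content of the proof of Proposition~\ref{prop:bits_trits} and Lemma~\ref{lem:cone_lemma}, but deployed in the wrong direction.
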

\begin{proof}
  Identify the lines in the ambient space $\pp$ of $X$ with divisors in $|\omega_C \otimes \varepsilon|$ (Theorem~\ref{thm:main_prym}). Let $L \subset \pp$ be the line corresponding to the divisor $D_1+D_2$. We show in Corollary~\ref{cor:DLinTL} that the twisted cubic $T_L = \xc(L)$ in $\Gamma$ passes through the points in the support of $D_1+D_2$. This line must be a bitangent of $X$ by Proposition~\ref{prop:bits_trits}.
\end{proof}

In fact, this proof implies that if we view a generic divisor $D \in |\omega_C \otimes \varepsilon|$ as six points on the canonical curve $C \subset \ppp$, then we can find a twisted cubic $T \subset \ppp$ passing through $D \subset C$. Moreover, this twisted cubic can be used to reconstruct $\Gamma$ as follows.

\begin{corollary}\label{cor:milne_cor2}
  Let $T$ be the twisted cubic passing through the six points in $C$ defining a generic divisor $D \in |\omega_C \otimes \varepsilon|$. Then, there is a unique cubic hypersurface containing $C$ and $T$---it is the Cayley cubic $\Gamma$.
\end{corollary}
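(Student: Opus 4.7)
The proof strategy is as follows. By the proposition recalled in the proof of Corollary~\ref{cor:milne_cor1} (taking $L$ to be the line in $\pp$ corresponding to $D$), the twisted cubic $T$ exists, lies on $\Gamma$, and passes through the six points of $D$; so $\Gamma$ is one cubic hypersurface containing $C\cup T$, and the content of the corollary is uniqueness.

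First I would control the linear system of cubics through $C$. Since $C=Q\cap\Gamma$ is a complete intersection of type $(2,3)$ in $\ppp$, the Koszul resolution gives
\begin{equation*}
  h^0(\ppp,\ci_C(3)) = h^0(\co_{\ppp}) + h^0(\co_{\ppp}(1)) = 5,
\end{equation*}
with $H^0(\ci_C(3))$ spanned by $\Gamma$ together with the four cubics $Q\cdot\ell$ for $\ell$ running over a basis of $H^0(\co_{\ppp}(1))$. Linear independence of this spanning set is immediate since $\Gamma$ is irreducible and not divisible by $Q$. Every cubic through $C$ therefore has the shape $\Gamma'=\lambda\Gamma+Q\cdot\ell$, and $\Gamma'\supset T$ if and only if $(Q\cdot\ell)|_T=0$ in $H^0(T,\co_T(3))$. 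Since $T\cong\p$ is integral, the coordinate ring of $T$ is an integral domain, so the product $Q|_T\cdot\ell|_T$ can vanish only if one of the two factors does. It therefore suffices to prove the single geometric fact that $T\not\subset Q$ and then to invoke the non-degeneracy of $T$.

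The geometric fact $T\not\subset Q$ is the main (and really the only) substantive step, and it is essentially free: if $T\subset Q$, then $T\subset Q\cap\Gamma=C$, contradicting that $T$ and $C$ are integral curves of distinct degrees $3$ and $6$. Given this, $Q|_T$ is a nonzero element of $H^0(\co_T(2))\cong H^0(\co_\p(6))$, so $Q|_T\cdot\ell|_T=0$ forces $\ell|_T=0$. Since $T$ spans $\ppp$, this in turn forces $\ell=0$, whence $\Gamma'=\lambda\Gamma$, proving that $\Gamma$ is the unique cubic through $C\cup T$.
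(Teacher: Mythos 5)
Your proof is correct, but it takes a genuinely different route from the paper's. The paper follows Milne's original argument: a cubic surface meeting a twisted cubic in ten points contains it by B\'ezout, the six contact points are already imposed by containing $C$, so one imposes four further general points $t_1,\dots,t_4$ of $T$ on the four-dimensional projective space of cubics through $C$ and checks that these four conditions are independent for generic data (a genericity verification the paper leaves as ``can be readily checked''). You instead use the Koszul resolution of the complete intersection $C=Q\cap\Gamma$ to write every cubic through $C$ as $\lambda\Gamma+Q\cdot\ell$, and then reduce uniqueness to the two facts that $T\not\subset Q$ (immediate from $T\subset Q\cap\Gamma=C$ being absurd on degree grounds) and that $T$ is integral and spans $\ppp$. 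Your argument buys something concrete: it eliminates the unproved genericity check on the auxiliary points and shows that uniqueness holds for \emph{any} honest twisted cubic $T\subset\Gamma$, with the only hypotheses being that $\Gamma$ is irreducible and that $C$ is the scheme-theoretic complete intersection $Q\cap\Gamma$ (which holds here since $\Gamma$ is a Cayley cubic, cf.\ Corollary~\ref{cor:odd_is_evil}); the genericity of $D$ is needed only to produce $T$ in the first place, via Corollary~\ref{cor:milne_cor1}. The paper's version stays closer to Milne's classical synthetic reasoning, which is the point of that section, but yours is the tighter proof.
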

\begin{proof}
  We follow Milne's original argument here. Any cubic surface containing ten points on $T$ will contain $T$ by B\'ezout theorem. A cubic surface containing $C$ will already contain the six points of contact which lie on $T$. Fix four general points $t_1,\dots,t_4$ on $T$ and consider the locus of cubic surfaces containing $C$ as well as $t_1,\dots,t_4$. Since the space of cubics containing $C$ is a projective space of dimension four, we expect to find a unique cubic surface satisfying these conditions. This expectation is realized when $C$ and $t_1,\dots,t_4$ are generic, as can be readily checked. We are done since $\Gamma$ contains both $T$ and $C$ by Corollary~\ref{cor:milne_cor1}.
\end{proof}

\begin{remark}\label{rem:generic_bitangents}
  We claim that if $C$ is generic, then the divisor $D_1 + D_2$ defined by a pair of tritangent planes of $C$ will be generic in the sense of this result. It suffices to generate one such $C$. To construct it, start with four lines $L_1,\dots,L_4$ in general position and construct $\xc\colon \pp \ratto \ppp$ using the cubics passing through the six points $L_i \cap L_j$, $i,j=1,\dots,4$. A generic quartic plane curve $X$ clearly has bitangents avoiding these six points. To construct $C$ from $X$, form the adjugate map $\xq\colon \pp \to \vppp$, choose a general quadric $\vQ$ containing $\xq(X)$ and intersect the dual of $\vQ$ with $\xc(\pp)$.
\end{remark}

\subsection{Outline for the proof of Milne's theorem}\label{sec:milnes_proof}

  A quadric surface $Q' \subset \ppp$ is said to be a \emph{sextactic quadric} of $C$ if $Q' \cdot C = 2D$. If $D \in |\omega_C \otimes \varepsilon|$ then $Q'$ is said to \emph{belong to the system corresponding to $\varepsilon$}. There are 255 systems, each corresponding to an order two point of $\jac_C[2]$. Each system is a three dimensional cone over a second Veronese image of $\pp$ with vertex $Q$.

  As usual $\rho_\varepsilon\colon C \to \pp$ denotes the Prym-canonical map (see also Theorem~\ref{thm:main_prym}). For a line $L \subset \pp$ let $D_L = \rho_\varepsilon^*(L) \in |\omega_C \otimes \varepsilon|$. The set of quadrics in $\ppp$ intersecting $C$ along $2D_L$ span a pencil containing $Q$. Denote this pencil by $\cp_L \subset |\co_{\ppp}(2)|$. 
  
Using Proposition~\ref{prop:DL} we distinguish an element of $\cp_L$: For a generic line $L \subset \pp$ the envelope $\Lambda_L$ of the conic $\xq(L) \subset \vppp$ is contained in $\cp_L$. Now we wish to determine the lines $L$ for which $\cp_L=\langle \Lambda_L,Q \rangle$ contains a reducible quadric.
  
\begin{proposition}\label{prop:bits_trits}
  The pencil $\cp_L$ contains a reducible quadric $H_1 \cup H_2$ if and only if $L$ is a bitangent of $X$. In this case, the planes $H_i$ are tritangents of $C$.
\end{proposition}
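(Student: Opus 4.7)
The plan is to analyze the pencil $\cp_L$ through its base locus. By Proposition~\ref{prop:DL}, $\Lambda_L\in\cp_L$, so $\cp_L=\langle Q,\Lambda_L\rangle$ with base locus $B_L:=Q\cap\Lambda_L$, a complete intersection curve of arithmetic genus $1$ and degree $4$. I would first show that the existence of a reducible member $H_1\cup H_2\in\cp_L$ is equivalent to a splitting $B_L=(H_1\cap Q)+(H_2\cap Q)$ into two plane conics: one direction is immediate from $B_L\subset H_1\cup H_2$, and for the converse, when $B_L$ splits and is not contained in any plane, the full linear system of quadrics through $B_L$ coincides with $\cp_L$ and hence contains $H_1\cup H_2$. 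Thus the question reduces to: when does $B_L$ split?

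Next I would project from the vertex $v_\Pi$ of the cone $\Lambda_L$ onto $\Pi^\vee\cong\PP^2$. The cone collapses to its base conic $q_\Lambda\subset\Pi^\vee$, while $Q\to\Pi^\vee$ becomes a $2$-to-$1$ cover branched over a smooth conic $q_b$. Restricting to $B_L$ yields a double cover $B_L\to q_\Lambda$ branched along $q_\Lambda\cap q_b$; since $\Pic(q_\Lambda)\simeq\mathbb{Z}$, this splits iff the degree-$4$ branch divisor has only even multiplicities, iff $q_\Lambda\cap q_b=2s_1+2s_2$, iff $q_\Lambda$ and $q_b$ are bitangent in $\Pi^\vee$. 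I would then dualize in $\Pi\leftrightarrow\Pi^\vee$, using that $q_\Lambda^\vee=\xq(L)$ (the conic of tangent planes to $\Lambda_L$) and $q_b^\vee=\vQ\cap\Pi$ (verified in coordinates centered at $v_\Pi$: the map $s\mapsto T_{r}Q$ at the preimage $r\in Q$ is exactly the Gauss map of $q_b$). Since bitangency is self-dual, the condition becomes $\xq(L)\cdot\vQ=2\xq(p_1)+2\xq(p_2)$; via the isomorphism $\xq|_L\colon L\isoto\xq(L)$ and $X=\xq\inv(\vQ)$ from Theorem~\ref{thm:main_prym}, this is precisely $L\cdot X=2p_1+2p_2$, i.e., $L$ is a bitangent of $X$.

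For the tritangent claim, I would set $E_i:=H_i\cdot C$, so that $E_1+E_2=2D_L$. Since $C\subset Q$, the support of $E_i$ lies in $C_i:=H_i\cap Q$, and the two smooth conics $C_1,C_2$ meet only in the two nodes $C_1\cap C_2=\{r_1,r_2\}$, the preimages in $Q$ of $s_1,s_2\in q_b\cap q_\Lambda$. For generic $(C,\varepsilon)$ one expects $r_1,r_2\notin C$; granting this, $\supp E_1\cap\supp E_2=\varnothing$, and then the evenness of $E_1+E_2=2D_L$ forces each $E_i$ to be even individually, giving $E_i=2D_i$ with $\deg D_i=3$ and hence $H_i$ tritangent to $C$. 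The main obstacle I foresee is this genericity claim $r_1,r_2\notin C$: I would either justify it via an expected-cardinality argument (both the $28$ bitangents of $X$ and the $28$ unordered pairs of tritangents of $C$ differing by $\varepsilon$ come from $\theta$-characteristic counts, so a coincidence $r_i\in C$ on generic $C$ would collapse an expected bijection), or as a fallback, prove the tritangency directly by a coordinate computation with $H_i=\{x_3+b_i=0\}$, $b_i=(\ell\pm m)/2$, and $m^2=q_b-4tq_\Lambda$, showing that $\det(\ca_V|_{H_i})$ restricts to $C_i$ as a perfect square.
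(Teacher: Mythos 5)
Your first half is sound and follows essentially the same route as the paper's Lemma~\ref{lem:cone_lemma}: project from the vertex of $\Lambda_L$, compare the base conic of the cone with the branch conic of $Q$ over the target plane, and dualize back to $\vLam$. Phrasing the splitting of $B_L=Q\cap\Lambda_L$ as the splitting of a double cover of $\PP^1$, detected by the parity of the degree-four branch divisor, is a clean equivalent of the paper's ramification-point analysis in Case~3 of that lemma. Two points you gloss over: you implicitly assume the vertex of $\Lambda_L$ does not lie on $Q$ (the paper treats that case separately; under the standing genericity hypothesis of Section~\ref{sec:classical} it does not occur), and you should note explicitly that the degenerate case $q_\Lambda\subset q_b$, equivalently $\xq(L)\subset\vQ$, i.e.\ $L\subset X$, is impossible, so the reducible member you produce is automatically reduced and $H_1\neq H_2$.

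The genuine gap is exactly where you flag it: the claim that the two intersection points $r_1,r_2$ of the conics $\sigma_i:=H_i\cap Q$ avoid $C$. Neither of your fallbacks is a proof --- the ``expected-cardinality'' heuristic only says a coincidence would be surprising, and the coordinate computation is not carried out. The paper closes this with a short synthetic argument (Lemma~\ref{lem:reducible_is_tritangent}) that needs no genericity beyond the smoothness of $C=\Gamma\cap Q$. The key input is Proposition~\ref{prop:envelope}: $\Lambda_L\cdot\Gamma=2\,\xc(L)$, so at any point where both surfaces are smooth and meet, their tangent planes coincide. At a point $p\in\sigma_1\cap\sigma_2$ the intersection $\Lambda_L\cap Q$ is singular, and $p$ is a smooth point of $\Lambda_L$ (since at most one $\sigma_i$ can be singular and a smooth conic on a cone avoids its vertex, $\sigma_1\cap\sigma_2$ misses the vertex); hence $\mathrm{T}_p\Lambda_L=\mathrm{T}_pQ$. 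If such a $p$ also lay on $C\subset\Gamma$, then $\Gamma$ is smooth at $p$ and $\mathrm{T}_p\Gamma=\mathrm{T}_p\Lambda_L=\mathrm{T}_pQ$, contradicting the transversality of $\Gamma$ and $Q$ along the smooth complete intersection $C$. This gives $\sigma_1\cap\sigma_2\cap C=\emptyset$ unconditionally, after which your parity argument $E_1+E_2=2D_L$ with disjoint supports finishes the tritangency. Replace your genericity appeal with this argument.
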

\begin{proof}
  Lemma~\ref{lem:cone_lemma} implies that $\langle \Lambda_L ,Q \rangle$ contains a reducible quadric if and only if the locus of enveloping tangent planes $\xq(L)$ of $\Lambda_L$ is bitangent to the dual quadric $\vQ \subset \vppp$. By Theorem~\ref{thm:main_prym} we know that $X$ equals $\xq\inv(\vQ)$. Therefore, the curve $\xq(L)$ is bitangent to $\vQ$ if and only if $L$ is bitangent to $X$.  It is clear that $H_1$ and $H_2$ cannot be equal, giving a non-reduced element in $\cp_L$. Otherwise, $D_L$ would belong to the canonical linear system $|\omega_C|$ instead of $|\omega_C \otimes \varepsilon|$. Now apply Lemma~\ref{lem:reducible_is_tritangent} to conclude that the planes $H_i$ are tritangents to $C$.
\end{proof}

These argument above completes the proof of Theorem~\ref{thm:milne} modulo the proofs of Proposition~\ref{prop:DL}, Lemma~\ref{lem:cone_lemma} and Lemma~\ref{lem:reducible_is_tritangent}. These proofs appear in Sections~\ref{sec:classical_cayley} and~\ref{sec:cone_lemma}.

\subsection{Removing the genericity assumptions}

In this subsection we will give, without proof, an indication as to what needs to be changed if genericity assumptions are to be removed.
  
  First, suppose that $(C,\varepsilon)$ is not bielliptic and the quadric $Q$ containing $C$ is smooth. Assuming no other restrictions on $(C,\varepsilon)$, the bitangents of $X$ may pass through some of the base points of the parametrization $\xc\colon \pp \ratto \Gamma$. Depending on how many base points are contained in a particular bitangent $L \subset \pp$ the corresponding ``twisted cubic'' $T_L$ may degenerate to any one of the following: a union of a conic and a line intersecting it at a point, a union of three concurrent non-planar lines, a union of two skew lines and a third line intersecting the first two. 
  
  Except for finitely many lines $L$, the image $\xq(L)$ is a conic and the correct choice of $\Lambda_L$ is its enveloping cone. However, for finitely many $L$, the map $L \overset{\xq}{\to} \xq(L)$ will be 2-to-1 and $\xq(L)$ will be a line. In this case, the correct choice of $\Lambda_L$ is the union of the two planes dual to the branch points on $\xq(L)$. These two planes are already tritangents---unless the pencil $\langle \Lambda_L, Q \rangle$ contains another pair of planes, in which case, the latter pair of planes will be tritangents. 

  When $Q$ is singular, we must take $(C,\varepsilon)$ to be even since an odd pair $(C,\varepsilon)$ does not behave well from the very start (e.g.\ see Corollary~\ref{cor:odd_is_evil}). When $(C,\varepsilon)$ is even, the canonical image of $X$ is a conic $\overline{X} \subset \pp$. The $28$ ``bitangents'' of $X$ will then be the 28 lines passing through any two of the images of the eight Weierstrass points in $\overline{X}$. The images $\xq(L)$ of these bitangent lines are always conics and the correct choice of $\Lambda_L$ is again the envelope of $\xq(L)$.

\subsection{Cayley cubic and enveloping cones}\label{sec:classical_cayley} 

Let $n_1,n_2,n_3,n_4$ denote the four nodes of the Cayley cubic $\Gamma$ and $\overline{E}_{ij} := \langle n_i,n_j \rangle \subset \Gamma$ the lines between them. Let $\psi\colon S \to \Gamma$ be the resolution of the four nodes by blow-ups. We will write $N_i \subset S$ for the preimage of the node $n_i$ and $E_{ij}$ for the proper transform of $\overline{E}_{ij}$. The $E_{ij}$'s are disjoint $(-1)$-curves and their blow-down gives a map $\pi\colon S \to \pp$.  The images of $N_i$ in $\pp$ are lines, denoted by $\overline{N}_i$. The curves $E_{ij}$ collapse to points $e_{ij}$.  The line class in $\pp$ as well as its pullback $\pi^* \ell$ in $S$ will be denoted by $\ell$.  The following result is straightforward and well known so we state it without proof.

\begin{lemma}\label{lem:anti-S}\label{lem:StoP3}
 For all $i=1,\dots,4$ we have $N_i \equiv \ell - \sum_{j \neq i} E_{ij}$. Also $\omega_S \equiv -3\ell + \sum_{i < j} E_{ij}$. Finally, the map $\psi\colon S \to \ppp$ is induced by the anti-canonical linear system $\vomega_S$. \qed
\end{lemma}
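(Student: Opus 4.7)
\textbf{Proof plan for Lemma~\ref{lem:anti-S}/\ref{lem:StoP3}.}

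My plan is to exploit that $\pi\colon S\to\pp$ realizes $S$ as the blow-up of $\pp$ at the six points $e_{ij}=\pi(E_{ij})$, $i<j$, so that $\pic(S)$ is freely generated by $\ell$ and the $E_{ij}$ with the intersection form $\ell^2=1$, $E_{ij}^2=-1$, $\ell\cdot E_{ij}=0$ and $E_{ij}\cdot E_{kl}=0$ whenever $\{i,j\}\neq\{k,l\}$. For the first assertion I would write $N_i\equiv d_i\ell-\sum_{j<k}m_{i,\{jk\}}E_{jk}$ and extract the coefficients from three incidence facts: (i) $N_i^2=-2$, since $\psi$ resolves the node $n_i\in\Gamma$ (an $A_1$); (ii) $N_i\cdot N_j=0$ for $i\ne j$, since these exceptional divisors lie over distinct points; and (iii) $N_i\cdot E_{jk}=\#(\{i\}\cap\{j,k\})$, since the proper transform $E_{ij}$ of the line $\overline{E}_{ij}\subset\Gamma$ through $n_i,n_j$ meets exactly the exceptional divisors $N_i,N_j$, transversally and once each. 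These intersections force $d_i=1$ and $m_{i,\{jk\}}=\delta_{i\in\{j,k\}}$, giving $N_i\equiv \ell-\sum_{j\ne i}E_{ij}$. Geometrically, this also records that the four lines $\overline{N}_1,\dots,\overline{N}_4\subset\pp$ form a configuration whose six pairwise intersections are precisely the $e_{ij}$.

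The canonical class computation is then immediate from the blow-up formula for a composition of six point blow-ups:
\[
\omega_S \;\equiv\; \pi^*\omega_{\pp}+\sum_{i<j}E_{ij} \;\equiv\; -3\ell+\sum_{i<j}E_{ij}.
\]

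For the last assertion, I would write $\psi^*\co_{\ppp}(1)\equiv a\ell-\sum_{i<j}b_{ij}E_{ij}$ and pin down the coefficients via two intersection constraints: $\psi^*\co_{\ppp}(1)\cdot N_i=0$ (since $\psi$ contracts $N_i$) yields $\sum_{j\ne i}b_{ij}=a$ for each $i$; and $(\psi^*\co_{\ppp}(1))^2=\deg\Gamma=3$ yields $a^2-\sum b_{ij}^2=3$. The first family of equations forces all $b_{ij}$ to be equal (by symmetry, or by subtracting pairs of equations), say $b_{ij}=b$, whence $3b=a$; the degree equation then gives $a^2-6b^2=3$, which has the unique positive solution $(a,b)=(3,1)$. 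Hence $\psi^*\co_{\ppp}(1)\equiv 3\ell-\sum_{i<j}E_{ij}\equiv-\omega_S$, and since $S$ is rational we have $\pic(S)\iso N^1(S)$, so $\psi^*\co_{\ppp}(1)\simeq\vomega_S$ as line bundles. Finally, to see that $\psi$ is induced by the \emph{full} anti-canonical system I would invoke Riemann--Roch together with Kawamata--Viehweg vanishing: $\vomega_S$ is nef with $\vomega_S^2=3$, so $h^1(\vomega_S)=h^2(\vomega_S)=0$ and $h^0(\vomega_S)=\chi(\co_S)+\tfrac12\vomega_S\cdot(-2\omega_S)=1+3=4$, matching $h^0(\ppp,\co(1))$.

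The main obstacle, and the only step requiring any care, is justifying the incidence assertion $N_i\cdot E_{jk}=\delta_{i\in\{j,k\}}$; everything else is an essentially formal intersection-theoretic calculation on a blow-up of $\pp$. This incidence reduces to the geometric fact that on $\Gamma$ itself the line $\overline{E}_{ij}$ passes smoothly through exactly the two nodes $n_i,n_j$ and through no others, which is a direct consequence of the explicit form of the Cayley cubic (e.g.\ case (1) of Theorem~\ref{T:symmetroid_classification}) and the fact that resolving a node by a single blow-up produces an exceptional $\PP^1$ meeting the proper transform of any smooth branch through the node transversally in one point.
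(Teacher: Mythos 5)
The paper states this lemma without proof, so there is nothing to compare against; your overall strategy (realize $S$ as the blow-up of $\pp$ at the six points $e_{ij}$ and do intersection theory in $\pic(S)=\Zz\ell\oplus\bigoplus\Zz E_{ij}$) is the standard and correct one, and your treatments of $N_i$ and of $\omega_S$ are fine. There is, however, one step that fails as written: the claim that the four equations $\sum_{j\neq i}b_{ij}=a$ ($i=1,\dots,4$) force all $b_{ij}$ to be equal. They do not. Subtracting the equations for $i=1$ and $i=2$ gives $b_{13}+b_{14}=b_{23}+b_{24}$, not $b_{13}=b_{23}$; the full solution space is $b_{12}=b_{34}=s$, $b_{13}=b_{24}=t$, $b_{14}=b_{23}=u$ with $s+t+u=a$ (the three "syntheme" pairings of the four indices). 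Worse, even after imposing your second constraint $a^2-\sum b_{ij}^2=3$ the class is still not pinned down: $(a;s,t,u)=(5;3,1,1)$ satisfies both $s+t+u=a$ and $a^2-2(s^2+t^2+u^2)=25-22=3$. So the two numerical conditions you use are genuinely insufficient to identify $\psi^*\co_{\ppp}(1)$, and "by symmetry" is not available without first establishing that the relevant $S_4$ of automorphisms of $\Gamma$ acts on $S$ fixing $\ell$, which you do not do.

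The repair is immediate and makes the computation shorter: by the projection formula, $b_{ij}=\psi^*\co_{\ppp}(1)\cdot E_{ij}=\co_{\ppp}(1)\cdot\psi_*E_{ij}=\deg\overline{E}_{ij}=1$, since $\overline{E}_{ij}$ is a line; then $\psi^*\co_{\ppp}(1)\cdot N_i=0$ gives $a=3$, so $\psi^*\co_{\ppp}(1)\equiv 3\ell-\sum_{i<j}E_{ij}\equiv\vomega_S$, and the degree equation is not needed at all. One further minor point: since the paper works over an arbitrary field of characteristic $\neq 2$, you should not invoke Kawamata--Viehweg vanishing for the count $h^0(\vomega_S)=4$; instead restrict to a member of $|\vomega_S|$ (or simply note that $\psi^*$ injects $\H^0(\ppp,\co_{\ppp}(1))$ into $\H^0(S,\vomega_S)$ because $\Gamma$ spans $\ppp$, and bound $h^0(\vomega_S)$ from above by the structure sequence of such a member), which works in any characteristic.
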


From Lemma~\ref{lem:StoP3} we see that any quadric hypersurface $Q' \subset \ppp$ pulls back on $S$ to a divisor in $|\vomega_S^{\otimes 2}|$. Since $C$ misses the nodes of $\Gamma$, we identify $C$ and its pullback to $S$.

\begin{lemma}
  The pullback map $|\co_{\ppp}(2)| \to |\vomega_S^{\otimes 2}| : Q' \mapsto \psi^*(Q')$ is an isomorphism.
\end{lemma}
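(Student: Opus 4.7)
The plan is to factor the pullback map as
\[
  H^0(\ppp, \co_{\ppp}(2)) \to H^0(\Gamma, \co_\Gamma(2)) \to H^0(S, \vomega_S^{\otimes 2})
\]
and show that both arrows are isomorphisms of ten-dimensional vector spaces. Projectivizing then yields the claimed isomorphism of linear systems, so the entire argument can avoid computing $h^0(\vomega_S^{\otimes 2})$ directly on $S$.

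First I would handle the arrow on the left by twisting the ideal sheaf sequence of the cubic $\Gamma \subset \ppp$ by $\co_{\ppp}(2)$ to obtain
\[
  0 \to \co_{\ppp}(-1) \to \co_{\ppp}(2) \to \co_\Gamma(2) \to 0,
\]
and observing that $H^i(\ppp, \co_{\ppp}(-1)) = 0$ for $i = 0, 1$, which forces the restriction to be an isomorphism. For the arrow on the right I would use that the four singularities of $\Gamma$ are ordinary nodes, i.e.\ rational double points of type $A_1$, so the minimal resolution $\psi \colon S \to \Gamma$ satisfies $\psi_*\co_S = \co_\Gamma$. Combined with the identification $\psi^*\co_\Gamma(1) \simeq \vomega_S$ furnished by Lemma~\ref{lem:StoP3}, the projection formula yields $\psi_*(\vomega_S^{\otimes 2}) \simeq \co_\Gamma(2)$, and hence pullback induces an isomorphism on global sections.

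The main obstacle, if it deserves that name, is the identity $\psi_*\co_S = \co_\Gamma$: it is what gives surjectivity, and it rests on the classical fact that ordinary nodes are rational singularities. With this in hand the rest is formal. An alternative, slightly longer route would compute $h^0(S, \vomega_S^{\otimes 2}) = 10$ directly via Riemann--Roch on the weak del~Pezzo surface $S$ (with $K_S^2 = 3$, giving $\chi(\vomega_S^{\otimes 2}) = 1 + 3 K_S^2 = 10$) together with Kawamata--Viehweg vanishing applied to $-2K_S = K_S + (-3K_S)$, and check injectivity of pullback by noting that a quadric in its kernel would have to contain the cubic $\Gamma$, which is impossible for degree reasons.
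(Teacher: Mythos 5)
Your proof is correct, but it takes a genuinely different route from the paper's. The paper proves injectivity by noting that a quadric vanishing on $\psi(S)=\Gamma$ would contain the cubic $\Gamma$, and then matches dimensions: it computes $h^0(\vomega_S^{\otimes 2})=10$ from the exact sequence $0 \to \co_S \to \vomega_S^{\otimes 2} \to \omega_C^{\otimes 2} \to 0$ (using $C\equiv\vomega_S^{\otimes 2}$, adjunction, the vanishing of the irregularity of the rational surface $S$, and Riemann--Roch on $C$, which gives $h^0(\omega_C^{\otimes 2})=9$). You instead factor through $\H^0(\Gamma,\co_\Gamma(2))$ and prove surjectivity directly: the ideal-sheaf sequence handles restriction from $\ppp$ to the cubic, and $\psi_*\co_S=\co_\Gamma$ plus the projection formula handles the resolution. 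Your argument never uses the curve $C$, which makes it more intrinsic and in fact valid for any normal cubic symmetroid with its minimal resolution, not just the four-nodal Cayley cubic of this section; note that for the identity $\psi_*\co_S=\co_\Gamma$ normality of $\Gamma$ already suffices (Zariski's main theorem), so the appeal to rationality of the $A_1$ singularities is slightly more than you need. The paper's route, by contrast, leans on data already established in the section ($C\in|\vomega_S^{\otimes 2}|$ and $\vomega_S|_C\simeq\omega_C$) and so is shorter in context. Your alternative computation $\chi(\vomega_S^{\otimes 2})=1+3K_S^2=10$ on the weak del~Pezzo surface of degree three, with Kawamata--Viehweg killing the higher cohomology, is also correct and is the closest in spirit to the paper's dimension count.
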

\begin{proof}
  The pullback map is obtained from the restriction map $\H^0(S,\vomega_S^{\otimes 2}) \to \H^0(\ppp,\co_{\ppp}(2))$. Since $\Gamma$ is not contained in a quadric, this map has trivial kernel. As $h^0(\ppp,\co_{\ppp}(2))=10$, it remains to show $h^0(S,\vomega_S^{\otimes 2})=10$. Since $C$ is a quadric section we have $C \equiv \vomega_S^{\otimes 2}$. By adjunction we get $\vomega_S|_C \simeq \omega_C$. This yields the following exact sequence:
$  \begin{tikzcd}[cramped, sep=small] 0 \arrow[r] & \co_S \arrow[r] & \vomega_S^{\otimes 2} \arrow[r] & \omega_C^{\otimes 2} \arrow[r] & 0 \end{tikzcd}$.  As $S$ is rational, the irregularity of $S$ is 0. Taking global sections of this sequence gives that $h^0(\vomega_S^{\otimes 2})= h^0(\omega_C^{\otimes 2})+1$. By Riemann--Roch on $C$ we have $h^0(\omega_C^{\otimes 2})=9$.
\end{proof}

\begin{lemma}\label{lem:cone_through_nodes}
  Given any line $L \subset \pp$, there is a unique quadric $\Lambda$ in $\ppp$ such that $\psi^*(\Lambda) \equiv 2\pi^*(L) + N_1+N_2+N_3+N_4$.
\end{lemma}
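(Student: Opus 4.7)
The plan is to reduce the statement to the isomorphism established in the preceding lemma. That lemma identifies the 10-dimensional space $\H^0(\ppp,\co_\ppp(2))$ with $\H^0(S,\vomega_S^{\otimes 2})$ via $\psi^*$; since $S$ is integral, each nonzero global section of $\co_S(-2\omega_S)$ determines its zero divisor uniquely up to scalar, so the bijection on sections descends to a bijection between quadrics $\Lambda\subset\ppp$ and effective divisors in $|{-2\omega_S}|$ on $S$. Under this identification, finding the required $\Lambda$ reduces to exhibiting $D := 2\pi^*L + N_1+N_2+N_3+N_4$ as an effective divisor in the class $-2\omega_S$.

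For the class computation, I would apply Lemma~\ref{lem:anti-S}. Since $N_i \equiv \ell - \sum_{j\neq i} E_{ij}$ and each $E_{ij}$ appears in exactly two of the $N_i$, summing gives
\[
\sum_{i=1}^4 N_i \equiv 4\ell - 2\sum_{i<j} E_{ij}.
\]
Combined with $2\pi^*L\equiv 2\ell$, this yields
\[
D \equiv 6\ell - 2\sum_{i<j} E_{ij} = -2\omega_S,
\]
where the last equality uses the formula $\omega_S \equiv -3\ell + \sum_{i<j} E_{ij}$ from Lemma~\ref{lem:anti-S}. The divisor $D$ is manifestly effective, being a sum of effective divisors.

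Therefore $D$ represents a point of $|{-2\omega_S}|$, and the bijection from the previous lemma produces a unique quadric $\Lambda\subset\ppp$ with $\psi^*\Lambda = D$. The main step requiring care is the passage from the section-level isomorphism of the previous lemma to a bijection of effective divisors, but this is automatic from integrality of $S$ and injectivity of the restriction $\H^0(\ppp,\co_\ppp(2))\to\H^0(\Gamma,\co_\Gamma(2))$ (an irreducible cubic like $\Gamma$ is contained in no quadric). No serious obstacle is expected.
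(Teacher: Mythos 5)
Your proof is correct and follows the same route as the paper: the paper's own argument is simply that the claim is an immediate consequence of the preceding isomorphism $|\co_{\ppp}(2)| \isoto |\vomega_S^{\otimes 2}|$ together with the class computation $2\pi^*(L) + N_1+N_2+N_3+N_4 \equiv \vomega_S^{\otimes 2}$, which you carry out explicitly (and correctly) from Lemma~\ref{lem:anti-S}. Your added care in passing from the section-level isomorphism to a bijection of effective divisors, and in reading the conclusion as an equality of pullback divisors rather than mere linear equivalence, is exactly the right reading of the statement.
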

\begin{proof}
  This is an immediate consequence of the previous lemma, combined with the observation that  $2\pi^*(L) + N_1+N_2+N_3+N_4 \equiv \vomega_S^{\otimes 2}$.
\end{proof}

\begin{proposition}\label{prop:envelope}
  For a line $L \subset \pp$ not passing through any of the points $e_{ij}$, there exists a unique quadric $\Lambda$ passing through the nodes of $\Gamma$ and tangential to $\Gamma$ all along the twisted cubic $\xc(L)$. This quadric is the envelope $\Lambda_L$ of $\xq(L)$.
\end{proposition}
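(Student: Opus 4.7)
The plan is to identify $\Lambda_L$ with the unique quadric produced by Lemma~\ref{lem:cone_through_nodes}.

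The first step is to translate the geometric conditions on a candidate $\Lambda$ into a divisor equation on $S$. Since $L$ meets each line $\overline{N}_i \subset \pp$ transversally in one point and avoids the base points $e_{ij}$ of $\xc$, the proper transform $\tilde T_L$ of $T_L = \xc(L)$ coincides with $\pi^{*}(L)$ and meets each $N_i$ in a single point that $\psi$ sends to the node $n_i$; in particular $T_L$ passes through all four nodes. Now suppose $\Lambda$ is a quadric passing through $n_1,\ldots,n_4$ and tangent to $\Gamma$ along $T_L$. Then $\Lambda \cdot \Gamma = 2T_L$ as $1$-cycles on $\ppp$ (both sides have degree $6$), and on $S$ this gives $\psi^{*}\Lambda = 2\pi^{*}(L) + \sum_{i=1}^4 m_i N_i$ for some integers $m_i \ge 1$. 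Using Lemma~\ref{lem:anti-S} to compare with the class $\vomega_S^{\otimes 2} = 6\ell - 2\sum_{i<j}E_{ij}$ and expanding $N_i = \ell - \sum_{j \ne i} E_{ij}$, matching coefficients of $\ell$ and of each $E_{ij}$ forces $\sum_i m_i = 4$ and $m_i + m_j = 2$ for all $i \ne j$, hence $m_i = 1$ for every $i$. Uniqueness of $\Lambda$ then follows from Lemma~\ref{lem:cone_through_nodes}.

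The second step is to verify that $\Lambda_L$ does satisfy these properties. Parametrizing $L$ by $s \in \p$ so that each $\xq_i(s)$ is quadratic in $s$, the envelope $\Lambda_L$ is cut out by $\disc_s\bigl(\sum_i x_i \xq_i(s)\bigr) = 0$. Consider $P_t(s) := \sum_i \xc_i(t) \xq_i(s)$. By Lemma~\ref{lem:gauss}, $\xq_i(t) = (\partial_i f)(\xc(t))$, where $f$ is the defining cubic of $\Gamma$, and Euler's identity for $f$ yields
\[
P_t(t) = \xq(t) \cdot \xc(t) = 3 f(\xc(t)) = 0.
\]
Differentiating the tautology $f(\xc(s)) \equiv 0$ in $s$ gives $\xq(s) \cdot \xc'(s) = 0$, so differentiating $\xq(s) \cdot \xc(s) \equiv 0$ produces $P_t'(t) = \xq'(t) \cdot \xc(t) = -\xq(t) \cdot \xc'(t) = 0$. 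Hence $s=t$ is a double root of $P_t$, placing $\xc(t) \in \Lambda_L$ on the characteristic line $L_t$ of the family $\xq(L)$. The envelope property then identifies $T_{\xc(t)}\Lambda_L$ with $\xq(t)$ at smooth points, matching $T_{\xc(t)}\Gamma$ and thus establishing tangency along $T_L$; containment of the four nodes follows by continuity of $T_L \subset \Lambda_L$.

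Combining these two steps identifies $\Lambda_L$ with the unique quadric of Lemma~\ref{lem:cone_through_nodes}. The one substantive calculation, and the main obstacle, is the Euler-identity argument placing $T_L \subset \Lambda_L$; the remaining arguments are divisor-theoretic bookkeeping on $S$ combined with the envelope property.
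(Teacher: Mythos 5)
Your proposal is correct, and it reverses the direction of the paper's argument while leaning on the same two lemmas. The paper starts from the quadric $\Lambda$ supplied by Lemma~\ref{lem:cone_through_nodes}, reads off $\Gamma\cdot\Lambda=2\xc(L)$ and passage through the nodes directly from the class $2\pi^*(L)+N_1+\cdots+N_4$, then identifies $\Lambda$ with the envelope by first showing $\Lambda$ must be an irreducible cone (a smooth quadric would carry a $(3,3)$-class, which is not divisible by two) and then invoking that a cone is determined by its enveloping tangent planes together with $\xq=\gamma\circ\xc$ from Lemma~\ref{lem:gauss}. You instead (i) prove uniqueness by showing that \emph{any} quadric through the nodes and tangent along $\xc(L)$ must have pullback class $2\pi^*(L)+\sum N_i$ --- the coefficient matching $\sum m_i=4$, $m_i+m_j=2$ is a step the paper leaves implicit, and it is worth having spelled out since the proposition explicitly asserts uniqueness among all such quadrics, not just among those with the prescribed pullback class; and (ii) verify directly that the envelope qualifies, via the discriminant of $P_t(s)=\xq(s)\cdot\xc(t)$ and the Euler identity, which bypasses the cone/parity argument entirely. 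The trade-off: the paper's route is shorter and yields as a byproduct that $\Lambda_L$ is an irreducible cone, while yours makes the uniqueness claim airtight and gives a self-contained, coordinate-level proof that $T_L\subset\Lambda_L$ with matching tangent planes. Two small points to keep in mind: since Lemma~\ref{lem:gauss} gives $\xq(t)=\lambda(t)\,\nabla f(\xc(t))$ only up to a scalar function $\lambda$, your computation of $P_t'(t)$ should be phrased as differentiating the identity $\xq(s)\cdot\xc(s)\equiv 0$ (as you do) rather than substituting $\nabla f$ directly; and the multiplicities $m_i\ge 1$ are integers because $\Lambda|_\Gamma$ is Cartier, which is worth a word since $\psi$ resolves $A_1$ points where pullbacks of Weil divisors need not be integral.
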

\begin{proof}
  Since $L$ does not pass through $e_{ij}$, the map $\xc\colon \pp \ratto \Gamma$ maps $L$ to a twisted cubic.  Let $\Lambda$ be as in Lemma~\ref{lem:cone_through_nodes}. By degree reasons, we have $\Gamma \cdot \Lambda = 2 \xc(L)$. However, the pullback of a degree three hypersurface onto a smooth quadric is a $(3,3)$-class, which is not divisible by two. Therefore, $\Lambda$ must be singular. Then, $\Lambda$ must be an irreducible quadric because a twisted cubic does not lie on a plane.  A cone is defined by its enveloping tangent planes. For each point on the twisted cubic $\xc(L)$, the tangent plane of $\Lambda$ coincides with the tangent plane of $\Gamma$. With $\gamma\colon \ppp \ratto \vppp$ the Gauss map of $\Gamma$ we then conclude that the enveloping planes of $\Lambda$ is the image $\gamma\circ\xc(L)=\xq(L)$ (Lemma~\ref{lem:gauss}).
\end{proof}

\begin{lemma}\label{lem:blow_down_is_prym}
 The restriction of the blow-down map $\pi\colon S \to\pp$ to $C$ coincides with the Prym-canonical map $\rho_\varepsilon \colon C \to \pp$ induced by the line bundle $\omega_C \otimes \varepsilon$. 
\end{lemma}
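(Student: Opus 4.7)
The plan is to verify that the line bundle $\co_C(\ell|_C)$ inducing $\pi|_C$ is isomorphic to $\omega_C \otimes \varepsilon$, thereby identifying $\pi|_C$ with the Prym-canonical map $\rho_\varepsilon$. To begin, I would rewrite $\ell$ in terms of the anti-canonical and exceptional classes: summing the relations $N_i \equiv \ell - \sum_{j \ne i} E_{ij}$ of Lemma~\ref{lem:anti-S} over $i$ gives $\sum_i N_i \equiv 4\ell - 2\sum_{i<j} E_{ij}$, and combining this with $\omega_S \equiv -3\ell + \sum_{i<j} E_{ij}$ yields $\ell \equiv -\omega_S - L$, where $L := 2\ell - \sum_{i<j} E_{ij}$ satisfies $2L \equiv \sum_i N_i$.

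Next, I would restrict this relation to $C$. Since $C$ is cut out on $\Gamma$ by a quadric and does not meet any node, its proper transform in $S$ has class $-2\omega_S$, so adjunction yields $\omega_C \simeq -\omega_S|_C$. Therefore $\ell|_C \simeq \omega_C \otimes \co_C(-L|_C)$, and since $C$ is disjoint from each exceptional divisor $N_i$, the restriction $L|_C$ is $2$-torsion in $\jac_C$.

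It remains to identify $L|_C$ with $\varepsilon$. The natural double cover of $\Gamma$ from Section~\ref{sec:natural_double_cover} pulls back along $\psi$ to a double cover $\tilde S \to S$ branched along $\sum_i N_i$; by cyclic cover theory this corresponds to a line bundle $M$ on $S$ with $M^{\otimes 2} \simeq \co_S(\sum_i N_i) \simeq \co_S(2L)$. Since $S$ is a smooth rational surface, $\pic(S)$ is torsion-free, forcing $M \simeq \co_S(L)$. Restricting to $C$ produces the \'etale double cover associated to $L|_C$, which by Theorem~\ref{thm:induced_double_cover} is $\tilde C_\varepsilon \to C$, so $L|_C \simeq \varepsilon$. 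Hence $\ell|_C \simeq \omega_C \otimes \varepsilon^\vee \simeq \omega_C \otimes \varepsilon$ as $\varepsilon$ is $2$-torsion. The main obstacle is this last identification, which requires carefully threading the geometric double-cover construction of Section~\ref{sec:natural_double_cover} through the cyclic-cover correspondence on the surface $S$.
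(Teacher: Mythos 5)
Your proposal is correct and follows essentially the same route as the paper: both arguments rest on the relation $\sum_i N_i \equiv 2\bigl(2\ell-\sum_{i<j}E_{ij}\bigr)$, the uniqueness of the square root of $\sum_i N_i$ in $\Pic(S)$ (you via torsion-freeness of $\Pic$ of a rational surface, the paper via simple connectedness of $S$), Theorem~\ref{thm:induced_double_cover} to identify the restricted cover with $\tilde C_\varepsilon\to C$, and adjunction with $C\equiv\vomega_S^{\otimes 2}$ to conclude $\ell|_C\simeq\omega_C\otimes\varepsilon$. No changes needed.
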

\begin{proof}
  By Theorem~\ref{thm:induced_double_cover} the double cover of $\Gamma$ branched over the nodes induces the double cover on $C$ corresponding to $\varepsilon$. The preimage of the nodes in $S$ gives a divisor divisible by two: $N_1+N_2+N_3+N_4 \equiv 2(2L - \sum_{i<j} E_{ij})$. Since $S$ is simply connected, there is a unique divisor class whose double is $N_1+\dots+N_4$. Therefore, there is a unique double cover of $S$ branched over the preimage of the nodes of $\Gamma$. This double cover resolves the singularities of the double cover of $\Gamma$ branched over the nodes~\cite{catanese83}. Therefore, the intersection $(2L - \sum_{i<j} E_{ij}) \cdot C$ must give a divisor class equivalent to $\varepsilon$.  Since $C$ is a quadric section of $S$ we have $C \equiv \vomega_S^{\otimes 2}$ by Lemma~\ref{lem:StoP3}. By adjunction we get $(3\ell - \sum_{i<j}E_{ij})|_C \equiv \omega_C$. As a result $\omega_C \otimes \varepsilon \equiv \ell|_C$.
\end{proof}

\begin{corollary}\label{cor:DLinTL}
  Let $D \in |\omega_C\otimes \varepsilon|$ be a general divisor and $L \subset \pp$ be the line for which $\rho_\varepsilon^*(L)=D$. Then the twisted cubic $\xc(L)$ passes through the six points in the support of $D$.
\end{corollary}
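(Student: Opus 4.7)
The plan is to factor the adjugation map $\xc\colon \pp \ratto \Gamma$ through the partial resolution $S$ constructed in Section~\ref{sec:classical_cayley}: off the base locus $\{e_{ij}\}$ one has $\xc = \psi \circ \pi^{-1}$, where $\pi\colon S \to \pp$ is the blow-down of the six $(-1)$-curves $E_{ij}$ from Lemma~\ref{lem:StoP3} and $\psi\colon S \to \Gamma$ is the resolution of the four nodes. With this factorization in hand, I would trace the six points of $\supp(D)$ along both legs using the identification $\pi|_C = \rho_\varepsilon$ from Lemma~\ref{lem:blow_down_is_prym}.

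More concretely, I would first write $\supp(D) = \{p_1,\ldots,p_6\}$, which consists of six distinct points for $D$ general, and use the genericity of $D$ to assume that the corresponding line $L \subset \pp$ avoids the finitely many points $e_{ij}$. Under this assumption $\pi$ restricts to an isomorphism between the strict transform $\pi^{-1}(L) \subset S$ and $L$, and $\psi$ maps $\pi^{-1}(L)$ onto the twisted cubic $T_L = \xc(L)$ by the very definition of $\xc$ as a rational map (Section~\ref{sec:adjugation_map}; compare also Proposition~\ref{prop:envelope}).

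Next, Lemma~\ref{lem:blow_down_is_prym} gives $\pi(p_i) = \rho_\varepsilon(p_i) \in L$ for each $i$, so each $p_i$ lies on $\pi^{-1}(L)$ when $C$ is identified with its proper transform inside $S$. Because $C$ misses the four nodes of $\Gamma$, the map $\psi$ is an isomorphism in a neighbourhood of $C \subset S$; in particular $\psi(p_i) = p_i$ as a point of $\ppp$. Combining these, $p_i = \psi(p_i) \in \psi(\pi^{-1}(L)) = T_L$, which is what we want.

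There is no real obstacle here: the proof reduces to bookkeeping once the factorization $\xc = \psi \circ \pi^{-1}$ is in place and once Lemma~\ref{lem:blow_down_is_prym} is invoked to transport the condition $\rho_\varepsilon^*(L) = D$ from $\pp$ back to $S$. The only point needing a line of justification is that genericity of $D$ is strong enough both to ensure the six $p_i$ are distinct and to keep $L$ away from the $e_{ij}$; both conditions are manifestly Zariski open in $|\omega_C \otimes \varepsilon|$.
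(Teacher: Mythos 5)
Your proof is correct and follows essentially the same route as the paper: both factor $\xc$ as $\psi\circ\pi^{-1}$ through the resolution $S$, invoke Lemma~\ref{lem:blow_down_is_prym} to identify $\pi|_C$ with $\rho_\varepsilon$, and conclude that $\pi^{-1}(L)\cdot_S C = D$ maps into $\xc(L)$. The paper's version is just a terser statement of the same bookkeeping.
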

\begin{proof}
  Since $D$ is general $\xc(L)$ is a twisted cubic. The cubic map $\xc \colon \pp \ratto \Gamma$ is the composition of the rational inverse of $\pi$ with $\psi\colon S \to \Gamma$. Thus $\xc(L) = \psi(\pi\inv(L))$. We are done by Lemma~\ref{lem:blow_down_is_prym} since $\pi\inv(L) \cdot_S C = D$.
\end{proof}

\begin{proposition}\label{prop:DL}
 The envelope $\Lambda_L$ satisfies $\Lambda_L \cdot C = 2D_L$.
\end{proposition}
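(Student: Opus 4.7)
The plan is to reduce the intersection computation to the resolution $S$, where all relevant divisor classes have been written down explicitly in Lemma~\ref{lem:anti-S} and Proposition~\ref{prop:envelope}. Since $C \subset \Gamma$ avoids the four nodes of the Cayley cubic (the canonical model of a smooth curve is smooth), the blow-up $\psi\colon S \to \Gamma$ is an isomorphism in a neighborhood of $C$, and I may identify $C$ with its proper transform on $S$.

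First I would invoke Proposition~\ref{prop:envelope}, which, combined with Lemma~\ref{lem:cone_through_nodes}, gives the divisor class equality
\[
\psi^*(\Lambda_L) \equiv 2\pi^*(L) + N_1 + N_2 + N_3 + N_4
\]
on $S$. Intersecting both sides with $C$ yields
\[
\Lambda_L \cdot C \;=\; \psi^*(\Lambda_L)\cdot_S C \;=\; 2\,\pi^*(L)\cdot_S C + \sum_{i=1}^4 N_i\cdot_S C.
\]
Each exceptional curve $N_i$ is supported over the node $n_i\notin C$, so $N_i\cdot_S C = 0$. By Lemma~\ref{lem:blow_down_is_prym}, the restriction $\pi|_C$ coincides with the Prym-canonical map $\rho_\varepsilon$, hence $\pi^*(L)\cdot_S C = \rho_\varepsilon^*(L) = D_L$. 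Substituting gives $\Lambda_L\cdot C = 2D_L$, as desired.

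There is no real obstacle here beyond checking that the pieces are assembled correctly: one has to ensure $L$ is sufficiently generic for Proposition~\ref{prop:envelope} to apply (so that $\xc(L)$ is an honest twisted cubic and $\Lambda_L$ meets $C$ properly), and that the identification of $C$ with a curve on $S$ disjoint from the exceptional locus is justified. Both points follow immediately from $C$ missing the singular locus of $\Gamma$ and from the genericity hypothesis on $L$ already in force throughout this subsection.
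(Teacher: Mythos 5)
Your proof is correct and follows the paper's argument essentially verbatim: pull $\Lambda_L$ back to $S$ via Lemma~\ref{lem:cone_through_nodes} and Proposition~\ref{prop:envelope}, kill the $N_i\cdot C$ terms since $C$ misses the nodes, and identify $\pi^*(L)\cdot_S C$ with $D_L$ via Lemma~\ref{lem:blow_down_is_prym}. Nothing to add.
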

\begin{proof}
  By Proposition~\ref{prop:envelope} and Lemma~\ref{lem:cone_through_nodes} we have $\Lambda_L|_S \equiv 2\pi^*(L) + N_1+N_2+N_3+N_4$. Since $C$ does not pass through the nodes we get $N_i \cdot C = 0$. Therefore, $\Lambda_L|_S \cdot C \equiv 2\pi^*(L) \cdot C \equiv 2 \rho_\varepsilon^*(L)=2D_L$ where we used Lemma~\ref{lem:blow_down_is_prym}. 
\end{proof}

\subsection{Pencils of quadrics}\label{sec:cone_lemma}

Let $\Lambda \subset \ppp$ be an irreducible quadric cone and $Q \subset \ppp$ be a smooth quadric. Denote the dual of $Q$ by $\vQ \subset \vppp$. The dual of the cone $\Lambda \subset \ppp$ is a plane $\vLam$ and there is a conic $\vlam \subset \vLam$ parametrizing the enveloping tangent planes of $\Lambda$.

The following lemma reformulates, in terms of $\vlam$ and $\vQ$, the condition for $\Lambda$ and $Q$ to intersect along two conics. Clearly the base locus $\Lambda \cap Q$ of the pencil $\langle \Lambda,Q \rangle$ breaks into a union of two conics if and only if $\langle \Lambda,Q \rangle$ contains a reducible conic. We will say $\vlam$ is \emph{bitangent} to $\vQ$ if $\vlam \not\subset \vQ$ and $\vlam$ is tangential to $\vQ$ at every point of the intersection $\vlam \cap \vQ$. 

\begin{lemma}\label{lem:cone_lemma}
  The pencil $\langle \Lambda,Q \rangle$ contains a reducible but reduced quadric if and only if $\vlam$ is bitangent to $\vQ$. The pencil $\langle \Lambda,Q \rangle$ contains a non-reduced element if and only if $\vlam \subset \vQ$.
\end{lemma}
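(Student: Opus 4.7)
The plan is to reduce both equivalences to an explicit discriminant computation after simultaneously diagonalizing $\Lambda$ and $Q$ over $\kbar$.

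Since $Q$ is smooth and $\Lambda$ is a rank-three quadric cone, whenever the $Q$-self-adjoint operator $Q^{-1}\Lambda$ is semisimple we can choose coordinates with
\[
Q=x_0^2+x_1^2+x_2^2+x_3^2,\qquad \Lambda=a_0x_0^2+a_1x_1^2+a_2x_2^2,
\]
where $a_0,a_1,a_2\in\kbar^\times$ and the vertex of $\Lambda$ is $v=[0{:}0{:}0{:}1]$. A direct computation gives
\[
\det(\mu\Lambda+\lambda Q)=\lambda\cdot\prod_{i=0}^{2}(\mu a_i+\lambda).
\]
The factor $\lambda$ accounts for $\Lambda$ itself, and one reads off that the pencil contains a rank-two member (a pair of distinct planes) iff the cubic factor acquires a double root, i.e.\ iff $a_i=a_j$ for some $i\neq j$, and a rank-one member (a double plane) iff the cubic is a perfect cube, i.e.\ iff $a_0=a_1=a_2$.

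For the dual side, $\vLam=\{y_3=0\}$ and in the induced coordinates
\[
\vlam:\ \tfrac{y_0^2}{a_0}+\tfrac{y_1^2}{a_1}+\tfrac{y_2^2}{a_2}=0,\qquad \vQ\cap\vLam:\ y_0^2+y_1^2+y_2^2=0.
\]
Since $\vlam\subset\vLam$, $\vlam$ is tangential to $\vQ$ at a point $p$ iff it is tangential to $\vQ\cap\vLam$ at $p$. Solving the system explicitly (eliminate $y_2^2$ to obtain a binary quadratic form in $y_0^2,y_1^2$) shows that $\vlam\cap(\vQ\cap\vLam)$ consists of four distinct points when the $a_i$ are pairwise distinct, degenerates to two points each of multiplicity two (the paper's \emph{bitangent} case) when exactly two of the $a_i$ coincide, and the two conics become equal (so $\vlam\subset\vQ$) when all three coincide. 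Matching these criteria with the preceding rank analysis yields both equivalences.

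The principal technical obstacle is the simultaneous diagonalization step: $Q^{-1}\Lambda$ can fail to be semisimple, acquiring a two-by-two Jordan block (necessarily at the eigenvalue zero when $v\in Q$). In that degenerate situation one replaces the diagonal normal form by the Segre canonical form for the pair $(\Lambda,Q)$ and repeats the same discriminant comparison case-by-case; this is routine but required to cover all configurations.
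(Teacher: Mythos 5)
Your computation in the simultaneously diagonalizable case is correct, and the route --- reading off the ranks of the degenerate members of the pencil from $\det(\mu\Lambda+\lambda Q)$ and comparing with the intersection scheme of $\vlam$ and $\vQ\cap\vLam$ inside the plane $\vLam$ --- is genuinely different from the paper's, which never normalizes the pair but instead projects from the vertex $x$ of $\Lambda$ and analyzes the ramification of $\Lambda\cap Q$ over the conic $\pi_x(\Lambda)$ relative to the branch locus of $\pi_x|_Q$. The problem is your last paragraph. The non-semisimple pairs are not a fringe case that can be dismissed as routine, and the parenthetical misidentifies when they occur: $Q^{-1}\Lambda$ can fail to be semisimple at a \emph{nonzero} eigenvalue with the vertex off $Q$. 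Concretely, take $Q=x_0x_1+x_2^2+x_3^2$ and $\Lambda=x_0x_1+x_1^2+x_2^2$; the vertex $(0{:}0{:}0{:}1)$ does not lie on $Q$, yet $\det(\mu\Lambda+\lambda Q)=-\tfrac14(\mu+\lambda)^3\lambda$ and the member $\Lambda-Q=(x_1-x_3)(x_1+x_3)$ has rank two (Segre symbol $[(2,1),1]$). Here the reducible reduced member \emph{exists}, and on the dual side $\vlam\cap\vQ$ is a single point of multiplicity four, so both directions of the equivalence must be re-verified with the paper's definition of bitangency applied to a non-reduced intersection. Given that $\Lambda$ has corank one, a short count shows there are roughly half a dozen further Segre types beyond the three your normal form covers (among them $[2,1,1]$, $[3,1]$, $[(2,1),1]$, $[2,(1,1)]$, $[2,2]$, $[4]$), in two of which the reducible member is actually present; moreover when $x\in Q$ the curve $\vQ\cap\vLam$ itself degenerates to a pair of lines, so the dual-side computation changes shape as well. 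As written, the proof establishes the lemma only for diagonalizable pairs; that likely suffices for the pencils arising from a generic $C$, but not for the statement as given.

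Two smaller points. First, ``the pencil contains a rank-two member iff the cubic factor acquires a double root'' is not an equivalence: when $a_0=a_1=a_2$ the cubic has a triple root but the corresponding member has rank one, not two. You need ``exactly two of the $a_i$ coincide''; your final conclusion survives only because the all-equal case simultaneously forces $\vlam\subset\vQ$, which the paper's definition of bitangent excludes. Second, if you want to avoid enumerating Segre types, the paper's projection argument is worth comparing: it handles every Jordan configuration at once at the cost of splitting into the three geometric cases $\vlam\subset\vQ$, $x\in Q$, and $x\notin Q$.
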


\begin{proof}[Proof of Lemma~\ref{lem:cone_lemma}]
  Let $x\in \Lambda$ be the vertex of the cone and $\pi_x\colon \ppp \ratto \pp_x$ be the projection map from $x$. The plane $\vLam$ is dual to $\pp_x$ and $\pi_x(\Lambda)$ is a conic $\lambda \subset \pp_x$ dual to $\vlam$.

  If the vertex $x$ is not on $Q$ then the projection map $\pi_x$ realizes $Q$ as a double cover of the plane $\pp_x$. When $x$ is on $Q$ then the projection maps $Q$ birationally to $\pp_x$, contracting the two lines of $Q$ through $x$. In either case, we will write $R \subset Q$ for the ramification locus in $Q$ of the projection $\pi_x|_Q$---this is a smooth conic if $x \notin Q$ and a pair of lines through $x$ when $x\in Q$. Denote the image of $R$ in $\pp_x$ by $B$, which is the branch locus of $\pi_x|_Q$. Note that $B$ is a smooth conic if $x \notin Q$ and $B$ is a pair of distinct points if $x \in Q$. 
  
  The intersection $\vLam \cap \vQ$ is dual to $B$. Indeed, $\vLam \cap \vQ$ parametrizes the tangent planes of $Q$ which pass through the vertex $x \in \Lambda$.  A point $p \in Q$ is in $R$, and thus maps to a point in $B$, if and only if the ray $\langle x,p \rangle$ is contained in the tangent plane of $Q$ at $p$.

  \textbf{Case 1:} We have $H_1 = H_2$ if and only if the intersection $\Lambda \cap Q$ is non-reduced everywhere. This happens precisely when the enveloping planes of $\Lambda$ are tangent to $Q$, which means $\vlam \subset \vQ$. 

  \textbf{Case 2:} Now let us assume $x \in Q$. Notice that $H_1 \neq H_2$ since $\vlam$ is a conic but $\vLam \cap Q$ is a pair of lines so that $\vlam$ is not contained in $\vQ$. In this case, $B$ consists of two distinct points, whose dual is the union of the two lines $\vLam \cap \vQ$. The conic $\vlam$ is tangential to these two lines $\vLam \cap \vQ$ if and only if its dual $\lambda$ contains $B$.

The two points in $B$ are the images of the two lines in $Q$ passing through $x$. We have $B \subset \lambda$ if and only if these two lines of $Q$ are contained in $\Lambda$. It is readily seen that the intersection $Q \cap \Lambda$ splits into the union of two hyperplane sections of $Q$ precisely when one of these hyperplanes is the tangent plane of $Q$ at $x$, which is the case here.

  \textbf{Case 3:} We now assume $x\notin Q$ and $H_1 \neq H_2$, this is the generic case. Since $x \notin Q$ the intersection $\vQ \cap \vLam$ is a smooth conic and then so is its dual $B$. The conic $\vlam$ is a bitangent of $\vQ$ if and only if $\vlam$ is a bitangent of $\vLam \cap \vQ$. The latter holds if and only if $\lambda$ and $B$ are bitangent.

  Consider a point $\bar y \in B$, which has a unique preimage $y \in Q$. By definition of $B$, the tangent plane $\mathrm{T}_yQ$ of $Q$ at $y$ passes through $x$. Furthermore, since $R$ maps isomorphically to $B$, and because the tangent direction of $R$ at $y$ lies in $\mathrm{T}_yQ$, the tangent line of $B$ at $\bar y$ is $\pi_x(\mathrm{T}_yQ)$. Thus, $\lambda$ and $B$ are tangential at $\bar y$ if and only if the tangent planes of $\Lambda$ and $Q$ coincide at $y$. 
  
  We further note that, since $x \notin Q$, the intersection $\Lambda \cap Q$ is singular at a point $y$ if and only if the tangent planes of $Q$ and $\Lambda$ coincide at $y$. In other words, $\Lambda \cap Q$ is singular at $y$ if and only if $\lambda$ and $B$ are tangential at $\bar y$.  On the other hand, $\Lambda \cap Q \to \lambda$ is ramified exactly at the points of intersection $\lambda \cap B$. A point in $\Lambda \cap Q$ which lies over a point of transversal intersection in $\lambda \cap B$ is an honest ramification point of a smooth component of $\Lambda \cap Q$ over $\lambda$.  The intersection $\Lambda \cap Q$ splits into two conics if and only if there is no non-singular points in $\Lambda \cap Q$ that are ramified over $\lambda$. This happens precisely $\lambda$ intersects $B$ with even multiplicity at every point of contact, i.e., $\lambda$ is a bitangent of $B$.
\end{proof}

Let $\Gamma \subset \ppp$ be any normal cubic and $C = \Gamma \cap Q$ a smooth curve. We will assume that the intersection of $\Lambda$ with $\Gamma$ yields an effective curve of even multiplicity, that is $\Lambda \cdot \Gamma = 2 Y$.

\begin{lemma}\label{lem:reducible_is_tritangent}
  If there is a reducible and reduced element $H_1 \cup H_2$ in the pencil $\langle  \Lambda,Q \rangle$ then it is unique and $H_i$ are tritangents of $C$.
\end{lemma}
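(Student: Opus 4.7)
The plan is to exploit the pencil identity $H_1 \cup H_2 = \lambda \Lambda + \mu Q$ (for some scalars $\lambda,\mu$, not both zero) by restricting it to the plane $H_1 \subset \ppp$. Since $H_1$ is a factor of $H_1 H_2$, the left-hand side vanishes on $H_1$, yielding
\[
\lambda\, \Lambda|_{H_1} + \mu\, Q|_{H_1} = 0 \quad \text{in } \H^0(H_1,\co_{H_1}(2)).
\]
The case $\lambda = 0$ is excluded by smoothness of $Q$ (else $H_1 H_2 \propto Q$ would be reducible). The case $\mu = 0$ would force $\Lambda = H_1 \cup H_2$ itself, so $H_1|_\Gamma + H_2|_\Gamma = 2Y$ on $\Gamma$; since $\Gamma$ is an irreducible normal cubic, the two degree-$3$ plane sections $H_i|_\Gamma$ are irreducible and both non-degenerate, and a degree comparison forces $H_1|_\Gamma = H_2|_\Gamma = Y$, whence $H_1 = H_2$, contradicting that $H_1 \cup H_2$ is reduced. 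With both $\lambda, \mu$ nonzero, the sections $\Lambda|_{H_1}$ and $Q|_{H_1}$ are proportional, hence cut out the same conic divisor on $H_1$.

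The tritangency of $H_1$ is now a one-line consequence of associativity of intersection, combined with the hypothesis $\Lambda \cdot \Gamma = 2Y$:
\[
E_1 = H_1 \cdot C = H_1 \cdot (Q \cdot \Gamma) = (H_1 \cdot Q) \cdot \Gamma = (H_1 \cdot \Lambda) \cdot \Gamma = H_1 \cdot (\Lambda \cdot \Gamma) = H_1 \cdot 2Y = 2(H_1 \cdot Y).
\]
Setting $D_1 := H_1 \cdot Y$, an effective $0$-cycle of degree three (whose support necessarily lies in $Y \cap C$), we obtain $H_1 \cdot C = 2 D_1$, so $H_1$ is a tritangent of $C$. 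The identical argument with the roles of $H_1$ and $H_2$ exchanged shows $H_2$ is a tritangent as well.

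For uniqueness, suppose $H_3 \cup H_4$ is a second reducible reduced member of the pencil. If the pairs $\{H_1,H_2\}$ and $\{H_3,H_4\}$ share a plane, say $H_1 = H_3$, then the pencil $\langle H_1 H_2, H_1 H_4\rangle = H_1 \cdot \langle H_2, H_4\rangle$ consists entirely of reducible quadrics of the form $H_1 \cdot H$, contradicting that the smooth quadric $Q$ lies in this pencil. Otherwise the two pairs are disjoint: restricting the relation $H_3 H_4 = \lambda'\Lambda + \mu' Q$ to $H_1$ and arguing as before, the nonzero section $(H_3 H_4)|_{H_1}$ must be proportional to $Q|_{H_1}$, forcing the smooth-or-singular conic $Q \cap H_1$ to coincide with the pair of lines $(H_1 \cap H_3) \cup (H_1 \cap H_4)$; hence $H_1$ is tangent to $Q$ and $Q$ contains these two lines. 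Running the same argument cyclically for $H_2, H_3, H_4$ realizes $Q$ as containing a quadrilateral of four lines $H_i \cap H_j$ with each $H_i$ a tangent plane of $Q$; a short incidence analysis, combined with the tritangency of each $H_i$ and the smoothness of $C = Q \cap \Gamma$, yields a contradiction.

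The main obstacle is the uniqueness argument in the disjoint case, which requires unpacking a rather degenerate configuration of tangent planes and ruled lines on $Q$. The tritangency assertion, by contrast, is essentially automatic once the key algebraic identity on $H_1$ has been observed.
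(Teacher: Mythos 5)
Your tritangency argument is correct and takes a genuinely different, and in fact slicker, route than the paper's. Restricting the pencil relation $H_1H_2=\lambda\Lambda+\mu Q$ to the plane $H_1$ to get $\Lambda|_{H_1}\propto Q|_{H_1}$, and then computing $H_1\cdot C=(H_1\cdot\Lambda)\cdot\Gamma=2(H_1\cdot Y)$, yields the evenness of each $H_i\cdot C$ \emph{separately}; the paper only obtains that the sum $D_1+D_2=\Lambda\cdot C=2(Y\cdot_\Gamma C)$ is even and must then prove $\sigma_1\cap\sigma_2\cap C=\emptyset$ (via a tangent-plane transversality argument) to distribute the evenness to each summand. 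One blemish: your exclusion of $\mu=0$ rests on the claim that plane sections of an irreducible normal cubic are irreducible, which is false (a tritangent plane of a cubic surface cuts out three lines); the case is vacuous anyway because $\Lambda$ is by hypothesis an irreducible quadric cone, hence not a union of two planes.

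The genuine gap is in the uniqueness argument, disjoint case. You correctly derive that each $H_i$ is tangent to $Q$ and that $Q$ contains the four lines $H_i\cap H_j$, but the promised ``short incidence analysis'' is never supplied, and no purely incidence-theoretic analysis can work: the configuration you reach is realizable on any smooth quadric. Take lines $a_1,a_2$ from one ruling and $b_1,b_2$ from the other and set $H_1=\langle a_1,b_1\rangle$, $H_2=\langle a_2,b_2\rangle$, $H_3=\langle a_1,b_2\rangle$, $H_4=\langle a_2,b_1\rangle$; then each $H_i$ is a tangent plane of $Q$, the quadrilateral lies on $Q$, and $H_1H_2$, $H_3H_4$, $Q$ all contain the four lines. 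What produces the contradiction is not incidence on $Q$ but the presence of $\Lambda$ in the pencil. Two ways to finish: (i) your own proportionality gives $\Lambda\cap H_1=Q\cap H_1$, a pair of \emph{distinct} lines (a smooth quadric has no double-line plane sections since the restricted form has rank at least $2$), and every line on the irreducible cone $\Lambda$ passes through its vertex; running this over the four planes forces at least three distinct lines of $Q$ through the vertex of $\Lambda$, impossible since a smooth quadric carries exactly two lines through any point. (ii) More uniformly: the discriminant of the pencil $\langle\Lambda,Q\rangle$ is a binary quartic, not identically zero because $Q$ is smooth; each reducible reduced member has corank $2$ and is a root of multiplicity at least $2$, and $\Lambda$ has corank $1$ and is a further root, so two distinct reducible members would force at least five roots. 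Either repair also absorbs your (correct) shared-plane case. For comparison, the paper's uniqueness proof goes through the base locus instead: at most one of the conics $\sigma_i=H_i\cap Q\subset\Lambda$ can be reducible (lines on $\Lambda$ are concurrent at its vertex), so the pair $\{H_1,H_2\}$ is recovered as the spans of the two conics of the base locus and is therefore unique.
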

\begin{proof}
  At most one of the conic $\sigma_i = H_i \cap Q$ can be reducible. Indeed, any collection of lines in $\Lambda$ are concurrent at the vertex of $\Lambda$. However, since $H_1 \neq H_2$, if both $\sigma_i$ are reducible we will have at least three non-concurrent lines in $\Lambda$.  One of $H_1$ or $H_2$ is necessarily the span of the irreducible conic $\sigma_1$ or $\sigma_2$. The other plane is then the span of the other conic. This proves that the reducible element of the pencil is unique.

Let $D_i := \sigma_i \cdot_Q C$. Since $\Lambda \cdot \Gamma = 2\cdot Y$ we have $\Lambda \cdot C = D_1 + D_2 = 2 (Y \cdot_{\Gamma} C)$. That is, the sum $D_1 + D_2$ is an even divisor. In order to prove that $H_i$ is a tritangent of $C$ we need to show that $D_i$ is even. This condition could fail to hold only if $C$ passes through a point $p$ in $\sigma_1 \cap \sigma_2$ \emph{and} if $\sigma_i$ intersects $C$ transversally at $p$. We will prove that $\sigma_1 \cap \sigma_2 \cap C = \emptyset$.

  The cone $\Lambda$ may have its vertex on $Q$, this would be the singular point of the reducible conic $\sigma_i$. But there is no smooth conic passing through the vertex of $\Lambda$. Therefore, $\sigma_1 \cap \sigma_2$ is contained in the smooth locus of $\Lambda$.

  By hypothesis, $\Lambda$ is tangential to $\Gamma$ at smooth points of contact. The cone $\Lambda$ is tangential to $Q$ at the singular points of $\sigma_1 \cup \sigma_2$, in particular on $\sigma_1 \cap \sigma_2$. Since the intersection $C = \Gamma \cap Q$ is smooth, the tangent planes of $\Gamma$ and $Q$ must both be well defined and transversal at every point of $C$. Therefore, a point $p \in \sigma_1 \cap \sigma_2$ cannot lie on $C$, otherwise the tangent planes of $\Gamma$ and $Q$ at $p$ would coincide with the tangent plane of $\Lambda$ at $p$.
\end{proof}

\newpage
\begin{bibdiv}
  \begin{biblist}
\bib{ACGH:volI}{book}{
author={Arbarello, E.},
author={Cornalba, M.},
author={Griffiths, P. A.},
author={Harris, J.},
title={Geometry of algebraic curves. Vol. I},
series={Grundlehren der Mathematischen Wissenschaften},
volume={267},
publisher={Springer-Verlag, New York},
date={1985},
pages={xvi+386},
isbn={0-387-90997-4},
}

\bib{beauville}{article}{
AUTHOR = {Beauville, Arnaud},
TITLE = {Determinantal hypersurfaces},
JOURNAL = {Michigan Math. J.},
FJOURNAL = {Michigan Mathematical Journal},
VOLUME = {48},
YEAR = {2000},
PAGES = {39--64},
}

\bib{abelian-varieties}{book}{
AUTHOR = {Lange, Herbert and Birkenhake, Christina},
TITLE = {Complex abelian varieties},
SERIES = {Grundlehren der Mathematischen Wissenschaften},
VOLUME = {302},
PUBLISHER = {Springer-Verlag, Berlin},
YEAR = {1992},
PAGES = {viii+435},
ISBN = {3-540-54747-9},
}

\bib{Bruin2008}{article}{
author={Bruin, Nils},
title={The arithmetic of Prym varieties in genus 3},
journal={Compos. Math.},
volume={144},
date={2008},
number={2},
pages={317--338},
issn={0010-437X},
}

\bib{BruinFlynn2005}{article}{
author={Bruin, Nils},
author={Flynn, E. Victor},
title={Towers of 2-covers of hyperelliptic curves},
journal={Trans. Amer. Math. Soc.},
volume={357},
date={2005},
number={11},
pages={4329--4347},
issn={0002-9947},
}

\bib{BruinFlynn2004}{article}{
author={Bruin, N.},
author={Flynn, E. V.},
title={Rational divisors in rational divisor classes},
conference={
title={Algorithmic number theory},
},
book={
series={Lecture Notes in Comput. Sci.},
volume={3076},
publisher={Springer, Berlin},
},
date={2004},
pages={132--139},
}

\bib{catanese83}{article}{
author = {Catanese, F.},
title = {On the rationality of certain moduli spaces related to curves of genus 4},
booktitle = {Algebraic geometry ({A}nn {A}rbor, {M}ich., 1981)},
series = {Lecture Notes in Math.},
volume = {1008},
pages = {30--50},
publisher = {Springer, Berlin},
year = {1983},
}

\bib{catanese81}{article}{
AUTHOR = {Catanese, F.},
TITLE = {Babbage's conjecture, contact of surfaces, symmetric
determinantal varieties and applications},
JOURNAL = {Invent. Math.},
FJOURNAL = {Inventiones Mathematicae},
VOLUME = {63},
YEAR = {1981},
NUMBER = {3},
PAGES = {433--465},
}

\bib{coble--theta-book}{book}{
AUTHOR = {Coble, Arthur B.},
 TITLE = {Algebraic geometry and theta functions},
SERIES = {American Mathematical Society Colloquium Publications},
VOLUME = {10},
  NOTE = {Reprint of the 1929 edition},
PUBLISHER = {American Mathematical Society, Providence, R.I.},
  YEAR = {1982},
 PAGES = {vii+282},
}

\bib{DelCentina-Recillas1983}{article}{
author={Del Centina, Andrea},
author={Recillas, Sev\'\i n},
title={Some projective geometry associated with unramified double covers
of curves of genus $4$},
journal={Ann. Mat. Pura Appl. (4)},
volume={133},
date={1983},
pages={125--140},
issn={0003-4622},
}

\bib{dolgachev}{book}{
author = {Dolgachev, Igor V.},
title = {Classical algebraic geometry},
note = {A modern view},
publisher = {Cambridge University Press, Cambridge},
year = {2012},
pages = {xii+639},
}

\bib{donagi}{article}{
AUTHOR = {Donagi, Ron},
 TITLE = {The fibers of the {P}rym map},
BOOKTITLE = {Curves, {J}acobians, and abelian varieties ({A}mherst, {MA}, 1990)},
SERIES = {Contemp. Math.},
VOLUME = {136},
 PAGES = {55--125},
PUBLISHER = {Amer. Math. Soc., Providence, RI},
  YEAR = {1992},
}

\bib{hartshorne}{book}{
author={Hartshorne, R.},
title={Algebraic Geometry},
series={Graduate Texts in Mathematics},
volume={52},
publisher={Springer-Verlag, New York-Heidelberg},
date={1977},
pages={xvi+496},
}

\bib{Hidalgo-Recillas1999}{article}{
author={Hidalgo-Sol\'\i s, Laura},
author={Recillas-Pishmish, Sevin},
title={The fibre of the Prym map in genus four},
journal={Boll. Unione Mat. Ital. Sez. B Artic. Ric. Mat. (8)},
volume={2},
date={1999},
number={1},
pages={219--229},
issn={0392-4041},
}

\bib{Kani-Rosen1989}{article}{
   author={Kani, E.},
   author={Rosen, M.},
   title={Idempotent relations and factors of Jacobians},
   journal={Math. Ann.},
   volume={284},
   date={1989},
   number={2},
   pages={307--327},
   issn={0025-5831},
}

\bib{Milne1923}{article}{
author={Milne, W. P.},
title={Sextactic Cones and Tritangent Planes of the Same System of a
Quadri-Cubic Curve},
journal={Proc. London Math. Soc. (2)},
volume={21},
date={1923},
pages={373--380},
}

\bib{mumford--prym}{article}{
author = {Mumford, David},
title = {Prym varieties. {I}},
booktitle = {Contributions to analysis (a collection of papers dedicated to {L}ipman {B}ers)},
pages = {325--350},
publisher = {Academic Press, New York},
year = {1974},
}

\bib{Recillas1974}{article}{
author={Recillas, Sevin},
title={Jacobians of curves with $g^{1}_{4}$'s are the Prym's of
trigonal curves},
journal={Bol. Soc. Mat. Mexicana (2)},
volume={19},
date={1974},
number={1},
pages={9--13},
}

\bib{Recillas1993}{article}{
author={Recillas, Sevin},
title={Symmetric cubic surfaces and curves of genus $3$ and $4$},
journal={Boll. Un. Mat. Ital. B (7)},
volume={7},
date={1993},
number={4},
pages={787--819},
}

\bib{Roth1911}{article}{
author={Roth, Paul},
title={\"Uber Beziehungen zwischen algebraischen Gebilden vom Geschlechte
drei und vier},
language={German},
journal={Monatsh. Math. Phys.},
volume={22},
date={1911},
number={1},
pages={64--88},
issn={1812-8076},
}

\bib{tevelev}{book}{
AUTHOR = {Tevelev, E. A.},
TITLE = {Projective duality and homogeneous spaces},
SERIES = {Encyclopaedia of Mathematical Sciences},
VOLUME = {133},
NOTE = {Invariant Theory and Algebraic Transformation Groups, IV},
PUBLISHER = {Springer-Verlag, Berlin},
YEAR = {2005},
PAGES = {xiv+250},
ISBN = {3-540-22898-5},
}

\bib{vakil--twelve}{article}{
AUTHOR = {Vakil, Ravi},
 TITLE = {Twelve points on the projective line, branched covers, and
          rational elliptic fibrations},
JOURNAL = {Math. Ann.},
FJOURNAL = {Mathematische Annalen},
VOLUME = {320},
  YEAR = {2001},
NUMBER = {1},
 PAGES = {33--54},
  ISSN = {0025-5831},
}
  \end{biblist}
\end{bibdiv}
\end{document}